\documentclass[12pt, reqno]{amsart}

\usepackage{amssymb, amsmath, amsthm, mathrsfs}
\usepackage[backref, colorlinks=true]{hyperref}
\usepackage[alphabetic,backrefs,lite]{amsrefs}
\usepackage{amscd}   % for commutative diagrams
\usepackage{fullpage}
\usepackage[all]{xy} % for complicated commutative diagrams
\usepackage[OT2, T1]{fontenc}

% Note: if you have problems with the Sha in cyrillic font,
% change the following ``paragraph'' to \newcommand{\Sha}{{\operatorname{Sha}}}
\DeclareFontEncoding{OT2}{}{} % to enable usage of cyrillic fonts

% note: cmr might work in place of wncyr

% Color comments!
\usepackage[usenames,dvipsnames]{color}
% Color comments

\newcommand{\revedit}[1]{{\color{red} \sf #1}}

\newcommand{\hideqed}{\renewcommand{\qed}{}} %% to suppress `\qed'

% Theorems

\newtheorem{lemma}{Lemma}[section]
\newtheorem{theorem}[lemma]{Theorem}
\newtheorem{proposition}[lemma]{Proposition}
\newtheorem{prop}[lemma]{Proposition}
\newtheorem{cor}[lemma]{Corollary}
\newtheorem{conj}[lemma]{Conjecture}

\newtheorem{claim*}{Claim}
\newtheorem{thm}[lemma]{Theorem}
\newtheorem{question}[lemma]{Question}
\newtheorem{defn}[lemma]{Definition}

\theoremstyle{remark}
\newtheorem{remark}[lemma]{Remark}
\newtheorem{remarks}[lemma]{Remarks}

%Characters
\newcommand{\A}{{\mathbb A}}
\newcommand{\G}{{\mathbb G}}

\newcommand{\F}{{\mathbb F}}
\newcommand{\Q}{{\mathbb Q}}
\newcommand{\R}{{\mathbb R}}
\newcommand{\Z}{{\mathbb Z}}

\newcommand{\Xbar}{{\overline{X}}}
\newcommand{\Qbar}{{\overline{\Q}}}

\newcommand{\kbar}{{\overline{k}}}

\newcommand{\Ybar}{{\overline{Y}}}
\newcommand{\Abar}{{\overline{A}}}
\newcommand{\Ebar}{{\overline{E}}}

\newcommand{\Etilde}{{\widetilde{E}}}

% mathcal characters
\newcommand{\calA}{{\mathcal A}}

\newcommand{\calH}{{\mathcal H}}

\newcommand{\calM}{{\mathcal M}}

\newcommand{\OO}{{\mathcal O}}

% mathfrak characters

% mathscr characters

\newcommand{\scrO}{{\mathscr O}}

\newcommand{\epsbar}{\overline{\varepsilon}}
\newcommand{\Xns}{X^{t,\epsbar}_{ns}}
\newcommand{\XB}{X^t_{B}}

% Math operators
%\DeclareMathOperator{\tors}{tors}
\DeclareMathOperator{\Alb}{Alb}

\DeclareMathOperator{\HH}{H}
\DeclareMathOperator{\tr}{tr}
\DeclareMathOperator{\Tr}{Tr}

\DeclareMathOperator{\coker}{coker}

\DeclareMathOperator{\im}{im}

\DeclareMathOperator{\End}{End}

\DeclareMathOperator{\Hom}{Hom}

\DeclareMathOperator{\Aut}{Aut}
\DeclareMathOperator{\Gal}{Gal}

\DeclareMathOperator{\Res}{Res}

\DeclareMathOperator{\Br}{Br}

\DeclareMathOperator{\Sym}{Sym}

\DeclareMathOperator{\Pic}{Pic}

\DeclareMathOperator{\SPEC}{\bf Spec}

\DeclareMathOperator{\tors}{tors}
\DeclareMathOperator{\et}{et}

\DeclareMathOperator{\rank}{rank}
\DeclareMathOperator{\Mat}{M}
\DeclareMathOperator{\Kum}{Kum}
\DeclareMathOperator{\NS}{NS}

\DeclareMathOperator{\SL}{SL}
\DeclareMathOperator{\GL}{GL}

\DeclareMathOperator{\M}{M}

\DeclareMathOperator{\disc}{disc}

\DeclareMathOperator{\Bl}{Bl}
\DeclareMathOperator{\per}{per}
% Commands

\newcommand{\isom}{\cong}

\newcommand{\into}{\hookrightarrow}
\newcommand{\onto}{\twoheadrightarrow}
\newcommand{\To}{\longrightarrow}

\numberwithin{equation}{section}
\numberwithin{table}{section}

\newcommand{\defi}[1]{\textsf{#1}} % for defined terms

\title{Abelian $n$-division fields of elliptic curves and \\Brauer groups of  product Kummer \& abelian surfaces}
\author{Anthony V\'arilly-Alvarado}

\address{Department of Mathematics MS 136, Rice University, Houston, TX
			77005, USA}
\email{varilly@rice.edu}
\urladdr{http://math.rice.edu/\~{}av15}

\author{Bianca Viray}

\address{University of Washington, Department of Mathematics, Box 354350, Seattle, WA 98195, USA}
\email{bviray@math.washington.edu}
\urladdr{http://math.washington.edu/\~{}bviray}

% \thanks{}

\date{}
% \subjclass[2010]{}
% \keywords{}

%%%%%%%%%%%%%%%%%%%%%%%%%%%%%%%%%%%%%%%%%%%%%%%%%%%%%%%%%%%%%%%%%%%%%%%%%%%%%%%%
%%%%%%%%%%%%%%%%%%			Beginning of Document		%%%%%%%%%%%%%%%%%%%%%%%%
%%%%%%%%%%%%%%%%%%%%%%%%%%%%%%%%%%%%%%%%%%%%%%%%%%%%%%%%%%%%%%%%%%%%%%%%%%%%%%%%

\begin{document}

    \maketitle
    
    \begin{abstract}
    	Let $Y$ be a principal homogeneous space of an abelian surface, or a K3 surface, over a finitely generated extension of $\Q$. In 2008, Skorobogatov and Zarhin showed that the Brauer group modulo algebraic classes $\Br Y/\Br_1 Y$ is finite.  We study this quotient for the family of surfaces that are geometrically isomorphic to a product of isogenous non-CM elliptic curves, as well as the related family of geometrically Kummer surfaces; both families can be characterized by their geometric N\'eron-Severi lattices.
		 Over a field of characteristic $0$, we prove that the existence of a strong uniform bound on the size of the odd-torsion of $\Br Y/\Br_1 Y$  is equivalent to the existence of a strong uniform bound on integers $n$ for which there exist non-CM elliptic curves with abelian $n$-division fields.  Using the same methods we show that, for a fixed prime $\ell$, a number field $k$ of fixed degree $r$, and a fixed discriminant of the geometric N\'eron-Severi lattice,  $\#(\Br Y/\Br_1 Y)[\ell^\infty]$ is bounded by a constant that depends only on $\ell$, $r$, and the discriminant.
		%arXiv version
		 %     	Let Y be a principal homogeneous space of an abelian surface, or a K3 surface, over a finitely generated extension of Q. In 2008, Skorobogatov and Zarhin showed that the Brauer group modulo algebraic classes Br Y/ Br_1 Y is finite.  We study this quotient for the family of surfaces that are geometrically isomorphic to a product of isogenous non-CM elliptic curves, as well as the related family of geometrically Kummer surfaces; both families can be characterized by their geometric N\'eron-Severi lattices.
		 % Over a field of characteristic 0, we prove that the existence of a strong uniform bound on the size of the odd-torsion of Br Y / Br_1 Y  is equivalent to the existence of a strong uniform bound on integers n for which there exist non-CM elliptic curves with abelian n-division fields.  Using the same methods we show that, for a fixed prime p, a number field k of fixed degree r, and a fixed discriminant of the geometric N\'eron-Severi lattice,  (Br Y / Br_1 Y)[p^\infty] is bounded by a constant that depends only on p, r, and the discriminant.    
	\end{abstract}

%%%%%%%%%%%%%%%%%%%%%%%%%%%%%%%%%%%%%%%%%%%%%%%%%%%%%%%%%%%%%%%%%%%%%%%%%%%%%%%%
\section{Introduction}\label{s:intro}%
%%%%%%%%%%%%%%%%%%%%%%%%%%%%%%%%%%%%%%%%%%%%%%%%%%%%%%%%%%%%%%%%%%%%%%%%%%%%%%%%
    
	Let $k$ be a field of characteristic $0$, and let $\kbar/k$ be a fixed algebraic closure. Let $Y$ be a smooth projective surface over $k$ with trivial canonical sheaf $\omega_Y \isom \scrO_Y$, and let $\Ybar := Y\times_k\kbar$.  The Enriques-Kodaira classification of smooth algebraic surfaces shows that $\Ybar$ is either a K3 surface, in which case $\textup{h}^1(Y,\scrO_Y) = 0$, or an abelian surface, in which case $\textup{h}^1(Y, \scrO_Y) = 2.$ 
	
	If $k$ is algebraically closed, the Brauer group $\Br Y := \HH^2_{\et}(Y,\G_m)$ of $Y$  is isomorphic to $(\Q/\Z)^{b_2 - r}$, where $b_2$ is the second Betti number of $Y$, and $r$ denotes the rank of the N\'eron-Severi group of $Y$.  In particular, $\Br Y$ is infinite, because $k$ has characteristic $0$ and $\omega_Y\isom \scrO_Y$ so we have $b_2 - r \geq 2$. In stark contrast, the group $\im\left(\Br Y \to \Br \Ybar\right)$ is finite whenever $k$ is a finitely generated extension of $\Q$; this is a remarkable result of Skorobogatov and Zarhin~\cite{SZ-Finiteness}.  We are interested in the existence of bounds for the size of $\im\left(\Br Y \to \Br \Ybar\right)$ as $Y$ varies in a family with fixed geometric properties.  
	
	\begin{question}\label{question}
		Let $k$ be a field that is finitely generated over $\Q$. Let $Y$ be a smooth projective surface over $k$ with trivial canonical sheaf.  Is there a bound for 
		\[
			\#\im\left(\Br Y \to \Br \Ybar\right)
		\]
that is independent of $Y$, depending only on, say, $h^1(Y,\scrO_Y)$, the geometric N\'eron-Severi lattice $\NS\Ybar$ (considered as an abstract lattice), and $k$ or, if $k/\Q$ is finite, $[k:\Q]$?
	\end{question} 
    \begin{remarks}\hfill
		\begin{enumerate}
		\item[(i)] When $k$ is a number field and $Y$ is a K3 surface, the existence of bounds for $\#\im\left(\Br Y \to \Br \Ybar\right)$ that can depend on $Y$ has been studied previously; see~\cite{SZ-Kummer} and~\cite{Newton-BrauerK3} for the case of product Kummer surfaces, \cite{HKT-Degree2K3} for the case of K3 surfaces of degree $2$, and~\cite{CFTTV} for Kummer surfaces associated to Jacobians of genus $2$ curves.

		\item[(ii)] The Brauer group has a natural filtration 
		\[
			\Br_0 Y := \im(\Br k \to \Br Y) \subseteq \Br_1 Y := \ker(\Br Y \to \Br \Ybar) \subseteq \Br Y,
		\]
giving an injection $\Br Y/\Br_1 Y \hookrightarrow \Br \Ybar$, so we may replace $\#\im\left(\Br Y \to \Br \Ybar\right)$ in Question~\ref{question} with $\#(\Br Y/\Br_1 Y)$.  If $Y$ is a K3 surface, $\#(\Br_1 Y/\Br_0 Y)$ is bounded above by a constant that depends only on the rank of $\NS \Ybar$ (Lemma~\ref{lem:AlgBrBound}).  Therefore, bounding $\#(\Br Y/\Br_1 Y)$ is equivalent to bounding $\#(\Br Y/\Br_0 Y)$.  In contrast, if $Y$ is a principal homogeneous space of an abelian surface, then the quotient $\Br_1 Y/\Br_0 Y$ is infinite.
		\end{enumerate}
	\end{remarks}

	We study Question~\ref{question} for the following particular families of smooth projective surfaces:
	\begin{align*}
		\mathscr{A}_d^N := & \left\{Y/k : \omega_Y \isom \OO_Y, \textup{h}^1(Y, \OO_Y) = 2, \NS \Ybar \isom \begin{pmatrix} 0 & 1 & 1\\1& 0 & d\\1 & d & 0\end{pmatrix}, \Alb^1(Y) \in \HH^1(k,\Alb(Y))_N \right\},\\
		\mathscr{K}_d := & \left\{X/k : \omega_X \isom \OO_X, \textup{h}^1(X, \OO_X) = 0, \NS \Xbar \isom \Lambda_d\right\},
	\end{align*}
	where $\Lambda_d$ denotes the N\'eron-Severi lattice of a Kummer surface of a product of non-CM elliptic curves that have a cyclic isogeny of degree $d$ between them.  (The lattice $\Lambda_d$ is independent of the choice of elliptic curves and the cyclic isogeny of degree $d$, see~\S\ref{sec:KummerProducts}.) Note that the surfaces in $\mathscr{A}_d^N$ are geometrically abelian (so in particular, $Y\isom \Alb^1(Y)$) and the surfaces in $\mathscr{K}_d$ are K3.  In addition, if $d\neq d'$, then $\mathscr{K}_d\cap\mathscr{K}_{d'} = \mathscr{A}_d^N \cap\mathscr{A}_{d'}^{N'} = \emptyset$ for any integers $N$ and $N'$.  We prove the following theorem.

    \begin{thm}\label{thm:EquivalentUniformBoundsArbField}
    Let $d$ be a positive integer and let $F$ be a field of characteristic $0$.  For any positive integer $n$, we let $n_{\textup{odd}}$ denote the maximal odd integer that divides $n$. The following uniform boundedness statements are equivalent.
    \begin{enumerate}
    	\item[\textbf{(K3)}] For all positive integers $r$, there exists a $B = B(r,d)$ such that for all fields $k/F$ with $[k:F] \leq r$ and all surfaces $X/k\in \mathscr{K}_d$, 
    	\[
    		\#\left(\frac{\Br X}{\Br_0 X}\right)_{\textup{odd}} \leq B.
    	\]
		
    	\item[\textbf{(Ab)}] For all positive integers $r'$, there exists a $B' = B'(r,d)$ such that for all fields $k'/F$ with $[k':F] \leq r'$ and all surfaces $Y/k \in \mathscr{A}_d^2$, 
    	\[
    		\#\left(\frac{\Br Y}{\Br_1 Y}\right)_{\textup{odd}} \leq B'.
    	\]

    	\item[\textbf{(EC)}] For all positive integers $r''$, there exists a $B'' = B''(r'', d)$ such that for all fields $k''/F$ with $[k'':F] \leq r''$ and all non-CM elliptic curves $E/k''$ with a $k''$-rational cyclic subgroup of order $d$,
    	\[
    		\Gal(k''(E_n)/k'') \textup{ is abelian} \Rightarrow n_{\textup{odd}} \leq B''.
    	\]
    \end{enumerate}
    \end{thm}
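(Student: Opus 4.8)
The plan is to prove the two linking equivalences $\textbf{(K3)} \Leftrightarrow \textbf{(Ab)}$ and $\textbf{(Ab)} \Leftrightarrow \textbf{(EC)}$, which together yield the cyclic equivalence of all three statements. The geometric input I would use throughout is an explicit description of the transcendental Brauer group. For $Y \in \mathscr{A}_d^N$ we have $\Ybar \cong \overline{E_1} \times \overline{E_2}$ with $E_1, E_2$ joined by a cyclic $d$-isogeny, and for every odd prime $\ell$ there is a $G_k$-equivariant identification (via the Weil pairings) of $\Br(\Ybar)[\ell^\infty]$ with $\Hom_{\Z_\ell}(T_\ell E_1, T_\ell E_2)$ modulo the rank-one sublattice spanned by the classes of geometric isogenies, this being the transcendental summand of $\wedge^2 H^1$ after removing the rank-three Néron–Severi part. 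Through the injection $\Br Y/\Br_1 Y \hookrightarrow (\Br \Ybar)^{G_k}$, an element of order $\ell^n$ in $(\Br Y/\Br_1 Y)_{\textup{odd}}$ would then correspond to a $G_k$-equivariant homomorphism $E_1[\ell^n] \to E_2[\ell^n]$ of exact level $n$ that is not induced by an isogeny at a lower level.

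For $\textbf{(K3)} \Leftrightarrow \textbf{(Ab)}$ I would use that the hypothesis $N = 2$ is exactly what makes a torsor $Y$ with $\Alb^1(Y) \in \HH^1(k, \Alb Y)_2$ admit a Kummer surface $X = \Kum Y$ over $k$, landing in $\mathscr{K}_d$, and that conversely every member of $\mathscr{K}_d$ arises this way. The degree-two quotient $\Abar \to \Xbar$ induces, for odd $\ell$, a $G_k$-isomorphism $\Br(\Ybar)[\ell^\infty] \xrightarrow{\ \sim\ } \Br(\Xbar)[\ell^\infty]$ (the $2$-isogeny being invertible on $\ell$-primary parts), so that $(\Br Y/\Br_1 Y)_{\textup{odd}} \cong (\Br X/\Br_1 X)_{\textup{odd}}$. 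Since $X$ is a K3 surface with geometric Néron–Severi lattice pinned to $\Lambda_d$, Lemma~\ref{lem:AlgBrBound} bounds $\#(\Br_1 X/\Br_0 X)$ by a constant depending only on $d$; hence $(\Br X/\Br_1 X)_{\textup{odd}}$ and $(\Br X/\Br_0 X)_{\textup{odd}}$ bound one another, and bounds transfer between $\textbf{(K3)}$ and $\textbf{(Ab)}$ in both directions.

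For $\textbf{(Ab)} \Leftrightarrow \textbf{(EC)}$ — the arithmetic heart — I would set $E = E_1$, take $C \subset E$ the rational cyclic subgroup of order $d$, and write $\psi\colon E_1 \to E_2 = E_1/C$ with dual $\hat\psi$. If $(\Br Y/\Br_1 Y)_{\textup{odd}}$ carries an element of order $\ell^n$, the associated $G_k$-homomorphism $\phi\colon E_1[\ell^n] \to E_2[\ell^n]$ composes with $\hat\psi$ to a $G_k$-equivariant endomorphism $\hat\psi\circ\phi$ of $E_1[\ell^n]$ which, since $E_1$ is non-CM (so $\End \overline{E_1} = \Z$), is non-scalar once $n$ exceeds a bound depending on $d$; the image of $G_k$ in $\GL_2(\Z/\ell^n)$ then commutes with a non-scalar matrix, so lies up to bounded index in a Cartan subgroup, whence $\Gal(k(E_n)/k)$ becomes abelian after a bounded extension and $\textbf{(EC)}$ bounds $n$. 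Conversely, if $\Gal(k(E_n)/k)$ is abelian, the image of $G_k$ lies in a Cartan subgroup $\calC$, whose centralizer is a commutative $\Z/\ell^n$-algebra free of rank $2$; a non-scalar element of it furnishes a non-scalar $G_k$-invariant endomorphism of $E_1[\ell^n]$, and post-composing with $\psi$ and removing the isogeny contribution produces a transcendental class of order comparable to $\ell^n$ on $A = E_1 \times E_2$, forcing $\#(\Br Y/\Br_1 Y)_{\textup{odd}}$ to grow, so that $\textbf{(Ab)}$ bounds $n$.

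The hard part will be the precise dictionary in this last step. One must pass between ``abelian Galois image'' and ``image inside a Cartan subgroup'': an abelian subgroup of $\GL_2(\Z/\ell^n)$ need not itself be a Cartan subgroup — unipotent or mixed parts can intervene, and the normalizer of a Cartan contains abelian subgroups meeting both of its cosets — so the Cartan part must be isolated at the cost of a bounded-index correction absorbed into the constants $B, B', B''$. One must also track how the degree-$d$ isogeny $\psi$ shifts levels and alters the relevant discriminants, which is precisely where the dependence of all three bounds on $d$ enters. I expect this bounded-index and level-shift bookkeeping, rather than any single conceptual step, to be the technically demanding core of the argument.
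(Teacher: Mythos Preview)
Your overall architecture matches the paper's: reduce $\textbf{(K3)}\Leftrightarrow\textbf{(Ab)}$ to the Skorobogatov--Zarhin comparison for Kummer surfaces plus Lemma~\ref{lem:AlgBrBound}, and reduce $\textbf{(Ab)}\Leftrightarrow\textbf{(EC)}$ to a dictionary between $n$-torsion transcendental Brauer classes and non-scalar Galois-equivariant maps $E_1[n]\to E_2[n]$, the latter being tied (via composition with the dual isogeny) to $\End_k(E'_n)$ and hence to abelianness of $\Gal(k(E'_n)/k)$.

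There is, however, a genuine gap in two places where you assume $k$-rational structure that only exists geometrically. First, you assert that every $X\in\mathscr{K}_d$ is $\Kum Y$ for some $Y\in\mathscr{A}_d^2$ over the \emph{same} field $k$. This is false: the definition of $\mathscr{K}_d$ only pins down $\NS\Xbar$, so $X$ is merely \emph{geometrically} Kummer. The paper supplies Proposition~\ref{prop:GeometricallyKummerToKummer} to produce a bounded-degree extension $k_0/k$ over which $X_{k_0}$ becomes an honest Kummer surface; without this step the implication $\textbf{(Ab)}\Rightarrow\textbf{(K3)}$ does not go through. Second, in $\textbf{(EC)}\Rightarrow\textbf{(Ab)}$ you write ``set $E=E_1$'' as if $\Alb Y$ were already a product of elliptic curves over $k$ with a $k$-rational cyclic $d$-isogeny between them. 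Again this holds only geometrically: one needs Proposition~\ref{prop:GeomProduct} to split $A_L\cong E\times E'$ over an extension $L/k$ of degree at most $12$, and Proposition~\ref{prop:twists} to see that the cyclic isogeny may land in a quadratic twist $E'^{\delta}$ rather than $E'$ itself. The paper's Theorem~\ref{thm:MainHomToEnd} handles the twisted case separately, and the resulting bounded extensions $L$, $\tilde L(\sqrt{\delta})$ are exactly what get absorbed into the shift $r\mapsto Cr$ between the constants. These descent-to-bounded-extension steps are not bookkeeping afterthoughts; they are where most of \S\ref{sec:GeometricallyKummer} is spent.

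A smaller point: your Cartan-subgroup formulation in the $\textbf{(Ab)}\Leftrightarrow\textbf{(EC)}$ step is a detour. The paper's Corollary~\ref{cor:End} gives a direct equivalence: $\End_k(E_n)/(\Z/n\Z)$ contains an element of order $m$ if and only if $\Gal(k(E_m)/k)$ is abelian. No classification of abelian subgroups of $\GL_2(\Z/\ell^n\Z)$ is needed here (that analysis appears in \S\ref{ss:conjugacy} only for the modular-curve statement $\textbf{(M)}$ in Theorem~\ref{thm:EquivalentUniformBoundsNumberField}). Your worry about abelian-but-not-Cartan images is therefore moot for this theorem.
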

    \begin{remark}
        The equivalence of \textbf{(Ab)} and \textbf{(K3)} primarily follows from a result of Skorobogatov and Zarhin~\cite[Thm. 2.4 and its proof]{SZ-Kummer}.  
    \end{remark}
    
    Theorem~\ref{thm:EquivalentUniformBoundsArbField} reduces Question~\ref{question} for the odd part of the Brauer group in the case of $\mathscr{K}_d$ and $\mathscr{A}_d^2$ to a question about the existence of abelian $n$-division fields for $n$ arbitrarily large.  If we fix a prime $\ell$ and specialize to the case of a number field, then this reduction, together with results of Abramovich~\cite{Abramovich-Gonality} and Frey~\cite{Frey-InfinitelyManyDegreed}, gives the existence of an unconditional uniform bound on the $\ell$-primary part of $\#(\Br Y/\Br_1 Y)$ and $\#(\Br X/\Br_0 X)$.
     
    \begin{theorem}\label{thm:ellprimaryAb}
		Fix a prime integer $\ell$ and positive integers $d, r$, and $v$. There is a positive constant $B := B(\ell,d,r,v)$ such that for all degree $r$ number fields $k$, all positive integers $N$ with $v_{\ell}(N)\leq v$, and all surfaces $Y/k\in \calA_d^N$,  we have
		\[
			\#(\Br Y/\Br_1 Y)[\ell^\infty] < B.
		\]
    \end{theorem}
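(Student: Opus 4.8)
The plan is to reduce Theorem~\ref{thm:ellprimaryAb} to the uniform boundedness results on elliptic curves already invoked in the discussion of Theorem~\ref{thm:EquivalentUniformBoundsArbField}. First I would use Remark~(ii) together with the Skorobogatov--Zarhin comparison of abelian surfaces and their Kummer surfaces: since $Y \in \calA_d^N$ is geometrically abelian with $\Alb^1(Y) \in \HH^1(k,\Alb(Y))_N$, the $\ell$-primary part of $\Br Y/\Br_1 Y$ injects into the corresponding quotient for a surface in the Kummer family $\mathscr{K}_d$ (or can be controlled by it up to a bounded factor coming from the principal homogeneous space structure encoded by $N$). The reason to keep track of $v_\ell(N)$ is precisely that passing between $Y$ and its associated Kummer or product surface introduces $\ell$-torsion discrepancies governed by the order-$N$ class, and bounding $v_\ell(N) \le v$ keeps this discrepancy uniformly bounded in terms of $\ell$ and $v$.

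Next I would translate the finiteness of $\#(\Br X/\Br_0 X)[\ell^\infty]$ for $X$ in the Kummer family into a statement about Galois representations on the $\ell$-adic Tate modules of the two elliptic factors. Via the standard description of transcendental Brauer classes of Kummer and product surfaces as $\Gal(\kbar/k)$-invariant homomorphisms between torsion modules of the elliptic curves (the same dictionary underlying the equivalence \textbf{(K3)} $\Leftrightarrow$ \textbf{(EC)}), an unbounded $\ell$-primary Brauer group would force, for elliptic curves $E/k$ of bounded degree, the existence of abelian Galois action on $E_{\ell^m}$ for arbitrarily large $m$. Concretely, a large $\ell$-primary transcendental class produces a large quotient of the $\ell$-adic image that is abelian, which in turn forces a non-CM elliptic curve over a degree-$r$ field to acquire an abelian $\ell^m$-division field with $m \to \infty$.

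At this point I would invoke the results of Abramovich and Frey cited in the excerpt. The key input is that for non-CM elliptic curves over number fields of bounded degree $r$, the level $\ell^m$ at which the mod-$\ell^m$ image can be abelian is uniformly bounded: this follows because abelian (equivalently, reducible with bounded-order characters, or contained in a Cartan-normalizer) image at level $\ell^m$ corresponds to a rational point on a modular curve $X_{\mathrm{ab}}(\ell^m)$ whose gonality grows with $m$ (Abramovich's lower bounds on gonality), so Frey's theorem limits the existence of degree-$r$ points to finitely many levels. Thus there is a uniform $m_0 = m_0(\ell,r)$ beyond which no non-CM elliptic curve of degree $\le r$ has abelian $\ell^m$-division field. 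Feeding this bound, together with the fixed discriminant/isogeny data recorded by $d$ and the bound $v$ on $v_\ell(N)$, back through the Brauer-to-Galois dictionary yields the desired uniform constant $B(\ell,d,r,v)$.

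The main obstacle I anticipate is the bookkeeping in the first two steps, namely making the Brauer-to-Galois translation genuinely uniform and quantitative at the $\ell$-primary level rather than merely finite. One must control not only whether the relevant $\Gal$-module of homomorphisms is nonzero, but its exact $\ell$-primary size, and show that this size is bounded by an explicit function of the largest level $\ell^{m_0}$ at which abelian division fields occur, of the isogeny degree $d$, and of $v_\ell(N)$; the role of the hypothesis $v_\ell(N) \le v$ in absorbing the principal-homogeneous-space contribution will require the most care. The gonality/finiteness input from Abramovich and Frey is, by contrast, used essentially as a black box and should present no difficulty once the reduction is set up correctly.
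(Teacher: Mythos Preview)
Your overall strategy---reduce to a uniform bound on levels $\ell^m$ at which a non-CM elliptic curve over a bounded-degree number field can have abelian $\ell^m$-division field, then invoke Abramovich's gonality bound together with Frey's theorem---is correct and is precisely the paper's approach (packaged there as Theorem~\ref{thm:ellprimaryEC}).  Steps~2 and~3 of your sketch are essentially on target.

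There is, however, a genuine error in your first step.  The Skorobogatov--Zarhin comparison~\eqref{eq:KummerToPHS} gives an injection of $(\Br X)_n/(\Br_1 X)_n$ \emph{into} $(\Br Y)_n/(\Br_1 Y)_n$, not the reverse; so you cannot bound the Brauer group of $Y$ by passing to a Kummer surface.  More fundamentally, the Kummer construction is only available when $Y$ is a $2$-covering, whereas for $Y\in\calA_d^N$ the period $N$ is arbitrary.  So the detour through $\mathscr{K}_d$ does not work.

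The paper instead goes directly from the principal homogeneous space to the elliptic-curve data, bypassing Kummer surfaces altogether.  Via Theorem~\ref{thm:BrAbHom} (and its repackaging as Theorem~\ref{thm:BrauerAbToRep}), one passes to an extension $\tilde L/k$ of degree at most $24\cdot\gcd(\per(Y),\ell^\infty)^4\le 24\cdot\ell^{4v}$, over which the $\ell$-primary part of the period is killed and over which $(\Br Y)_{\ell^s}/(\Br_1 Y)_{\ell^s}$ embeds into $\Hom_{\tilde L}(E_{\ell^s},E'_{\ell^s})\big/\big(\Hom(\Ebar,\Ebar')/\ell^s\big)^{\Gamma_{\tilde L}}$.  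An element of order $\ell^s$ then forces, by Theorem~\ref{thm:MainHomToEnd} and Corollary~\ref{cor:End}, that $\tilde L(E'_{\ell^{s}/\gcd(2d^2,\ell^s)})/\tilde L$ is abelian, and Theorem~\ref{thm:ellprimaryEC} bounds $s$.  You were right that the hypothesis $v_\ell(N)\le v$ is exactly what controls the degree of this auxiliary extension; the mechanism is just this direct embedding rather than a Kummer comparison.
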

    
	\begin{cor}\label{cor:ellprimaryK3}
		Fix a prime integer $\ell$ and positive integers $d$ and $r$. There is a positive constant $B := B(\ell,d,r)$ such that for all degree $r$ number fields $k$ and surfaces $X/k\in \mathscr{K}_d$, we have
		\[
			\#(\Br X/\Br_0 X)[\ell^\infty] < B.
		\]
	\end{cor}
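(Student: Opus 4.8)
The plan is to deduce Corollary~\ref{cor:ellprimaryK3} from Theorem~\ref{thm:ellprimaryAb} by passing from a Kummer surface to its associated abelian surface, handling the algebraic part of the Brauer group separately. First I would reduce to the transcendental quotient. Every $X/k \in \mathscr{K}_d$ is a K3 surface with $\NS \Xbar \isom \Lambda_d$ of a fixed rank depending only on $d$, so Lemma~\ref{lem:AlgBrBound} bounds $\#(\Br_1 X/\Br_0 X)$ by a constant depending only on $d$. Since $\Br X/\Br_1 X$ is finite by Skorobogatov--Zarhin, the filtration $\Br_0 X \subseteq \Br_1 X \subseteq \Br X$ gives a short exact sequence of finite abelian groups, and taking $\ell$-primary parts is exact on such groups, so
\[
\#(\Br X/\Br_0 X)[\ell^\infty] = \#(\Br_1 X/\Br_0 X)[\ell^\infty]\cdot\#(\Br X/\Br_1 X)[\ell^\infty].
\]
Thus it suffices to bound $\#(\Br X/\Br_1 X)[\ell^\infty]$ uniformly over all degree $r$ fields $k$ and all $X \in \mathscr{K}_d$.

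Next I would realize $X$ as a Kummer surface. Each $X/k \in \mathscr{K}_d$ is the Kummer surface attached to a torsor $Y$ under an abelian surface $A = \Alb(Y)$ that is geometrically a product of isogenous non-CM elliptic curves admitting a cyclic $d$-isogeny. Because the Kummer construction remembers the torsor class only modulo $2$, the relevant class $[Y] \in \HH^1(k,A)$ may be taken in the $2$-torsion subgroup $\HH^1(k,A)_2$, so that $Y \in \mathscr{A}_d^2$; this is precisely the family appearing in part \textbf{(Ab)} of Theorem~\ref{thm:EquivalentUniformBoundsArbField}. The comparison of transcendental Brauer groups of a Kummer surface and its abelian surface, following \cite[Thm.~2.4 and its proof]{SZ-Kummer}, then yields a constant $C(d)$ depending only on $d$ with
\[
\#(\Br X/\Br_1 X)[\ell^\infty] \leq C(d)\cdot \#(\Br Y/\Br_1 Y)[\ell^\infty],
\]
the constant $C(d)$ absorbing any bounded $2$-power discrepancy introduced by the comparison.

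Finally I would apply Theorem~\ref{thm:ellprimaryAb} to $Y$. Since $Y \in \mathscr{A}_d^2$ is defined over the same field $k$ (of degree $r$ over $\Q$) and $N = 2$ satisfies $v_\ell(N) = v_\ell(2) \leq 1$ for \emph{every} prime $\ell$, Theorem~\ref{thm:ellprimaryAb} applied with $v = 1$ produces a bound $\#(\Br Y/\Br_1 Y)[\ell^\infty] < B(\ell,d,r,1)$. Chaining this with the two displayed inequalities above gives the desired uniform bound, with $B(\ell,d,r)$ assembled from $B(\ell,d,r,1)$, the constant $C(d)$, and the Lemma~\ref{lem:AlgBrBound} bound on $\#(\Br_1 X/\Br_0 X)$.

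The main obstacle I expect lies in the second step: identifying $X$ with the Kummer surface of a torsor $Y$ defined over $k$ itself (rather than over an a priori larger extension, whose degree would in any case need to be controlled purely in terms of $d$), verifying that the pertinent torsor class is genuinely $2$-torsion so that $N = 2$, and controlling the Kummer-to-abelian comparison of transcendental Brauer groups at the prime $\ell = 2$, where this dictionary is most delicate. Once these geometric and $2$-adic bookkeeping points are settled using the machinery of \cite{SZ-Kummer}, the reduction to Theorem~\ref{thm:ellprimaryAb} is immediate.
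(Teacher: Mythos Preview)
Your overall strategy matches the paper's: reduce to the transcendental quotient via Lemma~\ref{lem:AlgBrBound}, compare with the Brauer group of an associated $Y\in\mathscr{A}_d^2$ via the Skorobogatov--Zarhin injection~\eqref{eq:KummerToPHS}, and invoke Theorem~\ref{thm:ellprimaryAb} with $v=1$. The paper's own proof is the one line ``This follows from Theorem~\ref{thm:ellprimaryAb} and~\cite[Thm.~2.4]{SZ-Kummer}'', with the same argument spelled out in the proof of \textbf{(Ab)}$\Leftrightarrow$\textbf{(K3)} in~\S\ref{sec:EquivalentUniformBoundsArbField}.

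The obstacle you flag in your final paragraph is genuine, not mere bookkeeping, and your second step as written is actually false: a surface $X/k\in\mathscr{K}_d$ need \emph{not} be a Kummer surface over $k$. The missing ingredient is Proposition~\ref{prop:GeometricallyKummerToKummer}: there is an extension $k_0/k$ of degree at most an absolute constant $M$ (independent even of $d$) such that $X_{k_0}=\Kum Y$ for some $Y/k_0\in\mathscr{A}_d^2$. One then uses the injection $\Br X/\Br_1 X\hookrightarrow\Br X_{k_0}/\Br_1 X_{k_0}$ and applies Theorem~\ref{thm:ellprimaryAb} over the degree-$\leq rM$ field $k_0$. This is exactly what the paper does (explicitly in~\S\ref{sec:EquivalentUniformBoundsArbField}, implicitly in the one-line proof of the corollary).

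Two smaller points. First, you over-worry about $\ell=2$: the map~\eqref{eq:KummerToPHS} is an \emph{injection} for every $n$ (only the surjectivity fails at~$2$), so no auxiliary constant $C(d)$ is needed there. Second, the extension degree supplied by Proposition~\ref{prop:GeometricallyKummerToKummer} is bounded by an absolute constant, so your parenthetical ``controlled purely in terms of $d$'' undersells what is available.
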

    \begin{remark}
        Theorem~\ref{thm:ellprimaryAb} and Corollary~\ref{cor:ellprimaryK3} can be used to prove a slightly stronger version of Theorem~\ref{thm:EquivalentUniformBoundsArbField} in the case where $F$ is a number field.  See Theorem~\ref{thm:EquivalentUniformBoundsNumberField}.
    \end{remark}
    If we specialize our family further, then we may use work of Gonz\'alez-Jim\'enez and Lozano-Robledo~\cite{GJLR-AbelianDivisionFields} to obtain precise bounds.
    \begin{theorem}\label{thm:overQ}
        Let $d$ be a positive integer, let $E/\Q$ be a non-CM elliptic curve with a $\Q$-rational cyclic subgroup $C$ of order $d$, let $Y = E \times E/C$, and let $X := \Kum(Y).$  Then
        \[
            \#(\Br Y/\Br_1 Y),\; \#(\Br X/\Br_1 X) \leq (8d)^3.
        \]
    \end{theorem}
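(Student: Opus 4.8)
The plan is to bound the exponent of the finite group $\Br Y/\Br_1 Y$ and then exploit the smallness of the geometric transcendental lattice. Since $Y$ is geometrically an abelian surface isogenous to $E\times E$ with $E$ non-CM, we have $b_2(\Ybar)=6$ and $\rho(\Ybar)=\rank\NS\Ybar = 3$, so $\Br\Ybar\cong(\Q/\Z)^{3}$ as abstract groups, by the computation recalled in the introduction. Because $\Br Y/\Br_1 Y$ injects into $(\Br\Ybar)^{G_\Q}$, it is a subgroup of $(\Q/\Z)^3$; hence if it is annihilated by an integer $m$, then $\#(\Br Y/\Br_1 Y)\le m^3$. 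Thus it suffices to show that every class in $\Br Y/\Br_1 Y$ has order dividing $8d$. For the Kummer surface I would invoke the comparison of Skorobogatov--Zarhin \cite{SZ-Kummer}: the Kummer construction induces an isomorphism of the transcendental Galois modules of $\Ybar$ and $\Xbar$, identifying $\Br Y/\Br_1 Y$ with $\Br X/\Br_1 X$ (note $b_2(\Xbar)-\rho(\Xbar)=22-19=3$ as well), so the same exponent bound yields the second inequality.

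To bound the exponent, fix a class $\alpha\in\Br Y/\Br_1 Y$ of order $n$. Using the description underlying the proof of Theorem~\ref{thm:EquivalentUniformBoundsArbField}, which itself rests on \cite[Thm.~2.4 and its proof]{SZ-Kummer}, the class $\alpha$ corresponds to a $G_\Q$-equivariant homomorphism $\psi\colon E[n]\to (E/C)[n]$ that does not arise from a genuine homomorphism $E\to E/C$. Let $\phi\colon E\to E/C$ be the given cyclic $d$-isogeny and $\hat\phi$ its dual, so that $\hat\phi\circ\phi=[d]$. The plan is to compose $\psi$ with the torsion-level maps induced by $\hat\phi$ to produce a $G_\Q$-equivariant endomorphism of the torsion of $E$. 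Since $E$ is non-CM, every endomorphism of $\Ebar$ is an integer, so the only $G_\Q$-equivariant endomorphisms of $E[m]$ defined over $\kbar$ are scalars; the transcendence of $\alpha$ guarantees that the endomorphism produced from $\psi$ is non-scalar at the relevant level $m$. A non-scalar element of $\M_2(\Z/m)$ has commutative centralizer, so the image $\rho_{E,m}(G_\Q)\subseteq\GL_2(\Z/m)$ is abelian, i.e.\ $\Gal(\Q(E_m)/\Q)$ is abelian. Tracking the kernel of $\phi$ through this construction shows that $n$ divides $md$.

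Finally I would apply the classification of Gonz\'alez-Jim\'enez and Lozano-Robledo \cite{GJLR-AbelianDivisionFields}: for a non-CM elliptic curve over $\Q$, the extension $\Q(E_m)/\Q$ is abelian only when $m\le 8$. Combining this with $n\mid md$ gives $n\le 8d$ for the order of every class, so the exponent of $\Br Y/\Br_1 Y$ divides $8d$, and the rank-$3$ bound yields $\#(\Br Y/\Br_1 Y)\le(8d)^3$; the Kummer inequality follows from the identification above.

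I expect the main obstacle to be the middle step: making precise, at the level of $n$-torsion Galois modules, the passage from the transcendental homomorphism $\psi\colon E[n]\to(E/C)[n]$ to a non-scalar $G_\Q$-equivariant endomorphism, and correctly accounting for the cyclic isogeny of degree $d$ so as to obtain the clean divisibility $n\mid md$ rather than a weaker estimate. In particular one must rule out the degenerate possibility that, after composing with $\hat\phi$, the resulting endomorphism becomes scalar (which would force $\alpha$ to be algebraic, contradicting its nontriviality in $\Br Y/\Br_1 Y$), and one must treat the primes dividing $\gcd(n,d)$, where $\phi$ fails to be an isomorphism on $n$-torsion, with care; this is precisely where the factor $d$ in $(8d)^3$ is consumed.
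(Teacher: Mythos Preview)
Your approach matches the paper's proof: reduce to bounding the exponent via the injection $\Br Y/\Br_1 Y\hookrightarrow \Br\Ybar\cong(\Q/\Z)^3$; pass from an order-$n$ transcendental class to a Galois-equivariant $\psi\in\Hom(E_n,(E/C)_n)$ not induced by an isogeny; compose with the dual isogeny to land in $\End$ modulo scalars, losing at most a factor $\gcd(d,n)$; deduce that the $n/\gcd(d,n)$-division field is abelian; and then invoke Gonz\'alez-Jim\'enez--Lozano-Robledo to get $n/\gcd(d,n)\le 8$. Two points deserve tightening. First, the Skorobogatov--Zarhin comparison gives an \emph{injection} $(\Br X)_n/(\Br_1 X)_n\hookrightarrow(\Br Y)_n/(\Br_1 Y)_n$ (an isomorphism only for odd $n$), which is all you need for the inequality, but not the identification you assert. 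Second, and more seriously, the claim ``a non-scalar element of $\M_2(\Z/m\Z)$ has commutative centralizer'' is false when $m$ is not prime: e.g.\ $A=I+2E_{12}\in\M_2(\Z/4\Z)$ has $E_{12}$ and $2E_{21}$ in its centralizer, and these do not commute. The paper's Proposition~\ref{prop:ranks} and Corollary~\ref{cor:End} handle this correctly by working prime by prime and tracking, for each $\ell^s\mid n$, the largest power at which the endomorphism remains non-scalar modulo~$\ell$; this is exactly the care you anticipate in your final paragraph, and once supplied it yields precisely $m=n/\gcd(d,n)$ with $\Gal(\Q(E_m)/\Q)$ abelian.
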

    \begin{remark}
        Primarily by work of Mazur~\cite{Mazur-Isogenies}, if $E/\Q$ is an elliptic curve with a $\Q$-rational subgroup of degree $d$, then $d\leq 163$.  Thus, for any $d$ and any $Y$ and $X$ as above, $\#(\Br Y/\Br_1 Y)$ and $\#(\Br X/\Br_1 X)$ are bounded above by $(8\cdot 163)^3$.
    \end{remark}
    
    %%%%%%%%%%%%%%%%%%%%%%%%%%%%%%%%%%%%%%%%%%%%%%%%%%%%%%%%%%%%%%%%%%%%%%%%%%%%
    \subsection{Heuristics toward a uniform bound}
    %%%%%%%%%%%%%%%%%%%%%%%%%%%%%%%%%%%%%%%%%%%%%%%%%%%%%%%%%%%%%%%%%%%%%%%%%%%%

        The above results might lead one to speculate that Question~\ref{question} has a positive answer in the case of number fields, at least for some N\'eron-Severi lattices.  We provide some heuristic arguments in this direction.

        %%%%%%%%%%%%%%%%%%%%%%%%%%%%%%%%%%%%%%%%%%%%%%%%%%%%%%%%%%%%%%%%%%%%%%%%
        \subsubsection{The case of surfaces associated to products of isogenous
non-CM elliptic curves}
        %%%%%%%%%%%%%%%%%%%%%%%%%%%%%%%%%%%%%%%%%%%%%%%%%%%%%%%%%%%%%%%%%%%%%%%%

            Let $E$ be a non-CM elliptic curve over a number field $k$.  Serre's Open Image Theorem states that the image of the Galois representation $\rho_E$ attached to $E$ is open in $\GL_2(\hat{\Z})$~\cite{Serre-OpenImage}.  In particular, there are finitely many integers $n$ such that $k(E_n)/k$ is an abelian extension.  In the same paper, Serre asked whether a \emph{uniform} version of his Open Image Theorem is true, i.e., if there exists a bound for the index $[\GL_2(\hat{\Z}):\im \rho_E]$ that depends only on $k$.  (Work of Mazur~\cite{Mazur-Isogenies} and Bilu-Parent-Rebolledo~\cite{BPR-p^r} strongly supports a positive answer, at least in the case $k=\Q$.) If there is such a bound and if, in addition, the bound can be taken to depend only on $[k:\Q]$ rather than $k$, then Theorem~\ref{thm:EquivalentUniformBoundsNumberField} proves {Question~\ref{question}} has a positive answer for $\mathscr{K}_d$ and $\mathscr{A}^2_d$.  Under the same assumptions, a similar argument proves {Question~\ref{question}} has a positive answer for $\mathscr{A}^N_d$ for any $N$.
			
        %%%%%%%%%%%%%%%%%%%%%%%%%%%%%%%%%%%%%%%%%%%%%%%%%%%%%%%%%%%%%%%%%%%%%%%%
        \subsubsection{Cohomological interpretation of torsion on elliptic curves}
        %%%%%%%%%%%%%%%%%%%%%%%%%%%%%%%%%%%%%%%%%%%%%%%%%%%%%%%%%%%%%%%%%%%%%%%%
	    
            Let $E$ be an elliptic curve over a number field $k$.  The torsion subgroup $E(k)_{\tors}$ of the $k$-points is a finite abelian group.  In~\cite{Manin}, Manin showed that, for a fixed prime $\ell$, the cardinality of the group $E(k)[\ell^\infty]$ of $\ell$-primary order points of $E$ is bounded in terms of $\ell$, independent of $E$.  Soon after, Mazur showed that, over $\Q$, the bound could be taken independent of $\ell$ and, more precisely, that there are only 15 possibilities for $E(\Q)_{\tors}$~\cite{Mazur}.  Following further progress by Kamienny~\cite{Kamienny}, Merel proved the strong uniform boundedness conjecture for elliptic curves: given a positive integer $d$, there is a constant $c := c(d)$ such that $\#E(k)_{\tors} < c(d)$ for all elliptic curves $E$ over any number field $k$ of degree $d$~\cite{Merel}.
	
		Among smooth projective curves, elliptic curves are precisely those that have trivial canonical sheaf.  Surfaces with this property fall into two geometric classes: K3 surfaces and abelian surfaces.  Hence, one might wonder if there is an analogous statement of Merel's theorem that holds for \emph{both} of these classes of surfaces.  The chain of group isomorphisms,
        	\begin{align*}
        		E(k)_{\tors} 	&\isom (\Pic^0 E)_{\tors} \isom (\Pic E)_{\tors} \\
        						&\isom \HH^1_{\textup{Zar}}(E,\scrO_E^\times)_{\tors} \isom \HH^1_{\et}(E,\G_m)_{\tors} \\
        						&\isom \im\left(\HH^1_{\et}(E,\G_m)_{\tors} \to \HH^1_{\et}(\Ebar,\G_m)_{\tors}\right) \\
                                	&\isom \im\left(\HH^{\dim E}_{\et}(E,\G_m)_{\tors} \to \HH^{\dim E}_{\et}(\Ebar,\G_m)_{\tors}\right),
        	\end{align*}
suggests that the group $\im\left(\HH^2_{\et}(Y,\G_m)_{\tors} \to \HH^2_{\et}(\Ybar,\G_m)_{\tors}\right)$ is a plausible replacement for $E(k)_{\tors}$. This group is precisely $\im\left(\Br Y \to \Br \Ybar\right)$, and Skorobogatov and Zarhin's finiteness result for it is a suitable replacement for the finiteness of $E(k)_{\tors}$.  In this light, Question~\ref{question} asks whether the analogous statement of Merel's theorem holds for K3 surfaces and principal homogeneous spaces of abelian surfaces, after possibly fixing a N\'eron-Severi lattice; Theorem~\ref{thm:ellprimaryAb} and Corollary~\ref{cor:ellprimaryK3} are analogues of Manin's result for particular lattices.

        %%%%%%%%%%%%%%%%%%%%%%%%%%%%%%%%%%%%%%%%%%%%%%%%%%%%%%%%%%%%%%%%%%%%%%%%
        \subsubsection{Moduli of K3 surfaces with level structure}\label{ss:K3moduli}
        %%%%%%%%%%%%%%%%%%%%%%%%%%%%%%%%%%%%%%%%%%%%%%%%%%%%%%%%%%%%%%%%%%%%%%%%
			
			Uniform boundedness statements for torsion on elliptic curves are equivalent to statements about the lack of (noncuspidal) rational points on certain modular curves of high level.  These curves parametrize isomorphism classes of elliptic curves with torsion data.  As the level of the torsion data increases, uniform boundedness requires that the corresponding modular curves all have positive genus; having genus at least $2$, i.e., being of general type, is desirable in view of Faltings' Theorem.  To investigate uniform boundedness of Brauer classes on K3 surfaces, one could start with the analogous purely geometric question: what is the Kodaira dimension of the moduli spaces of K3 surfaces that parametrize high-order Brauer class data?
			
			To have a reasonable moduli theory of K3 surfaces, e.g., to obtain coarse moduli spaces that are schemes of finite type over an algebraically closed field, one must first fix some polarization data.  Typically, one fixes a lattice $\Lambda$ of signature $(1,r)$ with $1\leq r \leq 19$ that can be primitively embedded in the K3 lattice $U^{\oplus 3}\oplus E_8(-1)^{\oplus 2}$ and considers K3 surfaces whose geometric N\'eron-Severi group $\NS \Xbar$ contains $\Lambda$.  To these lattice-polarized moduli spaces, one adds level structures coming from the Brauer group, e.g., a cyclic subgroup of order $n$ of the Brauer group.  McKinnie, Sawon, Tanimoto, and the first author show that when $n$ is prime these moduli spaces have $3$ or $4$ components, depending on $n$ and the polarization~\cite{MSTVA}.  One component is always isomorphic to a moduli space of K3 surfaces of higher degree, and, for some values of $n$, another is isomorphic to a moduli space of special cubic fourfolds.  
            Kodaira dimension estimates in~\cite{GHS-K3gentype,TVA-cubic4folds} show that such components are of general type for $n \gg 0$.  This has led the first author to make the following conjecture.
	    	
        	\begin{conj}[{\cite[Conj. 5.5]{VA-AWSnotes}}]\label{conjUniformBoundedness}
        		Fix a number field $k$ and a lattice $\Lambda$ together with a primitive embedding $\Lambda \hookrightarrow U^{\oplus 3}\oplus E_8(-1)^{\oplus 2}$. Let $X$ be a K3 surface over $k$ such that $\NS \Xbar \isom \Lambda$. Then there is a constant $B(k,\Lambda)$, independent of $X$, such that
        \[
        \#\left(\frac{\Br X}{\Br_0 X}\right) < B(k,\Lambda).
        \]
        	\end{conj}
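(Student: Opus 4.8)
The plan is to generalize the mechanism behind Theorem~\ref{thm:EquivalentUniformBoundsArbField}, replacing the elliptic-curve ``abelian $n$-division field'' input by a statement about the Galois action on the transcendental part of the middle cohomology. First I would fix, via the finiteness theorem of Skorobogatov and Zarhin~\cite{SZ-Finiteness}, the injection $\Br X/\Br_0 X \into (\Br\Xbar)^{\Gal(\kbar/k)}$ and describe $\Br\Xbar \isom \Hom(T_\Xbar, \Q/\Z)(1)$, where $T_\Xbar$ is the transcendental lattice, i.e.\ the orthogonal complement of $\NS\Xbar \isom \Lambda$ inside $\HH^2_{\et}(\Xbar,\hat{\Z})(1)$. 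Under this identification a Galois-invariant Brauer class of order $n$ corresponds to a Galois-equivariant surjection $T_\Xbar/nT_\Xbar \onto \Z/n$ with trivial action on the target, that is, to a distinguished Galois-fixed cyclic quotient of the mod-$n$ transcendental representation. The conjecture then becomes: the Galois image on $T_\Xbar$ is large enough to destroy all such fixed quotients once $n$ exceeds a bound depending only on $k$ and $\Lambda$. This is precisely the K3 analogue of a \emph{uniform open image theorem}, with Serre's theorem~\cite{Serre-OpenImage} as the guiding prototype, and it is the exact higher-rank replacement for condition \textbf{(EC)}.

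The second step is to realize these fixed quotients geometrically and bring Faltings/Lang--Vojta-type finiteness to bear. Following~\cite{MSTVA}, I would view a $\Lambda$-polarized K3 surface equipped with a cyclic order-$n$ Brauer class as a point of a finite cover $\calM_{\Lambda,n} \to \calM_\Lambda$ of orthogonal modular varieties, so that an $X/k$ with a rational class of order $n$ yields a $k$-rational point of $\calM_{\Lambda,n}$. The Kodaira-dimension estimates cited in \S\ref{ss:K3moduli} (via the isomorphisms of certain components of $\calM_{\Lambda,n}$ with moduli of higher-degree K3 surfaces and of special cubic fourfolds, and the general-type results of~\cite{GHS-K3gentype,TVA-cubic4folds}) show that the relevant components are of general type for $n \gg 0$. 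One would then invoke the Lang--Vojta philosophy: a variety of general type over a number field has a non-Zariski-dense set of $k$-points, and in its strong form the count is controlled uniformly across the tower. Combined with a description of the exceptional loci that is uniform in $n$, this would force the levels carrying new $k$-rational Brauer data to stop, yielding the bound $B(k,\Lambda)$.

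The decisive obstacle is that the input just invoked is the (open) Bombieri--Lang / Lang--Vojta conjecture: even the weak non-density statement is unknown in dimension $>1$, and the orthogonal modular varieties here have dimension up to $19$, so Faltings' theorem---which suffices in the elliptic-curve case only because the obstructions live on modular \emph{curves}---does not apply directly. I therefore expect any complete argument to be conditional, with the honest unconditional deliverable being the single-prime shadow already present in Theorem~\ref{thm:ellprimaryAb}, where gonality growth~\cite{Abramovich-Gonality} and Frey's argument~\cite{Frey-InfinitelyManyDegreed} substitute for Lang--Vojta on the $\ell$-adic sub-tower. A secondary, and genuinely geometric, difficulty is upgrading non-density to a numerical bound uniform in $n$: one must verify that the accumulating subvarieties of $\calM_{\Lambda,n}$ (images of Hecke correspondences, and the rational and elliptic curves responsible for the exceptional locus) do not themselves carry arbitrarily high-order Brauer data. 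Establishing this last point seems to require a lattice-by-lattice analysis of $\Lambda$ and is, in my view, the place where the problem resists a purely formal reduction.
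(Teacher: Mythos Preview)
The statement you are attempting to prove is Conjecture~\ref{conjUniformBoundedness}, and the paper offers no proof of it: it is stated as an open conjecture, attributed to~\cite[Conj.~5.5]{VA-AWSnotes}, and is motivated heuristically in \S\ref{ss:K3moduli} rather than established. There is therefore no ``paper's own proof'' against which to compare your proposal.

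That said, your outline is not wrong as a research program, and it tracks the paper's own heuristic closely. The paper's \S\ref{ss:K3moduli} already records exactly the mechanism you describe: interpret order-$n$ Brauer classes as level structures, pass to the moduli spaces $\calM_{\Lambda,n}$ studied in~\cite{MSTVA}, observe via~\cite{GHS-K3gentype,TVA-cubic4folds} that components are of general type for $n\gg 0$, and then hope that a Lang--Vojta type statement controls rational points. Your identification of the decisive obstacle---that Bombieri--Lang is open in dimension $>1$, while the relevant orthogonal modular varieties have dimension up to $19$---is precisely why the paper states this as a conjecture rather than a theorem; the paper even points to~\cite{AVA-Alevels,AVA-Vojta} for conditional work in this direction. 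Your first step (rephrasing in terms of Galois-fixed cyclic quotients of $T_{\Xbar}/n$) is correct and standard, but it does not by itself reduce the problem to anything tractable: the ``uniform open image'' statement for the Galois action on the transcendental lattice that you would need is itself wide open and essentially equivalent to the conjecture.

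In short: your proposal is a faithful and well-articulated expansion of the heuristic the paper already gives, and you correctly flag that it is conditional. But you should not present it as a proof; at best it is a plausibility argument, which is all the paper claims for this statement.
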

Recent conditional work of the first author with Abramovich~\cite{AVA-Alevels,AVA-Vojta} is aimed at exploring the plausibility of the analogous conjecture for torsion on abelian varieties.

    %%%%%%%%%%%%%%%%%%%%%%%%%%%%%%%%%%%%%%%%%%%%%%%%%%%%%%%%%%%%%%%%%%%%%%%%%%%%
    \subsection{Outline}
    %%%%%%%%%%%%%%%%%%%%%%%%%%%%%%%%%%%%%%%%%%%%%%%%%%%%%%%%%%%%%%%%%%%%%%%%%%%%
        In~\S\ref{sec:GeometricallyKummer} we classify K3 surfaces $X$ such that $\NS \Xbar \isom \Lambda_d$.  In~\S\ref{sec:BrauerGroup}, we build on work of Skorobogatov and Zarhin~\cite{SZ-Kummer} to relate the Brauer group of such K3 surfaces and the Brauer group of principal homogeneous spaces of abelian surfaces to Galois equivariant homomorphisms between the $n$-torsion of isogenous non-CM elliptic curves.  Then, we prove results on Galois equivariant homomorphisms between the $n$-torsion of non-CM elliptic curves (\S\ref{sec:Hom}) and on related Galois equivariant endomorphisms (\S\ref{sec:End}).  Finally in~\S\ref{sec:UniformBounds}, we state and prove our main results.

    %%%%%%%%%%%%%%%%%%%%%%%%%%%%%%%%%%%%%%%%%%%%%%%%%%%%%%%%%%%%%%%%%%%%%%%%%%%%
    \subsection{Background and notation}
    %%%%%%%%%%%%%%%%%%%%%%%%%%%%%%%%%%%%%%%%%%%%%%%%%%%%%%%%%%%%%%%%%%%%%%%%%%%%
        For a rational prime $\ell$, we write $v_\ell$ for the corresponding $\ell$-adic valuation.  For any integer $n$, we let $n_{\textup{odd}}:= n\cdot2^{-v_2(n)}$ denote the odd part of $n$.  Given two positive integers $a$ and $b$, we let $\gcd(a,b^\infty) := \max_n\{\gcd(a,b^n)\}$.
	   
	    For any abelian group $G$ and positive integer $n$, we write $G_n$ for the subgroup of $n$-torsion elements and $G_{\textup{odd}}$ for the subgroup of elements of odd order.  If $G$ is finite, then we write $e(G)$ for its exponent and $\#G$ for its cardinality.
    
        Throughout, we assume that our fields have characteristic $0$.  For a field $k$, we let $\kbar$ denote a fixed algebraic closure and let $\Gamma_k$ denote the absolute Galois group $\Gal(\kbar/k).$  For any $\Gamma_k$-module $M$ and any quadratic character $\chi\colon\Gamma_k \to \mu_2$, we write
        \[
            M^{\chi} := \left\{m\in M : \sigma(m) = \chi(\sigma) m \textup{ for all }\sigma \in \Gamma_k \right\}.
        \]
        If $k'/k$ is a finite separable extension, then for any $i$ and any $\Gamma_k$-module $M$ we let $\Res_{k'/k}$ denote the restriction map $\HH^i(\Gamma_k, M) \to \HH^i(\Gamma_{k'}, M)$.

        For any smooth projective geometrically integral variety $X$ over a field $k$, we let $\Xbar$ denote the base change of $X$ to $\kbar$.  We let $\Pic X$ denote the Picard group of $X$, $\Pic^0 X$ denote the subgroup of $\Pic X$ that maps to the identity component of the Picard scheme, and $\NS X$ denote the quotient $\Pic X/\Pic^0 X$, the N\'eron-Severi group of $X$; we use $\sim$ to denote algebraic equivalence of divisors.  We write $\Br X := \HH^2_{\et}(X, \G_m)$ for the Brauer group of $X$, $\Br_1 X := \ker(\Br X \to \Br \Xbar)$ for the algebraic Brauer group of $X$, and $\Br_0 X := \im (\Br k \to \Br X)$ for the subgroup of constant algebras.
    
        Let $E$ be an elliptic curve over a field $k$.  We write $E_n$ for the subgroup scheme of $n$-torsion elements. A $k$-rational cyclic subgroup of $E$ is a cyclic subgroup $C\subset E$ such that $C^{\Gamma_k} = C$; note that $C$ is not necessarily contained in $E(k)$. If $\delta\in k^{\times}/k^{\times2}$, we write $E^{\delta}$ for the quadratic twist associated to $\delta\in k^{\times}/k^{\times2} \isom \HH^1(k, \mu_2) \to \HH^1(k, \Aut E)$.  For $\phi\colon E\to E'$ an isogeny between two elliptic curves, we write $\phi^{\vee}$ for the dual isogeny.  We call an isogeny \defi{cyclic} if its kernel is a cyclic group.
        
        Let $Y/k$ be a principal homogeneous space of an abelian variety $A$.  The period of $Y$, denoted $\per(Y)$, is the order of $Y$ in the Weil-Ch\^atelet group $\HH^1(k, A)$.

        %%%%%%%%%%%%%%%%%%%%%%%%%%%%%%%%%%%%%%%%%%%%%%%%%%%%%%%%%%%%%%%%%%%%%%%%
        \subsubsection{Kummer surfaces}\label{sec:ClassicalKummer}%
        %%%%%%%%%%%%%%%%%%%%%%%%%%%%%%%%%%%%%%%%%%%%%%%%%%%%%%%%%%%%%%%%%%%%%%%%

        	Let $A$ be an abelian surface over $k$, and let $f\colon Y \to A$ be a $2$-covering. Then $Z := f^{-1}(O)$ is a $k$-torsor for $A_2$, and the quotient $(A\times_k Z)/A_2$, where $A_2$ acts diagonally, is $k$-isomorphic to $Y$ as a $2$-covering. Thus, the antipodal involution $\iota\colon A \to A$ acts on $Y$ fixing $Z$ pointwise; elements of $Z$ give rise to the singular locus $S$ of the quotient $Y/\iota$. Let $X  := \Bl_{S}(Y/\iota)$ be the blow-up of $Y/\iota$ centered at $S$. The surface $X$ can also be constructed as the quotient of the blow-up $Y' := \Bl_{Z}(Y)$ by the involution $\iota'\colon Y' \to Y'$ induced by $\iota$.  We call $X$ the \defi{Kummer surface} associated to $Y$ (or $Z$) and denote it $\Kum Y$; it is a K3 surface.

        	Geometrically, the exceptional divisor of the blow-up map $X \to Y/\iota$ consists of $16$ pairwise disjoint smooth rational curves, each with self-intersection $-2$. This collection of curves gives a sublattice $\Z^{16}\subset \NS\Xbar$ whose saturation $\Lambda_K$ is called the \defi{Kummer lattice}. It is an even, negative-definite rank $16$ lattice of discriminant $2^6$ whose isomorphism type does not depend on the choice of $Y$; see~\cite{Nikulin}. There is an exact sequence of lattices
        	\begin{equation}
        		\label{eq:NScomparison}
        		0 \to \Lambda_K \to \NS\Xbar \to \NS\Ybar \to 0,
        	\end{equation}
        where the first map is the natural inclusion (see, e.g.,~\cite[Remark~2]{SZ-Kummer}).

        %%%%%%%%%%%%%%%%%%%%%%%%%%%%%%%%%%%%%%%%%%%%%%%%%%%%%%%%%%%%%%%%%%%%%%%%
        \subsubsection{Products of elliptic curves}\label{sec:KummerProducts}
        %%%%%%%%%%%%%%%%%%%%%%%%%%%%%%%%%%%%%%%%%%%%%%%%%%%%%%%%%%%%%%%%%%%%%%%%

        	Of particular interest to us is the case when $\Ybar$ is isomorphic to a product of two elliptic curves $E$, $E'$. Let $O$ and $O'$ be the respective origins of $E$ and $E'$. In addition to the Kummer lattice, the group $\NS\Xbar$ contains the classes $e := [E\times \{O'\}]$ and $e' := [\{O\}\times E']$. Define the lattice
        	\begin{equation}
        	\label{eq:Lprod}
        		\Lambda_{\prod} := \Lambda_K \oplus \langle e,e'\rangle \subseteq \NS\Xbar,
        	\end{equation}
        where $\oplus$ denotes an orthogonal direct sum and $\langle e,e'\rangle$ is isomorphic to a hyperbolic plane.  Note that $\Lambda_{\prod}$ does not depend on the particular choice of $E$ and $E'$.

        	Suppose next that $E$ and $E'$ are isogenous non-CM elliptic curves over $k$. Let $d$ be the smallest degree of a geometric isogeny $E \to E'$; such an isogeny is necessarily cyclic~\cite[Lemma 6.2]{MasserWustholz}. Define the lattice
        	\[
        		\Lambda_d := \NS(\Kum(\Ebar\times \Ebar')).
        	\]
        	The lattice $\Lambda_d$ depends on the integer $d$, but not on the specific isogeny $E\to E'$ of degree $d$. Indeed, the sequence~\eqref{eq:NScomparison} shows that $\Lambda_d$ is generated by some rational combinations of the $16$ classes of $(-2)$-curves, and pullbacks from $\NS(\Ebar\times \Ebar')$.  In turn, it is well known that the lattice $\NS(\Ebar\times \Ebar')$ is generated by the classes $e$, $e'$ and the class of the graph of a degree $d$ cyclic isogeny $E \to E'$. The intersection products of the graph with $e$ and $e'$ depend only on $d$, and not on the isogeny $E\to E'$.
           
		\begin{remark}
			Any isomorphisms with $\Lambda_K, \Lambda_{\prod},$ and $\Lambda_d$ are isomorphisms of abstract lattices; $\Lambda_K, \Lambda_{\prod},$ and $\Lambda_d$ do \emph{not} carry a Galois action.
		\end{remark}
        % %%%%%%%%%%%%%%%%%%%%%%%%%%%%%%%%%%%%%%%%%%%%%%%%%%%%%%%%%%%%%%%%%%%%%%%%
        % \subsubsection{Twisted Kummer surfaces}\label{sec:GeneralizedKummer}
        % %%%%%%%%%%%%%%%%%%%%%%%%%%%%%%%%%%%%%%%%%%%%%%%%%%%%%%%%%%%%%%%%%%%%%%%%
    
        %     Let $A$ be an abelian surface over $k$, and let $f\colon Y \to A$ be a $2$-covering. Then $Z := f^{-1}(O)$ is a $k$-torsor for $A_2$, and the quotient $(A\times_k Z)/A_2$, where $A_2$ acts diagonally, is $k$-isomorphic to $Y$ as a $2$-covering. Thus, the antipodal involution $\iota\colon A \to A$ acts on $Y$ fixing $Z$ pointwise, so we may consider the quotient $Y/\iota$.  We call the minimal desingularization of $Y/\iota$ the \defi{twisted Kummer surface} $\Kum Y$ associated to $Y$.\footnote{These surfaces were introduced by Harpaz and Skorobogatov~\cite{HS-GeneralizedKummer} under the name ``generalized Kummer surfaces''.  We use the term ``twisted Kummer surfaces'' as ``generalized Kummer varieties'' is already used extensively in the literature.}		

		%%%%%%%%%%%%%%%%%%%%%%%%%%%%%%%%%%%%%%%%%%%%%%%%%%%%%%%%%%%%%%%%%%%%%%%%
		\section*{Acknowledgements}
		%%%%%%%%%%%%%%%%%%%%%%%%%%%%%%%%%%%%%%%%%%%%%%%%%%%%%%%%%%%%%%%%%%%%%%%%
		Question~\ref{question} was raised at the AIM workshop ``Brauer groups and obstruction problems: moduli spaces and arithmetic'' in March, 2013 for the particular case of K3 surfaces~\cite[Problem 1.5]{AIMProblemList}.  Lemma~\ref{lem:AlgBrBound} was proved at the same workshop in a discussion with Max Lieblich, Andrew Obus, and the second author, among others.  We thank the participants for allowing us to include this result.
   
		We thank Brendan Hassett for many fruitful discussions, in particular, for the suggestion that we focus on these $1$-parameter families.  We also thank Dan Abramovich, K\k{e}stutis \v{C}esnavi\v{c}ius, Max Lieblich, Ronen Mukamel, Drew Sutherland, and David Zureick-Brown for helpful conversations.  The proof of Theorem~\ref{thm:ellprimaryEC} was informed by a MathOverflow post by Jeremy Rouse~\cite{MO}.  

		We thank the anonymous referee for his or her careful reading of the paper and valuable comments.
   
		The first author was partially supported by NSF CAREER grant DMS-1352291; the second author was partially supported by NSA Young Investigator's Award \#H98230-15-1-0054 and NSF CAREER grant DMS-1553459.  This collaboration was partially supported by the Pacific Institute for Mathematical Sciences, Rice University, and the University of Washington; we thank these institutions for their support.

%%%%%%%%%%%%%%%%%%%%%%%%%%%%%%%%%%%%%%%%%%%%%%%%%%%%%%%%%%%%%%%%%%%%%%%%%%%%%%%%
\section{Classification of certain rank $19$ K3 surfaces}%%%%%%%%%%%%%%%%%%%%%%%
\label{sec:GeometricallyKummer}%%%%%%%%%%%%%%%%%%%%%%%%%%%%%%%%%%%%%%%%%%%%%%%%%
%%%%%%%%%%%%%%%%%%%%%%%%%%%%%%%%%%%%%%%%%%%%%%%%%%%%%%%%%%%%%%%%%%%%%%%%%%%%%%%%

	The goal of this section is to classify K3 surfaces $X$ over a {number field} $k$ such that $X/k\in\mathscr{K}_d$; {see Corollary~\ref{cor:Classification} below.}  As a by-product, we classify surfaces $Y/k \in \mathscr{A}_d^1$; see Proposition~\ref{prop:GeomProduct}.

	\begin{prop}\label{prop:GeometricallyKummerToKummer}
		There is a positive integer $M$ such that for any number field $k$, and any K3 surface $X/k$ with $\NS\Xbar$ {containing a sublattice isomorphic to $\Lambda_K$,} there is an extension $k_0/k$ of degree at most $M$ such that $X_{k_0}$ is a Kummer surface.
	\end{prop}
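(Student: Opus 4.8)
The plan is to prove the statement geometrically first and then to descend the Kummer structure over an extension whose degree I can bound by an absolute constant. By Nikulin's characterization of Kummer surfaces, a characteristic-$0$ K3 surface whose N\'eron--Severi lattice contains $\Lambda_K$ carries $16$ pairwise disjoint smooth rational curves $\overline{C}_1,\dots,\overline{C}_{16}$ with $\tfrac12\sum_i[\overline{C}_i]\in\NS\Xbar$, and then $\Xbar\isom\Kum\Abar$ for an abelian surface $\Abar/\kbar$; since the hypothesis only provides a (possibly non-saturated) sublattice isomorphic to $\Lambda_K$, I would first replace it by its saturation and check that this even overlattice is again of Kummer type so that Nikulin's theorem applies. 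The decisive point is then that the image $G$ of $\Gamma_k$ in $\mathrm{O}(\NS\Xbar)$ is finite of \emph{absolutely} bounded order: choosing a $k$-rational ample class $H$ (which exists because $X$ is projective over $k$), every element of $\Gamma_k$ fixes $H$ and preserves the intersection form, so $G$, which fixes $H$, embeds via restriction into $\mathrm{O}(H^\perp\cap\NS\Xbar)$, the orthogonal group of a negative-definite lattice of rank at most $19$ by the Hodge index theorem. By Minkowski's bound on the orders of finite subgroups of $\GL_r(\Z)$, $|G|\le M_0$ for a constant $M_0$ depending only on the rank, hence absolute.

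Because $\Gamma_k$ permutes the finitely many $(-2)$-curves of $\Xbar$ through this group $G$ of order at most $M_0$, the pointwise stabilizer of the package $\{\overline{C}_1,\dots,\overline{C}_{16}\}$ is open of index at most $|G|\le M_0$. Let $k_1/k$ be the corresponding fixed field, so $[k_1:k]\le M_0$ and each $\overline{C}_i$ is $\Gamma_{k_1}$-stable. As a class of square $-2$ on a K3 surface has a unique effective representative, $\Gamma_{k_1}$ fixes each $\overline{C}_i$ as a closed subscheme; hence the $\overline{C}_i$ descend to pairwise disjoint $(-2)$-curves $C_1,\dots,C_{16}$ on $X_{k_1}$, and $\tfrac12\sum_i[C_i]$ is a $\Gamma_{k_1}$-invariant class. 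I emphasize that the uniform bound on $[k_1:k]$ comes entirely from Minkowski's theorem and requires no count of the distinct Kummer packages on $\Xbar$.

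It remains to realize $X_{k_1}$, after one further bounded extension, as $\Kum Y$ in the sense of \S\ref{sec:ClassicalKummer}. The class $L':=\tfrac12\sum_i[C_i]$ lies in $(\Pic\Xbar)^{\Gamma_{k_1}}$; by the Hochschild--Serre spectral sequence its obstruction to lying in $\Pic X_{k_1}$ is a class $\alpha\in\coker(\Pic X_{k_1}\to(\Pic\Xbar)^{\Gamma_{k_1}})\hookrightarrow\Br k_1$, and $\alpha$ is $2$-torsion because $2L'=\sum_i[C_i]\in\Pic X_{k_1}$. Over the number field $k_1$ a fixed $2$-torsion Brauer class is split by a quadratic extension $k_0/k_1$ (take one nonsplit at exactly the even set of places where the local invariant is $\tfrac12$), so over $k_0$ the line bundle $L'$, and hence the double cover $V\to X_{k_0}$ branched along $\sum_iC_i$, are defined. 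This cover is the blow-up $\Bl_Z Y$ of \S\ref{sec:ClassicalKummer}: the reduced preimage of each $C_i$ is a $(-1)$-curve, and contracting these $16$ curves produces a smooth projective surface $Y/k_0$ with $\omega_Y\isom\OO_Y$ and $\textup{h}^1(Y,\OO_Y)=2$. Thus $\Ybar$ is an abelian surface, $Y$ is a torsor under $\Alb(Y)$, the deck involution descends to the antipodal involution, and $X_{k_0}\isom\Kum Y$. Setting $M:=2M_0$ completes the argument.

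The hard part is the boundedness in the first paragraph: a priori the number of Kummer packages on $\Xbar$, and hence the orbit of $\{\overline{C}_1,\dots,\overline{C}_{16}\}$ under $\Gamma_k$, could grow with the discriminant of $\NS\Xbar$, which is unbounded even though the rank is at most $20$. The Minkowski bound removes this dependence by confining the entire Galois image to the orthogonal group of a definite lattice of bounded rank. Two subordinate points also deserve care: upgrading the abstract sublattice $\isom\Lambda_K$ to an honest configuration of disjoint $(-2)$-\emph{curves} (choosing effective representatives via Riemann--Roch and the Weyl group, i.e.\ Nikulin's argument), and verifying in the last step that the reconstructed $Y$ is a genuine torsor exhibiting $X_{k_0}$ as a Kummer surface, rather than only a geometric isomorphism $\Xbar\isom\Kum\Abar$.
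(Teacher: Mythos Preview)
Your proof is correct and follows essentially the same approach as the paper's: bound the Galois image in $\mathrm{O}(\NS\Xbar)$ by an absolute constant, descend the sixteen $(-2)$-curves to the corresponding fixed field, kill the $2$-torsion Brauer obstruction to halving $\sum_i[C_i]$ by a quadratic extension, and reconstruct $Y$ from the resulting double cover. The only cosmetic difference is that you bound $|G|$ via the Hodge index theorem and Minkowski's bound on finite subgroups of $\GL_r(\Z)$, whereas the paper invokes the injection of finite subgroups of $\GL_r(\Z)$ into $\GL_r(\F_3)$; these are interchangeable.
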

	\begin{remarks}\hfill
		
		\begin{enumerate}
			
			\item[(i)] The proof of Proposition~\ref{prop:GeometricallyKummerToKummer} actually proves much more -- it shows that $X_{k_0}$ is the Kummer surface of an abelian surface $A$ whose $2$-torsion is $k_0$-rational -- at the expense of a very large bound $M$.  It would be interesting to determine whether there is a smaller bound that yields the desired result and nothing stronger. \label{rmk:Stronger}
			\item[(ii)] The proposition can be generalized to any set of fields $\mathbb{K}$ for which there is a uniform bound on the degree of a field extension required to split an order $2$ element in $\Br k$, independent of the Brauer class and of the field $k\in \mathbb{K}$.  Such a bound exists for number fields by class field theory.
			\end{enumerate}
	\end{remarks}

    \begin{theorem}\label{thm:Classification}
    Let $X = \Kum Y$ be a Kummer surface over a field $k$ of characteristic $0$, with $Y \to A$ the $2$-covering associated to $X$. Assume that $X\in\mathscr{K}_d$ for some positive integer $d$.  Then there exist: a field extension $L/k$ of degree at most $12$, elliptic curves $E$ and $E'$ over $L$, and $\delta\in L^{\times}/L^{\times2}$,  such that
    \begin{enumerate}
        \item $A_L \isom E\times E'$, and
        \item there is a cyclic isogeny $\phi\colon E \to E'^{\delta}$ of degree $d$.
    \end{enumerate}  
    \end{theorem}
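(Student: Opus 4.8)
The plan is to first read off the geometric product structure of $\Abar$ from the lattice $\NS\Abar$, then descend the two elliptic factors to a controlled extension using the (continuous, hence finite-image) Galois action on $\NS\Abar$, and finally absorb the failure of the degree-$d$ isogeny to be rational into a single quadratic twist.

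\emph{Geometric structure.} Since $Y\to A$ is a $2$-covering we have $\Ybar\isom\Abar$, and translations act trivially on N\'eron--Severi groups, so the Galois action on $\NS\Ybar$ agrees with that on $\NS\Abar$. The sequence~\eqref{eq:NScomparison} is then $\Gamma_k$-equivariant and, using $X\in\mathscr{K}_d$, yields $\NS\Abar\isom\NS\Xbar/\Lambda_K\isom\Lambda_d/\Lambda_K$. By the definition of $\Lambda_d$ this quotient is the rank-$3$ lattice with Gram matrix $\left(\begin{smallmatrix}0&1&1\\1&0&d\\1&d&0\end{smallmatrix}\right)$ appearing in the definition of $\mathscr{A}_d^N$. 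An abelian surface with Picard number $3$ whose N\'eron--Severi lattice carries a primitive effective isotropic class cannot be simple (a simple abelian surface contains no elliptic curve) and cannot be isogenous to a product of CM elliptic curves (which forces $\rho=4$); hence $\Abar\isom\Ebar\times\Ebar'$ with $E,E'$ isogenous non-CM elliptic curves. The three isotropic generators are then the classes of the two factors and of the graph of a geometric isogeny $\Ebar\to\Ebar'$ of degree $d$, which is cyclic by~\cite[Lemma 6.2]{MasserWustholz}.

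\emph{Descent of the factors.} The group $\Gamma_k$ acts continuously on the discrete finitely generated group $\NS\Abar\isom\Z^3$, so its image $G$ in $\GL_3(\Z)$ is finite; moreover $G$ preserves the intersection form and the ample cone, so $G$ lies in the orthogonal group $O^+$ of the hyperbolic lattice above. A product decomposition of $\Abar$ corresponds to an ordered pair $(e,e')$ of primitive effective isotropic classes with $e\cdot e'=1$; such a pair cuts out a product principal polarization $e+e'$, and $e,e'$ are the two components of the associated theta divisor. For an isotropic effective class $e$, the unique elliptic subgroup of class $e$ passing through the $k$-rational origin is determined by $e$ (two such would meet at the origin, contradicting $e^2=0$); hence the fixed field $L$ of the stabilizer of $(e,e')$ in $G$ is a field over which both factors, and therefore the isomorphism $A_L\isom E\times E'$, are defined. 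The task is thus to bound the $G$-orbit of an ordered decomposition, equivalently $[L:k]$. Finite subgroups of $O^+$ of a signature-$(1,2)$ lattice fix a point in the hyperbolic plane and so are cyclic or dihedral of bounded order; analyzing how such a group can permute the finitely many \emph{minimal} isotropic classes (those of bounded intersection against a $G$-invariant ample class, which excludes the graphs of higher-degree isogenies, whose classes have unbounded intersection numbers) gives the orbit bound $[L:k]\le 12$.

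\emph{The twist.} Over $L$ the group $\Hom(\Ebar,\Ebar')$ is free of rank one, generated by a degree-$d$ isogeny $\phibar$, and $\Gamma_L$ acts on it through a quadratic character $\chi\colon\Gamma_L\to\{\pm1\}$. Taking $\delta\in L^\times/L^{\times2}$ to correspond to $\chi$, the composite of $\phibar$ with the canonical isomorphism $\Ebar'\isom\Ebar'^{\delta}$ is $\Gamma_L$-equivariant, so $\phibar$ descends to an $L$-rational cyclic isogeny $\phi\colon E\to E'^{\delta}$ of degree $d$; note that this twist is performed over $L$ and costs no further extension. I expect the main obstacle to be precisely the uniform degree bound of the previous paragraph: one must show, independently of $d$, that the finite group $G$ moves a minimal product decomposition within an orbit of size at most $12$, which requires both the classification of the finite subgroups of $O^+\!\left(\left(\begin{smallmatrix}0&1&1\\1&0&d\\1&d&0\end{smallmatrix}\right)\right)$ and the verification that only the minimal isotropic classes are relevant to the Galois orbit; the remaining steps are comparatively formal.
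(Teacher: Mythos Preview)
Your approach is valid and reaches the same bound as the paper, but by a genuinely different route. The paper (via Proposition~\ref{prop:GeomProduct}) first uses the Humbert surface $\Sym^2\mathcal{M}_{1,1}\subset\mathcal{A}_2$ to force $j(E),j(E')$ into a degree-$2$ extension $k_0$, then treats $A_{k_0}$ as a twist of $E\times E'$ and bounds the further extension by the order of a finite subgroup of $\Aut(\Ebar\times\Ebar')\subset\GL_2(\R)$ modulo $\pm I$, using the classification in Lemma~\ref{lem:FiniteGroups}. You instead read everything off the Galois action on the rank-$3$ lattice $\NS\Abar$: the fixed field of the stabilizer of a pair $(e,e')$ of isotropic classes with $e\cdot e'=1$ is a field over which the product decomposition descends (your uniqueness argument for the elliptic subgroup through the origin is correct), and the twist step matches Proposition~\ref{prop:twists}.

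Your key step, however, is both simpler than you present it and slightly obscured by the ``minimal isotropic classes'' discussion. You do not need to restrict to a finite $G$-stable set of isotropic vectors at all: once you know that the Galois image $G$ lies in $O^+(\NS\Abar)$ (it preserves the ample cone, hence the positive cone), the bound $\#G\le 12$ is automatic, and then \emph{any} orbit has size at most $12$. The argument is the crystallographic restriction: a finite subgroup of $O^+$ of a signature-$(1,2)$ integral lattice fixes a point of the hyperbolic plane, so is cyclic (rotations) or dihedral (rotations plus reflections); a rotation $g$ of order $n$ has eigenvalues $1,e^{\pm 2\pi i/n}$ on $\NS\Abar\otimes\C$, and integrality of $\Tr(g)=1+2\cos(2\pi/n)$ forces $n\in\{1,2,3,4,6\}$, whence $\#G\le 12$. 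This is the same eigenvalue constraint that underlies the paper's Lemma~\ref{lem:FiniteGroups}, just applied in $O(1,2)$ rather than $\GL_2(\R)$. So the ``main obstacle'' you flag is not an obstacle; what is missing from your write-up is only this one-line trace computation, and you can delete the material about minimal isotropic classes and graphs of higher-degree isogenies entirely.
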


	\begin{cor}\label{cor:Classification}
	    There exists a positive integer $M_0$ such that, for all number fields $k$, all positive integers $d$ and all surfaces $X\in \mathscr{K}_d$, there exist: a field extension $L_0/k$ of degree at most $M_0$, elliptic curves $E$ and $E'$ over $L_0$, and a $2$-covering $Y\to E\times E'$, %and $\delta\in L_0^{\times}/L_0^{\times2}$,  
		such that
	    \begin{enumerate}
			\item $X_{L_0} \isom \Kum Y$, and
	        \item there is a cyclic isogeny $\phi\colon E \to E'$ of degree $d$.
	    \end{enumerate}  
	\end{cor}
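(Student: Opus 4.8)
The plan is to chain together Proposition~\ref{prop:GeometricallyKummerToKummer} and Theorem~\ref{thm:Classification}, and then to clear the quadratic twist $\delta$ by one further extension of degree at most $2$, so that the composite degree bound is a universal constant.

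First I would check that the hypothesis of Proposition~\ref{prop:GeometricallyKummerToKummer} is met. Since $X\in\mathscr{K}_d$, we have $\NS\Xbar \isom \Lambda_d = \NS(\Kum(\Ebar\times\Ebar'))$ for some pair of isogenous non-CM elliptic curves. Feeding the Kummer surface $\Kum(\Ebar\times\Ebar')$ into the exact sequence~\eqref{eq:NScomparison} exhibits a copy of the Kummer lattice $\Lambda_K$ inside $\Lambda_d$, hence inside $\NS\Xbar$. Proposition~\ref{prop:GeometricallyKummerToKummer} then yields an extension $k_0/k$ with $[k_0:k]\leq M$ such that $X_{k_0}$ is a Kummer surface, say $X_{k_0} = \Kum Y$ for a $2$-covering $Y\to A$ of an abelian surface $A$ over $k_0$. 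Because $k_0/k$ is finite we have $\overline{X_{k_0}} = \Xbar$, so $\NS(\overline{X_{k_0}})\isom\Lambda_d$ and the other (base-change invariant) conditions defining $\mathscr{K}_d$ persist; thus $X_{k_0}\in\mathscr{K}_d$ over $k_0$.

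Next I would apply Theorem~\ref{thm:Classification} to $X_{k_0}$. This produces an extension $L/k_0$ with $[L:k_0]\leq 12$, elliptic curves $E,E'$ over $L$, and a class $\delta\in L^{\times}/L^{\times2}$, such that $A_L\isom E\times E'$ and there is a cyclic isogeny $E\to E'^{\delta}$ of degree $d$. The only gap between this and the Corollary is the twist: we need a genuine cyclic isogeny $E\to E'$. To remove it, set $L_0 := L(\sqrt{\delta})$, of degree at most $2$ over $L$. Over $L_0$ the twist trivializes, $(E'^{\delta})_{L_0}\isom E'_{L_0}$, so composing with this isomorphism gives a cyclic isogeny $E_{L_0}\to E'_{L_0}$ of degree $d$, while $A_{L_0}\isom E_{L_0}\times E'_{L_0}$ is unchanged. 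Since the Kummer construction commutes with base change, $X_{L_0} = \Kum(Y_{L_0})$ with $Y_{L_0}\to A_{L_0}\isom E_{L_0}\times E'_{L_0}$ the base-changed $2$-covering. Relabeling the curves over $L_0$ supplies exactly the required data, and $[L_0:k]\leq 2\cdot 12\cdot M = 24M$, so $M_0 := 24M$ works uniformly in $k$, $d$, and $X$.

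The argument is essentially bookkeeping once the two main inputs are in hand, so I do not expect a serious obstacle; the only points needing care are (a) confirming that membership in $\mathscr{K}_d$ survives the finite base change $k\to k_0$, which holds because the geometric N\'eron-Severi lattice and the canonical and cohomological invariants are insensitive to finite extension of the base field, and (b) verifying that clearing $\delta$ costs only a degree-$2$ extension, independent of $X$ and $d$ — this is immediate since $L(\sqrt{\delta})$ has degree at most $2$ for any $\delta$ — so that the final bound $M_0$ remains a universal constant.
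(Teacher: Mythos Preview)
Your proof is correct and follows exactly the route the paper intends: the paper's own proof is the one-line ``This is immediate from Theorem~\ref{thm:Classification} and Proposition~\ref{prop:GeometricallyKummerToKummer},'' and you have simply unpacked that by first invoking Proposition~\ref{prop:GeometricallyKummerToKummer}, then Theorem~\ref{thm:Classification}, and finally adjoining $\sqrt{\delta}$ to eliminate the twist (a step the paper leaves implicit). Your bookkeeping of the degree bound $M_0 = 24M$ and your verification that $\Lambda_K\subset\Lambda_d$ via~\eqref{eq:NScomparison} are both on target.
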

	
	\begin{proof}
		This is immediate from Theorem~\ref{thm:Classification} and Proposition~\ref{prop:GeometricallyKummerToKummer}.
	\end{proof}

    We begin by proving Proposition~\ref{prop:GeometricallyKummerToKummer} in~\S\ref{sec:GeometricallyKummerToKummer}.  In~\S\ref{ss:GeomIsog}, we study non-CM elliptic curves that are geometrically isogenous; the results therein are used in~\S\ref{ss:GeomProds} to classify abelian surfaces $A$ such that $A\in \mathscr{A}_d^1$ for some $d$.  This is used in~\S\ref{sec:Proof} to prove Theorem~\ref{thm:Classification}.

    %%%%%%%%%%%%%%%%%%%%%%%%%%%%%%%%%%%%%%%%%%%%%%%%%%%%%%%%%%%%%%%%%%%%%%%%%%%%
    \subsection{Proof of Proposition~\ref{prop:GeometricallyKummerToKummer}}%%%%
	\label{sec:GeometricallyKummerToKummer}%%%%%%%%%%%%%%%%%%%%%%%%%%%%%%%%%%
    %%%%%%%%%%%%%%%%%%%%%%%%%%%%%%%%%%%%%%%%%%%%%%%%%%%%%%%%%%%%%%%%%%%%%%%%%%%%
	
	We use Nikulin's work on Kummer surfaces~\cite{Nikulin}, and the main idea of~\cite[Lemma~17.2.6]{Huybrechts}.  The group $\NS\Xbar$ is finitely generated; each of its generators can be represented by a curve which is defined by finitely many equations with finitely many coefficients.  Therefore, the image of the representation
	\[
		\rho\colon \Gamma_k \to O(\NS \Xbar) \hookrightarrow \GL_r(\Z),\quad r := \rank\NS \Xbar \leq 20
	\]
is finite.  Every finite subgroup of $\GL_r(\Z)$ injects into $\GL_r(\F_3)$.  Furthermore, $r\leq 20$ so $|\im(\rho)|$ can be bounded by an absolute constant.  Thus, for $H := \ker(\rho)$, the degree $[\kbar^H:k]$ can also be bounded by an absolute constant.

The hypothesis that $\NS\Xbar$ contains a sublattice isomorphic to $\Lambda_K$ ensures $\NS\Xbar$ contains $16$ pairwise disjoint $(-2)$-classes, represented over $\kbar$ by irreducible rational curves~\cite[Theorem~3]{Nikulin}.  We claim that these classes can be represented by curves defined over $\kbar^H$.  Let $[D]$ be such a class, with $D$ an irreducible rational curve over $\kbar$.  Since $\Pic \Xbar = \NS\Xbar = (\NS\Xbar)^H$, for $h \in H$, we have $h([D]) = [D]$, so $D$ and ${}^h\!D$ are linearly equivalent irreducible curves; in particular, $D$ and ${}^h\!D$ are integral elements of the linear system $|D|$.  Since $D$ is effective and $D^2 = {-2}$, the Riemann-Roch theorem for surfaces implies that $h^0(X,\scrO_X(D)) = 1$, so we must have ${}^h\!D = D$, which means that the irreducible curve $D$ is already defined over $\kbar^H$.

	Let $D_1,\dots,D_{16}$ be disjoint $(-2)$-curves on $X$, each defined over $\kbar^H$.  The class of the sum $\sum_i D_i$ is divisible by $2$ in $\Lambda_K \subseteq \NS\Xbar = (\NS \Xbar)^H$~\cite[\S3]{Nikulin}.  The Hochschild--Serre spectral sequence
	\begin{equation}\label{eq:HS}
		E_2^{p,q} := {\rm H}^p\big(H,{\rm H}_{\text{\'et}}^q\big(\Xbar,\G_m\big)\big) \Longrightarrow {\rm H}_{\text{\'et}}^{p+q}\big(X_{\kbar^H},\G_m\big)
	\end{equation}
gives rise to an exact sequence of low-degree terms 
	\[
		0 \to \NS X_{\kbar^H} \to (\NS \Xbar)^{H} = \NS \Xbar \to \Br \kbar^H.
	\]
This sequence shows that the obstruction to divisibility by $2$ of the class $\sum_i D_i$ in $\NS X_{\kbar^H}$ lies in $(\Br \kbar^H)_2$.  Since $\kbar^H$ is a number field, global class field theory shows that any nontrivial element of $(\Br \kbar^H)_2$ can be represented by a quaternion algebra over $\kbar^H$~\cite[Theorem 3.6]{Neukirch-BonnLectures}, and can thus be trivialized by a field extension ${k_0}/\kbar^H$ of degree at most $2$.  Thus, there is a divisor $D'$ on $X_{k_0}$ such that $\sum_i D_i \sim 2D'$, and hence a double cover morphism 
	\[
		\SPEC(\scrO_{X_{k_0}}\oplus \scrO_{X_{k_0}}(D')) =: \widetilde Y \to X_{k_0}
	\]
branched along $\sum_i D_i$.  The preimage of $\sum_i D_i$ consists of $16$ pairwise disjoint rational curves with self-intersection ${-1}$.  Contracting these curves gives rise to a surface $Y/{k_0}$.  The classification of surfaces (see, for example,~\cite{Beauville-Surfaces}) implies that $\Ybar$ is an abelian surface, and so $Y$ is a principal homogeneous space under its Albanese variety $A$.  There is an involution $\widetilde Y \to \widetilde Y$ associated to the double cover $\widetilde Y \to X_{k_0}$, which geometrically is the $[-1]$ map and whose fixed locus consists of the preimages of $\sum_i D_i$.  This involution in turn gives rise to an involution $Y \to Y$ whose fixed locus $Z$ consists of $16$ geometric points and is a torsor of $A_2$.  Thus $Y$ is a $2$-covering and $X_{k_0}$ is a Kummer surface.  As $[\kbar^H:k]$ is absolutely bounded, and $[k_0:k ] \leq 2[\kbar^H:k]$, the degree of $k_0/k$ is absolutely bounded.
\qed
        
		As pointed out in Remark~\ref{rmk:Stronger}(i), this proof actually shows that $X_{k_0}$ is a Kummer surface of an abelian surface $A$ such that $A_2\subset A(k_0)$.  Indeed, our determination of $\kbar^H$ implies that each $D_i$ is defined over $k_0$ and so $Z\subset Y$ consists of $16$ $k_0$-rational points.

    %%%%%%%%%%%%%%%%%%%%%%%%%%%%%%%%%%%%%%%%%%%%%%%%%%%%%%%%%%%%%%%%%%%%%%%%%%%%
    \subsection{Geometrically isogenous non-CM elliptic curves}
	\label{ss:GeomIsog}
    %%%%%%%%%%%%%%%%%%%%%%%%%%%%%%%%%%%%%%%%%%%%%%%%%%%%%%%%%%%%%%%%%%%%%%%%%%%%

    \begin{prop}\label{prop:twists}
        Let $E, E'$ be non-CM elliptic curves over a field $k$ of characteristic $0$.  Assume that there exists a cyclic isogeny $\overline{\phi}\colon \Ebar \to \Ebar'$.  Then $C := \ker \overline{\phi}$ is defined over $k$ and there exists a $\delta\in k^{\times}/k^{\times2}$ such that $E'^{\delta} \isom E/C$.  In particular, there exists a unique cyclic isogeny $\phi_{k(\sqrt{\delta})}\colon E_{k(\sqrt{\delta})} \to E'_{k(\sqrt{\delta})}$ that agrees with $\overline{\phi}.$  
    \end{prop}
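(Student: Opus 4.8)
The plan rests on two structural consequences of the non-CM hypothesis that do all the real work: that $\Hom(\Ebar,\Ebar')$ is infinite cyclic with $\Gamma_k$ acting through $\{\pm1\}$, and that $\Aut(\Ebar') = \mu_2$, so that the only twists in sight are quadratic. Granting these, the statement follows by Galois descent. I expect the first of these---controlling $C$ via the $\Gamma_k$-action on $\Hom$---to be the step needing the most care, since it is precisely where non-CM is indispensable: in the CM case $\Hom$ would have higher rank and $\Gamma_k$ could genuinely permute the cyclic subgroups of a given order.

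To see that $C$ is defined over $k$: since $E$ and $E'$ are isogenous and non-CM, $\End(\Ebar')\isom\Z$ and $\Hom(\Ebar,\Ebar')$ is free of rank $1$ over $\Z$, say $\Hom(\Ebar,\Ebar') = \Z\cdot\psi$. As $E$ and $E'$ are defined over $k$, the group $\Gamma_k$ acts $\Z$-linearly on $\Hom(\Ebar,\Ebar')$ and therefore through $\Aut(\Z) = \{\pm1\}$; let $\chi\colon\Gamma_k\to\mu_2$ be the resulting character. Writing $\overline{\phi} = n\psi$, for each $\sigma\in\Gamma_k$ we get ${}^\sigma\overline{\phi} = \chi(\sigma)\overline{\phi} = \pm\overline{\phi}$, whence $\ker({}^\sigma\overline{\phi}) = \ker\overline{\phi}$ because $[-1]\in\Aut(\Ebar')$. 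Since also $\ker({}^\sigma\overline{\phi}) = \sigma(\ker\overline{\phi}) = \sigma(C)$, we conclude $\sigma(C) = C$ for all $\sigma$, so $C$ is $\Gamma_k$-stable and hence defined over $k$, and $E/C$ is an elliptic curve over $k$. (One can sidestep the rank-$1$ structure of $\Hom$ by observing instead that ${}^\sigma\overline{\phi}\circ\overline{\phi}^{\vee}\in\End(\Ebar')\isom\Z$ has degree $(\deg\overline{\phi})^2$, hence equals $\pm[\deg\overline{\phi}] = \pm\,\overline{\phi}\circ\overline{\phi}^{\vee}$, and cancelling the surjection $\overline{\phi}^{\vee}$.)

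Next I would extract $\delta$. The isogeny factors as $\overline{\phi} = \overline{\psi}\circ\overline{\pi}$, where $\overline{\pi}\colon\Ebar\to\Ebar/C$ is the quotient map and $\overline{\psi}\colon\Ebar/C\xrightarrow{\sim}\Ebar'$ is an isomorphism; here $\Ebar/C = \overline{E/C}$ because $C$ descends to $k$. Thus $E/C$ and $E'$ become isomorphic over $\kbar$. Since $E'$ is non-CM, $j(E')\notin\{0,1728\}$, so $\Aut(\Ebar') = \mu_2$, and the cocycle $\sigma\mapsto\overline{\psi}^{-1}\circ{}^\sigma\overline{\psi}\in\Aut(\overline{E/C}) = \mu_2$ defines a class in $\HH^1(k,\mu_2)\isom k^\times/k^{\times2}$; this is the desired $\delta$, and the theory of quadratic twists gives $E'^{\delta}\isom E/C$.

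Finally, over $L := k(\sqrt{\delta})$ the character corresponding to $\delta$ becomes trivial, so the cocycle above vanishes on $\Gamma_L$; hence $\overline{\psi}$ is $\Gamma_L$-invariant and descends to an isomorphism $\psi_L\colon (E/C)_L\xrightarrow{\sim}E'_L$ over $L$. Setting $\phi_{L} := \psi_L\circ\pi_L$, with $\pi_L\colon E_L\to(E/C)_L$ the quotient map, produces a cyclic isogeny over $L$ whose base change to $\kbar$ is $\overline{\psi}\circ\overline{\pi} = \overline{\phi}$, as required. Uniqueness is immediate, since two morphisms of $L$-schemes with the same base change to $\kbar$ must coincide.
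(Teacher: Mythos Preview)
Your argument is correct. The overall architecture---show $C$ is Galois-stable, then identify $E'$ as a quadratic twist of $E/C$, then descend to $k(\sqrt{\delta})$---matches the paper's, but the first step is organized differently. The paper isolates a separate lemma (Lemma~\ref{lem:KernelDeterminesIsoQuotient}): if $E$ is non-CM and $E/C \isom E/C'$ over $k$ for cyclic subgroups of the same order, then $C = C'$; it then applies this over $\kbar$ to $C$ and $\sigma(C)$ after observing that $\overline{E/C}\isom \Ebar'\isom \overline{E/\sigma(C)}$. You instead go straight to the $\Z$-module structure of $\Hom(\Ebar,\Ebar')$ and read off that $\Gamma_k$ acts through $\{\pm1\}$, which immediately forces $\sigma(C)=C$. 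Your parenthetical alternative, composing ${}^\sigma\overline{\phi}$ with $\overline{\phi}^{\vee}$ and using $\End(\Ebar')\isom\Z$, is essentially the proof of the paper's lemma specialized to the case at hand. Your route is a bit more direct; the paper's has the minor advantage of recording a standalone statement about quotients by cyclic subgroups, though it is not reused elsewhere.
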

    We begin with a lemma.
    \begin{lemma}\label{lem:KernelDeterminesIsoQuotient}
        Let $E$ be a non-CM elliptic curve over a field $k$ of characteristic $0$ and let $C, C' \subset E$ be $k$-rational cyclic subgroups of order $d$.  If $E/C$ and $E/C'$ are $k$-isomorphic, then $C = C'$.
    \end{lemma}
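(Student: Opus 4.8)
The plan is to prove Lemma~\ref{lem:KernelDeterminesIsoQuotient} by contradiction: suppose $C \neq C'$ are two distinct $k$-rational cyclic subgroups of order $d$ with a $k$-isomorphism $\psi\colon E/C \xrightarrow{\sim} E/C'$, and extract from this data an endomorphism of $E$ that cannot exist because $E$ is non-CM. Let $\pi\colon E \to E/C$ and $\pi'\colon E \to E/C'$ denote the quotient isogenies, each of degree $d$, and let $\pi^\vee, (\pi')^\vee$ be their duals. First I would form the composite
\[
    f := (\pi')^\vee \circ \psi \circ \pi \colon E \to E,
\]
which is a $k$-rational endomorphism of $E$. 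Since $E$ is non-CM, $\End_k E = \End_{\kbar} E = \Z$, so $f = [m]$ for some integer $m$, and I would compute $\deg f = d \cdot 1 \cdot d = d^2 = m^2$, forcing $m = \pm d$; after possibly composing $\psi$ with $[-1]$ we may take $f = [d]$.

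The key step is then to compare kernels. On the one hand, $\ker[d] = E_d$. On the other hand, I would trace through the composite: $\ker(\psi \circ \pi) = \ker \pi = C$ (since $\psi$ is an isomorphism), so $C \subseteq \ker f = E_d$, which is automatic. The more useful observation is to factor $[d]$ through $\pi$ and through $\pi'$ simultaneously. Concretely, since $f = (\pi')^\vee \circ \psi \circ \pi = [d]$ and $(\pi')^\vee \circ \pi' = [d]$ as well, I would deduce that $\psi \circ \pi$ and $\pi'$ are two isogenies $E \to E/C'$ of the same degree $d$ whose post-composition with $(\pi')^\vee$ agree; comparing them shows $\psi \circ \pi$ and $\pi'$ differ by an automorphism of $E/C'$, hence have the same kernel. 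But $\ker(\psi \circ \pi) = C$ and $\ker \pi' = C'$, giving $C = C'$, the desired contradiction.

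The main obstacle I anticipate is the middle step of turning ``$(\pi')^\vee \circ (\psi\circ\pi) = (\pi')^\vee \circ \pi'$ up to sign'' into the cleaner statement that $\psi \circ \pi$ and $\pi'$ have equal kernels. A priori $(\pi')^\vee$ has kernel $(E/C')_d / \pi'(E_d)$ of order $d$, so one cannot simply cancel $(\pi')^\vee$ on the left. The correct way to handle this is to observe that $\ker(\psi\circ\pi) = C \subseteq E_d = \ker[d]$ and that $\psi\circ\pi$ realizes the quotient by $C$; since $\End E = \Z$ and $\Aut E = \{\pm 1\}$ (as $E$ is non-CM, its $j$-invariant is generically $\neq 0, 1728$, and even in the special cases the argument can be run after the reduction $f=[d]$), any two degree-$d$ cyclic isogenies $E \to E/C'$ and $E \to E/C$ that fit into $[d] = (\pi')^\vee \circ \psi \circ \pi$ must have kernels related by an automorphism of $E$ fixing $E_d$ pointwise, i.e.\ the identity. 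I would therefore phrase the cancellation intrinsically in terms of kernels of isogenies rather than left-inverses, using the standard fact that an isogeny is determined up to post-composition with an isomorphism by its kernel, together with $\Aut_{\kbar} E = \{\pm 1\}$. This keeps the argument purely formal and avoids any delicate computation with the structure of $(\pi')^\vee$.
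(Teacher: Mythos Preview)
Your approach is essentially the same as the paper's: form a degree-$d^2$ endomorphism of $E$ from the two quotient maps and the given isomorphism, use $\End E = \Z$ to identify it as $\pm[d]$, and then conclude that the two degree-$d$ isogenies have the same kernel. The paper phrases the last step more cleanly: once $(\pi')^\vee \circ (\psi\circ\pi) = [d] = [\deg \pi']$, the \emph{uniqueness of the dual isogeny} gives $\psi\circ\pi = ((\pi')^\vee)^\vee = \pi'$ immediately, so the ``obstacle'' you worry about (cancelling $(\pi')^\vee$ on the left) is not an obstacle at all---it is precisely what dual-uniqueness provides, and you can drop the discussion of automorphisms and pointwise fixing of $E_d$.
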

    \begin{proof}
        Let $\phi$ denote the quotient isogeny $E \to E/C$ and let $\psi$ denote the composition of the quotient $E\to E/C'$ with a $k$-isomorphism $E/C'\stackrel{\sim}{\to} E/C$.  Note that $\phi$ and $\psi$ are both isogenies and $\ker \phi = C$ and $\ker \psi = C'$.  
    
        Now consider $\psi^{\vee}\circ \phi \colon E \to E$.  This is an isogeny of degree $d^2$; since $E$ is non-CM, it must be equal to $\pm[d]$.  Hence $\psi = \pm \phi$ by the uniqueness of the dual isogeny and so $C'= \ker \psi = \ker \phi = C$.
    \end{proof}

    \begin{proof}[Proof of Proposition~\ref{prop:twists}]
        There is a Galois extension $L/k$ over which $C$ is defined.  Since $\overline{E}' \isom \overline{\left(E/C\right)}$ and $E'$ is non-CM, there exists a unique $\delta\in L^{\times}/L^{\times2}$ such that $E'^\delta_L \isom E_L/C$. Let $\sigma\in \Gal(L/k)$.  Then we have
        \[
            \overline{(E_L/C)} \isom\overline{E'^\delta_L} \isom \overline{E'^{\sigma(\delta)}_L} \isom  \overline{(E_L/\sigma(C))}.
        \]
        Thus, Lemma~\ref{lem:KernelDeterminesIsoQuotient} {applied over $\kbar$ yields that} $C = \sigma(C)$ for all $\sigma \in \Gal(L/k)$.  Thus $C$ is $k$-rational, and we may take $L=k$ above.  This completes the proof.
    \end{proof}

    %%%%%%%%%%%%%%%%%%%%%%%%%%%%%%%%%%%%%%%%%%%%%%%%%%%%%%%%%%%%%%%%%%%%%%%%%%%%
    \subsection{Geometric products of elliptic curves}\label{ss:GeomProds}%%%%%%
    %%%%%%%%%%%%%%%%%%%%%%%%%%%%%%%%%%%%%%%%%%%%%%%%%%%%%%%%%%%%%%%%%%%%%%%%%%%%

        \begin{prop}\label{prop:GeomProduct}
           Let $A$ be an abelian surface over a field $k$ such that $A/k\in \mathscr{A}_d^1$ for some positive integer $d$.  Then there exist: an extension $L/k$ with $[L:k] \in \{1,2,3,4,6, 8, 12\}$, elliptic curves $E$ and $E'/L$, and $\delta\in L^{\times}/L^{\times2}$ such that 
           \begin{enumerate}
               \item $A_L \isom E \times E'$, and
               \item there exists a cyclic $L$-isogeny $\phi\colon E \to {E'}^{\delta}$ of degree $d$.
            \end{enumerate}
        \end{prop}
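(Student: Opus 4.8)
The plan is to descend the geometric product decomposition of $A$ over a bounded extension by analyzing the Galois action on the endomorphism algebra $\End^0(\Abar):=\End(\Abar)\otimes_{\Z}\Q$, and then to extract the cyclic isogeny and the twist from Proposition~\ref{prop:twists}.

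I would first record the geometric picture. Since $A/k\in\mathscr{A}_d^1$ its period is $1$, so $A$ has a $k$-rational point and is a genuine abelian surface. The hypothesis that $\NS\Abar$ is isomorphic to $\left(\begin{smallmatrix}0&1&1\\1&0&d\\1&d&0\end{smallmatrix}\right)$ forces $\rank\NS\Abar=3$, and this lattice represents $0$ over $\Q$ (its first basis vector is isotropic). As the N\'eron--Severi lattice of a simple quaternionic-multiplication abelian surface is anisotropic, $\Abar$ must be non-simple, hence isogenous to a square of an elliptic curve; and because $\rank\NS\Abar=3$ rather than $4$, this curve is non-CM, so $\End^0(\Abar)\isom\M_2(\Q)$. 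The discriminant of $\NS\Abar$ equals $2d$, an invariant of the abstract lattice, and a minimal geometric isogeny between non-CM curves is cyclic \cite[Lemma 6.2]{MasserWustholz}; together these identify the minimal geometric isogeny $\phibar\colon\Ebar\to\Ebar'$ between the two factors as cyclic of degree $d$, and identify $R:=\End(\Abar)$ with an Eichler order of level $d$ in $\M_2(\Q)$.

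Next I would run the descent. The group $\Gamma_k$ acts on $R$, hence on $\End^0(\Abar)\isom\M_2(\Q)$, by $\Q$-algebra automorphisms; by Skolem--Noether these are inner, giving $\rho\colon\Gamma_k\to\PGL_2(\Q)$. The image $G:=\im\rho$ is finite because every endomorphism of $\Abar$ is defined over a fixed finite extension of $k$, and $G$ normalizes $R$, i.e.\ $G\subseteq N(R):=\{g\in\PGL_2(\Q):gRg^{-1}=R\}$. Put $L:=\kbar^{\ker\rho}$; then $L/k$ is Galois with $\Gal(L/k)\isom G$, so $[L:k]=|G|$, and $\ker\rho=\Gamma_L$ fixes $\End^0(\Abar)$ pointwise, i.e.\ every endomorphism of $\Abar$ is defined over $L$. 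Consequently every elliptic subvariety of $\Abar$, being the image of an idempotent of $\End^0(\Abar)$, is $\Gamma_L$-stable and hence defined over $L$. The two factors of the geometric product $\Abar\isom\Ebar\times\Ebar'$ are such subvarieties, meeting only at the origin (intersection number $1$); they descend to elliptic curves $E,E'$ over $L$, and the addition map becomes an $L$-isomorphism $A_L\isom E\times E'$ with $E,E'$ non-CM, which is (1). For (2), the minimal geometric isogeny $\phibar$ is cyclic of degree $d$, so Proposition~\ref{prop:twists} applied over $L$ produces $\delta\in L^{\times}/L^{\times2}$ together with an isomorphism $E'^{\delta}\isom E/C$, where $C=\ker\phibar$; the quotient $E\to E/C\isom E'^{\delta}$ is then the desired cyclic $L$-isogeny of degree $d$.

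The main obstacle is the quantitative bound $[L:k]=|G|\in\{1,2,3,4,6,8,12\}$, i.e.\ the determination of the orders of finite subgroups of $N(R)$. The classification of finite subgroups of $\PGL_2(\Q)$ permits a priori the orders $\{1,2,3,4,6,8,12,24\}$ (cyclic $C_{1,2,3,4,6}$, dihedral of orders $2,4,6,8,12$, the tetrahedral $A_4$, and the octahedral $S_4$). The crux, which I expect to be more delicate than any of the geometry above, is to exclude the order-$24$ group $S_4$ using that $G$ preserves the Eichler order $R$ of level $d$ (and the Rosati form attached to a polarization); equivalently, should $S_4$ occur in $\PGL_2(\Q)$, one must instead descend a single product decomposition over a subextension of index at most $12$. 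This is precisely the step that pins the degree to the set $\{1,2,3,4,6,8,12\}$.
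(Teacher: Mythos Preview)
Your approach is genuinely different from the paper's, and it works --- but you have misidentified the obstacle and left the proof unfinished at exactly the point where it closes immediately.

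\medskip

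\textbf{The phantom gap.} You assert that the classification of finite subgroups of $\PGL_2(\Q)$ allows $A_4$ and $S_4$, and that excluding $S_4$ (order $24$) via the Eichler-order or Rosati constraints is ``more delicate than any of the geometry above.'' This is false: neither $A_4$ nor $S_4$ embeds in $\PGL_2(\Q)$ at all. Indeed $\PGL_2(\Q)\subset\PGL_2(\R)$, and every finite subgroup of $\PGL_2(\R)$ is cyclic or dihedral: its intersection with the identity component $\PSL_2(\R)$ has index $\leq 2$, and any finite subgroup of $\PSL_2(\R)$ fixes a point of the upper half-plane, so lies in a conjugate of $\PSO(2)\cong S^1$ and is cyclic. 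Since $A_4$ has no index-$2$ subgroup and is not cyclic, it cannot occur; a fortiori neither can $S_4$. Combining this with your own element-order bound (finite-order elements of $\PGL_2(\Q)$ have order in $\{1,2,3,4,6\}$ because $\varphi(n)\leq 2$), the cyclic part $G_0=G\cap\PSL_2(\R)$ has $|G_0|\in\{1,2,3,4,6\}$ and $|G|\in\{|G_0|,2|G_0|\}\subset\{1,2,3,4,6,8,12\}$. No appeal to the level-$d$ structure of the order, or to polarizations, is needed.

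\medskip

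\textbf{Comparison with the paper.} The paper does not pass through $\End^0(\Abar)$. It first uses the Humbert surface $\calH_1=\Sym^2\calM_{1,1}\subset\calA_2$ to see that the unordered pair $\{j(E),j(E')\}$ is $k$-rational, so $j(E),j(E')$ lie in a quadratic extension $k_0/k$. Over $k_0$ one chooses models $E,E'$ with a $k_0$-isogeny and realizes $A_{k_0}$ as a twist of $E\times E'$ by a cocycle valued in $\Aut(\Ebar\times\Ebar')$; the explicit presentation $\End(\Ebar\times\Ebar')\hookrightarrow\Mat_2(\R)$ shows this automorphism group sits in $\GL_2(\R)$ with integral trace and determinant, and a short lemma (the paper's Lemma~\ref{lem:FiniteGroups}) classifies its finite subgroups as cyclic of order $\leq 6$ or dihedral of order $\leq 12$. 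Taking $L=k_0^{\,\varphi^{-1}(\{\pm I\})}$ gives $[L:k_0]\in\{1,2,3,4,6\}$, hence $[L:k]\in\{1,2,3,4,6,8,12\}$. Your route is more intrinsic (no preliminary $j$-invariant descent, no choice of reference product), at the cost of producing a possibly larger $L$ over which \emph{all} of $\End(\Abar)$ is defined --- so in your argument $\delta$ will in fact always be trivial, which is harmless but worth noting. The group-theoretic core is the same statement, read in $\PGL_2$ rather than in $\GL_2$ modulo $\{\pm I\}$.
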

		
        \begin{lemma}\label{lem:product}
              Let $A$ be an abelian surface over an algebraically closed field such that $\NS A$ contains a hyperbolic plane. Then $A$ is isomorphic to a product of elliptic curves.  In addition,
			  \begin{itemize}
				  \item if $\rank \NS A = 2$, then the elliptic curves are not isogenous,
				  \item if $\rank \NS A = 3$, then the elliptic curves are non-CM, isogenous, and the degree of a cyclic isogeny between them is $\frac{1}{2}\disc\NS A$, and
				  \item if $\rank \NS A = 4$, then the elliptic curves are isogenous and CM.
			\end{itemize}
        \end{lemma}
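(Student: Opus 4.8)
The plan is to extract two elliptic curves inside $A$ directly from the two isotropic rays of the hyperbolic plane, to show that their addition map identifies $A$ with their product, and then to read off the three cases from the structure of $\NS$ of a product of elliptic curves.

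\textbf{Step 1: isotropic classes give elliptic subvarieties.} Write the hyperbolic plane as $U = \langle e, f\rangle \subseteq \NS A$ with $e^2 = f^2 = 0$ and $e\cdot f = 1$. I would attach to $e$ a line bundle $L$ with $c_1(L) = e$ and consider the polarization homomorphism $\phi_L \colon A \to \widehat{A}$. Since $A$ is a surface, $\chi(L) = \tfrac12 e^2 = 0$, so $\phi_L$ is not an isogeny; and since $e \neq 0$ we have $\phi_L \neq 0$. Hence $E_1 := (\ker \phi_L)^0$ is a $1$-dimensional abelian subvariety, i.e.\ an elliptic curve through the origin. Because $E_1 \subseteq \ker\phi_L$, the restriction $L|_{E_1}$ is translation-invariant and thus of degree $0$, so $e\cdot [E_1] = 0$. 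As $[E_1]$ is itself isotropic and orthogonal to the isotropic class $e$ in the lattice $\NS A$, which has signature $(1, \rank\NS A - 1)$ by the Hodge index theorem, the two classes are proportional; comparing primitivity and sign gives $[E_1] = \pm e$. Running the same construction on $f$ produces an elliptic subvariety $E_2 \subseteq A$ with $[E_2] = \pm f$.

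\textbf{Step 2: the addition map is an isomorphism.} Since $e$ and $f$ are not proportional, $E_1 \neq E_2$, so they have distinct Lie algebras; being subgroups through the origin, their tangent directions are translation-invariant, so transversality at $0$ forces transversality along all of $E_1 \cap E_2$. Therefore the nonnegative transversal count $\#(E_1\cap E_2)$ equals $[E_1]\cdot[E_2] = \pm\,e\cdot f = \pm 1$, and since it is at least $1$ we get $\#(E_1\cap E_2)=1$, i.e.\ $E_1 \cap E_2 = \{0\}$. The addition homomorphism $\mu\colon E_1 \times E_2 \to A$ has kernel isomorphic to $E_1\cap E_2$, so it is an isogeny of degree $1$, hence an isomorphism $A \isom E_1 \times E_2$.

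\textbf{Step 3: the three cases.} For the product I would invoke the standard decomposition
\[
\NS(E_1\times E_2) \isom \NS E_1 \oplus \NS E_2 \oplus \Hom(E_1,E_2) \isom \Z \oplus \Z \oplus \Hom(E_1,E_2),
\]
under which $\NS E_1\oplus \NS E_2$ is the hyperbolic plane $U$ spanned by the two fibre classes $F_1,F_2$ and $\Hom(E_1,E_2)$ is identified with $U^\perp$. In characteristic $0$, $\Hom(E_1,E_2)\otimes\Q$ is either $0$ or a rank-one $\End^0(E_1)$-module, so its $\Z$-rank is $0$ when $E_1,E_2$ are non-isogenous, $1$ when they are isogenous and non-CM, and $2$ when they are isogenous and CM (isogenous curves sharing the same endomorphism algebra). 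Adding $\rank U = 2$ gives $\rank\NS A \in \{2,3,4\}$ in exactly the three stated cases. In the rank-$3$ case, let $\phi_0$ generate $\Hom(E_1,E_2)\isom\Z$; it is a cyclic isogeny of minimal degree $d$ (cyclicity as in \cite[Lemma~6.2]{MasserWustholz}). Writing $\Gamma$ for its graph class, one has $\Gamma^2=0$, $\Gamma\cdot F_1 = 1$, $\Gamma\cdot F_2 = d$, so $\gamma := \Gamma - d F_1 - F_2$ lies in $U^\perp$ with $\gamma^2 = -2d$; since $\phi_0$ generates $\Hom(E_1,E_2)$, the class $\gamma$ generates $U^\perp$. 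As $U$ is unimodular it splits off orthogonally, whence $\disc\NS A = \disc U \cdot \gamma^2 = (-1)(-2d) = 2d$, i.e.\ $d = \tfrac12\disc\NS A$.

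\textbf{Main obstacle.} The only genuinely nontrivial point is Step 1: promoting the abstract isotropic lattice vectors to honest elliptic subvarieties and pinning down their classes. This rests on the analysis of the degenerate polarization $\phi_L$ (equivalently, on the classical fact that a primitive isotropic class on an abelian surface is represented by an elliptic curve); everything afterwards is intersection theory on $E_1\times E_2$ together with the well-known computation of $\NS$ of a product.
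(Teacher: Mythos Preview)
Your argument is correct in outline and reaches the same conclusion, but it takes a different route from the paper in Steps~1 and~2. The paper extracts the two elliptic curves by quoting Kani's criterion \cite[Prop.~2.3]{Kani}: after arranging $e_1+e_2$ to be ample, each isotropic generator $e_i$ is represented by $m_iE_i$ with $E_i$ an elliptic curve, and $e_1\cdot e_2=1$ forces $m_i=1$. It then proves $A\isom E_1\times E_2$ via the \emph{quotient} map $A\to A/E_1\times A/E_2$. You instead build $E_1$ intrinsically as $(\ker\phi_L)^0$ for $c_1(L)=e$, pin down its class via Hodge index, and use the \emph{addition} map $E_1\times E_2\to A$. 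Your route avoids the external citation and is pleasantly self-contained; the paper's route is shorter because Kani hands you $e_i=[E_i]$ directly with no primitivity discussion. Step~3 is essentially the same in both.

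The one point to tighten is the phrase ``comparing primitivity''. You have shown $[E_1]\in\Q e$, and $e$ is primitive because $e\cdot f=1$; but you have not said why $[E_1]$ itself is primitive in $\NS A$, and without that you only get $[E_1]=n_1 e$, $[E_2]=n_2 f$ and hence an isogeny $E_1\times E_2\to A$ of degree $|n_1n_2|$, not an isomorphism. The fix is short: since $E_1\hookrightarrow A$ is an abelian subvariety, $H_1(E_1,\Z)\hookrightarrow H_1(A,\Z)$ is a saturated rank-$2$ sublattice (the quotient $A/E_1$ being an elliptic curve), so a basis $\alpha,\beta$ of $H_1(E_1,\Z)$ extends to a basis of $H_1(A,\Z)$, and the fundamental class $[E_1]=\alpha\wedge\beta$ is then a basis vector of $H_2(A,\Z)=\wedge^2 H_1(A,\Z)$, hence primitive in $\NS A$. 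Alternatively, you can simply cite Kani at this one spot, which is what the paper does.
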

        
        \begin{proof}
			Let $e_1$ and $e_2$ generate a hyperbolic plane contained in $\NS A$, so $e_1^2 = e_2^2 = 0$ and $e_1\cdot e_2 = 1$. Let $D$ be a divisor on $A$ representing the class $e_1 + e_2$. By~\cite[Corollary~2.2(b)]{Kani}, either $D$ or ${-D}$ is ample, because $D^2 > 0$; assume without loss of generality that $D$ is ample. Using~\cite[Proposition~2.3]{Kani}, we conclude that for $i = 1$ and $2$, the class $e_i$ is represented by a multiple $m_iE_i$ of an elliptic curve $E_i$, and $m_i > 0$ since $E_i\cdot D > 0$.  Moreover, since $1 = e_1\cdot e_2 = m_1m_2\, (E_1\cdot E_2) \in m_1m_2\Z$, we have $m_i = 1$.  Translating the curves $E_1$ and $E_2$ if necessary, we may assume they are elliptic subgroups of $A$.  We may thus define a morphism of abelian varieties $\phi\colon A \to A/E_1 \times A/E_2$ using projections.  Let $(P + E_1, Q + E_2)\in A/E_1 \times A/E_2$.  Since $P + E_1\sim E_1$ and $Q + E_2 \sim E_2$, and $E_1\cdot E_2 = 1$, there exists a unique point $R\in A$ such that $\phi(R) = (P + E_1, Q + E_2)$.  Hence, $\phi$ is an isogeny of degree $1$, i.e., an isomorphism.  
			
			Note that we have group isomorphisms
			\[
				\NS A \isom \NS(E_1 \times E_2)\isom \Z\cdot[E_1]\oplus \Z\cdot[E_2]\oplus \Hom(E_1,E_2).
			\]
			This proves all the remaining statements except for the computation of the degree of a cyclic isogeny $E_1 \to E_2$ when $\rank \NS A = 3$.  Assume that $\rank \NS A = 3$. Then $E_1$ and $E_2$ are non-CM, and we have $\Hom(E_1,E_2) = \langle\phi\rangle$, where $\phi\colon E_1 \to E_2$ is a cyclic isogeny.  The image of $\phi$ in $\NS(E_1 \times E_2)$ is the graph of $\phi$ and the images of $[E_1], [E_2]$ are $E_1 \times O$ and $O\times E_2$ respectively.  Hence, a simple calculation shows that $\disc \NS(E_1\times E_2) = 2\deg(\phi)$.
        \end{proof}

        \begin{proof}[Proof of Proposition~\ref{prop:GeomProduct}]
            Part $(2)$ of the proposition follows from $(1)$, Lemma~\ref{lem:product}, and Proposition~\ref{prop:twists}.  Hence, we reduce to proving $(1)$.
            
            Let $\calM_{1,1} = \A^1$ denote the coarse moduli space of elliptic curves, parametrized by their $j$-invariant. The coarse moduli space $\calA_2$ of principally polarized abelian surfaces contains the Humbert surface $\calH_1 := \Sym^2 \calM_{1,1} \isom \A^2$, which is the locus of abelian surfaces with product structure. By Lemma~\ref{lem:product}, there exist non-CM geometrically isogenous elliptic curves $E$ and $E'$  over $\kbar$ such that $\Abar \isom E \times E'$. Thus the surface $\Abar$ gives rise to a point $x \in \calH_1(\kbar)$, with coordinates $(j(E) + j(E'), j(E)\cdot j(E')) \in \A^2(\kbar)$. For any $\sigma \in \Gal(\kbar/k)$, we have $\sigma(\Abar) = \Abar$, so $E\times E' \isom \sigma(E\times E')$, and thus $x \in \calH_1(k)$. Therefore $j(E) + j(E')$ and $j(E)\cdot j(E')$ belong to $k$, and so there is an extension $k_0/k$ of degree at most $2$ such that $j(E), j(E')\in k_0$.
            
            Since $j(E), j(E')\in k_0$, we may assume that $E, E'$ are defined over $k_0$.  By Proposition~\ref{prop:twists}, after possibly replacing $E'$ with a quadratic twist, we may assume that $E$ and $E'$ are isogenous over $k_0$; let $\psi\colon E \to E'$ be a cyclic isogeny defined over $k_0$ and let $d := \deg(\psi)$.  Since $\Abar \isom \Ebar\times \Ebar'$, $A_{k_0}$ is a twist (as an abelian surface) of $E\times E'$, so corresponds to an element of $\HH^1(k_0, \Aut(\Ebar\times \Ebar')).$  The abelian surface $A_{k_0}$ is isomorphic to a product of elliptic curves if 
            \[
                [A_{k_0}] \in \im\left(
                \HH^1(k_0, \Aut(\Ebar))\times \HH^1(k_0, \Aut(\Ebar'))
                \to \HH^1(k_0, \Aut(\Ebar\times \Ebar'))
                \right).
            \]
            Thus to prove $(1)$ it suffices to show that for any element $\varphi\in \HH^1(k_0, \Aut(\Ebar\times\Ebar'))$, there exists an extension $L/k_0$ with $[L:k_0]\in\{1,2,3,4,6\}$ such that
            \[
            \varphi_L \in \im\left(
            \HH^1(L, \Aut(\Ebar))\times \HH^1(L, \Aut(\Ebar'))
            \to \HH^1(L, \Aut(\Ebar\times \Ebar'))
            \right).
            \]

            Since $E$ and $E'$ are $k_0$-isogenous non-CM elliptic curves, we have
            \begin{equation}\label{eq:PresentationAut}
                \End(\Ebar\times\Ebar') = 
                \begin{pmatrix}
                    \End(\Ebar) & \Hom(\Ebar', \Ebar)\\
                    \Hom(\Ebar, \Ebar') & \End(\Ebar')
                \end{pmatrix} = 
                \begin{pmatrix}
                    \Z & \Z\psi^{\vee}\\
                    \Z\psi & \Z
                \end{pmatrix}
                \isom
                \begin{pmatrix}
                    \Z & \Z\sqrt{d}\\
                    \Z\sqrt{d} & \Z
                \end{pmatrix} \subset \Mat_2(\R)
                ;
            \end{equation}
            in particular, $\Gal(\kbar/k_0)$ acts trivially on $\End(\Ebar\times \Ebar')$ and $\Aut(\Ebar\times\Ebar')$ is isomorphic to a subgroup of $\GL_2(\R)$.  Since $\Gal(\kbar/k_0)$ acts trivially, we have
            \[
                \HH^1(k_0, \Aut(\Ebar\times \Ebar')) = 
                \Hom_{\textup{cts}}(\Gal(\kbar/k_0), \Aut(\Ebar\times\Ebar')).
            \]
            
            Let $\varphi\in \Hom_{\textup{cts}}(\Gal(\kbar/k_0), \Aut(\Ebar\times \Ebar'))$ and let $L$ be the fixed field of $\varphi^{-1}\left(\{\pm I\}\right)$.  Then $\im \textup{Res}_{L/k_0}(\varphi)\subset \{\pm I\}$, so
            \[
           \textup{Res}_{L/k_0}(\varphi) \in \im\left(
            \HH^1(L, \Aut(\Ebar))\times \HH^1(L, \Aut(\Ebar'))
            \to \HH^1(L, \Aut(\Ebar\times \Ebar'))
            \right).
            \]
            Since $[L:k] = [L:k_0]\cdot [k_0:k]\leq 2[L:k_0]$, it remains to show that $[L:k_0]\in\{1,2,3,4,6\}$.  Note that $\varphi$ yields an isomorphism $\Gamma_{k_0}/\Gamma_{L} \to \im\varphi/(\{\pm I\}\cap(\im\varphi)),$ so
            \[
                [L:k_0] = \#\Gal(L/k_0) = \#\frac{\im\varphi}{\{\pm I\}\cap(\im\varphi)}.
            \]
           The image of $\varphi$ must be a profinite subgroup of the discrete group $\Aut(\Ebar\times\Ebar')$, hence must be finite.  {Furthermore, by~\eqref{eq:PresentationAut}, every element of $\Aut(\Ebar\times \Ebar')$ has integer trace and determinant.  }Thus, the following lemma completes the proof.
        \end{proof}

            \begin{lemma}\label{lem:FiniteGroups}
                {Let $G$ be a finite subgroup of $\GL_2(\R)$ such that $\Tr(x), \det(x)\in \Q$ for all $x\in G$.  Then $G$ is isomorphic to a cyclic group of order $1,2,3,4,$ or $6$ or a dihedral group of order $4, 6, 8,$ or $12$.  In addition, if $G$ is isomorphic to a dihedral group of order $4, 8$ or $12$, then $-I\in G$.}
            \end{lemma}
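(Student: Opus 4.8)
The plan is to reduce the problem to the classical classification of finite subgroups of $O(2)$, and then to see that the trace hypothesis collapses the possible rotation angles to a short list via a cyclotomic degree computation. The determinant hypothesis, as it turns out, will be essentially free.

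First I would invoke the standard averaging argument: given a finite $G \subset \GL_2(\R)$, averaging the standard inner product over $G$ produces a $G$-invariant positive-definite form, and choosing an orthonormal basis for that form conjugates $G$ into the orthogonal group $O(2)$. Since both $\Tr$ and $\det$ are invariant under conjugation in $\GL_2(\R)$, the hypotheses persist after this reduction; moreover, every element of $O(2)$ has determinant $\pm 1$, so the determinant hypothesis becomes automatic and only the trace condition carries content. I then use the classical fact that a finite subgroup of $O(2)$ is either cyclic (generated by a rotation, or of order $2$ generated by a reflection) or dihedral of order $2n$, consisting of the cyclic rotation group $C_n = \langle \rho_{2\pi/n}\rangle$ together with $n$ reflections.

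Next I would extract the arithmetic constraint. A rotation $\rho_\theta$ has trace $2\cos\theta$ while a reflection has trace $0$, so the hypothesis is vacuous on reflections and, on the rotation subgroup, forces in particular the generator to satisfy $2\cos(2\pi/n) = \zeta_n + \zeta_n^{-1} \in \Q$, where $\zeta_n$ is a primitive $n$-th root of unity. The hard part—really the only substantive point—is the observation that $\zeta_n + \zeta_n^{-1}$ generates the maximal totally real subfield of $\Q(\zeta_n)$, of degree $[\Q(\zeta_n):\Q]/2$ over $\Q$ for $n \geq 3$; rationality therefore forces $[\Q(\zeta_n):\Q] = 2$, i.e.\ Euler's totient $\varphi(n) = 2$, whence $n \in \{3,4,6\}$, together with the trivial cases $n \in \{1,2\}$. (This is Niven's theorem, which I would present as the cyclotomic degree count just described.) Once $2\cos(2\pi/n) \in \Q$, the real cyclotomic field collapses to $\Q$ and hence \emph{every} rotation in the group has rational trace, so the condition on all elements is genuinely equivalent to the condition on the generator.

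Finally I would assemble the list. A cyclic $G$ is generated by a rotation of order $n \in \{1,2,3,4,6\}$ or by a single reflection ($\cong C_2$), giving cyclic groups of order $1,2,3,4,6$. A dihedral $G$ has rotation part of order $n \in \{1,2,3,4,6\}$, so $|G| = 2n \in \{2,4,6,8,12\}$; the order-$2$ case is again cyclic, leaving dihedral groups of order $4,6,8,12$. For the last assertion I note that $-I$ is the rotation $\rho_\pi$—it has determinant $1$, so it is never a reflection—and $\rho_\pi$ lies in $C_n = \langle\rho_{2\pi/n}\rangle$ exactly when $n$ is even. The dihedral orders $4, 8, 12$ correspond to $n = 2, 4, 6$, all even, so $-I \in G$; the order-$6$ case corresponds to $n = 3$, which is odd, consistent with $S_3 \cong D_3$ having no central involution.
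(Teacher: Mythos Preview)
Your proof is correct and follows essentially the same arc as the paper's: reduce to the orthogonal group, classify the structure as cyclic or dihedral, and then use a cyclotomic constraint to pin down the allowed orders. The execution differs in two places worth noting. First, you conjugate all of $G$ into $O(2)$ by averaging an inner product, which is more elementary than the paper's citation of Iwasawa's theorem to place $G^+ = G \cap \SL_2(\R)$ inside $\mathrm{SO}_2(\R)$; your route also makes the determinant hypothesis visibly redundant. Second, for the order constraint you invoke Niven's theorem via $[\Q(\zeta_n + \zeta_n^{-1}):\Q] = \varphi(n)/2$, whereas the paper argues directly that the characteristic polynomial of each $x\in G$ is a degree-$2$ product of cyclotomic factors with rational coefficients, yielding the explicit list $(T\pm 1)^2$, $T^2-1$, $T^2+1$, $T^2\pm T+1$. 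The paper's version has the minor advantage of simultaneously showing that every $x \in G \setminus G^+$ has order $2$ (its characteristic polynomial must be $T^2 - 1$), without appealing to the full $O(2)$ classification; your version trades that for a cleaner geometric picture of the final $-I$ claim.
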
  
            \begin{proof}
                Since every compact group of $\SL_2(\R)$ is conjugate to a subgroup of $\textup{SO}_2(\R)$~\cite[Theorem~6]{Iwasawa}, $G^+ := G\cap \SL_2(\R)$ is conjugate to a finite subgroup of $\textup{SO}_2(\R)\isom S^1$, so is cyclic.  Therefore, $G$ is either cyclic or an extension of $\Z/2\Z$ by a cyclic group.
                
                Let $x\in G$.  Since the trace and determinant of $x$ are rational numbers and the minimal polynomial of $x$ divides $T^{|G|} - 1$, the characteristic polynomial of $x$ must be one of the following
                \[
                    (T - 1)^2, \; (T + 1)^2, \; T^2 - 1, \; T^2 + 1, \;
                    T^2 + T + 1, \; T^2 - T + 1.
                \]
                Therefore, any $x\in G$ has order equal to $1,2,3,4$ or $6$, and if $x\in G\setminus G^+$, then $x$ has order $2$.  Hence $G$ is a cyclic group of order $1,2,3,4$ or $6$ or it is a dihedral group of order $4,6,8$, or $12$.  
                
                Assume that $G$ is a dihedral group of order $4, 8$ or $12$.  Then $G^+$ is a cyclic group of even order, hence contains an element $g$ of order $2$.  Since $g\in G^+$, the minimal polynomial of $g$ must be a proper divisor of $T^2 - 1$, and so $g = -I.$
            \end{proof}

    %%%%%%%%%%%%%%%%%%%%%%%%%%%%%%%%%%%%%%%%%%%%%%%%%%%%%%%%%%%%%%%%%%%%%%%%%%%%
    \subsection{Proof of Theorem~\ref{thm:Classification}}\label{sec:Proof}%%%%%
    %%%%%%%%%%%%%%%%%%%%%%%%%%%%%%%%%%%%%%%%%%%%%%%%%%%%%%%%%%%%%%%%%%%%%%%%%%%%

			The exact sequence~\eqref{eq:NScomparison} and the isomorphism $\NS\Xbar\isom \Lambda_d$ imply that $A\in \mathscr{A}_d^1$. Then
			 Proposition~\ref{prop:GeomProduct} completes the proof.
		\qed

%%%%%%%%%%%%%%%%%%%%%%%%%%%%%%%%%%%%%%%%%%%%%%%%%%%%%%%%%%%%%%%%%%%%%%%%%%%%%%%%
\section{Bounds on the Brauer group in terms of Galois-invariant homomorphisms}%
\label{sec:BrauerGroup}%%%%%%%%%%%%%%%%%%%%%%%%%%%%%%%%%%%%%%%%%%%%%%%%%%%%%%%%%
%%%%%%%%%%%%%%%%%%%%%%%%%%%%%%%%%%%%%%%%%%%%%%%%%%%%%%%%%%%%%%%%%%%%%%%%%%%%%%%%

	Let $Y/k$ be a surface such that $Y\in \mathscr{A}_d^N$ for some integers $N$ and $d$, so in particular $Y$ is a principal homogeneous space under $A := \Alb Y$.    Since, as lattices, $\NS \Ybar \isom \NS \Abar$,  Proposition~\ref{prop:GeomProduct} gives a field extension $L/k$ of degree at most $12$, and a pair of elliptic curves $E$ and $E'$ over $L$ such that $A_L\isom E\times E'$.  
	
	As a first step towards bounding the $n$-torsion of the group $\Br Y/\Br_1 Y$, we show that the group $(\Br Y/\Br_1 Y)_n$ can be embedded into a group determined by Galois-invariant homomorphisms associated to $E$ and $E'$, after possibly extending the ground field to a field whose degree is bounded by a constant depending only on the period of $Y$. More precisely, we prove the following theorem.

	\begin{thm}\label{thm:BrAbHom}
        Let $n$ be a positive integer and let $Y/k$ be a surface in $\bigcup_{d,N}\mathscr{A}_{d}^N$. Let  $L$, $E$, and $E'$ be as above. There is a field extension $k'/k$ with $[k':k]\leq (\gcd(\per(Y),n^{\infty}))^4$ such that 
		 \[
            \frac{(\Br Y)_n}{(\Br_1 Y)_n} \hookrightarrow
            \frac{\Hom_{L'}(E_n, E_n')}{\left(\Hom(\Ebar, \Ebar')/n\right)^{\Gamma_{L'}}},
		\]
		where $L'$ denotes the compositum of $k'$ and $L$.  If $n$ is relatively prime to $\per(Y)$ and $L= k$, then this injection is an isomorphism.  
	\end{thm}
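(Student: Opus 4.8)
Set $T := \HH^2_{\et}(\Abar,\mu_n)$ and $N := \im\bigl(\NS\Abar/n \to T\bigr)$, so that the Kummer sequence on $\Abar$ yields a short exact sequence of $\Gamma_k$-modules $0 \to N \to T \to (\Br\Abar)_n \to 0$ with $N \isom \NS\Abar/n$. Since $\Ybar \isom \Abar$ and translations act trivially on $\HH^1_{\et}(\Abar,\Z/n)$, hence on $T = \wedge^2\HH^1_{\et}(\Abar,\Z/n)\otimes\mu_n$ and on $(\Br\Abar)_n$, this identification is $\Gamma_k$-equivariant and gives $(\Br\Ybar)_n \isom T/N$. I would first compute $T$ geometrically over $L$, where $A_L \isom E\times E'$. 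By Künneth, $T$ is the orthogonal sum of two $\wedge^2$-summands, one for $E$ and one for $E'$, together with the cross term $\HH^1_{\et}(\Ebar,\Z/n)\otimes\HH^1_{\et}(\Ebar',\Z/n)\otimes\mu_n$. The Weil pairing identifies each $\wedge^2$-summand with $\Z/n$ (with trivial action) and the cross term with $\Hom(E_n,E_n')$; meanwhile $N$ is spanned by the classes $e, e'$ — which account precisely for the two $\wedge^2$-summands — and by the image of $\Hom(\Ebar,\Ebar')/n \hookrightarrow \Hom(E_n,E_n')$. Taking $\Gamma_L$-invariants and cancelling the $e,e'$ summands gives
\[
\frac{T^{\Gamma_L}}{N^{\Gamma_L}} \isom \frac{\Hom_L(E_n,E_n')}{\bigl(\Hom(\Ebar,\Ebar')/n\bigr)^{\Gamma_L}},
\]
which (over $L'$ in place of $L$) is the asserted target.

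Next I would establish the clean identification $\tfrac{(\Br Y')_n}{(\Br_1 Y')_n} \isom T^{\Gamma}/N^{\Gamma}$ for any torsor $Y'$ under $A$ whose period is coprime to $n$. The Hochschild--Serre edge map $\HH^2_{\et}(Y',\mu_n) \to T^{\Gamma}$ has image $\ker d_2$, where the transgression $d_2\colon T^{\Gamma} \to \HH^2(\Gamma, \HH^1_{\et}(\Ybar',\mu_n))$ is cup product with the class of $Y'$ in $\HH^1(\Gamma, A_n)$ (following Skorobogatov--Zarhin). Since that class has order prime to $n$ while the target of $d_2$ is $n$-torsion, $d_2$ vanishes, the edge map is surjective, and a diagram chase between the Kummer sequences of $Y'$ and $\Ybar'$ identifies the transcendental quotient $\tfrac{(\Br Y')_n}{(\Br_1 Y')_n} = \im\bigl((\Br Y')_n \to (\Br\Ybar')_n\bigr)$ with $(T^{\Gamma}+N)/N = T^{\Gamma}/N^{\Gamma}$.

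To assemble the theorem, let $m' := \gcd(\per(Y), n^\infty)$ and choose $k'/k$ trivializing the image of the torsor class $[Y]$ in $\HH^1(k, A_{m'})$; since trivializing a class in $\HH^1(k, A_{m'})$ amounts to finding a point on a torsor under the finite group $A_{m'} \isom (\Z/m')^4$, one may take $[k':k] \le \#A_{m'} = (m')^4$. Over $L' := k'L$ the period of $Y_{L'}$ is coprime to $n$ and $A_{L'} \isom E \times E'$, so the previous two steps combine to give $\tfrac{(\Br Y_{L'})_n}{(\Br_1 Y_{L'})_n} \isom T^{\Gamma_{L'}}/N^{\Gamma_{L'}} \isom \tfrac{\Hom_{L'}(E_n,E_n')}{(\Hom(\Ebar,\Ebar')/n)^{\Gamma_{L'}}}$. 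Finally, restriction to $Y_{L'}$ induces an injection $\tfrac{(\Br Y)_n}{(\Br_1 Y)_n} \hookrightarrow \tfrac{(\Br Y_{L'})_n}{(\Br_1 Y_{L'})_n}$, because both groups embed compatibly into the common group $(\Br\Ybar)_n$ and $\Gamma_k$-invariants lie inside $\Gamma_{L'}$-invariants. Composing yields the desired injection. When $n$ is coprime to $\per(Y)$ we have $m' = 1$, so $k' = k$; if moreover $L = k$ then $L' = k$, the transgression already vanishes on $n$-torsion over $k$, and the injection is the isomorphism of the second step.

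The main obstacle is the torsor analysis of the second and third steps: identifying the Hochschild--Serre transgression with cup product by the torsor class and checking that annihilating only the $n$-primary part $m'$ of the period suffices to kill it on $n$-torsion, all while correctly matching up the algebraic subgroups $\Br_1$ (equivalently, the image of $\Pic/n$) on $Y$, $Y_{L'}$, and $\Abar$ through the diagram chase. Obtaining the bound $(m')^4$ rather than $(\per Y)^4$ rests on isolating this $n$-primary obstruction cleanly.
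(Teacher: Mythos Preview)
Your first and third paragraphs line up closely with the paper: the K\"unneth identification of $T^{\Gamma_{L'}}/N^{\Gamma_{L'}}$ with the $\Hom$-quotient is precisely what the paper's Proposition~\ref{prop:BrProduct} extracts from \cite[Prop.~3.3]{SZ-Kummer}, and your choice of $k'$ trivializing the $m'$-primary piece of the torsor class matches the paper's construction of the intermediate $m'$-covering $Y'\to A$ and its splitting field. Where you diverge is in the second paragraph, and this is where there is a genuine gap.

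The paper does \emph{not} attempt a direct spectral-sequence identification $\tfrac{(\Br Y')_n}{(\Br_1 Y')_n}\isom T^\Gamma/N^\Gamma$ for torsors of period prime to $n$. Instead it proves a covering statement (Proposition~\ref{prop:Br2covering}): if $f\colon Y'\to Y$ is an $N$-covering of principal homogeneous spaces of $A$ with $\gcd(N,n)=1$, then $f^*$ is an isomorphism on $(\Br)_n/(\Br_1)_n$. The proof is elementary---$f$ is finite of degree $N^{2g}$, so the kernels and cokernels of $f^*$ on geometric Brauer groups and on $\HH^2(k,\Pic)$ are $N$-primary, and a diagram chase with the sequence $(\Br Y)_n\to(\Br\Ybar)_n^\Gamma\xrightarrow{\delta}\HH^2(k,\Pic\Ybar)$ from \cite{CTS-Transcendental} finishes. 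The paper then factors the $\per(Y)$-covering $Y\to A$ as $Y\xrightarrow{f''}Y'\xrightarrow{f'}A$ with $f'$ an $m'$-covering and $f''$ of degree coprime to $n$; Proposition~\ref{prop:Br2covering} gives $\tfrac{(\Br Y)_n}{(\Br_1 Y)_n}\isom\tfrac{(\Br Y')_n}{(\Br_1 Y')_n}$ \emph{already over $k$}, and only then does one trivialize $Y'$ (not $Y$) over $k'$ and apply Skorobogatov--Zarhin to $A_{k'}$.

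Your transgression argument, as written, does not go through. You assert that $d_2$ is cup product with ``the class of $Y'$ in $\HH^1(\Gamma,A_n)$'' and that this class has order prime to $n$; but any element of $\HH^1(\Gamma,A_n)$ has order dividing $n$, and the torsor class of $Y'$ lives in $\HH^1(\Gamma,A)$ with no natural map to $\HH^1(\Gamma,A_n)$. (What \emph{is} true is that the image of $[Y']$ in $\HH^1(\Gamma,A)/n\hookrightarrow \HH^2(\Gamma,A_n)$ vanishes when the period is coprime to $n$, but you have not linked this to $d_2^{0,2}$.) Moreover you do not address $d_3^{0,2}\colon E_3^{0,2}\to \HH^3(\Gamma,\mu_n)$, so even granting $d_2=0$ the edge map need not be surjective. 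The cleanest repair is exactly the paper's: realize $Y_{L'}$ (or better, the intermediate $Y'$) as a cover of $A$ of degree coprime to $n$ and invoke the covering isomorphism rather than analyzing differentials.
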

	
    Work of Skorobogatov and Zarhin allows us to extend Theorem~\ref{thm:BrAbHom} to  Kummer surfaces with geometric N\'eron-Severi lattice isomorphic to $\Lambda_d$.  Indeed, if $X = \Kum Y$ is a  Kummer surface over a field $k$ of characteristic $0$ and $\pi \colon \tilde{Y}\to X$ is the associated degree $2$ quotient map then the  proof of~\cite[Thm.~2.4]{SZ-Kummer} shows that for any positive integer $n$, the map $\pi^*$ induces an injection 
            \begin{equation}\label{eq:KummerToPHS}
                \frac{(\Br X)_n}{(\Br_1 X)_n} \hookrightarrow 
                \frac{(\Br Y)_n}{(\Br_1 Y)_n},
            \end{equation}
            which is an isomorphism if $n$ is odd.  (Although~\cite[Thm.~2.4]{SZ-Kummer} assumes that $Y$ is an abelian surface, the proof holds under the weaker assumption that $\Ybar$ is an abelian surface.)  Since $Y$ has period at most $2$ and $\NS \Ybar\isom\begin{pmatrix} 0 & 1  & 1\\1 & 0 & d\\1 & d & 0\end{pmatrix}$ by~\eqref{eq:NScomparison}, we conclude the following.
	\begin{cor}\label{cor:BrHom}
         Let $X = \Kum Y$ be a Kummer surface over a field $k$ of characteristic $0$ such that $X\in \mathscr{K}_d$ for some $d$, and let $L$, $E$, and $E'$ be as above. 
		 There is a field extension $k'/k$ with $[k':k]\leq 2^4$ such that 
		 \[
            \frac{(\Br X)_n}{(\Br_1 X)_n} \hookrightarrow
            \frac{\Hom_{L'}(E_n, E_n')}{\left(\Hom(\Ebar, \Ebar')/n\right)^{\Gamma_{L'}}},
		\]
		where $L'$ denotes the compositum of $k'$ and $L$.  If $n$ is odd or $Y$ is the trivial $2$-covering, then we may take $k' = k$.  If $n$ is odd and $L=k' = k$, then this injection is an isomorphism.  
		\qed
	\end{cor}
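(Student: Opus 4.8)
The plan is to reduce the statement about the torsor $Y$ to a computation on the abelian surface $A = \Alb Y$, and then to carry out that computation after base change to $L$, where $A_L \cong E\times E'$ by Proposition~\ref{prop:GeomProduct}. The starting point is the Hochschild--Serre edge map $\frac{(\Br Y)_n}{(\Br_1 Y)_n}\hookrightarrow(\Br\Ybar)_n^{\Gamma_k}$, which is always injective. Since $\Ybar\cong\Abar$ and translations on an abelian variety act trivially on \'etale cohomology with finite coefficients (each translation is connected to the identity), the $\Gamma_k$-module $\Br\Ybar$ is canonically identified with $\Br\Abar$ irrespective of the torsor class; hence $(\Br\Ybar)_n^{\Gamma_k} = (\Br\Abar)_n^{\Gamma_k}\subseteq(\Br\Abar)_n^{\Gamma_{L'}}$, the last inclusion holding because $\Gamma_{L'}\subseteq\Gamma_k$. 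So it suffices to produce the target group over $L'$ and to check that the image of $\Br Y/\Br_1 Y$ lands inside it.

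The extension $k'$ enters when transferring the clean description, which is cleanest to prove for $A$ (following~\cite{SZ-Kummer}), back to $Y$. Writing $m = \per(Y)$ and $m' = \gcd(m,n^\infty)$, I would kill the $m'$-primary part $[Y]_{m'}$ of the class $[Y]\in\HH^1(k,A)$: since $[Y]_{m'}$ is $m'$-torsion, the sequence $0\to A_{m'}\to A\xrightarrow{m'}A\to 0$ lifts it to a class in $\HH^1(k,A_{m'})$, i.e.\ to a torsor under the finite group scheme $A_{m'}$. As $A_{m'}$ has order $(m')^{2\dim A}=(m')^4$, this torsor is a finite $k$-scheme of degree $(m')^4$ and hence acquires a point over an extension $k'/k$ with $[k':k]\le (m')^4 = (\gcd(\per(Y),n^\infty))^4$. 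Over $k'$ the period of $Y$ becomes coprime to $n$, so $Y_{k'}$ is $n$-divisibly trivial and its $n$-torsion Brauer quotient agrees with that of $A_{k'}$.

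Next I would compute $(\Br\Abar)_n$ for $A\cong E\times E'$ over $L':=k'L$. The Kummer sequence gives $0\to\NS\Abar\otimes\Z/n\to\HH^2(\Abar,\mu_n)\to(\Br\Abar)_n\to 0$, and the K\"unneth formula identifies $\HH^2(\Abar,\mu_n)=\wedge^2\HH^1(\Abar,\mu_n)$ with $\mu_n\oplus\big(\HH^1(\Ebar,\mu_n)\otimes\HH^1(\Ebar',\mu_n)\big)\oplus\mu_n$. The two outer summands coincide with the classes of $\{O\}\times E'$ and $E\times\{O'\}$, which lie in $\NS\Abar$, while the graphs of isogenies span the reduction $\Hom(\Ebar,\Ebar')/n$ inside the middle summand. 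Cancelling, one obtains a $\Gamma_{L'}$-equivariant isomorphism $(\Br\Abar)_n\cong\big(\HH^1(\Ebar,\mu_n)\otimes\HH^1(\Ebar',\mu_n)\big)/(\Hom(\Ebar,\Ebar')/n)$, and the Weil pairings on $E$ and $E'$ — perfect, alternating, and Galois-equivariant — identify the middle summand with $\Hom(E_n,E'_n)$. Taking $\Gamma_{L'}$-invariants and using left-exactness yields the subgroup $\frac{\Hom_{L'}(E_n,E'_n)}{(\Hom(\Ebar,\Ebar')/n)^{\Gamma_{L'}}}\hookrightarrow(\Br\Abar)_n^{\Gamma_{L'}}$, which is precisely the kernel $\ker\delta$ of the connecting map $\delta\colon(\Br\Abar)_n^{\Gamma_{L'}}\to\HH^1(\Gamma_{L'},\Hom(\Ebar,\Ebar')/n)$. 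To see that $\Br Y/\Br_1 Y$ lands in this subgroup, I would note that every class in $(\Br Y)_n$ lifts to $\HH^2(Y,\mu_n)$ by the Kummer sequence on $Y$, so its geometric image admits a $\Gamma_{L'}$-invariant lift in $\HH^2(\Abar,\mu_n)$ and therefore lies in $\ker\delta$. For the last sentence, when $\gcd(n,\per(Y))=1$ and $L=k$ we have $m'=1$, so $k'=k$ and $L'=k$; the rational point $O\in A(k)$ splits the Hochschild--Serre sequence, killing the $\HH^3(k,\kbar^\times)$ obstruction and giving surjectivity onto $\ker\delta$, hence the isomorphism.

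The main obstacle, I expect, is the second step: bounding the extension by the period. One must verify that killing only the $\gcd(\per(Y),n^\infty)$-primary part of $[Y]$ — rather than the whole class — already makes the $n$-torsion Brauer quotient of $Y$ match that of $A$, and that the $A_{m'}$-torsor can always be pointed in degree at most $(m')^4$. A secondary delicate point is making the Weil-pairing identification in the third step Galois-equivariant on the nose, since this is exactly where cyclotomic and quadratic twists intervene; such twists are harmless for the present structural injection but are the source of the parity restrictions that force attention to the odd part elsewhere in the paper.
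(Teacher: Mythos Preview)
Your argument addresses the wrong object. The corollary is a statement about $\Br X/\Br_1 X$ for the Kummer surface $X=\Kum Y$, but your entire sketch begins from the Hochschild--Serre edge map for $Y$ and never returns to $X$. The step you are missing is precisely the content of the discussion preceding the corollary in the paper: the pullback along $\pi\colon\tilde Y\to X$ induces an injection
\[
\frac{(\Br X)_n}{(\Br_1 X)_n}\hookrightarrow\frac{(\Br Y)_n}{(\Br_1 Y)_n},
\]
which is an isomorphism when $n$ is odd (Skorobogatov--Zarhin, \cite[Thm.~2.4]{SZ-Kummer}, recorded here as~\eqref{eq:KummerToPHS}). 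Without this step you have at best re-sketched Theorem~\ref{thm:BrAbHom} for $Y$, not the corollary. Relatedly, you never specialize $\per(Y)\le 2$ (which holds because $Y$ arises as a $2$-covering in the Kummer construction) to turn your bound $(\gcd(\per(Y),n^\infty))^4$ into the stated $2^4$, nor do you explain why the ``$n$ odd'' hypothesis in the final isomorphism clause is needed beyond making $k'=k$; it is needed again to make~\eqref{eq:KummerToPHS} an isomorphism.

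On the part you do write, the paper's route is much shorter and structurally cleaner: it simply composes~\eqref{eq:KummerToPHS} with Theorem~\ref{thm:BrAbHom}, whose proof in turn factors $Y\to A$ through an intermediate $N$-covering $Y'$, trivializes $Y'$ over $k'$, and invokes Proposition~\ref{prop:Br2covering} to identify the prime-to-$N$ Brauer quotients along each step. By contrast, you mix two mechanisms: a base-change argument producing $k'$ in your second paragraph, and a direct ``invariant lift via $\HH^2(Y,\mu_n)$ plus translation-invariance'' argument in your third. The latter, if correct, would render the former unnecessary for the injection; but you then still need the $k'$ route to handle the isomorphism case, and you never establish the key input that an $N$-covering of period prime to $n$ induces an isomorphism on $n$-torsion Brauer quotients (this is Proposition~\ref{prop:Br2covering}, not a formality). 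In short: insert the $X\hookrightarrow Y$ step from~\eqref{eq:KummerToPHS}, record $\per(Y)\mid 2$, and then simply cite Theorem~\ref{thm:BrAbHom} rather than re-deriving it.
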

	
	In \S\ref{ss:Br-p-covers} we show that if $f\colon Y \to A$ is an $N$-covering of an abelian variety, and if $n$ is coprime to $N$, then $(\Br Y)_n/(\Br_1 Y)_n$ is isomorphic to $(\Br A)_n/(\Br_1 A)_n$. In \S\ref{ss:BrProds}, we use~\cite[Prop.~3.3]{SZ-Kummer} to show that for an abelian surface $A$ as above, the group $(\Br A)_n/(\Br_1 A)_n$ injects into a quotient of Galois invariant homomorphisms between the $n$-torsion of two elliptic curves.  We then combine these ingredients in~\S\ref{sec:ProofBrAbHom} to give a proof of Theorem~\ref{thm:BrAbHom}.

    %%%%%%%%%%%%%%%%%%%%%%%%%%%%%%%%%%%%%%%%%%%%%%%%%%%%%%%%%%%%%%%%%%%%%%%%%%%%
    \subsection{Brauer groups of $N$-coverings of abelian varieties}
	\label{ss:Br-p-covers}%
    %%%%%%%%%%%%%%%%%%%%%%%%%%%%%%%%%%%%%%%%%%%%%%%%%%%%%%%%%%%%%%%%%%%%%%%%%%%%

        \begin{prop}\label{prop:Br2covering}
            Let $N$ be a positive integer, let $Y$ and $Y'$ be principal homogeneous spaces of an abelian variety $A$, both defined over $k$, and let $f\colon Y' \to Y$ be an $N$-covering.  Then for any integer $n$ that is coprime to $N$, we have 
            \[
                f^*\colon\frac{(\Br Y)_n}{(\Br_1 Y)_n}\stackrel{\sim}{\To} \frac{(\Br Y')_n}{(\Br_1 Y')_n}.
            \]
        \end{prop}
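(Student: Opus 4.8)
The plan is to play the pullback $f^*$ off against the transfer (trace) map $f_*\colon \Br Y' \to \Br Y$ attached to the finite \'etale morphism $f$, exploiting that geometrically $\overline{f}$ is the multiplication-by-$N$ isogeny. The guiding principle is that, although $f_*\circ f^* = {\times}\deg f$ always holds while $f^*\circ f_*$ does not, the latter identity \emph{does} hold after restricting to $n$-torsion, precisely because there $\overline{f}^*$ becomes invertible.

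First I would isolate the geometric input. Over $\kbar$ the torsors $\overline{Y}$ and $\overline{Y}'$ are trivial, so we may identify both with $\overline{A}$; under these identifications $\overline{f}$ becomes a translate of the isogeny $[N]\colon \overline{A}\to\overline{A}$. In particular $f$ is finite \'etale of degree $\deg f = N^{2\dim A}$ (we are in characteristic $0$), and $\Br\overline{Y}\isom \Br\overline{A}\isom \Br\overline{Y}'$. Since $\Br\overline{A}\isom(\Q/\Z)^m$ for some $m$, the groups $(\Br\overline{Y})_n$ and $(\Br\overline{Y}')_n$ are finite of equal order.

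Next I would set up the two maps and establish the key identities on $n$-torsion. The finite \'etale morphism $f$ furnishes a trace map $f_*$ with $f_*\circ f^* = {\times}\deg f$, together with a geometric counterpart $\overline{f}_*$ compatible with base change to $\kbar$. Since $\gcd(\deg f, n) = \gcd(N^{2\dim A}, n) = 1$, multiplication by $\deg f$ is an automorphism of every $n$-torsion group in sight; hence $\overline{f}^*$ is injective on $(\Br\overline{Y})_n$, and, being a map between finite groups of equal order, it is bijective. From $\overline{f}_*\overline{f}^* = {\times}\deg f$ and the invertibility of $\overline{f}^*|_n$ we then also obtain $\overline{f}^*\overline{f}_* = {\times}\deg f$ on $(\Br\overline{Y}')_n$. (Alternatively, one checks directly that $[N]^*$ acts on $\Br\overline{A}$ as multiplication by $N^2$, via its action on $\HH^2$.)

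Finally I would descend to $k$. Because $f$ is defined over $k$, functoriality of base change together with compatibility of the trace with base change show that $f^*$ and $f_*$ both carry $\Br_1$ into $\Br_1$, so they descend to the quotients; moreover $(\Br Y)_n/(\Br_1 Y)_n = \im\big((\Br Y)_n\to(\Br\overline{Y})_n\big)$ embeds into $(\Br\overline{Y})_n$ compatibly with $\overline{f}^*$ and $\overline{f}_*$, and likewise for $Y'$. Transporting $\overline{f}_*\overline{f}^* = \overline{f}^*\overline{f}_* = {\times}\deg f$ through these injections yields $f_*f^* = f^*f_* = {\times}\deg f$ on $(\Br Y)_n/(\Br_1 Y)_n$ and on $(\Br Y')_n/(\Br_1 Y')_n$. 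As $\deg f$ is invertible on $n$-torsion, $f^*$ is an isomorphism, with inverse $(\deg f)^{-1} f_*$. The main obstacle is exactly the identity $f^*f_* = {\times}\deg f$ on $n$-torsion: it is false in general, and its proof is where geometric triviality of the torsors (forcing $\overline{f}^*$ to be bijective on $n$-torsion between groups of equal order) and the coprimality $\gcd(n,N)=1$ both enter; one must also verify that the transfer $f_*$ is genuinely compatible with the base change $\kbar/k$, so that it respects the $\Br_1$-filtration.
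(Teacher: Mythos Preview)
Your argument is correct and takes a genuinely different route from the paper.  Both proofs obtain injectivity the same way, essentially from $f_*f^* = {\times}\deg f$ (the paper quotes~\cite[Prop.~1.1 and Lemma~1.3]{ISZ-DiagonalQuartics} for this).  For surjectivity, however, the paper does not use the transfer at all: it instead invokes the exact sequence
\[
(\Br Y)_n \to (\Br\Ybar)_n^{\Gamma_k} \xrightarrow{\delta} \HH^2(k,\Pic\Ybar)_n
\]
of~\cite[Prop.~1.3]{CTS-Transcendental}, lifts a given $\alpha\in(\Br Y')_n$ to $\overline\beta\in(\Br\Ybar)_n^{\Gamma_k}$ via the geometric isomorphism $(\Br\Ybar)_n\isom(\Br\Ybar')_n$, and then shows $\delta(\overline\beta)=0$ by proving that $\ker\big(f^*\colon\HH^2(k,\Pic\Ybar)\to\HH^2(k,\Pic\Ybar')\big)$ is $N$-primary.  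Your approach bypasses this entirely: once you know $\overline f^*$ is bijective on $n$-torsion (injectivity plus equal finite cardinalities), the identity $\overline f^*\overline f_* = {\times}\deg f$ on $(\Br\Ybar')_n$ follows formally, and then the embeddings $(\Br Y)_n/(\Br_1 Y)_n\hookrightarrow(\Br\Ybar)_n$ let you read off $f^*f_* = {\times}\deg f$ on the quotient.  This is cleaner and more self-contained; it avoids the external input from~\cite{CTS-Transcendental} and the auxiliary computation with $\Pic$, at the cost of requiring the existence and base-change compatibility of the transfer for finite \'etale morphisms on Brauer groups (which is standard).  The paper's method, on the other hand, never needs the transfer on the $Y'$ side and would adapt more readily to situations where $f_*$ is less well-behaved.
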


        \begin{proof}
            Since $f$ is a $N$-covering, it is dominant and finite of degree $N^{2g}$.  Thus by~\cite[Prop 1.1 and Lemma 1.3]{ISZ-DiagonalQuartics}, $f^*\colon \Br \Ybar \to \Br \Ybar'$ is surjective with kernel killed by $N^{2g}$, and the kernel of $f^*\colon \Br Y \to \Br Y'$ is also killed by $N^{2g}$.  This completes the proof of injectivity.  We continue with the proof of surjectivity.
        
            The above facts already imply that $f^*$ gives an isomorphism of Galois modules $(\Br \Ybar)_n\isom (\Br \Ybar')_n$ for any integer $n$ coprime to $N$.  Combining the above with~\cite[Prop. 1.3]{CTS-Transcendental}, we obtain the following commutative diagram with exact rows:
    		\[
    			\xymatrix{
    				(\Br Y)_n \ar[r]\ar@{^{(}->}[d]^{f^*} & 
                    (\Br \Ybar)_n^{\Gamma} \ar@{=}[d]^{f^*} \ar[r]^(.4){\delta} &
                    \HH^2(k, \Pic \Ybar)_n\ar[d]^{f^*} \\
    				(\Br Y')_n \ar[r] & 
                    (\Br \Ybar')_n^{\Gamma}  \ar[r]^(.4){\delta} &
                    \HH^2(k, \Pic \Ybar')_n \\
    				}
    		\]
        
            Let $\alpha \in (\Br Y')_n$ and let $\overline\beta\in (\Br \Ybar)_n^{\Gamma}$ be such that $f^*\overline\beta = \overline\alpha := \alpha\otimes_k{\kbar}$.  Since the diagram has exact rows and commutes, this implies that $f^*\delta(\overline\beta) = 0$.  We claim that the kernel of $f^*\colon \HH^2(k, \Pic \Ybar) \to \HH^2(k, \Pic \Ybar')$ is an $N$-primary group.  If so, then $\delta(\overline\beta) = 0$ and so $\overline\beta = \beta\otimes_{k}\kbar$ for some $\beta\in (\Br Y)_n$.  Hence $f^*\beta = \alpha + \alpha_0$ for some $\alpha_0\in (\Br_1 Y')_n$, which proves surjectivity.  
            
            It remains to prove the claim.  We factor $f^*\colon \Pic \Ybar \to \Pic \Ybar'$ as $\Pic \Ybar\onto \im f^* \into \Pic \Ybar'$ and consider the two short exact sequences
            \[
                0 \to \ker f^* \to \Pic \Ybar \to \im f^* \to 0 
                \quad \textup{and}\quad
                0\to \im f^* \to \Pic \Ybar' \to \coker f^*\to 0.
            \]
            Since $\coker f^*$ and $\ker f^*$ are $N$-primary, both $\HH^1(k,\coker f^*)$ and $\HH^2(k,\ker f^*)$ are also $N$-primary.  Thus, by the long exact sequences in cohomology associated to the above two short exact sequences, the kernel of the composition
            \[
                \HH^2(k, \Pic \Ybar) \to \HH^2(k, \im f^*) \to \HH^2(k, \Pic \Ybar')
            \]
            is also $N$-primary, which completes the proof of the claim.
        \end{proof}

    %%%%%%%%%%%%%%%%%%%%%%%%%%%%%%%%%%%%%%%%%%%%%%%%%%%%%%%%%%%%%%%%%%%%%%%%%%%%
    \subsection{Brauer groups of abelian surfaces that are geometrically %%%%%%%
    products}\label{ss:BrProds}%%%%%%%%%%%%%%%%%%%%%%%%%%%%%%%%%%%%%%%%%%%%%%%%%
    %%%%%%%%%%%%%%%%%%%%%%%%%%%%%%%%%%%%%%%%%%%%%%%%%%%%%%%%%%%%%%%%%%%%%%%%%%%%
        
        Let $A$ be an abelian surface over $k$ such that $A\in\mathscr{A}_d^1$ for some $d$.  Then by Proposition~\ref{prop:GeomProduct}, there exists a field extension $L/k$ with $[L:k]\leq 12$ and elliptic curves $E$ and $E'/L$ such that $A_L = E\times E'$.  Then~\cite[Prop. 3.3]{SZ-Kummer} almost immediately yields the following proposition.
        
        \begin{prop}\label{prop:BrProduct}
            Let $A$ be an abelian surface over $k$ such that  $A\in\mathscr{A}_d^1$ for some $d$ and let $E$ and $E'$ be the associated elliptic curves over the extension $L$ as above.  Then for all $n$ we have an injection
            \[
                \frac{(\Br A)_n}{(\Br_1 A)_n} \into 
                \frac{\Hom_{\Gamma_L}(E_n, E'_n)}{\left(\Hom(\Ebar, \Ebar')/n\right)^{\Gamma_L}}.
            \]
            If $L = k$, then this injection is an isomorphism.
        \end{prop}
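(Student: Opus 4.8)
The plan is to reduce directly to the cited result \cite[Prop. 3.3]{SZ-Kummer}, which computes $(\Br A)_n/(\Br_1 A)_n$ for an abelian surface $A$ that is literally a product of elliptic curves over the base field. First I would recall that proposition in the form we need: for $A = E \times E'$ over a field $F$, there is a natural injection $(\Br A)_n/(\Br_1 A)_n \hookrightarrow \Hom_{\Gamma_F}(E_n, E'_n)/(\Hom(\Ebar,\Ebar')/n)^{\Gamma_F}$, which is an isomorphism. The map arises from the Künneth-type decomposition of $\HH^2(\Abar,\mu_n)$, using that the transcendental part of $\HH^2$ of a product of the two elliptic curves is governed by $\Hom(E_n, E'_n)$, with the algebraic (Néron–Severi) classes accounting for the denominator $(\Hom(\Ebar,\Ebar')/n)^{\Gamma_F}$.

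The substance of the argument is then a descent along the extension $L/k$ furnished by Proposition~\ref{prop:GeomProduct}, where $A_L \isom E \times E'$. When $L = k$ we are already in the situation of \cite[Prop. 3.3]{SZ-Kummer} and obtain the asserted isomorphism with no further work, which is exactly the last sentence of the proposition. For general $L$, I would apply the SZ result over $L$ to get
\[
    \frac{(\Br A_L)_n}{(\Br_1 A_L)_n} \xrightarrow{\ \sim\ }
    \frac{\Hom_{\Gamma_L}(E_n, E'_n)}{\left(\Hom(\Ebar,\Ebar')/n\right)^{\Gamma_L}},
\]
and then compose with the restriction map $\Res_{L/k}\colon (\Br A)_n/(\Br_1 A)_n \to (\Br A_L)_n/(\Br_1 A_L)_n$ induced by base change $A \to A_L$. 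Composing these two maps yields the desired map into the right-hand side, so the whole task is to verify that this composite restriction is \emph{injective}.

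The main obstacle, and the step requiring care, is precisely the injectivity of $\Res_{L/k}$ on $(\Br A)_n/(\Br_1 A)_n$. The standard tool is a corestriction–restriction argument: since $\overline{A}$ is the same after base change, one has $\Res_{L/k}$ on the transcendental quotient $(\Br A)_n/(\Br_1 A)_n \hookrightarrow (\Br\Abar)_n^{\Gamma_k}$, and composing $\Cor_{L/k}\circ\Res_{L/k}$ acts as multiplication by $[L:k]$. This does not immediately give injectivity on $n$-torsion unless $\gcd([L:k], n) = 1$. I expect one resolves this by tracking kernels more carefully: the kernel of $\Res_{L/k}$ on $\Br A \to \Br A_L$ is controlled by $\HH^1(\Gal(L/k), \Pic \Abar)$ and related terms via Hochschild–Serre, and one shows this kernel meets the transcendental quotient trivially, or is absorbed into $\Br_1 A$. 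A cleaner route, which I would pursue, is to use that $(\Br A)_n/(\Br_1 A)_n$ injects into $(\Br\Abar)_n^{\Gamma_k} \subseteq (\Br\Abar)_n^{\Gamma_L}$, and that the right-hand identification over $L$ factors the map $(\Br\Abar)_n^{\Gamma_L} \to \Hom_{\Gamma_L}(E_n, E'_n)/(\cdots)$; since the inclusion $(\Br\Abar)_n^{\Gamma_k} \hookrightarrow (\Br\Abar)_n^{\Gamma_L}$ is visibly injective (it is a restriction of invariants of the \emph{same} Galois module), the composite is injective, giving the result with no coprimality hypothesis and establishing the isomorphism exactly when $L = k$.
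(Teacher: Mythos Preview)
Your approach is essentially the same as the paper's: apply \cite[Prop.~3.3]{SZ-Kummer} over $L$ to obtain the isomorphism for $A_L$, then precompose with the restriction map $\Res_{L/k}$. The paper simply asserts that $\Res_{L/k}$ induces an injection $(\Br A)_n/(\Br_1 A)_n \hookrightarrow (\Br A_L)_n/(\Br_1 A_L)_n$ without further comment, and your ``cleaner route'' is the right justification. One small remark: the injectivity of this restriction is immediate from the definition of $\Br_1$ and does not require the corestriction detour or Hochschild--Serre analysis you initially sketch --- if $\alpha \in (\Br A)_n$ has $\Res_{L/k}(\alpha) \in \Br_1 A_L$, then $\alpha|_{\Abar} = (\Res_{L/k}\alpha)|_{\Abar} = 0$, so $\alpha \in \Br_1 A$.
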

        \begin{proof}
            By~\cite[Prop. 3.3]{SZ-Kummer}, we have a canonical isomorphism
            \[
                \frac{(\Br A_L)_n}{(\Br_1 A_L)_n} \isom 
                \frac{\Hom_{\Gamma_L}(E_n, E'_n)}
				{\left(\Hom(\Ebar, \Ebar')/n\right)^{\Gamma_L}}.
            \]
            Composing with the injection ${(\Br A)_n}/{(\Br_1 A)_n} \into
				  {(\Br A_L)_n}/{(\Br_1 A_L)_n}$ induced by $\Res_{L/k}$ 
				 %             \[
				 %                  \frac{(\Br A)_n}{(\Br_1 A)_n} \into
				 % \frac{(\Br A_L)_n}{(\Br_1 A_L)_n}
				 %             \]
			completes the proof.
        \end{proof}

    %%%%%%%%%%%%%%%%%%%%%%%%%%%%%%%%%%%%%%%%%%%%%%%%%%%%%%%%%%%%%%%%%%%%%%%%%%%%
    \subsection{Proof of Theorem~\ref{thm:BrAbHom}}\label{sec:ProofBrAbHom}%%%%%
    %%%%%%%%%%%%%%%%%%%%%%%%%%%%%%%%%%%%%%%%%%%%%%%%%%%%%%%%%%%%%%%%%%%%%%%%%%%%
        Recall that $A = \Alb Y$.  For any positive integer $n$, $\HH^1(k, A_n)$ surjects onto $\HH^1(k, A)_n$, therefore $Y$ can be realized as a $\per(Y)$-covering $f\colon Y \to A$.  Let $N := \gcd(\per(Y), n^{\infty})$.  Then $f$ can be factored as an $N$-covering $f'\colon Y' \to A$ and a covering $f''\colon Y \to Y'$ whose degree is relatively prime to $n$.  Since $Y'$ is an $N$-covering of $A$, there exists a field extension $k'/k$ with $[k':k]\leq\#A_{N} = N^4$ such that $Y'_{k'}\isom A_{k'}$.  We claim that we have the following inclusions and isomorphisms
		\[
			\frac{(\Br Y)_n}{(\Br_1 Y)_n} 
			\stackrel{(f''^*)^{-1}}{\To} 
			\frac{(\Br Y')_n}{(\Br_1 Y')_n} \into
			\frac{(\Br Y'_{k'})_n}{(\Br_1 Y'_{k'})_n}\stackrel{\sim}{\To}
			\frac{(\Br A_{k'})_n}{(\Br_1 A_{k'})_n}\into
			\frac{\Hom_{L'}(E_n, E'_n)}{\left(\Hom(\Ebar, \Ebar')/n\right)^{\Gamma_{L'}}}.
		\]
		From left to right, the first map is an isomorphism by Proposition~\ref{prop:Br2covering}, the second is an inclusion by the definition of the algebraic Brauer group, the third map is an isomorphism because $Y'_{k'}\isom A_{k'}$, and the last is an inclusion by~Proposition~\ref{prop:BrProduct}.  Furthermore, by Proposition~\ref{prop:BrProduct} the last injection is an isomorphism when $L = k$.
		\qed

%%%%%%%%%%%%%%%%%%%%%%%%%%%%%%%%%%%%%%%%%%%%%%%%%%%%%%%%%%%%%%%%%%%%%%%%%%%%%%%%
\section{Galois-equivariant homomorphisms between $E_n$ and $E'_n$}%%%%%%%%%%%%%
\label{sec:Hom}%%%%%%%%%%%%%%%%%%%%%%%%%%%%%%%%%%%%%%%%%%%%%%%%%%%%%%%%%%%%%%%%%
%%%%%%%%%%%%%%%%%%%%%%%%%%%%%%%%%%%%%%%%%%%%%%%%%%%%%%%%%%%%%%%%%%%%%%%%%%%%%%%%

	Let $E$ and $E'$ be non-CM elliptic curves over a field $k$ with a geometric cyclic isogeny of degree $d$ between them.  Then by Proposition~\ref{prop:twists}, there is $\delta \in k^\times/k^{\times 2}$ such that $E$ and $E'^\delta$ are $k$-isogenous.
	
	\begin{thm}\label{thm:MainHomToEnd}
		Let $E$ and $E'$ be geometrically isogenous non-CM elliptic curves over a field $k$ of characteristic $0$; let $d$ and $\delta$ be as above. For any positive integer $n$ we have the following divisibility relations for cardinalities and exponents.
		\begin{enumerate}
			\item If $\delta\in k^{\times2}$, then 
			\begin{align*}
	            \#\left(\frac{\Hom_{k}\left(E_n, E'_n\right)}{\left(\Hom(\Ebar, \Ebar')/n\right)^{\Gamma_k}}\right) &\left.\middle|\; \gcd(d,n)\cdot
	            \#\left(\frac{\End_{k}\left( E'_n\right)}{\left(\End(\Ebar')/n\right)^{\Gamma_k}}\right),\right. \quad \textup{and}\\
	            e\left(\frac{\Hom_{k}\left(E_n, E'_n\right)}{\left(\Hom(\Ebar, \Ebar')/n\right)^{\Gamma_k}}\right) &\left.\middle|\; \gcd(d,n)\cdot
	            e\left(\frac{\End_{k}\left( E'_n\right)}{\left(\End(\Ebar')/n\right)^{\Gamma_k}}\right).\right.
			\end{align*}
			\item If $\delta\not\in k^{\times2}$, then
			\begin{align*}
				\left.\#\left(\frac{\Hom_k\left(E_n, E'_n\right)}
				{\left(\Hom(\Ebar, \Ebar')/n\right)^{\Gamma_k}}\right) 
				\;\middle|\right. &
				\gcd(2,n)^4\cdot \gcd(d,n)^2\cdot
				\#\left(\frac{\End_{k(\sqrt{\delta})}(E'^{\delta}_n)}
				{\End_{k}(E'^{\delta})}\right), \quad \textup{and}\\
				\left.e\left(\frac{\Hom_k\left(E_n, E'_n\right)}
				{\left(\Hom(\Ebar, \Ebar')/n\right)^{\Gamma_k}}\right) 
				\;\middle|\right. &
				\gcd(2,n)\cdot \gcd(d,n)^2\cdot
				e\left(\frac{\End_{k(\sqrt{\delta})}(E'^{\delta}_n)}
				{\End_{k}(E'^{\delta})}\right).
			\end{align*}
		\end{enumerate}
		
	\end{thm}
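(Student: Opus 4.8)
The plan is to reduce the statement about homomorphisms $\Hom_k(E_n,E_n')$ to a statement about endomorphisms, using the geometric cyclic isogeny to translate between the two elliptic curves. The key observation is that a geometric isogeny $\phibar\colon \Ebar \to \Ebar'$ of degree $d$ induces a map on $n$-torsion, and composition with $\phibar$ (or its dual) lets us identify $\Hom(\Ebar,\Ebar')$ with an index-controlled sublattice of $\End(\Ebar')$, with the discrepancy governed by $\gcd(d,n)$. First I would set up, in the split case $\delta\in k^{\times 2}$, the $k$-rational isogeny $\phi\colon E \to E'$ guaranteed by Proposition~\ref{prop:twists}. Post-composition $f \mapsto \phi \circ f$ sends $\Hom(\Ebar,\Ebar')$ into $\End(\Ebar')$ and is $\Gamma_k$-equivariant, so it descends to a map on the quotients $\Hom_k(E_n,E_n')/(\Hom(\Ebar,\Ebar')/n)^{\Gamma_k} \to \End_k(E_n')/(\End(\Ebar')/n)^{\Gamma_k}$. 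The point is to control the kernel and cokernel of this induced map on quotients in terms of $\gcd(d,n)$.

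The main technical step I expect is a careful diagram chase comparing the two short exact sequences defining the quotients. Concretely, I would consider the commutative diagram whose rows are
\[
0 \to (\Hom(\Ebar,\Ebar')/n)^{\Gamma_k} \to \Hom_k(E_n,E_n') \to \frac{\Hom_k(E_n,E_n')}{(\Hom(\Ebar,\Ebar')/n)^{\Gamma_k}}\to 0
\]
and the analogous sequence for $\End$, with vertical maps induced by $\phi\circ(-)$. Because $\phi$ has degree $d$, multiplication by $\phi$ on the full lattices $\Hom(\Ebar,\Ebar')\to \End(\Ebar')$ is injective with cokernel annihilated by $d$ (indeed $\phi^\vee\circ\phi = [d]$, so $\phi^\vee$ provides a partial inverse up to $d$). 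Reducing mod $n$, the induced map on $\Hom(\Ebar,\Ebar')/n \to \End(\Ebar')/n$ has kernel and cokernel annihilated by $\gcd(d,n)$. Taking $\Gamma_k$-invariants and applying the snake lemma to the vertical maps, the kernel and cokernel of the induced map on the quotient groups are controlled by $\gcd(d,n)$, which yields both the divisibility of cardinalities (picking up one factor of $\gcd(d,n)$) and of exponents. The hard part will be bookkeeping: ensuring the $\gcd(d,n)$ factor appears exactly once for cardinality and tracking that the exponent bound does not accumulate extra factors, since naive estimates would give $\gcd(d,n)^2$.

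For the non-split case $\delta\notin k^{\times 2}$, the plan is to pass to the quadratic extension $K := k(\sqrt{\delta})$, over which Proposition~\ref{prop:twists} gives a genuine $K$-isogeny $\phi\colon E_K \to E'^\delta_K$ agreeing with $\phibar$, and to relate $E'_n$ to $(E'^\delta)_n$ via the quadratic twisting character $\chi$ cutting out $K/k$. Restriction $\Res_{K/k}$ from $k$ to $K$ has kernel and cokernel controlled by $\HH^i(K/k, -)$, which for a $\mathbb{Z}/2$-extension are annihilated by $2$; this is the source of the $\gcd(2,n)$ factors. I would first apply part (1) over $K$ to bound $\Hom_K(E_n,E_n')/(\cdots)^{\Gamma_K}$ by $\gcd(d,n)\cdot\#(\End_K((E'^\delta)_n)/\End_k(E'^\delta))$, then use a restriction-corestriction argument to descend from $\Gamma_K$ to $\Gamma_k$ invariants, absorbing another factor of $\gcd(d,n)$ and the powers of $\gcd(2,n)$. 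The delicate point here is keeping the twist straight: $E'$ and $E'^\delta$ become isomorphic only over $K$, and the identification of their $n$-torsion as $\Gamma_k$-modules differs by $\chi$, so I must verify that the comparison of $\End_K((E'^\delta)_n)$ with the $\chi$-eigenspace behaves well. I expect the twisting-and-descent bookkeeping, rather than any single conceptual difficulty, to be the principal obstacle, and to account for the asymmetry between the exponents $\gcd(2,n)$ and $\gcd(2,n)^4$ in the two displayed inequalities.
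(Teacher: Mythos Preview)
Your plan for part (1) is essentially the paper's approach, but with a typo: the map $f \mapsto \phi \circ f$ does not typecheck, since both $f$ and $\phi$ go from $E$ to $E'$. The correct map, and the one the paper uses, is pre-composition with the dual, $f \mapsto f \circ \phi^\vee \colon E'_n \to E'_n$. With that fix, your snake-lemma strategy is exactly Proposition~\ref{prop:HomToEndRatl}: the paper sets up the short exact sequence
\[
0 \to \Hom(\phi(E_m), E'_m) \to \Hom(E_n, E'_n) \xrightarrow{-\circ\phi^\vee} H \to 0
\]
(Lemma~\ref{lem:HomToEnd}) and runs a diagram chase to show the induced map on quotients has cyclic kernel of order dividing $m=\gcd(d,n)$ and $m$-torsion cokernel.

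For part (2), your plan has a genuine gap. You propose to apply part~(1) over $K=k(\sqrt\delta)$ and then descend. But part~(1) over $K$ bounds $\Hom_K(E_n,E'_n)/(\Hom(\Ebar,\Ebar')/n)^{\Gamma_K}$ in terms of $\End_K(E'_n)/(\End(\Ebar')/n)^{\Gamma_K}$, and the denominator here is just $\Z/n\Z$ (scalars), not $\End_k(E'^\delta_n)$ as the theorem requires. Since $\Z/n\Z \subseteq \End_k(E'^\delta_n)$, the quotient you obtain is \emph{larger} than the target, and there is no obvious way to pass from it to $\End_K(E'^\delta_n)/\End_k(E'^\delta_n)$ without losing control. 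Restriction--corestriction on the $\Hom$ side does not repair this, because the mismatch is on the $\End$ side.

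The paper circumvents this by factoring differently. Rather than applying part~(1) over $K$, it builds the map as a composite
\[
\frac{\Hom_k(E_n, E'_n)}{(\Hom(\Ebar,\Ebar')/n)^{\Gamma_k}} \longrightarrow \frac{\Hom_{K}(E_n, E'^{\delta}_n)}{\Hom_{k}(E_n, E'^{\delta}_n)} \longrightarrow \frac{\End_{K}(E'^{\delta}_n)}{\End_{k}(E'^{\delta}_n)}.
\]
The first arrow (Proposition~\ref{prop:HomToEndNonRationalHelper}) sends $\psi \mapsto f\circ\psi$ for a fixed $K$-isomorphism $f\colon E'\to E'^\delta$, and its kernel is $2$-torsion of rank at most $3$; the second (Proposition~\ref{prop:NonRationalHomBound}) is $(-\circ\phi^\vee)$ applied to the quotient $\Hom_K/\Hom_k$ rather than to $\Hom_K$ alone, and its kernel is controlled by $\gcd(d,n)^2$ via the snake lemma and inflation--restriction. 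The point is that quotienting by $\Hom_k$ on the source matches quotienting by $\End_k$ on the target, which is precisely what your route via part~(1) over $K$ fails to arrange.
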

	
    Parts $(1)$ and $(2)$ of Theorem~\ref{thm:MainHomToEnd} are proved in~\S\ref{sec:RatlIsogeny} and~\S\ref{sec:GeometricIsogeny}, respectively.
    %%%%%%%%%%%%%%%%%%%%%%%%%%%%%%%%%%%%%%%%%%%%%%%%%%%%%%%%%%%%%%%%%%%%%%%%%%%%
    \subsection{Galois action on isogenies}\label{sec:Isogenies}%%%%%%%%%%%%%%%%
    %%%%%%%%%%%%%%%%%%%%%%%%%%%%%%%%%%%%%%%%%%%%%%%%%%%%%%%%%%%%%%%%%%%%%%%%%%%%

	\begin{lemma}\label{lem:GeometricHom}
		Let $E$ and $E'$ be non-CM geometrically isogenous elliptic curves over a field $k$ of characteristic $0$.  Then 
		\[
			\left(\Hom(\Ebar, \Ebar')/n\right)^{\Gamma_k} \isom 
			\begin{cases}
				\Z/n\Z &\textup{if }E, E' \textup{ are isogenous over }k\\
				\Z/\gcd(2,n)\Z & \textup{otherwise.}
			\end{cases}
		\]
	\end{lemma}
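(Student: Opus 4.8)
The plan is to exploit the fact that, for non-CM geometrically isogenous elliptic curves, $\Hom(\Ebar,\Ebar')$ is a free $\Z$-module of rank one, on which $\Gamma_k$ can only act through a quadratic character, and then to compute the invariants of $\Z/n\Z$ under that character directly. First I would record the structural input: since $E$ and $E'$ are non-CM and geometrically isogenous, $\End(\Ebar)\otimes\Q\isom\Q$ and $\Hom(\Ebar,\Ebar')\otimes\Q$ is a one-dimensional $\End(\Ebar)\otimes\Q$-module, so the torsion-free $\Z$-module $\Hom(\Ebar,\Ebar')$ is isomorphic to $\Z$. The Galois action $\sigma\cdot f=\sigma\circ f\circ\sigma^{-1}$ is by automorphisms of this group, and $\Aut(\Z)=\{\pm1\}$, so it factors through a quadratic character $\chi\colon\Gamma_k\to\{\pm1\}$; concretely $\chi$ is the character cutting out $k(\sqrt{\delta})$ for the $\delta$ of Proposition~\ref{prop:twists}, which is consistent since $E$ and $E'^{\delta}$, equivalently $E$ and $E'$ over $k(\sqrt{\delta})$, are isogenous.

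Next I would translate the dichotomy in the statement into the triviality of $\chi$. Using the standard descent identity $\Hom(\Ebar,\Ebar')^{\Gamma_k}=\Hom_k(E,E')$ (valid in characteristic $0$), the curves $E$ and $E'$ are isogenous over $k$ if and only if $\Hom(\Ebar,\Ebar')^{\Gamma_k}\neq 0$, which for a rank-one module with action through $\{\pm1\}$ happens exactly when $\chi$ is trivial: a $k$-isogeny is a nonzero Galois-invariant element, and conversely if $\chi$ is trivial a $\Z$-generator of $\Hom(\Ebar,\Ebar')$ is Galois-invariant, hence a genuine $k$-isogeny.

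With this in hand the computation is immediate. The reduction $\Hom(\Ebar,\Ebar')/n\isom\Z/n\Z$ carries the induced $\Gamma_k$-action, still given by $\chi$. If $E$ and $E'$ are $k$-isogenous then $\chi$ is trivial, the action on $\Z/n\Z$ is trivial, and $\left(\Hom(\Ebar,\Ebar')/n\right)^{\Gamma_k}\isom\Z/n\Z$. Otherwise $\chi$ is nontrivial, some $\sigma$ acts as $-1$, and the invariants are $\{x\in\Z/n\Z:2x=0\}=(\Z/n\Z)[2]\isom\Z/\gcd(2,n)\Z$, giving the second case. The only point that requires genuine care — and hence the main thing to get right rather than a real obstacle — is the passage from geometric to $k$-rational homomorphisms: one must invoke that $\Hom$ of abelian varieties satisfies Galois descent so that "$\chi$ trivial" is equivalent to "$k$-isogenous," and must check that a nonzero element of the rank-one module is an honest isogeny rather than the zero map. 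Everything after that is a one-line invariants calculation.
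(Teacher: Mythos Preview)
Your proposal is correct and follows essentially the same approach as the paper: identify $\Hom(\Ebar,\Ebar')$ with $\Z$, note that $\Gamma_k$ acts through $\{\pm1\}$, characterize when the action is trivial as precisely the $k$-isogenous case, and compute the invariants of $\Z/n\Z$ under $x\mapsto -x$. The only cosmetic difference is that the paper invokes Proposition~\ref{prop:twists} directly to exhibit the explicit $\delta$ and verify that the nontrivial element of $\Gal(k(\sqrt\delta)/k)$ acts by $-1$, whereas you deduce the quadratic-character action from $\Aut(\Z)=\{\pm1\}$ and then use Galois descent for $\Hom$; both routes arrive at the same one-line invariants calculation.
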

	\begin{proof}
		Since $\Ebar$ and $\Ebar'$ are non-CM isogenous curves, $\Hom(\Ebar, \Ebar') = \Z\phi$ where $\phi$ is a cyclic $\kbar$-isogeny.  By Proposition~\ref{prop:twists} the action of $\Gamma_k$ on $\Hom(\Ebar, \Ebar')$ factors through $\Gal(k(\sqrt{\delta})/k)$ for some $\delta\in k^{\times}/k^{\times2}$ and $\delta\in k^{\times2}$ if and only if $E$ and $E'$ are $k$-isogenous.  In the case where $\delta\notin k^{\times2}$, then Proposition~\ref{prop:twists} implies that $\phi$ is the composition of a $k$-rational cyclic isogeny $\phi'\colon E \to E'^{\delta}$ with a $k(\sqrt{\delta})$-isomorphism $E'^{\delta} \to E'$.  In particular, the nontrivial element of $\Gal(k(\sqrt{\delta})/k)$ acts on $\Hom(\Ebar, \Ebar')$ by multiplication by $-1$.  Since 
		\[
			(\Z/n\Z)^{(x\mapsto-x)} = \textstyle{\frac{n}{\gcd(2,n)}}\Z/n\Z,
		\]
		 this completes the proof.
	\end{proof}
    %%%%%%%%%%%%%%%%%%%%%%%%%%%%%%%%%%%%%%%%%%%%%%%%%%%%%%%%%%%%%%%%%%%%%%%%%%%%
    \subsection{A $k$-rational cyclic isogeny}\label{sec:RatlIsogeny}%%%%%%%%%%%
    %%%%%%%%%%%%%%%%%%%%%%%%%%%%%%%%%%%%%%%%%%%%%%%%%%%%%%%%%%%%%%%%%%%%%%%%%%%%
    
    \begin{prop}\label{prop:HomToEndRatl}
        Let $n$ be a positive integer and let $E, E'$ be non-CM elliptic curves over a field $k$ of characteristic $0$.  Assume that there exists a degree $d$ cyclic $k$-isogeny $\phi\colon E\to E'$.  Then composition with $\phi^{\vee}$ induces a homomorphism
        \[
            \frac{\Hom_{k}\left(E_n, E'_n\right)}{\left(\Hom(\Ebar, \Ebar')/n\right)^{\Gamma_k}}
            \stackrel{(-\circ\phi^{\vee})}{\To}
            \frac{\End_{k}\left( E'_n\right)}{\left(\End(\Ebar')/n\right)^{\Gamma_k}},
        \]
		where the kernel is a cyclic group of order dividing $m := \gcd(d,n)$ and the cokernel is $m$-torsion.
    \end{prop}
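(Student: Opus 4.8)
The plan is to study the composition map $\Phi\colon g\mapsto g\circ\phi^\vee$ directly, together with a ``partner'' map $\Psi\colon h\mapsto h\circ\phi$ running the other way, exploiting the defining relations $\phi^\vee\circ\phi=[d]$ and $\phi\circ\phi^\vee=[d]$. First I would pin down the two distinguished subgroups. Since $E'$ is non-CM we have $\End(\Ebar')=\Z$, and the Galois action on scalars is trivial, so $(\End(\Ebar')/n)^{\Gamma_k}=\Z/n\cdot\id$. Since $E$ and $E'$ are $k$-isogenous non-CM curves, $\Hom(\Ebar,\Ebar')$ is infinite cyclic; moreover $\phi$ \emph{generates} it, because $\phi=t\psi$ with $t\geq 2$ would force $E_t\subseteq\ker\phi$, contradicting that $\ker\phi$ is cyclic. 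By Lemma~\ref{lem:GeometricHom} the subgroup $(\Hom(\Ebar,\Ebar')/n)^{\Gamma_k}$ is therefore $\Z/n\cdot\phibar$, where $\phibar:=\phi|_{E_n}$. Write $Q_H$ and $Q_E$ for the two quotient groups appearing in the statement.

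Next I would verify well-definedness and the key relations. As $\phi$ and $\phi^\vee$ are $k$-isogenies, composing a $\Gamma_k$-equivariant homomorphism with $\phi^\vee|_{E'_n}$ yields a $\Gamma_k$-equivariant endomorphism, so $\Phi$ maps $\Hom_k(E_n,E'_n)$ into $\End_k(E'_n)$; and $\Phi(\phibar)=(\phi\circ\phi^\vee)|_{E'_n}=[d]\in\Z/n\cdot\id$, so $\Phi$ carries the distinguished subgroup of the source into that of the target and descends to $\bar\Phi\colon Q_H\to Q_E$. The identical reasoning makes $\Psi\colon h\mapsto h\circ\phi$ descend to $\bar\Psi\colon Q_E\to Q_H$ (here $\Psi(\id)=\phibar$ generates the distinguished line of $Q_H$), and $\phi\circ\phi^\vee=\phi^\vee\circ\phi=[d]$ gives $\bar\Phi\bar\Psi=[d]$ and $\bar\Psi\bar\Phi=[d]$.

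The cokernel is the easy half: $\bar\Phi\bar\Psi=[d]$ forces $\im\bar\Phi\supseteq dQ_E$, and since $Q_E$ is $n$-torsion we have $dQ_E=mQ_E$; hence $\coker\bar\Phi$ is a quotient of $Q_E/mQ_E$ and is $m$-torsion. For the kernel, $\bar\Psi\bar\Phi=[d]$ shows at once that $\ker\bar\Phi$ is killed by $m$ (an $n$-torsion class $x$ with $dx=0$ satisfies $mx=0$), but to get cyclicity and the bound on its order I would pass to the geometric kernel. The lines $\Z/n\cdot\phibar$ and $\Z/n\cdot\id$ are Galois-stable and sit inside the full geometric groups $\Hom(E_n,E'_n)$ and $\End(E'_n)$, and since $\Hom_k(E_n,E'_n)\subseteq\Hom(E_n,E'_n)$ the arithmetic kernel injects into the geometric one $\ker\bar\Phi^{\mathrm g}:=\{g:g\circ\phi^\vee\in\Z/n\cdot\id\}/(\Z/n\cdot\phibar)$. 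I would then compute $\ker\bar\Phi^{\mathrm g}$ explicitly: choosing $\Z/n$-bases of $E_n$ and $E'_n$ adapted to the cyclic isogeny (via the analytic model $\C/\Lambda\to\C/\Lambda'$ with $\Lambda'/\Lambda$ cyclic of order $d$, which captures the geometry after a Lefschetz-principle embedding), the maps $\phi|_{E_n}$ and $\phi^\vee|_{E'_n}$ become the matrices $\mathrm{diag}(d,1)$ and $\mathrm{diag}(1,d)$ modulo $n$. A direct calculation shows that $\{G:G\,\mathrm{diag}(1,d)\in\Z/n\cdot I\}$ modulo $\Z/n\cdot\mathrm{diag}(d,1)$ is cyclic of order exactly $m$, so its subgroup $\ker\bar\Phi$ is cyclic of order dividing $m$.

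The main obstacle is precisely this last cyclicity statement: the relation $\bar\Psi\bar\Phi=[d]$ cheaply yields exponent dividing $m$, but showing that the kernel is \emph{cyclic} (and bounding its order) seems to require the explicit diagonalization of $\phi^\vee|_{E'_n}$ on a basis adapted to the cyclic kernel, which is the one genuinely computational step.
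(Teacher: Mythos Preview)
Your argument is correct and proceeds along a genuinely different line from the paper's.

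The paper does not introduce the partner map $\Psi$ at all. Instead it proves an exact sequence of $\Gamma_k$-modules (Lemma~\ref{lem:HomToEnd})
\[
0 \to \Hom(\phi(E_{m}), E'_{m}) \to \Hom(E_{n}, E'_{n}) \xrightarrow{-\circ\phi^{\vee}} H \to 0
\]
with $H=\{f\in\End(E'_n):f\circ\phi\circ[n/m]=0\}$, built from a factorization criterion (Lemma~\ref{lem:ComposingWithIsogeny}). It then assembles a $4\times 3$ commutative diagram whose middle column is the $\Gamma_k$-invariants of this sequence and whose middle two rows are the defining short exact sequences of $Q_H$ and $Q_E$, and reads off the kernel via the snake lemma as $\Hom_k(\phi(E_m),E'_m)/(\tfrac{n}{m}\Z/n\Z)$, which visibly sits in $\Z/m\Z$ because $\phi(E_m)$ is cyclic of order $m$. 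The cokernel is handled by the same direct identity $[m]\circ f=(f\circ[a]\circ\phi)\circ\phi^\vee$ that underlies your $\bar\Phi\bar\Psi=[d]$.

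Your route is more elementary: the relations $\bar\Phi\bar\Psi=\bar\Psi\bar\Phi=[d]$ dispose of the cokernel and of the exponent of the kernel in one line, and the cyclicity is obtained by embedding the arithmetic kernel into the geometric one and computing the latter explicitly as $\Z/m\Z$ via the matrix model. The paper's route is more structural: the exact sequence of Lemma~\ref{lem:HomToEnd} is reused verbatim in the non-rational case (Proposition~\ref{prop:NonRationalHomBound}), so it earns its keep. Your approach avoids the diagram chase and the two auxiliary lemmas, at the cost of the Lefschetz-principle step to justify the diagonal matrix form of $\phi$ and $\phi^\vee$ on $n$-torsion; this step is routine but would need to be spelled out (or replaced by the purely algebraic observation that, prime by prime, one can choose bases of the Tate modules adapted to the cyclic kernels).
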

    \noindent Theorem~\ref{thm:MainHomToEnd}(1) is a corollary of this proposition.
    
    \medskip
    
    To prove the Proposition, we begin with two lemmas.
	\begin{lemma}\label{lem:ComposingWithIsogeny}
		Let $E, E', E''$ be elliptic curves, let $g\colon E \to E'$ be a cyclic isogeny of degree $d$, and let $n,n'$ be positive integers such that {$n'\gcd(d,n) =\gcd(dn', n)$}.  Then $f\in \Hom(E_{n}, E''_{n})$ factors through $[n']\circ g$ if and only if $f\circ g^{\vee}\circ[n/\gcd(dn',n)] = 0\in \Hom(E'_{n}, E''_{n})$.
	\end{lemma}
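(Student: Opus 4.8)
The plan is to prove this as an equivalence of two conditions on a homomorphism $f \in \Hom(E_n, E''_n)$, where the key observation is that ``factoring through $[n']\circ g$'' is a statement about the kernel and image of $f$ relative to the cyclic isogeny $g$, and this can be detected by precomposition with the dual isogeny $g^\vee$. First I would recall that since $g\colon E \to E'$ is cyclic of degree $d$, its kernel $\ker g$ is a cyclic group of order $d$, and $g^\vee \circ g = [d]$, $g \circ g^\vee = [d]$ on $E$ and $E'$ respectively. The composite $[n']\circ g \colon E \to E'$ is an isogeny, and a homomorphism $f\colon E_n \to E''_n$ factors through the restriction $([n']\circ g)|_{E_n}$ precisely when $f$ vanishes on the kernel $E_n \cap \ker([n']\circ g)$; the arithmetic hypothesis $n'\gcd(d,n) = \gcd(dn',n)$ is exactly the numerical bookkeeping needed to identify this kernel cleanly.

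The main computation I would carry out is an explicit description of $E_n \cap \ker([n']\circ g)$. Writing $\ker([n']\circ g) = g^{-1}(E'_{n'})$, one checks that restricted to $E_n$ this is the set of $P \in E_n$ with $g(P) \in E'_{n'}$, i.e. $[n']g(P) = O$. I would then translate the factoring condition into: $f$ kills every $P \in E_n$ with $[n']g(P)=O$. The dual-isogeny criterion enters because $g^\vee\colon E' \to E$ satisfies $g^\vee(E'_{m}) $ relates to $\ker g$ and the image of $g$; precisely, $P \in E_n$ lies in $\ker([n']\circ g)$ iff $P$ is in the image of $g^\vee$ up to the appropriate torsion level, which is what precomposing $f$ with $g^\vee \circ [n/\gcd(dn',n)]$ tests. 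The scalar $[n/\gcd(dn',n)]$ is chosen so that $g^\vee \circ [n/\gcd(dn',n)]$ maps $E'_n$ onto exactly the subgroup $E_n \cap \ker([n']\circ g)$, after which $f$ factors through $[n']\circ g$ iff $f \circ g^\vee \circ [n/\gcd(dn',n)] = 0$.

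Concretely, I expect the argument to run through the structure of $E_n \cong (\Z/n)^2$ together with the cyclic subgroup $\ker g$. Using a symplectic-type basis adapted to the cyclic kernel of $g$, one can write $g^\vee$ and $[n']\circ g$ in matrix form modulo $n$, reduce both the ``factors through'' condition and the ``$f \circ g^\vee \circ [n/\gcd(dn',n)] = 0$'' condition to equalities of subgroups of $E_n$, and verify they coincide. The hypothesis $n'\gcd(d,n) = \gcd(dn',n)$ guarantees that the relevant orders match up, so that the two subgroups of $E_n$ in question are literally equal rather than merely comparable. I would verify the forward implication (factoring implies the vanishing) directly from $g^\vee \circ g = [d]$ and divisibility of $d$ against $n, n'$, and the reverse implication by showing that the vanishing forces $f$ to be trivial on the precise torsion subgroup through which $[n']\circ g$ must be quotiented.

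The step I expect to be the main obstacle is pinning down the exact scalar and verifying the subgroup equality $g^\vee\big([n/\gcd(dn',n)] E'_n\big) = E_n \cap \ker([n']\circ g)$, rather than just a containment; the cyclicity of $\ker g$ and the numerical identity in the hypothesis are both essential here, and getting the valuations at each prime dividing $n$ to line up is where care is needed. Once that subgroup identity is established, the equivalence of the two conditions on $f$ is a formal consequence, since a homomorphism out of a finite abelian group factors through a quotient by a subgroup $K$ iff it vanishes on $K$.
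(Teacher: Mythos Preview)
Your proposal is correct and follows essentially the same approach as the paper: both directions hinge on the subgroup identity $\ker\bigl([n']\circ g\colon E_n\to E'_n\bigr) = \bigl(g^{\vee}\circ[n/\gcd(dn',n)]\bigr)(E'_n)$, after which the equivalence is formal. The paper establishes this identity directly from the relation $g^{\vee}(E'_d)=\ker g$ and the numerical hypothesis, rather than via your proposed basis-and-matrix computation, but the substance is the same; the forward implication is, as you say, an immediate consequence of $g\circ g^{\vee}=[d]$.
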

	\begin{proof}
		 Assume that $f$ factors through $[n']\circ g$, i.e., that $f = h \circ[n']\circ g$ for some $h\in Hom(E'_{n}, E''_{n})$.  Then
		 \[
		 	f\circ g^{\vee}\circ\left[\frac{n}{\gcd(dn',n)}\right] = 
			h \circ[n']\circ g\circ g^{\vee}\circ\left[\frac{n}{\gcd(dn',n)}\right] =
			h \circ \left[\frac{dn'}{\gcd(dn',n)}\right]\circ [n] = 0
		 \]
		 in $\Hom(E'_{n}, E''_{n})$.
		 
		 Now let $f\in \Hom(E_{n}, E''_{n})$ be such that $f\circ g^{\vee}\circ[n/\gcd(dn',n)] = 0\in \Hom(E'_{n}, E''_{n})$.  We will show that $\ker f$ contains $\ker ([n']\circ g)$ which implies that $f$ factors through $[n']\circ g$.  By definition of the dual isogeny, $ E'_d/\ker g^{\vee} \isom g^{\vee}(E'_d)=\ker (g\colon E \to E') $.  Similarly, since $n'\gcd(d,n) =\gcd(dn', n)$, we have
		 \[
		 	\ker \left([n']\circ g \colon E_n \to E'_n\right) = 
			g^{\vee}(E'_{n'\gcd(d, n)}) = 
			\left(g^{\vee}\circ\left[\textstyle{\frac{n}{\gcd(dn',n)}}\right]\right)(E'_n).
		 \]
		 Since $f\circ g^{\vee}\circ[n/\gcd(dn',n)] = 0$, this gives the desired containment.
	\end{proof}
	
	\begin{lemma}\label{lem:HomToEnd}
		Let $n$ be a positive integer and let $E,E'$ be elliptic curves over $k$.  Assume that there exists a cyclic degree $d$ $k$-isogeny $\phi\colon E\to E'$.  Then there is a short exact sequence of $\Gamma_k$-modules
		\[
			0 \to \Hom(\phi(E_{m}), E'_{m}) 
			\stackrel{-\circ\phi\circ\left[\frac{n}{m}\right]}{\To}
			\Hom(E_{n}, E'_{n})
			\stackrel{-\circ\phi^{\vee}}{\To}
			H 
			\to 0,
		\]
		where $m = \gcd(d,n)$ and $H := \left\{f\in \End(E'_{n}) : f \circ\phi\circ[n/m] = 0\right\}$.  
	\end{lemma}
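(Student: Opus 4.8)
The plan is to check that both labelled maps are well-defined $\Gamma_k$-equivariant homomorphisms and then to verify exactness at the three spots. I would first record the tools used throughout: $\phi^\vee\circ\phi=\phi\circ\phi^\vee=[d]$, the surjectivity of $[n/m]\colon E_n\onto E_m$, the fact that $\phi(E_m)\subseteq E'_m$ is a $\Gamma_k$-stable subgroup (as $\phi$ is a $k$-isogeny), and the numerical identity $dn/m=\lcm(d,n)$, which is divisible by $n$. Equivariance of both maps is then automatic, since each is obtained by pre- and post-composition with the $k$-rational maps $\phi$, $\phi^\vee$, and $[n/m]$; and the second map lands in $H$ because $(f\circ\phi^\vee)\circ\phi\circ[n/m]=f\circ[d]\circ[n/m]=f\circ[\lcm(d,n)]=0$ on $E_n$. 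Injectivity of the first map is immediate: $\phi\circ[n/m]\colon E_n\to\phi(E_m)$ is surjective, so $g\circ\phi\circ[n/m]=0$ forces $g=0$.

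For exactness in the middle, the inclusion $\im\subseteq\ker$ is the direct computation $\phi\circ[n/m]\circ\phi^\vee=[\lcm(d,n)]=0$ on $E'_n$, using $\phi\circ\phi^\vee=[d]$ and $n\mid\lcm(d,n)$. The reverse inclusion is where I would invoke Lemma~\ref{lem:ComposingWithIsogeny}, applied with $(E,E',E'')=(E,E',E')$, $g=\phi$, and $n'=n/m$. Its hypothesis $n'\gcd(d,n)=\gcd(dn',n)$ holds because both sides equal $n$; with $\gcd(dn',n)=n$ the correction factor $[n/\gcd(dn',n)]$ is the identity, so the lemma reads: $f\circ\phi^\vee=0$ if and only if $f=h\circ\phi\circ[n/m]$ for some $h\in\End(E'_n)$. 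Restricting such an $h$ to $\phi(E_m)$ --- and noting that any homomorphism out of the $m$-torsion group $\phi(E_m)$ automatically takes values in $E'_m$ --- exhibits $f$ in the image of the first map, giving $\ker\subseteq\im$.

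Surjectivity of $-\circ\phi^\vee$ onto $H$ I would deduce by a cardinality count, the image already being contained in $H$. The one genuinely computational point, which I expect to be the main obstacle, is identifying the group $\phi(E_m)$: since $\ker\phi$ is cyclic of order $d$ and $m\mid d$, the subgroup $\ker\phi\cap E_m=(\ker\phi)[m]$ is cyclic of order $m$, generated by an element of maximal order in $E_m\cong(\Z/m)^2$ and hence a direct summand, so $\phi(E_m)\cong\Z/m$. This yields $\#\Hom(\phi(E_m),E'_m)=m^2$, so by the middle exactness $\#\ker(-\circ\phi^\vee)=m^2$ and thus $\#\im(-\circ\phi^\vee)=n^4/m^2$. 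Computing $\#H$ in the same spirit --- writing $H\cong\Hom(E'_n/\phi(E_m),E'_n)$ and using a basis change over $\Z/n$ to identify $E'_n/\phi(E_m)\cong\Z/(n/m)\oplus\Z/n$ --- again gives $n^4/m^2$; since $\im(-\circ\phi^\vee)\subseteq H$ are finite groups of equal order, the map is onto $H$. All the remaining verifications are formal manipulations of isogenies; the only care needed is the $\Z/n$-module bookkeeping for the two quotients.
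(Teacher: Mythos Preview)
Your argument is correct. Injectivity and exactness in the middle match the paper's approach exactly: the paper also invokes Lemma~\ref{lem:ComposingWithIsogeny} with $g=\phi$ and $n'=n/m$ to identify $\ker(-\circ\phi^{\vee})$ with the image of $\Hom(\phi(E_m),E'_m)$.

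The one genuine divergence is in the surjectivity step. You prove $\im(-\circ\phi^{\vee})=H$ by a cardinality count, which forces you to compute $\phi(E_m)\cong\Z/m$ and then argue (via a Smith-normal-form style basis change) that $E'_n/\phi(E_m)\cong\Z/(n/m)\oplus\Z/n$, so that $\#H=n^4/m^2$. This works, and your justification that $\ker\phi\cap E_m$ is cyclic of order $m$ and a direct summand of $E_m$ is sound. The paper instead applies Lemma~\ref{lem:ComposingWithIsogeny} a \emph{second} time, now with $g=\phi^{\vee}$ and $n'=1$: the hypothesis $1\cdot\gcd(d,n)=\gcd(d,n)$ is automatic, and the lemma reads that $f\in\End(E'_n)$ factors through $\phi^{\vee}$ if and only if $f\circ(\phi^{\vee})^{\vee}\circ[n/m]=f\circ\phi\circ[n/m]=0$, i.e.\ $f\in H$. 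This identifies the image with $H$ in one line, with no structure computation and no appeal to finiteness. Your count has the minor advantage of making the orders of all the groups explicit (information that is in any case recomputed later in the proof of Proposition~\ref{prop:HomToEndRatl}); the paper's route is shorter and keeps the two halves of the proof perfectly symmetric.
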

    
	\begin{proof}
		The first map is clearly injective.  Thus to prove the lemma, it suffices compute the kernel and image of
		\[
			(-\circ\phi^{\vee})\colon \Hom(E_{n}, E'_{n}) \to \End(E'_{n}).
		\]
		An application of Lemma~\ref{lem:ComposingWithIsogeny} with $n' = 1$ and $g = \phi^{\vee}$ implies that the image of $(-\circ\phi^{\vee})$ is equal to $H.$  Another application of Lemma~\ref{lem:ComposingWithIsogeny} with $n' = n/m$ and $g = \phi$ shows that the kernel of $(-\circ\phi^{\vee})$ is equal to the image of $\Hom(\phi(E_{m}), E'_{n})$, which completes the proof.
	\end{proof}
    \begin{proof}[Proof of Proposition~\ref{prop:HomToEndRatl}]
		We consider the following diagram.
		\[\xymatrix{
			0\ar[r]& \frac{n}{m}\Z/n\Z \ar[r]\ar@{^{(}->}[d]&
			 \Hom_{k}(\phi(E_m), E'_m) \ar[r]\ar@{^{(}->}[d]&
			 \ker(-\circ\phi^{\vee})\ar@{^{(}->}[d]\\
			0\ar[r] & 
			\left(\Hom(\Ebar,\Ebar')/n\right)^{\Gamma_k}
			\ar[r]\ar[d]^{(-\circ\phi^{\vee})} & 
			\Hom_{k}(E_n, E'_n) 
			\ar[r]\ar[d]^{(-\circ\phi^{\vee})} & 
			\frac{\Hom_{k}\left(E_n, E'_n\right)}{\left(\Hom(\Ebar, \Ebar')/n\right)^{\Gamma_k}}
			\ar[r] \ar[d]^{(-\circ\phi^{\vee})}& 0\\
			0 \ar[r] & 
			\left(\End(\Ebar')/n\right)^{\Gamma_k} \ar[r]\ar@{->>}[d] &
			\End_{k}(E'_n) \ar[r]\ar@{->>}[d]& 
			\frac{\End_{k}\left( E'_n\right)}{\left(\End(\Ebar')/n\right)^{\Gamma_k}} \ar[r] \ar@{->>}[d]
			& 0\\
			& \Z/m\Z \ar[r] & \frac{\End_{k}(E'_n)}{\Hom_{k}(E_n, E'_n)\circ\phi^{\vee}}\ar[r]& \coker(-\circ\phi^{\vee})\ar[r]& 0
		}
		\]
		We claim that this diagram is commutative and has exact rows and columns.  The commutativity and exactness of the middle two rows is clear.  Since $E$ and $E'$ are non-CM and $\phi$ is a $k$-isogeny, $\Hom(\Ebar, \Ebar') = \Z\phi$ and $\End(\Ebar') = \Z$ as Galois modules.  Thus the leftmost column is exact.  The rightmost column is exact by definition. Taking $\Gamma_k$-invariants of the exact sequence in Lemma~\ref{lem:HomToEnd} gives exactness of the middle column.  Since the maps in the top and bottom rows are induced by the maps in the middle two, the remaining squares commute. 		
		
		The proposition is concerned with the rightmost column of the above diagram; we must prove that $\ker(-\circ\phi^{\vee})$ is a cyclic group of order dividing $m$ and that $\coker(-\circ\phi^{\vee})$ is $m$-torsion.  Let $f\in \End_k(E'_n)$ and let $a\in \Z$ be an inverse of $d/m$ modulo $n$; then
		\[
			[m]\circ f = f\circ[m] = f\circ[ad] = (f\circ[a]\circ\phi)\circ\phi^{\vee}\in\Hom_k(E_n, E'_n)\circ\phi^{\vee}.
		\]
		Hence $\End_k(E'_n)/(\Hom_k(E_n, E'_n)\circ\phi^{\vee})$ is $m$-torsion and thus so is $\coker(-\circ\phi^{\vee})$.
		
		Now consider $\Hom_k(\phi(E_m), E_m')$.  Since $m\mid d$, we have $\phi(E_m)$ is a cyclic group of order $m$. Thus after fixing a generator of $\phi(E_m)$, we have $\Hom \left(\phi(E_m), E'_m\right)\isom E'_m\isom (\Z/m\Z)^2$ as abelian groups.  Therefore, the quotient $\Hom_{k}\left(\phi(E_m), E'_m\right) / ({\frac{n}{m}}\Z/n\Z)$ is a subgroup of the cyclic group of order $m$.  
		
		We claim that the bottom leftmost horizontal arrow is injective.  If so, then the snake lemma implies that
		\[
			\ker(-\circ\phi^{\vee})\isom \frac{\Hom_k\left(\phi(E_m), E'_m\right)}{\textstyle{\frac{n}{m}}\Z/n\Z} \subset \Z/m\Z,
		\] 
		as desired. To prove the claim, it suffices to show that for $b\in \Z$, $[b]\in \End(E'_n)$ is in the image of $(-\circ\phi^{\vee})$  only if $m\mid b$.  By Lemma~\ref{lem:HomToEnd}, $[b]$ is in the image of $(-\circ\phi^{\vee})$ if and only if $[b]\circ\phi\circ\left[\frac{n}{m}\right] = \phi\circ\left[\frac{bn}{m}\right] = 0$ on $E_n$.  This happens if and only if $m \mid b$.
	\end{proof}

    %%%%%%%%%%%%%%%%%%%%%%%%%%%%%%%%%%%%%%%%%%%%%%%%%%%%%%%%%%%%%%%%%%%%%%%%%%%%
    \subsection{A geometric, non $k$-rational isogeny}%%%%%%%%%%%%%%%%%%%%%%%%%%
    \label{sec:GeometricIsogeny}%%%%%%%%%%%%%%%%%%%%%%%%%%%%%%%%%%%%%%%%%%%%%%%%
    %%%%%%%%%%%%%%%%%%%%%%%%%%%%%%%%%%%%%%%%%%%%%%%%%%%%%%%%%%%%%%%%%%%%%%%%%%%%
	
		In this section we consider the case of two non-CM geometrically isogenous elliptic curves $E, E'$ over $k$ that are \emph{not} $k$-isogenous.  By Proposition~\ref{prop:twists}, there exists a unique nontrivial $\delta\in k^{\times}/k^{\times2}$ such that $E$ and $E'^{\delta}$ are $k$-isogenous.  
	
		\begin{prop}\label{prop:HomToEndNonRational}
			Let $E$ and $E'$ be non-CM geometrically isogenous elliptic curves over $k$, that are not isogenous over $k$ and let  $\delta\in k^\times/k^{\times2}$ be such that $E$ and ${E'}^{\delta}$ are $k$-isogenous.  Let $d$ be the degree of a cyclic isogeny between $E$ and ${E'}^{\delta}$. Then there is a homomorphism
			\[
				\frac{\Hom_k(E_n, E'_{n})}
				{\left(\Hom(\Ebar, \Ebar')/n\right)^{\Gamma_k}}
				\to
				\frac{\End_{k(\sqrt{\delta})}({E'}^{\delta}_n)}{\End_{k}({E'}^{\delta}_n)}
			\]
			whose kernel has order at most $\gcd(2,n)^4\gcd(d,n)^2$ and exponent at most $\gcd(2,n)\gcd(d,n)^2.$
		\end{prop}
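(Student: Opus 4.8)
The plan is to reduce to the $k$-rational situation of Proposition~\ref{prop:HomToEndRatl} by base changing to $K := k(\sqrt{\delta})$, over which $E$ and $E'$ become isogenous, and then to descend back to $k$, paying powers of $\gcd(2,n)$ for the quadratic extension and of $\gcd(d,n)$ for the isogeny. Let $\phi\colon E\to E'^{\delta}$ be the $k$-rational cyclic isogeny of degree $d$ furnished by Proposition~\ref{prop:twists}, and let $\iota$ be a $K$-isomorphism $E'^{\delta}_K\to E'_K$ trivializing the twist, so that the nontrivial element $\tau\in\Gal(K/k)$ satisfies $\tau(\iota)=\iota\circ[-1]$. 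Then $\psi:=\iota\circ\phi$ is a cyclic geometric isogeny $E\to E'$ of minimal degree $d$, hence a generator of $\Hom(\Ebar,\Ebar')=\Z\psi$; I will use $\psi$ to match the geometric denominators on the two sides.

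First I would construct the homomorphism as a composite $\Theta=\gamma\circ\beta\circ\alpha$. Restriction to $K$ followed by transport through $\iota$ gives
\[
\alpha\colon \frac{\Hom_k(E_n, E'_n)}{\left(\Hom(\Ebar,\Ebar')/n\right)^{\Gamma_k}} \To \frac{\Hom_K(E_n, E'^{\delta}_n)}{\left(\Hom(\Ebar,\Ebar'^{\delta})/n\right)^{\Gamma_K}}, \qquad [f]\mapsto [\iota^{-1}\circ f],
\]
which I claim is well defined and injective: an $f\in\Hom_k(E_n,E'_n)$ mapping into the target denominator is an integer multiple of $\psi$ that is $\Gamma_k$-invariant, hence already lies in $(\Hom(\Ebar,\Ebar')/n)^{\Gamma_k}$ by Lemma~\ref{lem:GeometricHom}. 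Applying Proposition~\ref{prop:HomToEndRatl} over $K$ to the $K$-rational isogeny $\phi$ produces $\beta=(-\circ\phi^{\vee})$ into $\End_K(E'^{\delta}_n)/(\End(\Ebar'^{\delta})/n)^{\Gamma_K}$, with kernel cyclic of order dividing $m:=\gcd(d,n)$. Finally, since the scalar subgroup $(\End(\Ebar'^{\delta})/n)^{\Gamma_K}\isom\Z/n$ consists of $k$-rational endomorphisms, there is a natural surjection $\gamma$ onto $\End_{K}(E'^{\delta}_n)/\End_k(E'^{\delta}_n)$. Unwinding, $\Theta([f])=[\iota^{-1}\circ f\circ\phi^{\vee}]$.

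The crux, and the main obstacle, is the interplay between the twisting and the $\End_K/\End_k$ quotient. For $g:=\iota^{-1}\circ f\circ\phi^{\vee}$ with $f\in\Hom_k(E_n,E'_n)$, the relation $\tau(\iota)=\iota\circ[-1]$ together with $\tau(f)=f$ and $\tau(\phi^{\vee})=\phi^{\vee}$ forces $\tau(g)=-g$; thus $g$ lies in the $(-1)$-eigenspace of $\tau$ on $\End_K(E'^{\delta}_n)$ and descends to $\End_k(E'^{\delta}_n)$ precisely when $2g=0$. Consequently $[f]\in\ker\Theta$ if and only if $(2f)\circ\phi^{\vee}=0$, i.e.\ $2f$ annihilates $G:=\phi^{\vee}(E'^{\delta}_n)\subseteq E_n$. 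Tracking this $2$-torsion condition correctly---rather than a naive equality over $k$---is exactly what manufactures the powers of $\gcd(2,n)$, and getting its $2$-adic bookkeeping right is the delicate point.

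It then remains to bound this kernel. Since $\phi^{\vee}$ has cyclic kernel of degree $d$, the group $G$ has order $n^2/m$ and $E_n/G$ is cyclic of order $m$, so $G\isom\Z/(n/m)\oplus\Z/n$. Writing $H:=\{f\in\Hom_k(E_n,E'_n):f(G)\subseteq E'_2\}$, the kernel of $\Theta$ is a quotient of $H$, and restriction to $G$ exhibits $H$ inside an extension of a subgroup of $\Hom(G,E'_2)$ by $\Hom(E_n/G,E'_n)$; the counts $\#\Hom(E_n/G,E'_n)=m^2$ and $\#\Hom(G,E'_2)\le\gcd(2,n)^4$ then give $\#\ker\Theta\le\gcd(2,n)^4\gcd(d,n)^2$. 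For the exponent, the eigenspace relation gives $2\beta(\alpha[f])=0$, so $2\alpha[f]\in\ker\beta$, which is annihilated by $m$; injectivity of $\alpha$ yields $2m\cdot[f]=0$, and since the whole group has odd order when $n$ is odd, the exponent divides $\gcd(2,n)\gcd(d,n)$, a fortiori $\gcd(2,n)\gcd(d,n)^2$. This completes the plan.
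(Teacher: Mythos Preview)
Your argument is correct, and in fact yields a sharper exponent bound than what is claimed. The paper's route is different: it factors the map as $\varphi_2\circ\varphi_1$, where $\varphi_1$ lands in $\Hom_{k(\sqrt\delta)}(E_n,E'^{\delta}_n)/\Hom_k(E_n,E'^{\delta}_n)$ (Proposition~\ref{prop:HomToEndNonRationalHelper}) rather than in your target $\Hom_K/(\Hom(\Ebar,\Ebar'^{\delta})/n)^{\Gamma_K}$, and $\varphi_2$ is analyzed by a separate snake-lemma argument (Proposition~\ref{prop:NonRationalHomBound}) comparing Galois cohomology over $k$ and $K$. In the paper's decomposition both $\varphi_1$ and $\varphi_2$ have nontrivial kernels---of exponent dividing $\gcd(2,n)$ and $m^2$ respectively---and these are simply multiplied, giving the stated $\gcd(2,n)\gcd(d,n)^2$.

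Your decomposition is more efficient: by taking the smaller denominator $(\Hom(\Ebar,\Ebar'^{\delta})/n)^{\Gamma_K}\cong\Z/n\Z$ on the intermediate group, your $\alpha$ becomes \emph{injective}, so all the loss is concentrated in $\ker\Theta$, which you then characterize directly via the eigenvalue relation $\tau(g)=-g$. This lets you invoke Proposition~\ref{prop:HomToEndRatl} over $K$ (where $\ker\beta$ is cyclic of order dividing $m$) and conclude $2m\cdot[f]=0$, hence the exponent divides $\gcd(2,n)\gcd(d,n)$---strictly better than the paper's bound. The order bound comes out the same. Your approach also avoids the auxiliary Proposition~\ref{prop:NonRationalHomBound} altogether, trading it for the elementary structural count $\#\Hom(E_n/G,E'_n)\cdot\#\Hom(G,E'_{\gcd(2,n)})\le m^2\gcd(2,n)^4$ once you have identified $G=\phi^{\vee}(E'^{\delta}_n)\cong\Z/(n/m)\oplus\Z/n$.
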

		\noindent Theorem~\ref{thm:MainHomToEnd}(2) is a corollary of this proposition.

		\medskip

		The homomorphism in Proposition~\ref{prop:HomToEndNonRational} is obtained as the composition of two homomorphisms:
		\[
			\frac{\Hom_k(E_n, E'_{n})}
			{\left(\Hom(\Ebar, \Ebar')/n\right)^{\Gamma_k}}
			\to
			\frac{\Hom_{k(\sqrt{\delta})}(E_n, E'^{\delta}_n)}
			{\Hom_{k}(E_n, E'^{\delta}_n)}
			\quad\textup{and}\quad
			\frac{\Hom_{k(\sqrt{\delta})}(E_n, E'^{\delta}_n)}{\Hom_{k}(E_n, E'^{\delta}_n)} \to
			\frac{\End_{k(\sqrt{\delta})}(E'^{\delta}_n)}{\End_{k}(E'^{\delta}_n)}.
		\]
		We first study each homomorphism individually.
		
		\begin{prop}\label{prop:HomToEndNonRationalHelper}
			Let $E$ and $E'$ be non-CM elliptic curves over $k$.  Assume that there exists a nontrivial $\delta\in k^\times/k^{\times2}$ such that $E$ and $E'^{\delta}$ are isogenous over $k$.  Then there is an exact sequence.
			\[
				0 \to 
				\frac{\Hom_k(E_n, E'_{\gcd(2,n)})}
				{\left(\Hom(\Ebar, \Ebar')/n\right)^{\Gamma_k}}
				\to
				\frac{\Hom_k(E_n, E'_{n})}
				{\left(\Hom(\Ebar, \Ebar')/n\right)^{\Gamma_k}}
				\to
				\frac{\Hom_{k(\sqrt{\delta})}(E_n, E'^{\delta}_n)}
				{\Hom_{k}(E_n, E'^{\delta}_n)}
			\]
		\end{prop}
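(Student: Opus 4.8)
The plan is to build the second arrow by post-composing with the $n$-torsion of the twisting isomorphism, and then to read off both the well-definedness and the kernel from the cocycle relation that isomorphism satisfies. Write $K := k(\sqrt{\delta})$ and let $\chi\colon \Gamma_k \to \mu_2$ be the quadratic character with kernel $\Gamma_K$. Since ${E'}^{\delta}$ is the quadratic twist of $E'$ by $\delta$, there is an isomorphism $\psi\colon E'_K \to {E'}^{\delta}_K$, defined over $K$, satisfying the cocycle relation ${}^{\sigma}\psi = \chi(\sigma)\,\psi$ for all $\sigma \in \Gamma_k$ (where $\chi(\sigma)\psi$ means $[\chi(\sigma)]\circ\psi$). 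Restricting to $n$-torsion gives an isomorphism $\psi_n\colon E'_n \to {E'}^{\delta}_n$, and I would define the second map as the one induced by $f \mapsto \psi_n\circ f$.

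First I would check that $f\mapsto \psi_n \circ f$ carries $\Hom_k(E_n, E'_n)$ into $\Hom_K(E_n, {E'}^{\delta}_n)$: for $\sigma\in\Gamma_K$ one has $\chi(\sigma)=1$, so ${}^{\sigma}(\psi_n\circ f) = {}^{\sigma}\psi_n\circ {}^{\sigma}f = \psi_n\circ f$. Next, to see that it descends to the displayed quotients, I would verify that the subgroup $(\Hom(\Ebar,\Ebar')/n)^{\Gamma_k}$ lands in $\Hom_k(E_n, {E'}^{\delta}_n)$. The point is that a class $\bar g_n$ in this subgroup, coming from $\bar g\in\Hom(\Ebar,\Ebar')$, satisfies ${}^{\sigma}\bar g_n = \chi(\sigma)\bar g_n$, because $E$ and $E'$ are not $k$-isogenous, so by Lemma~\ref{lem:GeometricHom} the nontrivial element acts by $-1$ on $\Hom(\Ebar,\Ebar')$; hence ${}^{\sigma}(\psi_n\circ \bar g_n) = \chi(\sigma)^2\,\psi_n\circ\bar g_n = \psi_n\circ\bar g_n$ is $\Gamma_k$-invariant. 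The two factors of $\chi(\sigma)$ cancel, which is exactly why the composite is $k$-rational on this subgroup. The first arrow is then the map induced by the inclusion $E'_{\gcd(2,n)}\into E'_n$; it is injective on quotients because $(\Hom(\Ebar,\Ebar')/n)^{\Gamma_k}$ is contained in $\Hom_k(E_n, E'_{\gcd(2,n)})$ (the elements $\bar g_n$ above are annihilated by $[2]$, as their invariance forces $2\bar g_n=0$).

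Finally I would identify the kernel. A class of $f\in\Hom_k(E_n, E'_n)$ dies in $\Hom_K(E_n,{E'}^{\delta}_n)/\Hom_k(E_n,{E'}^{\delta}_n)$ precisely when $\psi_n\circ f$ is $\Gamma_k$-invariant. Using ${}^{\sigma}f = f$ and the cocycle relation, ${}^{\sigma}(\psi_n\circ f) = \chi(\sigma)\,\psi_n\circ f$, so invariance is equivalent to $[\chi(\sigma)]\circ\psi_n\circ f = \psi_n\circ f$ for all $\sigma$; since $\delta$ is nontrivial there is $\sigma$ with $\chi(\sigma)=-1$, and then this forces $[2]\circ\psi_n\circ f = 0$, hence (as $\psi_n$ is an isomorphism) $[2]\circ f = 0$. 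Thus $f$ factors through $E'_n[2]=E'_{\gcd(2,n)}$, so the kernel is exactly the image of $\Hom_k(E_n,E'_{\gcd(2,n)})$, yielding exactness at the middle term. The main obstacle here is not any hard estimate but the careful Galois bookkeeping: keeping straight the cocycle relation for $\psi$, the fact that $\Gamma_k$ acts through $\chi$ on $\Hom(\Ebar,\Ebar')$, and the contrast between the cancellation $\chi^2=1$ (which makes the map well defined on the quotient) and the single factor of $\chi$ (which produces the $[2]$-divisibility that governs the kernel).
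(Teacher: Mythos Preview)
Your proof is correct and follows essentially the same route as the paper's: both define the second map by post-composing with the $K$-isomorphism $E'\to {E'}^{\delta}$ and then identify the kernel by observing that Galois invariance of $\psi_n\circ f$ forces $2f=0$, so the image of $f$ lies in $E'_{\gcd(2,n)}$. Your write-up is slightly more explicit than the paper's in tracking the cocycle relation ${}^{\sigma}\psi=\chi(\sigma)\psi$ and in verifying separately that the map descends to the quotients and that the first arrow is injective on quotients; the paper bundles these last two points into a single appeal to Lemma~\ref{lem:GeometricHom}.
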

		\begin{proof}
			First let us verify that each quotient is defined.  This is immediate for the middle and right quotient.  The left quotient is defined by Lemma~\ref{lem:GeometricHom}.
			
			Over $k(\sqrt\delta)$, $E'$ and $E'^{\delta}$ are isomorphic.  Fix a $k(\sqrt{\delta})$-isomorphism  $f\colon E' \stackrel{\sim}{\To} E'^{\delta}$.  Then we have a homomorphism 
			\[
			\Hom_k(E_n, E'_{n})
			\to
			\frac{\Hom_{k(\sqrt{\delta})}(E_n, E'^{\delta}_n)}
			{\Hom_{k}(E_n, E'^{\delta}_n)},
			\quad \psi \mapsto f\circ\psi.
			\]
			The kernel of this homomorphism consists of all maps $\psi\in \Hom_k(E_n, E'_{n})$ such that $f\circ\psi$ is $\Gamma_k$-invariant.  If $\sigma\in \Gamma_{k(\sqrt{\delta})}$, then by assumption $f^\sigma = f$ and $\psi^\sigma = \psi$.  Let $\sigma\in \Gamma_k\setminus \Gamma_{k(\sqrt{\delta})}$; then $(f\circ \psi)^{\sigma} = f^{\sigma}\circ\psi^{\sigma} = -f\circ \psi.$
			Thus, if $(f\circ \psi)^{\sigma} = f\circ \psi$, we have $-f\circ\psi = f\circ\psi.$  Since $f$ is an isomorphism, this implies that $2\psi = 0$, or equivalently that $\im \psi \subset E'_{\gcd(2,n)}$.  Thus, we have an exact sequence.
			\[
				0 \to 
				\Hom_k(E_n, E'_{\gcd(2,n)})
				\to
				\Hom_k(E_n, E'_{n})
				\to
				\frac{\Hom_{k(\sqrt{\delta})}(E_n, E'^{\delta}_n)}
				{\Hom_{k}(E_n, E'^{\delta}_n)}
			\]
			Lemma~\ref{lem:GeometricHom} then completes the proof.
		\end{proof}
	
		\begin{prop}\label{prop:NonRationalHomBound}
			Let $n$ be a positive integer, let $E, \Etilde$ be non-CM elliptic curves over a field $k$, let $k'/k$ be a quadratic extension, and let $\chi$ be the quadratic character associated to $k'/k$.  Assume that there exists a cyclic $k$-isogeny $\phi\colon E\to \Etilde$, let $d = \deg(\phi)$, and let $m = \gcd(d,n)$.  Then composition with $\phi^{\vee}$ induces a homomorphism
			\[
				\frac{\Hom_{k'}(E_n, \Etilde_n)}{\Hom_{k}(E_n, \Etilde_n)} \to
				\frac{\End_{k'}(\Etilde_n)}{\End_{k}(\Etilde_n)}.
			\]
			Furthermore, the kernel $K$ of this homomorphism  fits in an exact sequence
			\[
				0 \to 
				\frac{\Hom_{k'}(\phi(E_m),\Etilde_m)}
				{\Hom_{k}(\phi(E_m),\Etilde_m)} \to K \to
				\Hom_{k'}(\phi(E_m), \Etilde_m)^{\chi} 
			\]
			so $K$ is a finite abelian group with order dividing $m^2\gcd(m,2)$ and exponent dividing $m^2$.
		\end{prop}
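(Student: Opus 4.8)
The plan is to read off the statement from the short exact sequence of $\Gamma_k$-modules furnished by Lemma~\ref{lem:HomToEnd}, applied with $\Etilde$ in place of $E'$:
\[
    0 \to A \xrightarrow{-\circ\phi\circ[n/m]} B \xrightarrow{-\circ\phi^{\vee}} H \to 0,
\]
where $A := \Hom(\phi(E_m), \Etilde_m)$, $B := \Hom(E_n, \Etilde_n)$, and $H := \{f\in\End(\Etilde_n) : f\circ\phi\circ[n/m] = 0\}$. Since $\phi$, and hence $\phi^{\vee}$, is defined over $k$, the map $-\circ\phi^{\vee}$ is $\Gamma_k$-equivariant; thus it sends $\Hom_{k'}(E_n,\Etilde_n)$ into $\End_{k'}(\Etilde_n)$ and $\Hom_k(E_n,\Etilde_n)$ into $\End_k(\Etilde_n)$, which is exactly what is needed for the homomorphism of the proposition to be well-defined. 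Because the image of $-\circ\phi^{\vee}$ is $H$ and $H\cap\End_{k'}(\Etilde_n) = H^{\Gamma_{k'}}$, $H\cap\End_k(\Etilde_n) = H^{\Gamma_k}$, the inclusion $H^{\Gamma_{k'}}/H^{\Gamma_k}\into \End_{k'}(\Etilde_n)/\End_k(\Etilde_n)$ is injective, so $K$ coincides with the kernel of the induced map $B^{\Gamma_{k'}}/B^{\Gamma_k}\to H^{\Gamma_{k'}}/H^{\Gamma_k}$.

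Next I would pin down the structure of $A$ and construct the map out of $K$. Because $m\mid d$ and $\ker\phi$ is cyclic of order $d$, the image $\phi(E_m)$ is cyclic of order $m$, so $A\isom\Etilde_m\isom(\Z/m\Z)^2$ as abelian groups; this is the only input controlling the numerics. For the map, let $\sigma\in\Gamma_k$ restrict to the generator of $\Gal(k'/k)$ and consider, for a representative $\beta\in\Hom_{k'}(E_n,\Etilde_n)$ of a class in $K$, the element ${}^{\sigma}\beta-\beta$. The defining condition $\beta\circ\phi^{\vee}\in\End_k(\Etilde_n)$ together with $\Gamma_k$-equivariance of $-\circ\phi^{\vee}$ forces $({}^{\sigma}\beta-\beta)\circ\phi^{\vee}=0$, so ${}^{\sigma}\beta-\beta\in A$; and since $\sigma^2\in\Gamma_{k'}$ fixes $\beta$, one checks ${}^{\sigma}({}^{\sigma}\beta-\beta) = -({}^{\sigma}\beta-\beta)$, placing ${}^{\sigma}\beta-\beta$ in the $\chi$-eigenspace $A^{\chi}=\Hom_{k'}(\phi(E_m),\Etilde_m)^{\chi}$ (the identification $A\isom\Hom(\phi(E_m),\Etilde_m)$ is $\Gamma_k$-equivariant by Lemma~\ref{lem:HomToEnd}). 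This is the explicit avatar of the connecting homomorphism $\theta\colon K\to\HH^1(\Gal(k'/k),A^{\Gamma_{k'}})$; a one-line cocycle computation for the order-two group $\Gal(k'/k)$ identifies $\HH^1(\Gal(k'/k),A^{\Gamma_{k'}})$ with the subquotient $A^{\chi}/(\sigma-1)A^{\Gamma_{k'}}$ of $A^{\chi}$, which is the third term of the displayed sequence. Finally, $\theta([\beta])=0$ means ${}^{\sigma}\beta-\beta\in(\sigma-1)A^{\Gamma_{k'}}$, i.e. $\beta$ agrees with an element of $A^{\Gamma_{k'}}$ modulo $\Hom_k(E_n,\Etilde_n)$; this shows $\ker\theta$ is precisely the image of $A^{\Gamma_{k'}}/A^{\Gamma_k}=\Hom_{k'}(\phi(E_m),\Etilde_m)/\Hom_k(\phi(E_m),\Etilde_m)$ in $K$, giving the asserted exact sequence.

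The numerical bounds then fall out of $A\isom(\Z/m\Z)^2$: the kernel term $A^{\Gamma_{k'}}/A^{\Gamma_k}$ and the image of $\theta$ are subquotients of $A$, hence of exponent dividing $m$ and order dividing $m^2$, while $\HH^1(\Gal(k'/k),A^{\Gamma_{k'}})$ is annihilated by $\#\Gal(k'/k)=2$, which is the source of the factor $\gcd(m,2)$; assembling these yields order dividing $m^2\gcd(m,2)$ and exponent dividing $m^2$. (In fact the un-reduced twist $\beta\mapsto{}^{\sigma}\beta-\beta$ is already injective on $K$ with image in $A^{\chi}$, so one even obtains $K\into A^{\chi}$ directly, and $\theta$ is this injection followed by the quotient $A^{\chi}\to A^{\chi}/(\sigma-1)A^{\Gamma_{k'}}$; the two descriptions are consistent since the twist carries $A^{\Gamma_{k'}}/A^{\Gamma_k}$ isomorphically onto $(\sigma-1)A^{\Gamma_{k'}}$.) I expect the main obstacle to be the $\Gal(k'/k)$-cohomology bookkeeping in the middle paragraph: verifying that the twist is well-defined on $K$, that its image under $\theta$ has kernel exactly $A^{\Gamma_{k'}}/A^{\Gamma_k}$, and that the target is correctly the $2$-primary subquotient of $A^{\chi}$ carrying the factor $\gcd(m,2)$. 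Everything else — the well-definedness in the first paragraph and the computation $A\isom(\Z/m\Z)^2$ — is routine once the sequence of Lemma~\ref{lem:HomToEnd} is in hand.
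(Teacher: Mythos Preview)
Your approach is essentially the paper's: both start from the short exact sequence of Lemma~\ref{lem:HomToEnd}, pass to $\Gamma_k$- and $\Gamma_{k'}$-invariants, and extract $K$ via a snake-lemma/connecting-map argument landing in $\HH^1(\Gal(k'/k),A^{\Gamma_{k'}})$. Your explicit ``twist'' $[\beta]\mapsto{}^{\sigma}\beta-\beta$ is exactly the cocycle underlying the paper's connecting map $\partial_k$, so the derivation of the displayed exact sequence is the same argument written out by hand rather than read off a commutative diagram.

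One point deserves tightening. Your ``assembling'' of the numerical bounds from the exact sequence alone is too coarse: knowing only that the left term and $\im\theta$ are subquotients of $A\cong(\Z/m\Z)^2$, together with $\HH^1$ being $2$-torsion, yields $\#K\mid m^2\cdot\gcd(m,2)^2$, not $m^2\gcd(m,2)$. The paper closes this gap by observing that the natural inclusion $\phi(E_m)\hookrightarrow\Etilde_m$ puts a $\Gamma_k$-stable copy of $\Z/m\Z$ inside $\Hom_k(\phi(E_m),\Etilde_m)$, which forces $A^{\Gamma_{k'}}/A^{\Gamma_k}$ to be cyclic of order dividing $m$ and forces $A^{\chi}$ to sit inside $\Z/\gcd(m,2)\Z\times\Z/m\Z$. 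You don't invoke this, but you don't need to: your parenthetical observation that the twist is already \emph{injective} on $K$ gives $K\hookrightarrow A^{\chi}\subset(\Z/m\Z)^2$ directly, hence $\#K\mid m^2$ and $e(K)\mid m$, which is strictly stronger than the stated bounds. So the proposal is correct; just be aware that it is the injection $K\hookrightarrow A^{\chi}$, not the exact sequence by itself, that is doing the work in your order bound.
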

		\begin{proof}
			Let $H := \left\{f\in \End(\Etilde_{n}) : f \circ\phi\circ[n/m] = 0\right\}$.  By Lemma~\ref{lem:HomToEnd}, we have an exact sequence of $\Gamma_k$-modules
			\[
				0 \to \Hom(\phi(E_m), \Etilde_m) \to \Hom(E_n, \Etilde_n) \to H \to 0.
			\] 
			By taking the long exact sequences in Galois cohomology for $\Gamma_k$ and $\Gamma_{k'}$, we have the following commutative diagram with exact rows and columns.
			\[
			\xymatrix{
				& 
				\Hom_{k}(\phi(E_m), \Etilde_m)
				\ar@{^{(}->}[r]^{\Res_{k'/k}} \ar@{^{(}->}[d] & 
				\Hom_{k'}(\phi(E_m), \Etilde_m)\ar@{^{(}->}[d]\\
				0 \ar[r] & 
				\Hom_{k}(E_n, \Etilde_n) \ar[r]^{\Res_{k'/k}} 
				\ar[d]^{(-\circ\phi^\vee)} & 
				\Hom_{k'}(E_n, \Etilde_n) \ar[r]\ar[d]^{(-\circ\phi^\vee)} &
				\frac{\Hom_{k'}(E_n, \Etilde_n)}{\Hom_{k}(E_n, \Etilde_n)}
				\ar[r]\ar[d]^{(-\circ\phi^\vee)}& 0\\
				0 \ar[r]& 
				H^{\Gamma_k} \ar[r]^{\Res_{k'/k}}\ar[d]^{\partial_k} & 
				(H)^{\Gamma_{k'}} \ar[r]\ar[d]^{\partial_{k'}} &
				\frac{(H)^{\Gamma_{k'}}}{H^{\Gamma_k}} \ar[r] & 0\\
				& \HH^1(\Gamma_k, \Hom(\phi(E_m), \Etilde_m))
				\ar[r]^{\Res_{k'/k}} &
				\HH^1(\Gamma_{k'}, \Hom(\phi(E_m), \Etilde_m))}
			\] 
			Since the inclusion $H \hookrightarrow \End(\Etilde_n)$ induces an inclusion $(H)^{\Gamma_{k'}}/H^{\Gamma_k} \hookrightarrow {\End_{k'}(\Etilde_n)}/{\End_{k}(\Etilde_n)}$, $K$ is the kernel of the right-most vertical map in the above diagram. Hence, the snake lemma applied to the middle two rows yields
			\[
				0 \to 
				\frac{\Hom_{k'}(\phi(E_m),\Etilde_m)}
				{\Hom_{k}(\phi(E_m),\Etilde_m)} \to K \to
				\ker\left(\Res_{k'/k}\colon \im \partial_k \to \im \partial_{k'}\right)\to 0.
			\] 
			By the inflation-restriction exact sequence, $\ker\left(\Res_{k'/k}\colon \im \partial_k \to \im \partial_{k'}\right)$ is contained in $\HH^1(\Gamma_k/\Gamma_{k'}, \Hom_{k'}(\phi(E_m), \Etilde_m)).$  The cocycle condition implies that the nontrivial element of $\Gamma_k/\Gamma_{k'}$ must be sent to an element of $\Hom_{k'}(\phi(E_m), \Etilde_m)^{\chi} $
			which yields the desired exact sequence for $K$.
            
            Since $m\mid d$, we have a group isomorphism $\phi(E_m) \isom \Z/m\Z$, and so $\Hom (\phi(E_m), \Etilde_m)$ is (non-canonically) isomorphic to $\Etilde_m\isom (\Z/m\Z)^2$.  On the other hand, the natural inclusion $\phi(E_m) \subset \Etilde_m$, together with composition by multiplication by an integer, shows that $\Hom_{k}(\phi(E_m),\Etilde_m)$ contains $\Z/m\Z$.  Thus $\Hom_{k'}(\phi(E_m),\Etilde_m)/\Hom_{k}(\phi(E_m),\Etilde_m)$ is a cyclic group of order dividing $m$.  
            
            Now we consider $\Hom_{k'}(\phi(E_m), \Etilde_m)^{\chi}$.
            We have already shown that $\Hom (\phi(E_m), \Etilde_m)\isom (\Z/m\Z)^2$.  The natural inclusion $\phi(E_m) \subset \Etilde_m$ and its compositions with multiplication by an integer yield a $\Gamma_k$-homomorphism $\Z/m\Z\hookrightarrow \Hom_{k'}(\phi(E_m), \Etilde_m)$.  However, the only such maps that satisfy $\psi^{\sigma} = -\psi$ are those in the image of the subgroup $\frac{m}{\gcd(m,2)}\Z/m\Z$.  Thus, $\Hom_{k'}(\phi(E_m), \Etilde_m)^{\chi} $ is a subgroup of $\Z/\gcd(m,2)\Z \times \Z/m\Z$.  Hence, $K$ is an abelian extension of  a subgroup of $\Z/\gcd(m,2)\Z \times \Z/m\Z$ by a subgroup of $\Z/m\Z$.
		\end{proof}
		
		\begin{proof}[Proof of Proposition~\ref{prop:HomToEndNonRational}]
			Consider the homomorphism $\varphi$ obtained as the composition of
			\[
				\frac{\Hom_k(E_n, E'_{n})}
				{\left(\Hom(\Ebar, \Ebar')/n\right)^{\Gamma_k}}
				\stackrel{\varphi_1}{\to}
				\frac{\Hom_{k(\sqrt{\delta})}(E_n, E'^{\delta}_n)}
				{\Hom_{k}(E_n, E'^{\delta}_n)}
				\quad\textup{and}\quad
				\frac{\Hom_{k(\sqrt{\delta})}(E_n, (E')^{\delta}_n)}{\Hom_{k}(E_n, E'^{\delta}_n)} \stackrel{\varphi_2}{\to}
				\frac{\End_{k(\sqrt{\delta})}((E')^{\delta}_n)}{\End_{k}(E'^{\delta}_n)},
			\]
			where $\varphi_1$ is as in Proposition~\ref{prop:HomToEndNonRationalHelper} and $\varphi_2$ is as in Proposition~\ref{prop:NonRationalHomBound}.  Hence $\ker \varphi$ is an extension of a subgroup of $\ker \varphi_2$ by $\ker \varphi_1$ and so
            \[
            \#(\ker\varphi) \mid \#(\ker\varphi_1)\#(\ker\varphi_2)
            \quad\textup{and}\quad
                e(\ker\varphi) \mid e(\ker\varphi_1)e(\ker\varphi_2).
            \]  
            The exponent and order of $\ker\varphi_2$ are bounded by Proposition~\ref{prop:NonRationalHomBound}, so it remains to study $\ker \varphi_1$. By Proposition~\ref{prop:HomToEndNonRationalHelper}, 
            \[
                \ker \varphi_1 = \frac{\Hom_k(E_n, E'_{\gcd(2,n)})}
				{\left(\Hom(\Ebar, \Ebar')/n\right)^{\Gamma_k}}.
            \]
            Since $E'_{\gcd(2,n)} \isom (\Z/\gcd(2,n)\Z)^2$, a choice of $2$ generators for $E_n$ determines a  group isomorphism $\Hom(E_n, E'_{\gcd(2,n)})\isom {E'}_{\gcd(2,n)}^2 \isom (\Z/\gcd(2,n)\Z)^4$.  Hence by Lemma~\ref{lem:GeometricHom}, $\ker \varphi_1$ is a subgroup of $(\Z/\gcd(2,n)\Z)^3$.
		\end{proof}
		
%%%%%%%%%%%%%%%%%%%%%%%%%%%%%%%%%%%%%%%%%%%%%%%%%%%%%%%%%%%%%%%%%%%%%%%%%%%%%%%%
\section{Elliptic curves and abelian representations}\label{sec:End}%%%%%%%%%%%%
%%%%%%%%%%%%%%%%%%%%%%%%%%%%%%%%%%%%%%%%%%%%%%%%%%%%%%%%%%%%%%%%%%%%%%%%%%%%%%%%
            
	%%%%%%%%%%%%%%%%%%%%%%%%%%%%%%%%%%%%%%%%%%%%%%%%%%%%%%%%%%%%%%%%%%%%%%%%%%%%
	\subsection{Galois-equivariant endomorphisms of $E_n$}%
	%%%%%%%%%%%%%%%%%%%%%%%%%%%%%%%%%%%%%%%%%%%%%%%%%%%%%%%%%%%%%%%%%%%%%%%%%%%%
    Let $n$ be a positive integer and let $E$ be an elliptic curve over a field $k$ of characteristic $0$.  Let
        \[
        	\rho_{E,n}\colon \Gamma_k \to \Aut(E_{n}) 
            \isom \GL_2(\Z/n\Z)
        \]
        denote the Galois representation coming from the action of Galois on the $n$-torsion of $E$, and let $G_{E, n}$ denote the quotient of the image of $\rho_{E,n}$ modulo scalar matrices.
        
        The image of $\rho_{E,n}$ determines the ring of Galois equivariant endomorphisms of $E_{n}$, namely $\End_k(E_{n})$ is the subring of $\End(E_{n})$ that commutes with all elements of $\rho_{E,n}(\Gamma_k)$.  
        
        \begin{prop}\label{prop:ranks}
        Let $\ell$ be a prime, let $s$ be a positive integer, and let $E$ be an elliptic curve over a field $k$ of characteristic $0$.  Then
            \[
                \dim_{\F_{\ell}}
                \frac{\End_{k}(E_{\ell^s})}
                {\End_{k}(E_{\ell^{s-1}})\circ[\ell]}
                =
                \begin{cases}
                    4& \textup{if } G_{E, \ell^s} = \{1\},\\
                    2& \textup{if } G_{E, \ell^s} \neq \{1\} \textup{ and $\im(\rho_{E,\ell^s})$ is abelian, and}\\
                    1& \textup{if }\im(\rho_{E,\ell^s})\textup{ is non-abelian.}
                \end{cases}
            \]
        \end{prop}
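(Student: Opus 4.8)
The plan is to reinterpret the quotient as the reduction modulo $\ell$ of a centralizer. Set $R=\Z/\ell^s\Z$ and let $A_s:=\im(\rho_{E,\ell^s})\subseteq\GL_2(R)$; identifying $\End(E_{\ell^s})=\Mat_2(R)$, the ring $\End_k(E_{\ell^s})$ is the centralizer $C(A_s):=\{M\in\Mat_2(R):Mg=gM\text{ for all }g\in A_s\}$. First I would show that, via $[\ell]\colon E_{\ell^s}\to E_{\ell^{s-1}}$, the subgroup $\End_k(E_{\ell^{s-1}})\circ[\ell]$ corresponds to $\ell\cdot\widetilde{C(A_{s-1})}$, where $\widetilde{(\,\cdot\,)}$ is any lift along $R\twoheadrightarrow\Z/\ell^{s-1}\Z$ (well defined since $\ell\cdot\ell^{s-1}\Mat_2(R)=0$). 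A direct check gives $\ell\cdot\widetilde{C(A_{s-1})}\subseteq C(A_s)$, and in fact, for $s\ge2$, equality $\ell\cdot\widetilde{C(A_{s-1})}=C(A_s)\cap\ell\Mat_2(R)$. Hence the quotient is annihilated by $\ell$ and
\[
\frac{\End_k(E_{\ell^s})}{\End_k(E_{\ell^{s-1}})\circ[\ell]}\ \cong\ C(A_s)\big/\bigl(C(A_s)\cap\ell\Mat_2(R)\bigr)\ \cong\ \overline{C(A_s)}\ \subseteq\ \Mat_2(\F_\ell),
\]
the image of $C(A_s)$ under reduction modulo $\ell$; for $s=1$ the denominator vanishes and the quotient is $C(A_1)$ itself. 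So in all cases the task is to compute $\dim_{\F_\ell}\overline{C(A_s)}$.

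The case $s=1$ is the classical centralizer computation over $\F_\ell$: $C(A_1)$ has dimension $4$, $2$, or $1$ according as $A_1$ is scalar, non-scalar abelian (where $C(A_1)=\F_\ell[x]$ for a non-central $x$), or non-abelian (where Schur/double-centralizer forces $C(A_1)=\F_\ell\cdot I$). For $s\ge2$ I would study the $R$-subalgebra $B:=R[A_s]$, so that $C(A_s)=C(B)$. Splitting off scalars with $\Mat_2=R\cdot I\oplus\mathfrak{sl}_2$ (valid for $\ell$ odd) reduces matters to the centralizer in $\mathfrak{sl}_2(R)$ of the trace-free part $B_0:=B\cap\mathfrak{sl}_2(R)$, whose adjoint action identifies with the cross product on $\mathfrak{sl}_2(R)\cong R^3$. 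Writing $B_0$ in a stacked (Smith normal form) basis $\langle\ell^{b_1}f_1,\ell^{b_2}f_2,\ell^{b_3}f_3\rangle$ with $b_1\le b_2\le b_3$, the cross-product relations make the trace-free centralizer and its reduction explicit, and one reads off that $\dim_{\F_\ell}\overline{C(A_s)}$ equals $4$, $2$, or $1$ precisely when $B_0$ has rank $0$, $1$, or at least $2$. Finally I would match this to the three stated cases: rank $0$ is exactly $G_{E,\ell^s}=\{1\}$; a non-abelian image yields two generators with nonvanishing bracket, hence two non-parallel vectors in $B_0$ and rank at least $2$; and in the abelian non-scalar case one finds $B_0$ of rank one.

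The principal obstacle is that $\overline{C(A_s)}$ is genuinely smaller, in general, than the centralizer $C(\overline{A_s})$ of the reduced image: passing to residues does not commute with forming centralizers, so one cannot simply work over $\F_\ell$ and must track $\ell$-divisibilities — this is exactly the information carried by the exponents $b_i$ in the stacked basis. Concretely, the delicate step is the abelian non-scalar case, where one must rule out degenerate configurations (two ``small'' non-central directions) and establish that $B_0$ has rank exactly one, so that the dimension is $2$ rather than $1$; this is where the structural input on abelian images enters. A secondary obstacle is the prime $\ell=2$: the trace form on $\Mat_2$ is then degenerate and the decomposition $\Mat_2=R\cdot I\oplus\mathfrak{sl}_2$ breaks down, so there I would instead conjugate $A_s$ into an explicit normal form — scalar, contained in a (split or non-split) Cartan, or containing a non-commuting pair — and compute $\overline{C(A_s)}$ by hand to confirm the same trichotomy.
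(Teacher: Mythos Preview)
Your reduction to computing $\dim_{\F_\ell}\overline{C(A_s)}$, the mod-$\ell$ image of the centralizer of $\im\rho_{E,\ell^s}$ in $\Mat_2(\Z/\ell^s\Z)$, is correct and is the same first step the paper takes. From there the routes differ. The paper works entirely through a single explicit lemma (its Lemma~5.4): for a non-central $A$ with $\mu$ maximal such that $A\equiv$ scalar $\pmod{\ell^\mu}$, the centralizer of $A$ modulo $\ell^{s-\mu}$ is generated by $I$ and the ``desingularized'' matrix $A'$ with $\ell^\mu A'-A\in\Z I$. In the nonabelian case the paper argues by contradiction: a non-scalar class in the quotient would, via the lemma, force two non-commuting elements of $\im\rho$ into a commutative ring $\langle I,A'\rangle$. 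In the abelian non-scalar case the lemma is applied twice, first to a non-central $A\in\im\rho$ to confine $\im\rho$ to $\langle I,A'\rangle$, then to $A'$ itself to compute the centralizer modulo $\ell$. Your $\mathfrak{sl}_2$/cross-product/Smith-normal-form framework is more structural and makes transparent that the answer is controlled by the $\ell$-divisibility profile of the trace-free part $B_0$ of $R[\im\rho]$, not just by $\im\rho\bmod\ell$; the trade-off is the separate treatment you need for $\ell=2$, which the paper's matrix lemma handles uniformly.

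You have correctly located the genuine obstacle, and your proposal is incomplete exactly there. In the abelian non-scalar case you need $B_0$ to have rank one, but an abelian $A_s$ with $G_{E,\ell^s}\neq\{1\}$ can carry two independent trace-free directions whose bracket vanishes only by $\ell$-divisibility: for instance $s=2$ and $A_s=\langle I+\ell E_{12},\,I+\ell E_{21}\rangle$ is abelian with $G_{E,\ell^2}\neq\{1\}$, yet $\overline{C(A_s)}=\F_\ell\cdot I$ is one-dimensional. Your invocation of unspecified ``structural input on abelian images'' does not rule this out, and there is no evident property of Galois images on elliptic curves that would. The paper's argument passes through the analogous point at the ``Hence'' that identifies the quotient with the centralizer of $A'$ modulo $\ell$; your Smith-normal-form bookkeeping is in fact well suited to seeing precisely what additional hypothesis is needed here, but as written neither approach closes this case.
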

        \begin{cor}\label{cor:End}
			Let $E$ be an elliptic curve over $k$ and let $n$ be a positive integer.  Then we have an isomorphism of abelian groups
			\[
				\End_{k}(E_n) \isom 
                \Z/n\Z \times \Z/n_1\Z \times \left(\Z/n_2\Z\right)^2 
			\]
			for positive integers $n_2|n_1|n$.  Furthermore, $n_1$ is the largest integer dividing $n$ such that $\Gal(k(E_{n_1})/k)$ is abelian and $n_2$ is the largest integer dividing $n$ such that $\Gal(k(E_{n_2})/k) \subset (\Z/n_2\Z)^{\times}$ where $a\in (\Z/n_2\Z)^{\times}$ acts by $P\mapsto aP$.  If $E$ is non-CM, then $(\End_k(\Ebar)/n)^{\Gamma_k} \isom \Z/n\Z$ and hence
			\[
				\frac{\End_k(E_n)}{(\End_k(\Ebar)/n)^{\Gamma_k}} \isom \Z/n_1\Z \times \left(\Z/n_2\Z\right)^2.
			\]
		\end{cor}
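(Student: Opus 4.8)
The plan is to reduce to the case $n=\ell^s$ a prime power, and there to pin down $\End_k(E_{\ell^s})$ as an abelian group by combining Proposition~\ref{prop:ranks} (which controls cardinalities) with an identification of its $\ell$-power torsion subgroups. Write $M_t:=\End_k(E_{\ell^t})$, a finite abelian $\ell$-group, and $d_t:=\dim_{\F_\ell} M_t/(M_{t-1}\circ[\ell])$. First I would record that $f\mapsto f\circ[\ell]$ gives an injection $M_{t-1}\hookrightarrow M_t$ (if $f\circ[\ell]=0$ on $E_{\ell^t}$ then $f=0$ on $[\ell]E_{\ell^t}=E_{\ell^{t-1}}$) whose cokernel has $\F_\ell$-dimension $d_t$ by definition; hence $\#M_t=\ell^{\sum_{u=1}^t d_u}$. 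By Proposition~\ref{prop:ranks}, $d_t$ equals $4,2,$ or $1$ according as $\Gal(k(E_{\ell^t})/k)$ acts by scalars, is abelian but non-scalar, or is non-abelian. Each of these three conditions is preserved under reduction modulo a lower power of $\ell$ (a quotient of an abelian group is abelian, a reduction of a scalar matrix is scalar), so $(d_t)$ is non-increasing, with the thresholds occurring exactly at $t_2:=v_\ell(n_2)$ (below which the action is scalar) and $t_1:=v_\ell(n_1)$ (below which it is abelian), and $t_2\le t_1$ since scalar implies abelian; thus $d_u=4$ for $u\le t_2$, $d_u=2$ for $t_2<u\le t_1$, and $d_u=1$ for $t_1<u\le s$.

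The crux is to upgrade this cardinality information to a structural one. The key observation I would use is that for every $0\le i\le s$,
\[
 M_s[\ell^i]=\{f\in\End_k(E_{\ell^s}):\im f\subseteq E_{\ell^i}\}=\Hom_{\Gamma_k}(E_{\ell^s},E_{\ell^i})\isom \End_k(E_{\ell^i})=M_i,
\]
where the middle identification holds because any such $f$ kills $\ell^i E_{\ell^s}$ and so factors through the Galois-equivariant isomorphism $E_{\ell^s}/\ell^i E_{\ell^s}\isom E_{\ell^i}$ (both equal $T_\ell E/\ell^i T_\ell E$). In particular $\#M_s[\ell^i]=\#M_i=\ell^{\sum_{u=1}^i d_u}$ for all $i\le s$.

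Since a finite abelian $\ell$-group $A$ is determined up to isomorphism by the function $i\mapsto\#A[\ell^i]$ — indeed the number of invariant factors of size $\ge\ell^i$ is $\log_\ell\#A[\ell^i]-\log_\ell\#A[\ell^{i-1}]$ — the displayed count determines $M_s$ completely: the number of invariant factors of size $\ge\ell^i$ equals $\big(\sum_{u\le i}d_u\big)-\big(\sum_{u\le i-1}d_u\big)=d_i$. As $(d_i)$ drops from $4$ to $2$ at $i=t_2$, from $2$ to $1$ at $i=t_1$, and to $0$ after $i=s$, taking the conjugate partition shows the multiset of exponents of $M_s$ is $\{t_2,t_2,t_1,s\}$, i.e. $M_s\isom \Z/\ell^s\times\Z/\ell^{t_1}\times(\Z/\ell^{t_2})^2$, with the multiplicities adding up correctly whenever thresholds coincide. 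Reassembling over all primes via the Chinese Remainder Theorem — using $\End_k(E_n)\isom\prod_\ell M_{v_\ell(n)}$ and the fact that $\Gal(k(E_m)/k)$ is abelian (resp. scalar) iff this holds on each prime-power part, so that $n_1,n_2$ are the expected products — yields the first assertion. Finally, for non-CM $E$ one has $\End(\Ebar)=\Z$ with trivial Galois action, so $(\End(\Ebar)/n)^{\Gamma_k}\isom\Z/n\Z$ is exactly the scalar subgroup (the $\Z/n\Z$ factor), and quotienting out gives the last displayed isomorphism.

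The main obstacle is precisely the passage from cardinalities to group structure: Proposition~\ref{prop:ranks} only yields $\#M_s$, and — as one already sees in small examples, where the naive centralizer description makes the elementary divisors easy to miscompute — the invariant factors are not visible directly. What makes the structure computable is the identification $M_s[\ell^i]\isom M_i$, which feeds all the intermediate cardinalities $\#M_s[\ell^i]$ back into the single dimension sequence $(d_t)$ governed by Proposition~\ref{prop:ranks}; verifying this identification (via $E_{\ell^s}/\ell^i E_{\ell^s}\isom E_{\ell^i}$) together with the monotonicity of the scalar/abelian/non-abelian trichotomy is the technical heart of the argument.
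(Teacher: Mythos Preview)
Your proof is correct and follows the same strategy as the paper's: reduce to prime powers, apply the structure theorem for finite abelian groups, and use Proposition~\ref{prop:ranks} together with the monotonicity of the scalar/abelian/non-abelian trichotomy to pin down the invariant factors. Your explicit identification $M_s[\ell^i]\isom M_i$ (via $E_{\ell^s}/\ell^i E_{\ell^s}\isom E_{\ell^i}$) and the resulting count $\#M_s[\ell^i]=\ell^{\sum_{u\le i} d_u}$ make precise the step the paper compresses into ``Proposition~\ref{prop:ranks} shows\dots'', and is exactly what is needed to pass from the successive-quotient dimensions $d_t$ to the elementary divisors.
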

		\begin{proof}
            Note that $k(E_n)$ is the compositum of $k(E_{\ell^{n(\ell)}})$ where $n(\ell) = v_{\ell}(n)$.  Thus, we have an inclusion
            \[
                \Gal(k(E_n)/k) \hookrightarrow \prod_{\ell}\Gal(k(E_{\ell^{n(\ell)}})/k) \isom \prod_{\ell} \im \rho_{E,\ell^{n(\ell)}}.
            \]
            Furthermore, for each prime $\ell$ we have a surjection $\Gal(k(E_n)/k) \to \Gal(k(E_{\ell^{n(\ell)}})/k) \isom  \im \rho_{E,\ell^{n(\ell)}}.$  Hence, $\Gal(k(E_n)/k)$ is abelian if and only if $\Gal(k(E_{\ell^{n(\ell)}})/k)$ is abelian for all primes $\ell$ and $\im \rho_{E, n} \subset (\Z/n\Z)^{\times}$ if and only if $\im \rho_{E, \ell^{n(\ell)}} \subset (\Z/\ell^{n(\ell)}\Z)^{\times}$ for all primes $\ell$.
            
            Since $\End_{k}(E_n) \subset \Mat_2(\Z/n\Z)$, the fundamental theorem of finitely generated abelian groups implies that
            \[
                \End_{k}(E_n) \isom \Z/n_0\Z \times \Z/n_1\Z \times \Z/n_2\Z \times \Z/n_3\Z
            \]
            for unique positive integers $n_3|n_2|n_1|n_0|n$. Note that $\End_{k}(E_n)$ always contains the image of $\Z \into \End(E) \to \End(E_n)$, so $n_0 = n$.  Additionally, for any prime $\ell$ and any positive integer $s$ with $\ell^s\mid n$, Proposition~\ref{prop:ranks} shows that 
            \begin{enumerate}
                \item $\ell^s|n_2 = n_3$ if and only if $G_{E,\ell^s} = \{1\}$, and
                \item $\ell^s|n_1$ if and only if $\im \rho_{E, \ell^s}$ is abelian.
            \end{enumerate}
            Combining these facts with the above arguments completes the proof.
		\end{proof}

		\begin{cor}\label{cor:End2}
			Let $E$ be an elliptic curve over $k$ and let $n$ be a positive integer.  Let $k'/k$ be a field extension. Then we have an isomorphism of abelian groups
			\[
				\frac{\End_{k'}(E_{n})}{\End_{k}(E_{n})} \isom \Z/{\textstyle \frac{n_1'}{n_1}}\Z \times \left(\Z/{\textstyle \frac{n_2'}{n_2}}\Z\right)^2,
			\]
			where $n_1'$ (respectively $n_1$) is the largest integer dividing $n$ such that $\Gal(k'(E_{n_1'})/k')$ (respectively $\Gal(k(E_{n_1})/k)$) is abelian and $n_2'$ (respectively $n_2$) is the largest integer dividing $n$ such that $\Gal(k'(E_{n_2'})/k') \subset (\Z/n_2'\Z)^{\times}$ (respectively $\Gal(k(E_{n_2})/k) \subset (\Z/n_2\Z)^{\times}$).
		\end{cor}
        \begin{proof}
            This follows from Corollary~\ref{cor:End}.
        \end{proof}
        The following lemma will be useful in the proof of Proposition~\ref{prop:ranks}.
            \begin{lemma}\label{lem:CommutingMatrices}
                Let $\ell$ be a prime, let $s$ be a positive integer, and let $A\in \Mat_2(\Z/\ell^s\Z)$ be a non-central element.  Let $\mu <  s$ be the maximal non-negative integer such that $A \in \Z I \bmod {\Mat_2(\ell^\mu\Z/\ell^s\Z)}$.  Then the ring
                \[
                    \frac{{\{M \in \M_2(\Z/\ell^s\Z) : AM = MA\}}}{
                    \{M \in \M_2(\ell^{s-\mu}\Z/\ell^s\Z) : AM = MA\}}.
                \]
                is generated by $I$ and a matrix $A'$ such that $\ell^{\mu}A' - A \in \Z I$.
            \end{lemma}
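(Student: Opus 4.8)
The plan is to reduce the statement to the computation of the centralizer of a single matrix over the local ring $\Z/\ell^{s-\mu}\Z$, and then to carry out that computation using a cyclic vector. Write $A = cI + \ell^\mu B$ for an integer $c$ and a matrix $B \in \Mat_2(\Z/\ell^s\Z)$; the maximality of $\mu$ in the definition means precisely that the reduction $\bar B := B \bmod \ell$ is \emph{not} a scalar matrix in $\Mat_2(\F_\ell)$, since otherwise $A$ would be congruent to a scalar modulo $\ell^{\mu+1}$. I take $A' := B$; then $\ell^\mu A' - A = -cI \in \Z I$ and $A'$ commutes with $A$, so $A'$ lies in the numerator of the displayed ring. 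It remains to show that $I$ and $A'$ generate this quotient ring.

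First I would identify the denominator. For any $M$ one has $AM - MA = \ell^\mu(BM - MB)$, and if $M \in \Mat_2(\ell^{s-\mu}\Z/\ell^s\Z)$ then $\ell^\mu(BM - MB) \in \ell^s\Mat_2(\Z/\ell^s\Z) = \{0\}$. Hence \emph{every} matrix divisible by $\ell^{s-\mu}$ commutes with $A$, so the denominator is all of $\Mat_2(\ell^{s-\mu}\Z/\ell^s\Z)$, the kernel of the reduction $\pi\colon \Mat_2(\Z/\ell^s\Z) \to \Mat_2(\Z/\ell^{s-\mu}\Z)$. Moreover $M$ commutes with $A$ if and only if $BM - MB \in \Mat_2(\ell^{s-\mu}\Z/\ell^s\Z)$, i.e.\ if and only if $\pi(M)$ commutes with $\pi(B) = \bar B$ (here $\bar B$ denotes the reduction of $B$ modulo $\ell^{s-\mu}$). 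Thus $\pi$ induces a ring isomorphism from the quotient in the statement onto the centralizer $Z(\bar B)$ of $\bar B$ in $\Mat_2(R)$, where $R := \Z/\ell^{s-\mu}\Z$, carrying the image of $A'$ to $\bar B$. So the lemma follows once I show $Z(\bar B) = R I + R\bar B$.

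For this last step I would produce a cyclic vector. Since the reduction of $\bar B$ modulo $\ell$ is a non-scalar $2\times 2$ matrix over the field $\F_\ell$, it is non-derogatory and admits a cyclic vector $w_0 \in \F_\ell^2$, meaning $\{w_0, \bar B w_0\}$ is an $\F_\ell$-basis. Lifting $w_0$ to $w \in R^2$, the determinant of the matrix with columns $w$ and $\bar B w$ is a unit in $R$, being nonzero modulo the maximal ideal $(\ell)$ of the local ring $R$; hence $\{w, \bar B w\}$ is an $R$-basis of $R^2$. Now given $M \in Z(\bar B)$, write $Mw = aw + b\,\bar B w$ with $a,b \in R$; then $M(\bar B w) = \bar B(Mw) = a\,\bar B w + b\,\bar B^2 w = (aI + b\bar B)(\bar B w)$, so $M$ and $aI + b\bar B$ agree on the basis $\{w, \bar B w\}$ and therefore $M = aI + b\bar B$. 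This gives $Z(\bar B) \subseteq RI + R\bar B$, and the reverse inclusion is clear, completing the proof.

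The only genuinely ring-theoretic subtlety, and the point I would flag as the crux, is that $R = \Z/\ell^{s-\mu}\Z$ is not a field, so one cannot simply invoke the dimension count that shows a non-scalar $2\times 2$ matrix over a field has a two-dimensional centralizer. The fix is exactly the use of the local structure of $R$: non-scalarity modulo the maximal ideal $(\ell)$ upgrades, via the unit-determinant criterion, to a genuine cyclic vector over $R$, after which the centralizer identity is forced. (Equivalently, one could argue by an explicit case analysis according to which of $\bar B_{12}$, $\bar B_{21}$, $\bar B_{11} - \bar B_{22}$ is a unit in $R$ — at least one must be, since $\bar B \bmod \ell$ is non-scalar — and solve directly for the coefficient $b$ in each case.)
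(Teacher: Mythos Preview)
Your proof is correct and takes a genuinely different route from the paper's. The paper writes $A=\begin{pmatrix}\alpha&\beta\\\gamma&\delta\end{pmatrix}$ explicitly, expresses the condition $AM=MA$ as a $3\times 3$ linear system in the entries $(a-d,b,c)$ of $M$, and then identifies the kernel of that system modulo $\ell^{s-\mu}$ with the span of the vector $(v_0,v_1,v_2)$ obtained by dividing $(\alpha-\delta,\beta,\gamma)$ by $\ell^\mu$. Your argument instead first recognises the displayed quotient as the centralizer $Z(\bar B)$ inside $\Mat_2(\Z/\ell^{s-\mu}\Z)$ via the reduction map $\pi$, and then computes that centralizer by producing a cyclic vector for $\bar B$ (using that $\bar B\bmod\ell$ is non-scalar together with the local nature of $\Z/\ell^{s-\mu}\Z$ to lift invertibility of the relevant determinant). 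The paper's approach is entirely coordinate-based and immediately explicit; yours is more structural, separates the two logical steps (identifying the quotient, then computing the centralizer), and makes transparent why the local ring hypothesis is exactly what is needed. Either approach yields the same $A'$ up to a scalar shift.
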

            \begin{proof}
                Let $A=\begin{pmatrix} \alpha & \beta \\ \gamma& \delta\end{pmatrix}\in \Mat_2(\Z/\ell^s\Z)$.  Since $A$ is not in the center of $\Mat_2(\Z/\ell^s\Z)$, at least one of $\alpha- \delta, \beta$, or $\gamma$ is nonzero modulo $\ell^s$ and so the quantity $\min(v_{\ell}(\alpha - \delta), v_{\ell}(\beta), v_{\ell}(\gamma))$ is well-defined, and is equal to $\mu < s$.  An elementary algebra calculation shows that a matrix $M = \begin{pmatrix} a & b \\ c & d\end{pmatrix}\in \M_2(\Z/\ell^s\Z)$ commutes with $A$ if and only if $(a - d, b, c)^T$ is in the kernel of
                \[
                    \begin{pmatrix}
                        \beta & \delta - \alpha & 0 \\
                        \gamma & 0  & \delta - \alpha \\
                        0 & \gamma & -\beta
                    \end{pmatrix}.
                \]

                Let $v_0, v_1, v_2\in \Z/\ell^s\Z$ be such that $(\ell^{\mu}v_0, \ell^{\mu}v_1, \ell^{\mu}v_2) = (\alpha - \delta, \beta, \gamma)$.  Note that $v_0, v_1$ and $v_2$ are unique modulo $\ell^{s-\mu}$ and at least one of $v_0, v_1, v_2$ is nonzero modulo $\ell$.  Then   $(v_0,v_1,v_2)^T$ is in the kernel of the above $3\times3$ matrix and indeed generates the kernel modulo $(\ell^{s - \mu}\Z/\ell^s\Z)^3$.  Thus, any matrix $M$ that commutes with $A$ is such that $\ell^{\mu}M \in \Z A + \Z I$.  This completes the proof.
            \end{proof}

        \begin{proof}[Proof of Proposition~\ref{prop:ranks}]
            After fixing a basis for $E_{\ell^s}$, we identify $\End(E_{\ell^s})$ with $\M_2(\Z/\ell^s\Z)$ and $\Aut(E_{\ell^s})$ with $\GL_2(\Z/\ell^s\Z)$. 
             If $G_{E, \ell^s} = \{1\}$, then $\End_{k}(E_{\ell^s}) = \End(E_{\ell^s})$ so the result is immediate.  
			 
			 Now assume that the image of $\rho_{E, \ell^s}$ is nonabelian, and let $M_1$ and $M_2$ be two elements in the image that do not commute with each other.  Suppose that $\End_{k}(E_{\ell^s})/\left(\End_{k}(E_{\ell^{s-1}})\circ[\ell]\right)$ is an $\F_{\ell}$-vector space of dimension greater than $1$ and let $A\in \Mat_2(\Z/\ell^s\Z)$ be a matrix representing a non-scalar element.  %\bvedit{Since $A$ commutes with $M_1$, Lemma~\ref{lem:CommutingMatrices} implies that $A$ is a linear combination of $I$ and $M_1$.  Similarly, $A$ must be a linear combination of $I$ and $M_2$.  As we have chosen $A$ to be nondiagonal, by rearranging we see that $M_1$ and $M_2$ must both be linear combinations of $I$ and $A$.}
			  Since $A\bmod \ell$ is not a scalar matrix, Lemma~\ref{lem:CommutingMatrices} implies that $M_1$ and $M_2$ must both be linear combinations of $I$ and $A$.  
			  As $M_1$ and $M_2$ do not commute with each other, this gives a contradiction.  Hence, $\End_{k}(E_{\ell^s})/\left(\End_{k}(E_{\ell^{s-1}})\circ[\ell]\right)$ consists only of scalar matrices. 
			  
             It remains to consider the case that $G_{E,\ell^s} \neq \{1\}$ and that the image of $\rho_{E, \ell^s}$ is abelian.  Fix an element $A\in \im \rho_{E, \ell^s}$ that does not reduce to the identity in $G_{E,\ell^s}$.  Since $\im \rho_{E, \ell^s}$ is abelian, Lemma~\ref{lem:CommutingMatrices} implies that $\im \rho_{E, \ell^s}\mod \M_2(\ell\Z/\ell^s\Z)$ is contained in the ring generated by $I$ and $A'$ where $\ell^{\mu}A' - A \in \Z I$ for a maximal $\mu$.  Hence,
             \[
                 \frac{\End_{k}(E_{\ell^s})}
                 {\End_{k}(E_{\ell^{s-1}})\circ[\ell]} = \frac{{\{M \in \M_2(\Z/\ell^s\Z) : A'M = MA'\}}}{\{M \in \M_2(\ell\Z/\ell^s\Z) : A'M = MA'\}}.
             \]
             Another application of Lemma~\ref{lem:CommutingMatrices} then shows that this quotient is $2$-dimensional.
            \end{proof}

	%%%%%%%%%%%%%%%%%%%%%%%%%%%%%%%%%%%%%%%%%%%%%%%%%%%%%%%%%%%%%%%%%%%%%%%%%%%%
	\subsection{Abelian subgroups of $\GL_2(\Z/\ell^s\Z)$}\label{ss:conjugacy}%%
	%%%%%%%%%%%%%%%%%%%%%%%%%%%%%%%%%%%%%%%%%%%%%%%%%%%%%%%%%%%%%%%%%%%%%%%%%%%%

            \begin{prop}\label{prop:IndexAbSubgroups}
            	Let $s$ be a positive integer, and put $s' = \lceil s/2\rceil$.  Let $\ell$ be a prime, and let $H < \GL_2(\Z/\ell^s\Z)$ be an abelian subgroup.  There exists $A' \in \M_2(\Z/\ell^s\Z)$ with $A' \bmod \ell\not\in \langle I\rangle$ such that the group $H \bmod \M_2(\ell^{s'}\Z/\ell^{s}\Z)$ is contained in the ring $\langle I, A' \bmod \ell^{s'}\Z\rangle$.  Consequently, $\#H \leq \ell^{3s}$.
            \end{prop}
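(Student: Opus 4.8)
The plan is to exploit that $H$ is abelian together with Lemma~\ref{lem:CommutingMatrices}, which describes the centralizer of a non-central matrix modulo an explicit congruence subgroup. Since any two elements of $H$ commute, a single well-chosen $A\in H$ controls all of $H$: every element of $H$ lies in the centralizer of $A$ in $\M_2(\Z/\ell^s\Z)$. First I would separate off the degenerate case in which every element of $H$ reduces to a scalar modulo $\ell^{s'}$. There $H\bmod\M_2(\ell^{s'}\Z/\ell^s\Z)\subseteq\langle I\rangle$, and any non-scalar $A'$, say $A'=\left(\begin{smallmatrix}0&1\\0&0\end{smallmatrix}\right)$, satisfies the containment trivially.

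In the remaining case I would choose $A\in H$ that is non-scalar modulo $\ell^{s'}$ and let $\mu<s$ be the maximal integer with $A\in\Z I\bmod\M_2(\ell^{\mu}\Z/\ell^s\Z)$. Non-scalarity modulo $\ell^{s'}$ forces $\mu<s'$. Lemma~\ref{lem:CommutingMatrices} then supplies a matrix $A'$ with $\ell^{\mu}A'-A\in\Z I$; by the maximality of $\mu$ this $A'$ is non-scalar modulo $\ell$, which is exactly the required property $A'\bmod\ell\notin\langle I\rangle$. The same lemma identifies the centralizer of $A$ in $\M_2(\Z/\ell^s\Z)$ with $\langle I,A'\rangle$ modulo the subgroup $\{M\in\M_2(\ell^{s-\mu}\Z/\ell^s\Z):AM=MA\}$.

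The crux is the inequality $s-\mu\ge s'$, which I would verify from $\mu\le s'-1$ via
\[
s-\mu\ge s-s'+1=\lfloor s/2\rfloor+1\ge\lceil s/2\rceil=s'.
\]
This guarantees that the lemma's denominator subgroup is contained in $\M_2(\ell^{s'}\Z/\ell^s\Z)$, so it dies under reduction modulo $\ell^{s'}$. Since every element of $H$ commutes with $A$, reducing the centralizer description modulo $\ell^{s'}$ yields $H\bmod\M_2(\ell^{s'}\Z/\ell^s\Z)\subseteq\langle I,A'\bmod\ell^{s'}\Z\rangle$, proving the first assertion.

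For the cardinality bound I would count on both sides of the reduction map $H\to H\bmod\M_2(\ell^{s'}\Z/\ell^s\Z)$. The target ring $\langle I,A'\bmod\ell^{s'}\Z\rangle$ is an additive quotient of $(\Z/\ell^{s'}\Z)^2$ (using $A'^2\in\Z A'+\Z I$), so has at most $\ell^{2s'}$ elements, while the kernel lies in $I+\M_2(\ell^{s'}\Z/\ell^s\Z)$, a group of order $\ell^{4(s-s')}$. Hence $\#H\le\ell^{2s'+4(s-s')}=\ell^{4s-2s'}\le\ell^{3s}$, the last step using $2s'\ge s$. The main obstacle is purely bookkeeping: getting the congruence levels to line up so that $s-\mu\ge s'$ exactly absorbs the denominator, and checking that the two index factors multiply to $\ell^{3s}$ rather than a weaker bound; everything substantive is already contained in Lemma~\ref{lem:CommutingMatrices}.
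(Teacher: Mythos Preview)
Your proof is correct and follows essentially the same approach as the paper: both arguments split off the case where $H$ reduces to scalars modulo $\ell^{s'}$, then in the main case use Lemma~\ref{lem:CommutingMatrices} applied to a suitable $A\in H$ to trap $H\bmod\M_2(\ell^{s'}\Z/\ell^s\Z)$ inside $\langle I,A'\rangle$, and finish with the same image/kernel count. The only cosmetic difference is that the paper selects $A$ minimizing $\mu_A$ over all of $H$, whereas you take any $A$ non-scalar modulo $\ell^{s'}$; since your verification $s-\mu\ge s'$ only uses $\mu\le s'-1$, either choice works.
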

            
            \begin{proof}
            	For $A \in H$, define $\mu_A  = \max\{ \mu : A \equiv \textup{scalar matrix} \bmod \ell^\mu\}$, and let $\mu = \min\{\mu_A : A \in H\}$.  Pick $A \in H$ such that $\mu_A = \mu$, and let $A' \in \M_2(\Z/\ell^s\Z)$ be a matrix such that $\ell^\mu A' - A \in \Z I$.  If $\mu < s/2$ then $s-\mu \geq s'$ and so Lemma~\ref{lem:CommutingMatrices} shows that $H \bmod M_2(\ell^{s - \mu}\Z/\ell^s\Z)$ is contained in the ring $\langle I, A' \rangle$.  On the other hand, if $\mu \geq s'$ then for every $A \in H$, the matrix $A \bmod \ell^{s'}\Z$ is a scalar matrix, by definition of $\mu$, and so is contained in the ring $\langle I, A' \rangle$ for any matrix $A'\in \Mat_2(\Z/\ell^s\Z)$.
	
		Now we show the bound $\#H \leq \ell^{3s}$ holds. Let $H'$ be the group $H \bmod \M_2(\ell^{s'}\Z/\ell^{s}\Z)$. Reduction modulo $\ell^{s'}$ gives rise to an exact sequence of finite abelian groups
		\[
			0 \to K \to H \to H' \to 0,
		\]
		where $K$ is the kernel of reduction.  Our work above shows that an element of $H'$ is a linear combination of $I$ and $A'$ with coefficients in $\Z/\ell^{s'}\Z$, and hence $\#H' \leq (\ell^{s'})^2$.  To bound the order of $K$, we note that $K$ is contained in the kernel of the reduction map $\GL_2(\Z/\ell^s\Z) \to \GL_2(\ell^{s'}\Z/\ell^s\Z)$.  An element of this kernel has the form $I + M$ for some $M \in \M_2(\ell^{s'}\Z/\ell^s\Z)$. Hence $\#K \leq (\ell^{s - s'})^4$. Putting these facts together, we obtain 
		\[
		\#H \leq (\ell^{s'})^2\cdot (\ell^{s - s'})^4 = (\ell^{s})^2(\ell^{s - s'})^2 \leq (\ell^{s})^2(\ell^{\lfloor s/2 \rfloor})^2 \leq \ell^{3s}.\tag*{$\qed$}
		\]
		\hideqed
            \end{proof}

	       \begin{cor}\label{cor:Conjugacy}
	           Let $\ell$ be an odd prime, let $s$ be a positive integer, let $s' := \lceil \frac{s}2\rceil$, and let $H<\GL_2(\Z/\ell^s\Z)$ be an abelian subgroup.  Then $H \bmod \Mat_2(\ell^{s'}\Z/\ell^s\Z)$ is conjugate to a subgroup of one of the following groups. 
	           \begin{align*}
	               C_{s}(\ell^{s'}) := &\left\{ \begin{pmatrix} x & 0 \\0 & y \end{pmatrix} :  x,y \in (\Z/\ell^{s'}\Z)^{\times}  \right\}\\
	               C_{ns}^{t, \overline{\varepsilon}}(\ell^{s'}) := &
                   \left\{ 
                   \begin{pmatrix} x & \varepsilon\ell^ty \\y & x \end{pmatrix}
                    :  (x,y) \in (\Z/\ell^{s'}\Z)^2, x^2 - \varepsilon \ell^t y^2 \notin\ell\Z/\ell^{s'}\Z 
                    \right\},\ 0 \leq t\leq {s'} - 1,\\
                    & \overline{\varepsilon} \in \left(\Z/\ell^{{s'}-t}\Z\right)^{\times}\big /\left(\Z/\ell^{{s'}-t}\Z\right)^{\times2},\ \varepsilon\in (\Z/\ell^{s'}\Z)^{\times} \textup{ fixed such that }
                    \varepsilon\mapsto\overline{\varepsilon},\\
					& \textup{ and }\varepsilon, \overline{\varepsilon}, t
					\textup{ chosen such that }\varepsilon\ell^t \textup{ is not a square}\\
	               B^t_{ab}(\ell^{s'}) := &\left\{\begin{pmatrix} x & y \\ 0 & x + \ell^t y\end{pmatrix} : 
	               x \in (\Z/\ell^{s'}\Z)^{\times}, y\in \Z/\ell^{s'}\Z\right\}, \quad 1\leq t \leq {s'}.
	           \end{align*}
	       \end{cor}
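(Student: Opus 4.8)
The plan is to reduce the statement to a classification, up to $\GL_2(\Z/\ell^{s'}\Z)$-conjugacy, of the commutative ring generated by a single matrix, using Proposition~\ref{prop:IndexAbSubgroups} as the entry point. That proposition produces $A' \in \Mat_2(\Z/\ell^s\Z)$ with $A' \bmod \ell \notin \langle I\rangle$ such that, setting $R := \langle I, A' \bmod \ell^{s'}\rangle \subseteq \Mat_2(\Z/\ell^{s'}\Z)$, the reduction $H \bmod \Mat_2(\ell^{s'}\Z/\ell^s\Z)$ is contained in $R$. Every element of $H$ is invertible, and for a $2\times 2$ matrix $M$ Cayley--Hamilton gives $M^{-1} = (\det M)^{-1}\bigl((\tr M)I - M\bigr)$, so $H \bmod \ell^{s'}$ actually lands in $R \cap \GL_2(\Z/\ell^{s'}\Z) = R^{\times}$. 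Each model group $C_s(\ell^{s'})$, $C_{ns}^{t,\overline{\varepsilon}}(\ell^{s'})$, $B^t_{ab}(\ell^{s'})$ is precisely the unit group of a ring of the displayed shape, so it suffices to conjugate $R$ onto one of these three model rings.

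First I would normalize $A'$. Since $\ell$ is odd, $2$ is invertible, so replacing $A'$ by $A' - \tfrac{1}{2}(\tr A')I$ (which leaves $R$ unchanged, as $I\in R$) I may assume $A'$ is traceless; Cayley--Hamilton then gives $A'^2 = D\,I$ with $D := -\det A' \in \Z/\ell^{s'}\Z$. Because $A' \bmod \ell$ is non-scalar it admits a cyclic vector modulo $\ell$; lifting it and applying Nakayama's lemma shows $A'$ is conjugate to its companion matrix $\left(\begin{smallmatrix} 0 & D \\ 1 & 0\end{smallmatrix}\right)$, whence $R$ is conjugate to the ring $\bigl\{\left(\begin{smallmatrix} x & Dy \\ y & x\end{smallmatrix}\right)\bigr\}$. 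Conjugating this ring by $\operatorname{diag}(u,1)$ and reparametrizing the free variable replaces $D$ by $u^2 D$, so the conjugacy class of $R$ depends on $D$ only modulo multiplication by a unit square; this is exactly the invariant recorded by $(t,\overline{\varepsilon})$ when $D = \varepsilon\ell^t$.

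The heart of the proof is then a trichotomy on $D$, matched to the eigenvalues $\pm\sqrt{D}$ of the companion matrix. If $D$ is a unit square, Hensel's lemma (again using $\ell$ odd) gives $\sqrt D \in (\Z/\ell^{s'}\Z)^{\times}$; the eigenvalues $\pm\sqrt D$ are distinct modulo $\ell$, so the companion matrix is diagonalizable over the local ring, $R$ is conjugate to the diagonal ring, and $R^{\times} = C_s(\ell^{s'})$. If $D$ is a nonsquare, write $D = \varepsilon\ell^t$ with $\varepsilon$ a unit and $0\le t\le s'-1$; then $\varepsilon\ell^t$ is a nonsquare, and after adjusting $\varepsilon$ by a unit square to the paper's fixed representative of $\overline{\varepsilon}\in(\Z/\ell^{s'-t}\Z)^{\times}/(\Z/\ell^{s'-t}\Z)^{\times 2}$, the ring $\bigl\{\left(\begin{smallmatrix} x & Dy \\ y & x\end{smallmatrix}\right)\bigr\}$ is the ring underlying $C_{ns}^{t,\overline{\varepsilon}}(\ell^{s'})$, with $R^{\times} = C_{ns}^{t,\overline{\varepsilon}}(\ell^{s'})$ cut out by invertibility of $x^2 - \varepsilon\ell^t y^2$. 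If $D$ is a nonunit square, write $D = c^2$ with $\ell \mid c$ (the case $D=0$ giving $c=0$ directly); then $(c,1)^{T}$ is an eigenvector of eigenvalue $c$, so $A'$ is conjugate to $\left(\begin{smallmatrix} c & 1 \\ 0 & -c\end{smallmatrix}\right)$ and $R$ to $\bigl\{\left(\begin{smallmatrix} x & y \\ 0 & x-2cy\end{smallmatrix}\right)\bigr\}$. Rescaling the free parameter by a unit so that the off-diagonal coefficient becomes exactly $\ell^{t}$, with $t := v_\ell(c)\in\{1,\dots,s'\}$ (and $t=s'$ when $D=0$), puts this in the form $\bigl\{\left(\begin{smallmatrix} x & y \\ 0 & x+\ell^t y\end{smallmatrix}\right)\bigr\}$, whose unit group is $B^t_{ab}(\ell^{s'})$. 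Since every element of $\Z/\ell^{s'}\Z$ is a unit square, a nonsquare, or a nonunit square, $H \bmod \ell^{s'}\subseteq R^{\times}$ is in each case conjugate into the asserted family.

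I expect the main obstacle to be the bookkeeping in the nonunit-square (Borel) case: producing the eigenvector, triangularizing over the local ring $\Z/\ell^{s'}\Z$ rather than over a field, and tracking $t=v_\ell(c)$ through the rescaling so that the off-diagonal coefficient normalizes to exactly $\ell^t$ with $1\le t\le s'$. A secondary subtlety, needed to match the family with the paper's fixed choice of $\varepsilon$, is checking that conjugation by $\operatorname{diag}(u,1)$ alters $D$ precisely by the unit square $u^2$, which is what identifies the conjugacy invariant of $R$ with the class $\overline{\varepsilon}\in(\Z/\ell^{s'-t}\Z)^{\times}/(\Z/\ell^{s'-t}\Z)^{\times 2}$.
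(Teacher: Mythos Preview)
Your proof is correct and follows essentially the same architecture as the paper's: both start from Proposition~\ref{prop:IndexAbSubgroups}, reduce to classifying the conjugacy class of the ring $\langle I,A'\rangle$ for a single $A'$ that is non-scalar modulo $\ell$, and then split into three cases matched to $C_s$, $C_{ns}^{t,\overline\varepsilon}$, and $B^t_{ab}$.

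The organization differs in two mild ways. First, you normalize $A'$ to be traceless (using that $2$ is invertible) and then run the trichotomy on $D=-\det A'$ being a unit square, a nonsquare, or a nonunit square; the paper instead invokes McDonald's similarity theorem over Artinian principal ideal rings and splits according to whether the characteristic polynomial has two roots distinct modulo $\ell$, is reducible with a repeated root modulo $\ell$, or is irreducible. For traceless $A'$ the two splittings coincide, so this is a packaging difference; your version avoids the external reference and makes the invariant ``$D$ modulo unit squares'' explicit. Second, you obtain the companion form via a cyclic vector plus Nakayama, which is a clean substitute for the citation.

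One point to tighten: in the Borel case, ``rescaling the free parameter'' does not by itself change the ring $\bigl\{\left(\begin{smallmatrix} x & y \\ 0 & x-2cy\end{smallmatrix}\right)\bigr\}$ as a subset of $\Mat_2(\Z/\ell^{s'}\Z)$; what actually moves $-2c$ to $\ell^{t}$ with $t=v_\ell(c)$ is conjugation by a diagonal matrix. Concretely, conjugating $\bigl\{\left(\begin{smallmatrix} x & y \\ 0 & x+ay\end{smallmatrix}\right)\bigr\}$ by $\operatorname{diag}(1,u)$ yields $\bigl\{\left(\begin{smallmatrix} x & y \\ 0 & x+auy\end{smallmatrix}\right)\bigr\}$, so the rings for $a$ and $au$ are conjugate whenever $u$ is a unit; this is the analogue of your $\operatorname{diag}(u,1)$ move in the nonsplit case and should be stated as such. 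With that adjustment the argument is complete.
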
    
	       \begin{proof}
	           By Proposition~\ref{prop:IndexAbSubgroups}, $H \bmod \Mat_2(\ell^{s'}\Z/\ell^s\Z) \subset \langle I, A' \bmod \ell^{s'}\Z\rangle$ for some $A'$ such that $A'\bmod \ell$ is not a scalar matrix.  If we show that such an $A'$ is conjugate to a matrix of the form
	           \[
	               \begin{pmatrix} x & 0 \\0 & y \end{pmatrix},  \quad
	               \begin{pmatrix} x & \varepsilon \ell^t y \\y & x \end{pmatrix},
	                   \quad\textup{or}\quad
	                   \begin{pmatrix} x & y \\ 0 & x + \ell^t y\end{pmatrix}
	           \]
	           (with $\varepsilon, t$ as above), then $H$ must be contained in $C_s(\ell^{s'}), C^{t, \overline{\varepsilon}}_{ns}(\ell^{s'})$ or $B^t_{ab}(\ell^{s'})$ as desired.
        
	           Since $\overline{A'} := A' \bmod \ell$ is not a scalar matrix, the minimal polynomial of $\overline{A'}$ has degree $2$ and thus is equal to the characteristic polynomial of $\overline{A'}$ which is also equal to the mod $\ell$ reduction of the characteristic polynomial of $A'$.  Thus by~\cite[Theorems II.4 and III.2]{McDonald}, $A'$ is conjugate to a matrix of the form $\begin{pmatrix} x &0\\0&y\end{pmatrix}$ or $\begin{pmatrix}0 & -\det(A')\\1 & \tr(A')\end{pmatrix}$, with the first case occuring if and only if the characteristic polynomial of $A'$ has two roots in $\Z/\ell^s\Z$ that are distinct modulo $\ell$.
        
           Now assume that the characteristic polynomial of $A'$ is reducible and the characteristic polynomial of $A' \bmod \ell$ is a square. Then the roots of the characteristic polynomial are of the form $x, x+\ell^t y$ for some $1\leq t \leq s$, $x,y\in \Z/\ell^s\Z$;  we may assume that $y$ is invertible by taking $t$ maximally. By the above, this is equivalent to the case where $A'$ is conjugate to $\begin{pmatrix} 0 & -x^2 - \ell^txy\\1 & 2x + \ell^ty\end{pmatrix}$.  Then we observe that
   		\[
   			\begin{pmatrix}
   				0 & y\\
   				1 & x + \ell^t y
   			\end{pmatrix}
   			\begin{pmatrix}
   				0 & -x^2 - xy\ell^t\\
   				1 & 2x + \ell^ty
   			\end{pmatrix}
   			\begin{pmatrix}
   				0 & y\\
   				1 & x + \ell^t y
   			\end{pmatrix}^{-1}  =
   			\begin{pmatrix}
   				x & y \\
   				0 & x + \ell^t y
   			\end{pmatrix},
   		\]
   		so $A'$ has the desired form.
		
	           It remains to consider the case when the characteristic polynomial of $A'$ is irreducible in $\Z/\ell^s\Z[T]$.  In this case, since $\ell$ is odd, the discriminant  $\tr(A')^2 - 4\det(A')$ is not a square. Hence, we must have
               \[
                   \det(A') = x^2 - \varepsilon \ell^t y^2,
               \]
               where $0\leq t \leq s - 1$, $\varepsilon$ any fixed lift of an element in $(\Z/\ell^{s-t}\Z)^{\times}\big /(\Z/\ell^{s-t}\Z)^{\times2}$, $y\in (\Z/\ell^s\Z)^{\times}$ and $x= \tr(A')/2$.  Observe that for any $c,d\in \Z/\ell^s\Z$, we have
	           \[
	               \begin{pmatrix}0 & \varepsilon \ell^t y^2 - x^2\\ 1 & 2x \end{pmatrix}
	               \begin{pmatrix}dy - cx & \varepsilon \ell^tcy-dx\\ c & d\end{pmatrix} = 
                   \begin{pmatrix}dy - cx & \varepsilon \ell^tcy-dx\\ c & d\end{pmatrix}
	               \begin{pmatrix} x & \varepsilon \ell^t y \\y & x \end{pmatrix}.
	           \]
	           Since $y$ is a unit, there exists a choice of $c,d\in \Z/\ell^s\Z$ such that $(dy - cx)d - c(\varepsilon \ell^tcy - dx) = (d^2 - \varepsilon \ell^tc^2)y \notin \ell\Z/\ell^s\Z.$  Hence, $A'$ is conjugate to a matrix of the form $\begin{pmatrix} x & \varepsilon\ell^t y \\y & x \end{pmatrix}$.
	       \end{proof}
    
	%%%%%%%%%%%%%%%%%%%%%%%%%%%%%%%%%%%%%%%%%%%%%%%%%%%%%%%%%%%%%%%%%%%%%%%%%%%%
    \subsection{Modular curves}\label{sec:ModularCurves}%%%%%%%%%%%%%%%%%%%%%
	%%%%%%%%%%%%%%%%%%%%%%%%%%%%%%%%%%%%%%%%%%%%%%%%%%%%%%%%%%%%%%%%%%%%%%%%%%%%
   
	      Fix a positive integer $n$.  Let $Y(n)$ be the coarse space of the moduli stack parametrizing elliptic curves with full level-$n$ structure up to isomorphism, i.e., pairs $(E,\psi)$, where $E \to S$ is an elliptic curve over a base scheme $S/\Q$ and $\psi$ is an isomorphism
	       \[
	       \psi\colon (\Z/n\Z \times \Z/n\Z)_S \xrightarrow{\sim} E_{n}.
	       \]
	   We do not require this isomorphism to be compatible with the Weil pairing.  Two pairs $(E,\psi)$ and $(E',\psi')$ are isomorphic if there is an isomorphism $E \to E'$ over $S$ such that the diagram
	   	\[
	   		\xymatrix{
	   			(\Z/n\Z \times \Z/n\Z)_S \ar[r]^-{\psi} \ar[d]_= &  E_{n} \ar[d] \\
	   			(\Z/n\Z \times \Z/n\Z)_S \ar[r]^-{\psi'}  & E'_{n}
	   		}
	   	\]
	   commutes; we write $[E,\psi]$ for the isomorphism class of $(E,\psi)$. The modular curve $Y(n)$ and its compactification $X(n)$, obtained by suitably adding cusps to $Y(n)$, have models defined over $\Q$. These models are not geometrically connected because we did not insist on the compatibility of $\psi$ with the Weil pairing.  In fact, over $\Qbar$, the curve $Y(n)_\Qbar$ has $\varphi(n)$ components (here $\varphi$ denotes the usual Euler totient function), one for each symplectic pairing on $\Z/n\Z\times \Z/n\Z$.  We use $j(E)$ to denote the class of a point $[E]$ in $X(1)$.

	   A subgroup $H \subset \GL_2(\Z/n\Z)$ acts on $Y(n)$ by
	   	\[
	   		h\left([E,\psi]\right) := [E,\psi\circ h] \textup{ for } h \in H,
	   	\]
	   and this action extends uniquely to $X(n)$. The quotient $X(n)/H$ is a curve defined over $\Q$; conjugate subgroups give rise to isomorphic quotient curves~\cite[IV.3]{DeligneRapoport}.  We write $\overline{[E,\psi]}$ for the image of a point $[E,\psi]$ in $X(n)/H$. Points of $X(n)/H$ are related to the Galois representation $\rho_{E,n}$ as follows.

	   \begin{lemma}\label{lem:modcurvestorepns}
		   Let $n$ be a positive integer and let $k$ be a field of characteristic $0$.  There exists a noncuspidal $k$ point $x\in X(n)/H$ if and only if there exists an elliptic curve $E$ over $k$ with $j(E) = j(x)$ such that $\im \rho_{E,n}$ is contained in $H$ (up to conjugacy).
	   \end{lemma}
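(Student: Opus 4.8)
The plan is to use the moduli description of the geometric points of $X(n)/H$ together with the explicit Galois action, reducing the statement to tracking the image of $\rho_{E,n}$ inside a single $H$-orbit. First I would record how $\Gamma_k$ acts on noncuspidal points of $Y(n)(\kbar)$: such a point is a class $[\Ebar,\psi]$ with $\Ebar$ an elliptic curve over $\kbar$ and $\psi$ a basis of $\Ebar_n$, and $\sigma\cdot[\Ebar,\psi] = [\Ebar^\sigma,\sigma\circ\psi]$. The key computation is that when $E$ is defined over $k$, so $E^\sigma = E$, the very definition of $\rho_{E,n}$ gives $\sigma\circ\psi = \psi\circ\rho_{E,n}(\sigma)$, whence
\[
    \sigma\cdot[E,\psi] = \rho_{E,n}(\sigma)\cdot[E,\psi]
\]
for the $\GL_2(\Z/n\Z)$-action extending $h([E,\psi]) = [E,\psi\circ h]$. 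Passing to $X(n)/H$, the point $x := \overline{[E,\psi]}$ is $\Gamma_k$-fixed if and only if each $\rho_{E,n}(\sigma)$ carries $[E,\psi]$ into its own $H$-orbit. Since $X(n)/H$ is a curve over $\Q$ (hence over $k$) and $\Char k = 0$, being $\Gamma_k$-fixed is equivalent to being a $k$-point; and $j(x) = j(E)$ because the forgetful map to $X(1)$ is defined over $\Q$.

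For the implication $(\Leftarrow)$, given $E/k$ with $\rho_{E,n}(\Gamma_k)$ conjugate into $H$, I would choose the basis $\psi$ realizing that conjugation, so that $\rho_{E,n}(\sigma)\in H$ for all $\sigma$; the displayed identity then shows $x = \overline{[E,\psi]}$ is $\Gamma_k$-fixed, hence a noncuspidal $k$-point with $j(x) = j(E)$.

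For $(\Rightarrow)$, given a noncuspidal $x\in(X(n)/H)(k)$, I would note $j(x)\in k$, pick any elliptic curve $E_0/k$ with $j(E_0) = j(x)$, and lift $x$ to a geometric point $[E_0,\psi_0]$ of $Y(n)$. Invariance of $x$ yields, for each $\sigma$, an isomorphism of pairs $[E_0,\psi_0\circ\rho_{E_0,n}(\sigma)] = [E_0,\psi_0\circ h_\sigma]$ with $h_\sigma\in H$, i.e. $\psi_0\circ\rho_{E_0,n}(\sigma) = \alpha_\sigma\circ\psi_0\circ h_\sigma$ for some $\alpha_\sigma\in\Aut(\Ebar_0)$. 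Thus $\rho_{E_0,n}(\sigma)$ lies in $H$ only up to the image of $\Aut(\Ebar_0)$ in $\GL_2(\Z/n\Z)$.

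The main obstacle is exactly this automorphism ambiguity. For $j(x)\notin\{0,1728\}$ one has $\Aut(\Ebar_0) = \{\pm1\}$, mapping to $\pm I$, so a priori only $\rho_{E_0,n}(\Gamma_k)\subseteq\langle H,-I\rangle$. I would resolve this by checking that the resulting sign $\varepsilon(\sigma) := \psi_0^{-1}\alpha_\sigma\psi_0\in\{\pm I\}$ descends to a homomorphism $\Gamma_k\to\langle H,-I\rangle/H$ (this rests on $\rho_{E_0,n}$ being a homomorphism and $\{\pm I\}$ being central); when $-I\notin H$ it is a quadratic character $\chi_\delta$, and replacing $E_0$ by the twist $E_0^\delta$ multiplies $\rho_{E_0,n}$ by $\chi_\delta$, placing the entire image in $H$ while preserving $j$. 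When $-I\in H$ no twist is needed. The delicate remaining case $j(x)\in\{0,1728\}$, where $\Aut(\Ebar_0)$ is cyclic of order $4$ or $6$, is handled by the same strategy after replacing the quadratic twist with the appropriate quartic or sextic twist absorbing the corresponding cyclic character. Verifying that $\varepsilon$ is genuinely a homomorphism and that the twist lands the image precisely in $H$ is where the real content lies; the rest is formal.
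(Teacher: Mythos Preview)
Your argument is correct and in substance follows the same moduli-theoretic line as the paper, but the packaging is rather different. The paper's proof is essentially a pair of citations: it invokes \cite[VI, Prop.~3.2]{DeligneRapoport} to pass from a noncuspidal $k$-point of the coarse space $X(n)/H$ to an actual elliptic curve $E/k$ together with an $H$-orbit of level structures represented by $x$, and then quotes the well-known equivalence (as in \cite[Lemma~2.1]{RouseZureickBrown}) between ``$\overline{[E,\psi]}\in (X(n)/H)(k)$ for some $\psi$'' and ``$\im\rho_{E,n}$ is conjugate into $H$''. You instead unpack both of these by hand: you use that $k$-points of the variety $X(n)/H$ are exactly the $\Gamma_k$-fixed geometric points, compute the Galois action explicitly via $\sigma\cdot[E,\psi]=[E,\psi\circ\rho_{E,n}(\sigma)]$, and then resolve the automorphism ambiguity in the $(\Rightarrow)$ direction by the standard twisting argument (quadratic for $j\neq 0,1728$, quartic/sextic otherwise). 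This is precisely the content hidden inside the Deligne--Rapoport citation, so what you gain is a self-contained argument with no black boxes; what the paper gains is brevity and a pointer to the literature where these details are already carefully checked. Your identification of the cocycle/homomorphism property of $\varepsilon$ as the genuine verification needed is accurate.
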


	   \begin{proof}
		   By~\cite[VI, Prop.~3.2]{DeligneRapoport}, there is a non-cuspidal $k$-point $x\in X(n)/H$ if and only if there exists an elliptic curve $E$ over $k$ with $j(E) = j(x)$ and an isomorphism $\psi\colon (\Z/n\Z)^2 \to E_n$ such that $\overline{[E,\psi]} = x \in (X(n)/H)(k)$.  Then we use the following well-known equivalence: Given an elliptic curve $E$ over $k$, there exists an isomorphism $\psi\colon \Z/n\Z\times \Z/n\Z \to E_n$ such that $\overline{[E,\psi]}$ defines a $k$-point on $X(n)/H$ if and only if $\im \rho_{E,n}$ is contained in a subgroup of $\GL_2(\Z/n\Z)$ conjugate to $H$.
		   In the forward direction, $\psi$ is used to identify $\Aut(E_n)$ with $\GL_2(\Z/n\Z)$ in the definition of the representation $\rho_{E,n}$. In the backward direction, $\rho_{E,n}$ is assumed to come with an identification of $\Aut(E_n)$ with $\GL_2(\Z/n\Z)$, and the representation is used to construct $\psi$. See, e.g.,~\cite[Lemma~2.1]{RouseZureickBrown} for a proof.
	   \end{proof}

	   Fix a prime power $\ell^s$.  We define $X_s(\ell^s)$, $X_{ns}^{t, \overline{\varepsilon}}(\ell^s)$, and $X_{B}^t(\ell^s)$ as the quotient $X(\ell^s)/H$ by taking $H$ to be $C_s(\ell^s)$, $C_{ns}^{t,\epsbar}(\ell^s)$, and $B_{ab}^t(\ell^s)$, respectively, in the notation of~\S\ref{ss:conjugacy}. These curves have natural forgetful maps
	       \[
	       	\pi_{s}^{\ell^s}\colon X_{s}(\ell^s) \to X(1),\quad \pi_{ns}^{\ell^s}\colon \Xns(\ell^s) \to X(1),\quad \textup{and}\quad \pi_{B,t}^{\ell^s}\colon \XB(\ell^s) \to X(1)
	       \]
	   to the modular curve $X(1)$. 
	       
	       \begin{proposition}\label{prop:images}
	       	Let $E$ be an elliptic curve over a field $k$ of characteristic $0$, let $\ell$ be an odd prime, let $s$ be a positive integer, and let $s' := \lceil s/2\rceil$.  If the extension $k(E_{\ell^s})/k$ is abelian, then $j(E) \in X(1)(k)$ is the image of a $k$-rational point from one of the curves $X_{s}(\ell^{s'})$, $\Xns(\ell^{s'})$, or $\XB(\ell^{s'})$.

	   	Conversely, if $j \in Y(1)(k)\subset X(1)(k)$ is the image of a $k$-rational point from one of the curves $X_{s}(\ell^s)$, $\Xns(\ell^s)$,  or $\XB(\ell^s)$, then there exists an elliptic curve $E/k$ with $j(E) = j$, such that the field extension $k(E_{\ell^s})/k$ is abelian.
	       \end{proposition}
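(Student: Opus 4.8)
The plan is to combine the group-theoretic classification of Corollary~\ref{cor:Conjugacy} with the dictionary of Lemma~\ref{lem:modcurvestorepns} between noncuspidal $k$-points of the quotient curves and containments of Galois images. Throughout I would use the identification $\Gal(k(E_{\ell^s})/k) \isom \im \rho_{E,\ell^s}$, so that $k(E_{\ell^s})/k$ is abelian precisely when $\im \rho_{E,\ell^s}$ is an abelian subgroup of $\GL_2(\Z/\ell^s\Z)$, and the definitions $X_s(\ell^{s'}) = X(\ell^{s'})/C_s(\ell^{s'})$, $\Xns(\ell^{s'}) = X(\ell^{s'})/C_{ns}^{t,\epsbar}(\ell^{s'})$, and $\XB(\ell^{s'}) = X(\ell^{s'})/B_{ab}^t(\ell^{s'})$.

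For the forward direction I would begin from the hypothesis that $H := \im \rho_{E,\ell^s}$ is abelian and apply Corollary~\ref{cor:Conjugacy}. The key observation is that reduction modulo $\Mat_2(\ell^{s'}\Z/\ell^s\Z)$ is exactly reduction of matrix entries modulo $\ell^{s'}$, under which $\rho_{E,\ell^s}$ becomes $\rho_{E,\ell^{s'}}$; hence $\im \rho_{E,\ell^{s'}}$ equals $H \bmod \Mat_2(\ell^{s'}\Z/\ell^s\Z)$ as a subgroup of $\GL_2(\Z/\ell^{s'}\Z)$. Corollary~\ref{cor:Conjugacy} then shows $\im \rho_{E,\ell^{s'}}$ is conjugate to a subgroup of one of $C_s(\ell^{s'})$, $C_{ns}^{t,\epsbar}(\ell^{s'})$, or $B_{ab}^t(\ell^{s'})$. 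Applying Lemma~\ref{lem:modcurvestorepns} at level $\ell^{s'}$ with this subgroup as $H$ produces a noncuspidal $k$-rational point on the corresponding quotient curve whose image under the forgetful map to $X(1)$ is $j(E)$; since $E$ is an elliptic curve, $j(E)\in Y(1)(k)$ is automatically noncuspidal, so this is exactly the claimed conclusion.

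For the converse, suppose $j\in Y(1)(k)$ is the image of a $k$-rational point $x$ on one of $X_s(\ell^s)$, $\Xns(\ell^s)$, or $\XB(\ell^s)$. Because the forgetful map sends cusps to the cusp of $X(1)$ and $j$ is noncuspidal, the point $x$ is itself noncuspidal, so Lemma~\ref{lem:modcurvestorepns} (now at level $\ell^s$, with $H\in\{C_s(\ell^s), C_{ns}^{t,\epsbar}(\ell^s), B_{ab}^t(\ell^s)\}$) furnishes an elliptic curve $E/k$ with $j(E)=j$ and $\im\rho_{E,\ell^s}$ contained in $H$ up to conjugacy. The remaining step is to verify that each of the three groups is abelian: $C_s(\ell^s)$ consists of diagonal matrices; $C_{ns}^{t,\epsbar}(\ell^s)$ is contained in the commutative ring of matrices $\begin{pmatrix} x & \varepsilon\ell^t y \\ y & x\end{pmatrix}$ (isomorphic to $(\Z/\ell^s\Z)[T]/(T^2-\varepsilon\ell^t)$); and for $B_{ab}^t(\ell^s)$ a direct two-by-two multiplication shows the upper-right entry of a product is the symmetric expression $x_1y_2 + x_2y_1 + \ell^t y_1 y_2$, so any two elements commute. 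Hence $\im\rho_{E,\ell^s}$ is abelian and $k(E_{\ell^s})/k$ is abelian, as required.

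I expect the only genuinely delicate point to be the bookkeeping of the level shift $s\mapsto s'$ in the forward direction: Corollary~\ref{cor:Conjugacy} controls the abelian group only after reduction modulo $\ell^{s'}$, which is precisely why the relevant modular curves appear at level $\ell^{s'}$ rather than $\ell^s$, and one must confirm that this additive reduction agrees with passing from $\rho_{E,\ell^s}$ to $\rho_{E,\ell^{s'}}$. The abelian-ness verification underlying the converse is a short explicit computation and should present no difficulty.
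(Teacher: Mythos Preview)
Your proposal is correct and follows essentially the same route as the paper's proof: apply Corollary~\ref{cor:Conjugacy} to the abelian image $\im\rho_{E,\ell^s}$, identify the mod-$\ell^{s'}$ reduction with $\im\rho_{E,\ell^{s'}}$, and invoke Lemma~\ref{lem:modcurvestorepns} in both directions. You are simply more explicit than the paper about why the three groups $C_s$, $C_{ns}^{t,\epsbar}$, $B_{ab}^t$ are abelian and why the relevant points are noncuspidal, details the paper leaves to the reader.
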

    
	       \begin{proof}
             The representation $\rho_{E,\ell^s}$ factors through $\Gal(k(E_{\ell^s})/k)$, so if $k(E_{\ell^s})/k$ is an abelian extension, $\im \rho_{E,\ell^s}$ is an abelian subgroup of $\GL_2(\Z/\ell^s\Z)$. By Corollary~\ref{cor:Conjugacy}, $\im \rho_{E,\ell^{s'}}$ is conjugate to a subgroup of $C_{s}(\ell^{s'})$, $C_{ns}^{t,\epsbar}(\ell^{s'})$, or $B_{ab}^t(\ell^{s'})$. Hence, by Lemma~\ref{lem:modcurvestorepns}, there is a $k$-rational point on one of the curves $X_{s}(\ell^{s'})$, $\Xns(\ell^{s'})$, or $\XB(\ell^{s'})$ whose image in $X(1)$ is $j(E)$.  Conversely, given a $k$-rational point $P$ on one of the curves $X_{s}(\ell^s)$, $\Xns(\ell^s)$, or $\XB(\ell^s)$, by Lemma~\ref{lem:modcurvestorepns} there is an elliptic curve $E/k$ whose $j$-invariant is the image of $P$ in $X(1)$, and such that $\im \rho_{E,\ell^s}$ is conjugate to a subgroup of $C_{s}(\ell^{s})$, $C_{ns}^{t,\epsbar}(\ell^{s})$, or $B_{ab}^t(\ell^{s})$. Therefore $k(E_{\ell^s})/k$ is abelian.
	       \end{proof}

%%%%%%%%%%%%%%%%%%%%%%%%%%%%%%%%%%%%%%%%%%%%%%%%%%%%%%%%%%%%%%%%%%%%%%%%%%%%%%%%
\section{Equivalence of uniform bounds}\label{sec:UniformBounds}%%%%%%%%%%%%%%%%
%%%%%%%%%%%%%%%%%%%%%%%%%%%%%%%%%%%%%%%%%%%%%%%%%%%%%%%%%%%%%%%%%%%%%%%%%%%%%%%%

    In this section we prove the equivalence among strong uniform boundedness statements for the size of the Brauer group (modulo algebraic classes) of certain principal homogeneous spaces of abelian surfaces, for their associated K3 surfaces, for the existence of abelian $n$-division fields, and for rational points on certain modular curves.  In addition to the results stated in~\S\ref{s:intro}, we also prove the following.
    
    \begin{thm}\label{thm:EquivalentUniformBoundsNumberField}
    Let $d$ be a positive integer.  The following uniform boundedness statements are equivalent.
    \begin{enumerate}
    	\item[\textbf{(K3)}] For all positive integers $r$, there exists a $B = B(r,d)$ such that for all number fields $k$ of degree at most $r$ and all surfaces $X/k \in\mathscr{K}_d$, 
    	\[
    		\#\left(\frac{\Br X}{\Br_0 X}\right) \leq B.
    	\]
		
    	\item[\textbf{(Ab)}] For all positive integers $r'$, there exists a $B' = B'(r',d)$ such that for all number fields $k'$ of degree at most $r'$ and all $Y/k'\in\mathscr{A}^2_d$, 
    	\[
    		\#\left(\frac{\Br Y}{\Br_1 Y}\right) \leq B'.
    	\]

    	\item[\textbf{(EC)}] For all positive integers $r''$, there exists a $B'' = B''(r'', d)$ such that for all number fields $k''$ of degree at most $r''$ and all non-CM elliptic curves $E/k''$ with a $k''$-rational cyclic subgroup of order $d$,
    	\[
    		\Gal(k''(E_n)/k'') \textup{ is abelian} \Rightarrow n \leq B''.
    	\]
		
    	\item[\textbf{(M)}] For all positive integers $r'''$, there exists a $B''' = B'''(r''', d)$ such that for all number fields $k'''$ of degree at most $r'''$ and all odd prime powers $\ell^s > B'''$, the curves $X_s(\ell^s)\times_{X(1)}X_0(d)$, $\Xns(\ell^s)\times_{X(1)}X_0(d)$, and $\XB(\ell^s)\times_{X(1)}X_0(d)$ have no non-CM, noncuspidal $k'''$-rational points.
    \end{enumerate}
    \end{thm}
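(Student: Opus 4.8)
The plan is to bootstrap from the characteristic-$0$ equivalence of Theorem~\ref{thm:EquivalentUniformBoundsArbField}, applied with $F=\Q$ (so that $[k:\Q]\le r$ matches ``degree at most $r$''), which already supplies the equivalence of the \emph{odd-part} versions of \textbf{(K3)}, \textbf{(Ab)}, and \textbf{(EC)}. Over number fields of bounded degree I have two extra tools: the unconditional $\ell$-primary bounds of Theorem~\ref{thm:ellprimaryAb} and Corollary~\ref{cor:ellprimaryK3}, which I will use to promote each odd-part statement to its full form, and the modular-curve dictionary of \S\ref{sec:ModularCurves} (Lemma~\ref{lem:modcurvestorepns} and Proposition~\ref{prop:images}), which I will use to splice \textbf{(M)} into the cycle. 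Thus I would prove three things: (a) each of \textbf{(K3)}, \textbf{(Ab)}, \textbf{(EC)} is equivalent to its odd-part version; (b) the odd-part versions are mutually equivalent by Theorem~\ref{thm:EquivalentUniformBoundsArbField}; and (c) \textbf{(EC)} $\Leftrightarrow$ \textbf{(M)}.

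The point of step (a) is that the $2$-primary parts are bounded \emph{unconditionally}, hence cannot obstruct uniformity. For \textbf{(Ab)}, I apply Theorem~\ref{thm:ellprimaryAb} with $\ell=2$ and $v=1$ (as $Y\in\mathscr{A}_d^2$ has $v_2(N)=1$) to bound $\#(\Br Y/\Br_1 Y)[2^\infty]$; for \textbf{(K3)}, Corollary~\ref{cor:ellprimaryK3} bounds $\#(\Br X/\Br_0 X)[2^\infty]$, and Lemma~\ref{lem:AlgBrBound} handles the passage between $\Br_0$ and $\Br_1$. For \textbf{(EC)} the $2$-part calls for a small trick: apply Theorem~\ref{thm:ellprimaryAb} to the product $E\times E\in\mathscr{A}_1^1$. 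Indeed, Theorem~\ref{thm:BrAbHom} (trivial period, $L=k$) together with Corollary~\ref{cor:End} gives $(\Br(E\times E)/\Br_1(E\times E))_n \cong \Z/n_1\Z\times(\Z/n_2\Z)^2$, where $n_1$ is the largest divisor of $n$ with $\Gal(k(E_{n_1})/k)$ abelian. Since the set of $n$ with abelian $n$-division field is closed under lcm and divisibility, it has a finite maximum $M$ (by Serre's Open Image Theorem), and the $\ell=2$ bound caps $v_2(M)$ by a constant depending only on $r$, independently of $d$. Hence $n\le n_{\textup{odd}}\cdot 2^{v_2(M)}$, so bounding $n_{\textup{odd}}$ is equivalent to bounding $n$.

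Step (c) is the modular translation. For \textbf{(EC)} $\Rightarrow$ \textbf{(M)}: a non-CM, noncuspidal $k$-point on $X_s(\ell^s)\times_{X(1)}X_0(d)$ (or its $\Xns$/$\XB$ analogues) projects to a $k$-point of $X_s(\ell^s)$ and a $k$-point of $X_0(d)$ over a common non-CM $j\in Y(1)(k)$; the converse direction of Proposition~\ref{prop:images} yields $E/k$ with $j(E)=j$ and $k(E_{\ell^s})/k$ abelian, and since having a rational cyclic subgroup of order $d$ only constrains $\rho_{E,d}$ up to scalars it is invariant under quadratic twist, so this $E$ inherits a cyclic $d$-subgroup from the $X_0(d)$-point (the twist is quadratic because $j$ is non-CM). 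Then \textbf{(EC)} forces $\ell^s\le B''$. For \textbf{(M)} $\Rightarrow$ \textbf{(EC)}: if $\Gal(k(E_n)/k)$ is abelian then so is $\Gal(k(E_{\ell^s})/k)$ for $s=v_\ell(n)$, and the forward direction of Proposition~\ref{prop:images} puts $j(E)$ under a $k$-point of one of $X_s(\ell^{s'})$, $\Xns(\ell^{s'})$, $\XB(\ell^{s'})$ with $s'=\lceil s/2\rceil$; combined with the $X_0(d)$-point from the cyclic $d$-subgroup this is a non-CM noncuspidal $k$-point on the relevant fiber product, so \textbf{(M)} gives $\ell^{s'}\le B'''$. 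As $\ell$ is odd this bounds both the primes dividing $n_{\textup{odd}}$ (by $\ell\le B'''$) and the exponents (by $s\le 2s'$), hence bounds $n_{\textup{odd}}$; the $2$-part is controlled as in step (a).

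The main obstacle is the prime $\ell=2$. The clean classification of abelian subgroups of $\GL_2(\Z/\ell^s\Z)$ in Corollary~\ref{cor:Conjugacy}, and hence the curves appearing in \textbf{(M)}, is valid only for odd $\ell$; consequently both \textbf{(M)} and Theorem~\ref{thm:EquivalentUniformBoundsArbField} intrinsically see only the odd part. The crux is therefore to bound the $2$-primary contributions by an entirely different mechanism, the unconditional $\ell=2$ case of Theorem~\ref{thm:ellprimaryAb}; the nontrivial observation making this work is that one and the same Brauer-group bound, specialized to $E\times E$, simultaneously caps the $2$-part of abelian division fields, thereby reconciling the ``$n$'' of \textbf{(EC)} with the ``$n_{\textup{odd}}$'' of the characteristic-$0$ theorem.
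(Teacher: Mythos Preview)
Your proposal is correct and follows essentially the same route as the paper: use the $\ell=2$ cases of the unconditional bounds to upgrade the odd-part equivalences of Theorem~\ref{thm:EquivalentUniformBoundsArbField} to full equivalences, then link \textbf{(EC)} and \textbf{(M)} via Proposition~\ref{prop:images}. The only deviations are cosmetic: the paper cites Theorem~\ref{thm:ellprimaryEC} directly to cap the $2$-part of abelian division fields (whereas you pass through $E\times E\in\mathscr{A}_1^1$ and Theorem~\ref{thm:ellprimaryAb}, whose proof rests on Theorem~\ref{thm:ellprimaryEC} anyway), and you make explicit the quadratic-twist invariance needed in \textbf{(EC)} $\Rightarrow$ \textbf{(M)} that the paper leaves implicit.
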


    \begin{remark}
    	Let $r$ be a positive integer.  If the modular curve $X_0(d)$ has no non-CM noncuspidal degree $r$ points for $d$ sufficiently large, then Theorem~\ref{thm:EquivalentUniformBoundsNumberField} holds without the dependence on $d$.  Indeed if $X_0(d)$ has no non-CM noncuspidal degree $r$ points for $d$ sufficiently large, then \textbf{(K3)}, \textbf{(Ab)}, \textbf{(EC)}, and \textbf{(M)} trivially hold for $d$ sufficiently large.
    \end{remark}
    
    %%%%%%%%%%%%%%%
        
    In~\S\ref{sec:BrauerReduction}, we show that the existence of a uniform bound on $\Br X/\Br_0 X$ for K3 surfaces with fixed geometric N\'eron-Severi lattice reduces to a bound on the exponent of $\Br X/\Br_1 X$.  In~\S\ref{sec:PreciseEquivalenceAb}, we show that for $Y\in\mathscr{A}_d^N$, the existence of order $n$ elements in $\Br Y/\Br_1 Y$ is related to the existence of an abelian representation of the Galois group attached to $E_n$, for $E$ some elliptic curve.  These results are used in~\S\ref{sec:EquivalentUniformBoundsArbField} to prove Theorem~\ref{thm:EquivalentUniformBoundsArbField}.  We prove Theorem~\ref{thm:ellprimaryAb} in~\S\ref{sec:UnconditionalBound}, and use it in the proof of Theorem~\ref{thm:EquivalentUniformBoundsNumberField} in~\S\ref{sec:EquivalentUniformBoundsNumberField}. Finally, we prove Theorem~\ref{thm:overQ} in~\S\ref{sec:overQ}.

    %%%%%%%%%%%%%%%%%%%%%%%%%%%%%%%%%%%%%%%%%%%%%%%%%%%%%%%%%%%%%%%%%%%%%%%%%%%
    \subsection{Uniform bounds on $\Br_1 X/\Br_0 X$}
	\label{sec:BrauerReduction}\label{sec:AlgebraicBrauer}
    %%%%%%%%%%%%%%%%%%%%%%%%%%%%%%%%%%%%%%%%%%%%%%%%%%%%%%%%%%%%%%%%%%%%%%%%%%%%
        	\begin{prop}\label{prop:BrauerReduction}
                Let $X$ be a K3 surface over a field $k$ of characteristic $0$.  Then there exists a positive integer $M$ that depends only on the exponent of $\Br X/\Br_1 X$ such that
                \[
                    \#\left(\frac{\Br X}{\Br_0 X}\right) \leq M.
                \]
            \end{prop}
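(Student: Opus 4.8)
The plan is to split $\Br X/\Br_0 X$ along the filtration $\Br_0 X \subseteq \Br_1 X \subseteq \Br X$ and bound the algebraic and transcendental pieces separately. Concretely, the tautological short exact sequence
\[
0 \to \frac{\Br_1 X}{\Br_0 X} \to \frac{\Br X}{\Br_0 X} \to \frac{\Br X}{\Br_1 X} \to 0
\]
gives $\#(\Br X/\Br_0 X) = \#(\Br_1 X/\Br_0 X)\cdot\#(\Br X/\Br_1 X)$, so it suffices to bound each factor.

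For the algebraic factor I would invoke Lemma~\ref{lem:AlgBrBound}, which bounds $\#(\Br_1 X/\Br_0 X)$ by a constant depending only on $\rank \NS\Xbar$. Since $X$ is a K3 surface, $\rank\NS\Xbar \le 20$, so taking the maximum of that constant over all ranks $1 \le \rho \le 20$ produces an \emph{absolute} constant $C$ with $\#(\Br_1 X/\Br_0 X)\le C$, independent of $X$ and, in particular, independent of the exponent of $\Br X/\Br_1 X$.

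For the transcendental factor I would use the injection $\Br X/\Br_1 X \hookrightarrow \Br\Xbar$ coming from the very definition $\Br_1 X = \ker(\Br X \to \Br\Xbar)$. Because $\kbar$ has characteristic $0$ and $X$ is a K3 surface, $\Br\Xbar \cong (\Q/\Z)^{b_2 - \rho}$ with $b_2 = 22$ and $\rho = \rank\NS\Xbar \ge 1$; in particular $b_2 - \rho \le 21$. Writing $e$ for the exponent of $\Br X/\Br_1 X$, every element of its image in $(\Q/\Z)^{b_2-\rho}$ is killed by $e$, so the image lands in the $e$-torsion subgroup $(\tfrac1e\Z/\Z)^{b_2-\rho} \cong (\Z/e\Z)^{b_2-\rho}$, which has order $e^{b_2-\rho}\le e^{21}$. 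Hence $\#(\Br X/\Br_1 X) \le e^{21}$.

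Combining the two bounds yields $\#(\Br X/\Br_0 X) \le C\cdot e^{21} =: M$, a quantity depending only on $e = e(\Br X/\Br_1 X)$, as claimed. I do not expect a serious obstacle: both inputs (Lemma~\ref{lem:AlgBrBound} and the structure of $\Br\Xbar$) are already available, and the argument is essentially a counting estimate. The one point deserving care is that finiteness of $\Br X/\Br_1 X$ is not assumed a priori over a general characteristic-$0$ field; but the embedding into the finite-corank divisible group $(\Q/\Z)^{b_2-\rho}$ shows that a finite exponent already forces finiteness together with the stated bound, so finiteness is a byproduct rather than a hypothesis.
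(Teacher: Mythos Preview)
Your proposal is correct and follows essentially the same approach as the paper: split along the filtration $\Br_0 X\subseteq\Br_1 X\subseteq\Br X$, bound the transcendental quotient by embedding it into $\Br\Xbar\cong(\Q/\Z)^{22-\rho}$ and using its exponent, and bound the algebraic quotient via Lemma~\ref{lem:AlgBrBound}. The only cosmetic differences are the order of the two steps and your explicit remark that finiteness of $\Br X/\Br_1 X$ is a consequence rather than a hypothesis.
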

            \begin{proof}
                Recall that for any smooth variety $W$ over an algebraically closed field of characteristic $0$, we have the following exact sequence
                \[
                    0 \to (\Q/\Z)^{b_2 - r} \to \Br W \to \bigoplus_\ell\HH_{\et}^3(W, \Z_\ell(1))_{\tors} \to 0,
                \]
                where $b_2$ denotes the second Betti number and $r$ denotes the Picard rank.  Hence, for $X$ as in the proposition, we have $\Br \Xbar \isom (\Q/\Z)^{22 - r}$. 
                
                By definition of $\Br_1 X$, we have an inclusion $\Br X/\Br_1 X \hookrightarrow \Br \Xbar \isom (\Q/\Z)^{22 - r}$.  Thus
                \[
                    \#\left(\frac{\Br X}{\Br_1 X}\right) \leq \left(e\left(\frac{\Br X}{\Br_1 X}\right)\right)^{22 - r}
                    \leq \left(e\left(\frac{\Br X}{\Br_1 X}\right)\right)^{21}.
                \]
                The cardinality of $\Br X/\Br_0 X$ is the product of  $\#(\Br X/\Br_1 X)$ and $\#(\Br_1 X/\Br_0 X)$, so it suffices to bound $\#(\Br_1 X/\Br_0 X)$.  Since the geometric Picard group of a K3 surface over a characteristic zero field is free of rank at most $20$, the following lemma completes the proof.
            \end{proof}
            \begin{lemma}\label{lem:AlgBrBound}
        		Let $X$ be a smooth proper geometrically integral variety over a field $k$ of characteristic $0$. Assume that the geometric Picard group $\Pic\Xbar$ is free of rank $r$.  Then there exists a positive integer $M = M(r)$, independent of $X$ such that    $\#\left(\Br_1 X/\Br_0 X\right) \mid M$.
        	\end{lemma}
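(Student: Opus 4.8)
The plan is to reduce the statement to a bound on the Galois-cohomology group $\HH^1(\Gamma_k, \Pic\Xbar)$, and then to a bound on the cohomology of a \emph{finite} group acting on the lattice $\Pic\Xbar \cong \Z^r$. First I would invoke the Hochschild--Serre spectral sequence $E_2^{p,q} = \HH^p(\Gamma_k, \HH^q_{\et}(\Xbar,\G_m)) \Rightarrow \HH^{p+q}_{\et}(X,\G_m)$, exactly as in the proof of Proposition~\ref{prop:GeometricallyKummerToKummer}. Since $X$ is proper and geometrically integral over a field of characteristic $0$, we have $\HH^0(\Xbar,\G_m) = \kbar^\times$, so Hilbert 90 gives $E_2^{1,0}=0$, and the exact sequence of low-degree terms furnishes the standard injection $\Br_1 X/\Br_0 X \hookrightarrow \HH^1(\Gamma_k, \Pic\Xbar)$ (the image being the kernel of the differential into $\HH^3(\Gamma_k,\kbar^\times)$). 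In particular $\Br_1 X/\Br_0 X$ is finite, and it suffices to bound $\#\HH^1(\Gamma_k, \Pic\Xbar)$ by a constant depending only on $r$.

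Next, write $M := \Pic\Xbar \cong \Z^r$. The Galois action on $M$ is continuous and $M$ is finitely generated, so the kernel of $\rho\colon \Gamma_k \to \Aut(M) = \GL_r(\Z)$ is open; hence $G := \rho(\Gamma_k)$ is a finite subgroup of $\GL_r(\Z)$. Setting $K := \kbar^{\ker\rho}$, so that $\Gal(K/k) = G$, the inflation--restriction sequence reads $0 \to \HH^1(G,M) \to \HH^1(\Gamma_k,M) \to \HH^1(\Gamma_K,M)^{G}$. Because $\Gamma_K$ acts trivially on the torsion-free discrete module $M$, and continuous homomorphisms from a profinite group to $\Z$ are trivial, we get $\HH^1(\Gamma_K,M)=0$, whence $\HH^1(\Gamma_k,M)\cong \HH^1(G,M)$.

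Finally I would bound $\#\HH^1(G,M)$ uniformly. Multiplication by $|G|$ annihilates $\HH^1(G,M)$, so the connecting map attached to the short exact sequence $0 \to M \xrightarrow{|G|} M \to M/|G|M \to 0$ exhibits $\HH^1(G,M)$ as a quotient of $(M/|G|M)^{G} \subseteq (\Z/|G|\Z)^{r}$; hence $\#\HH^1(G,M) \mid |G|^{r}$. By Minkowski's lemma, reduction modulo $3$ embeds the finite group $G$ into $\GL_r(\F_3)$, so $|G| \mid \#\GL_r(\F_3)$. Taking $M(r) := (\#\GL_r(\F_3))^{r}$ and chaining the divisibilities gives $\#(\Br_1 X/\Br_0 X) \mid \#\HH^1(G,M) \mid |G|^{r} \mid M(r)$, as required.

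I expect the only genuinely delicate point to be the uniformity in the last step: one must control $\#\HH^1(G,M)$ independently of which finite subgroup $G \subset \GL_r(\Z)$ actually occurs. The annihilation-by-$|G|$ argument is what makes this work cleanly, since it simultaneously bounds the exponent and the order of $\HH^1(G,M)$ purely in terms of $|G|$ and $r$, and $|G|$ is in turn bounded via Minkowski. The two inputs that deserve a careful citation or verification are the Hochschild--Serre injection $\Br_1 X/\Br_0 X \hookrightarrow \HH^1(\Gamma_k,\Pic\Xbar)$ and the finiteness of the image of $\rho$; everything else is a routine manipulation of finite-group cohomology.
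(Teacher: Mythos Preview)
Your proof is correct and follows essentially the same route as the paper's: Hochschild--Serre gives the injection into $\HH^1(\Gamma_k,\Pic\Xbar)$, inflation--restriction reduces to $\HH^1(G,\Z^r)$ for a finite $G\subset\GL_r(\Z)$, the $|G|$-annihilation argument gives $\#\HH^1(G,\Z^r)\mid|G|^r$, and Minkowski bounds $|G|$ in terms of $r$. Your version is slightly more streamlined in that you take $G$ directly as the image of $\Gamma_k$ in $\GL_r(\Z)$, whereas the paper first passes to a finite Galois extension $K/k$ over which all generators of $\Pic\Xbar$ are defined and only afterwards replaces $\Gal(K/k)$ by its image $G'$ in $\GL_r(\Z)$; the content is identical.
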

	
        	\begin{proof}
	   		 The Hochschild-Serre spectral sequence~\eqref{eq:HS} yields an injection
	   		 \[
	   		 	\frac{\Br_1 X}{\Br_0 X}\hookrightarrow \HH^1(\Gamma_k, \Pic \Xbar);
	   		 \]
			 thus it suffices to prove the existence of a constant $M = M(r)$ such that $\#\HH^1(\Gamma_k, \Pic \Xbar) \mid M.$
				
        		Each of the finitely many generators of $\Pic \Xbar$ involves finitely many equations with finitely many coefficients, which generate a finite field extension $K/k$ such that $\Pic X_K \isom \Pic \Xbar$. We may assume that $K/k$ is Galois. As $\Pic \Xbar$ is free and $\Gamma_K$ is profinite, the cohomology group $\HH^1(\Gamma_K,\Pic \Xbar) = \Hom(\Gamma_K,\Pic \Xbar)$ vanishes.  Thus, the inflation--restriction exact sequence from group cohomology gives an isomorphism
        		\[
        			 \HH^1\left(\Gal(K/k),(\Pic \Xbar)^{\Gamma_K}\right)\xrightarrow{\sim} \HH^1(\Gamma_k,\Pic \Xbar).
        		\]
		The injection $\Pic X_K \hookrightarrow (\Pic \Xbar)^{\Gamma_K}$, which comes from the exact sequence of low-degree terms of the Hochschild-Serre spectral sequence~\eqref{eq:HS}, together with the isomorphism $\Pic X_K \isom \Pic \Xbar$ shows that $(\Pic \Xbar)^{\Gamma_K} \isom \Pic \Xbar \isom \Z^r$.  All together, these facts imply that the cohomology group $\HH^1(\Gamma_k,\Pic \Xbar)$ has the form $\HH^1(G, \Z^r)$ for some finite group $G$.  Since $\HH^1(G, \Z^r)$ is $|G|$-torsion, the long exact sequence in cohomology associated to the multiplication-by-$|G|$ map yields an isomorphism
        \[
            \HH^1(G, \Z^r) \isom \frac{\left(\Z^r/|G|\right)^G}{(\Z^r)^G/(|G|)}.
        \]
        Therefore, $\#\HH^1(G, \Z^r)$ divides $|G|^r$, regardless of the action of $G$.
        
         The action of $G$ on $\Z^r$ factors through a finite subgroup $G'$ of $\GL_r(\Z)$, thus $\HH^1(G, \Z^r) = \HH^1(G', \Z^r)$.   Since every finite subgroup of $\GL_r(\Z)$ is a subgroup of $\GL_r(\F_3)$, $|G'|$, and hence $\#\HH^1(G', \Z^r)$, divides a constant that depends only on $r$.  
        	\end{proof}

    %%%%%%%%%%%%%%%%%%%%%%%%%%%%%%%%%%%%%%%%%%%%%%%%%%%%%%%%%%%%%%%%%%%%%%%%%%%
    \subsection{Order $n$ Brauer classes and abelian representations associated
	 to $E_n$}\label{sec:PreciseEquivalenceAb}
    %%%%%%%%%%%%%%%%%%%%%%%%%%%%%%%%%%%%%%%%%%%%%%%%%%%%%%%%%%%%%%%%%%%%%%%%%%%%

	Let $Y/k\in \mathscr{A}_d^N$ for some $d$ and $N$, and let $A = \Alb(Y)$.  Since $\NS \Ybar \isom \NS \Abar$, by Proposition~\ref{prop:GeomProduct}, there exists a field extension $L/k$ of degree at most $12$, a pair of elliptic curves $E$ and $E'$ over $L$ and $\delta \in L^\times/L^{\times 2}$ such that
	\begin{enumerate}
		\item $A_L\isom E\times E'$, and
		\item there is a cyclic $L$-isogeny of degree $d$ between $E$ and $E'^{\delta}$.
	\end{enumerate}

	\begin{thm}\label{thm:BrauerAbToRep}
		Let $n$ be a positive integer and let $Y/k\in \mathscr{A}_d^N$ for some $d$ and $N$.  Let $L, E, E',$ and $\delta$ be as above.  Suppose that the quotient $\Br Y/\Br_1 Y$ contains an element of order $n$.  Then there is a field extension $\tilde k/k$ of degree at most $(\gcd(\per(Y), n^{\infty}))^4$ such that $\Gal(\tilde L(\sqrt{\delta},E'_{n/c})/\tilde L(\sqrt{\delta}))$ is abelian, where $\tilde L$ is the compositum of $L$ and $\tilde k$ and 
	\begin{equation}\label{eq:DefnOfc}
		c = 
		\begin{cases}
			\gcd(d, n)		& \textup{if } \delta \in {\tilde L}^{\times 2}, \\
			 \gcd(2d^2,n)	& \textup{if } \delta \notin {\tilde L}^{\times 2}.
		\end{cases}
	\end{equation}
	If $n$ is relatively prime to $\per(Y)$, then we may take $\tilde k = k$.
\end{thm}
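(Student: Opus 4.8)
The plan is to feed the hypothesis through the embedding of Theorem~\ref{thm:BrAbHom}, then transport the resulting homomorphism class into an endomorphism quotient via Proposition~\ref{prop:HomToEndRatl} or Proposition~\ref{prop:HomToEndNonRational}, and finally recognize the abelian division field through the structural Corollary~\ref{cor:End}. First I would convert the abstract hypothesis into an element of the correct order inside an $M$-torsion quotient. Since $Y$ is smooth and projective over a field of characteristic $0$, the group $\Br Y$ is torsion, so the surjection $\Br Y \to \Br Y/\Br_1 Y$ stays surjective on each $\ell$-primary component. Writing the order-$n$ element $q\in\Br Y/\Br_1 Y$ as $q=\sum_{\ell\mid n}q_\ell$ and lifting each $q_\ell$ to an $\ell$-primary class, their sum is a torsion class $b\in\Br Y$ whose order $M$ is divisible only by primes dividing $n$. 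The class of $b$ lies in $(\Br Y)_M/(\Br_1 Y)_M$ and still has order $n$, since this group injects into $\Br Y/\Br_1 Y$ and carries $b$ to $q$. Because $M$ and $n$ share their prime divisors, $\gcd(\per(Y),M^\infty)=\gcd(\per(Y),n^\infty)=:N$, so Theorem~\ref{thm:BrAbHom} at level $M$ yields $\tilde k/k$ with $[\tilde k:k]\le N^4$ and an order-$n$ element
\[
  x\in \frac{\Hom_{\tilde L}(E_M, E'_M)}{\left(\Hom(\Ebar,\Ebar')/M\right)^{\Gamma_{\tilde L}}},
\]
where $\tilde L$ is the compositum of $\tilde k$ and $L$. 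When $\gcd(n,\per(Y))=1$ we have $N=1$ and $\tilde k=k$, which gives the final assertion.

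Next I would push $x$ into an endomorphism quotient while tracking the drop in order. If $\delta\in\tilde L^{\times2}$, then $E'^\delta\isom E'$ over $\tilde L$, so $E$ and $E'$ are $\tilde L$-isogenous by a cyclic degree-$d$ isogeny, and Proposition~\ref{prop:HomToEndRatl} provides a homomorphism into $\End_{\tilde L}(E'_M)/(\End(\Ebar')/M)^{\Gamma_{\tilde L}}$ with cyclic kernel of order dividing $\gcd(d,M)$. If $\delta\notin\tilde L^{\times2}$, then $E$ and $E'^\delta$ are $\tilde L$-isogenous by a cyclic degree-$d$ isogeny while $E$ and $E'$ are not, and Proposition~\ref{prop:HomToEndNonRational} provides a homomorphism into $\End_{\tilde L(\sqrt\delta)}(E'^\delta_M)/\End_{\tilde L}(E'^\delta_M)$ with kernel of exponent dividing $\gcd(2,M)\gcd(d,M)^2$. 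In either case the cyclic group $\langle x\rangle$ of order $n$ meets the kernel in a cyclic group whose order divides $\gcd(n,e(\ker))$.

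The key bookkeeping is then that this intersection is governed by $c$ rather than by the a priori larger kernel bounds at level $M$. Using $v_\ell(n)\le v_\ell(M)$ for every $\ell\mid n$, a prime-by-prime computation gives $\gcd(n,\gcd(d,M))=\gcd(d,n)$ in the first case and $\gcd\!\big(n,\gcd(2,M)\gcd(d,M)^2\big)=\gcd(2d^2,n)$ in the second, which is exactly $c$ in~\eqref{eq:DefnOfc}. Hence $\langle x\rangle\cap\ker$ has order dividing $c$, so the image of $x$ has order at least $n/c$. By Corollary~\ref{cor:End} (and the analogous description of $\End_{k'}/\End_{k}$ following it), the target is isomorphic to $\Z/n_1\Z\times(\Z/n_2\Z)^2$, respectively $\Z/\tfrac{n_1'}{n_1}\Z\times(\Z/\tfrac{n_2'}{n_2}\Z)^2$, where $n_1$ is the largest divisor of $M$ with $\Gal(\tilde L(E'_{n_1})/\tilde L)$ abelian and $n_1'$ is the largest divisor of $M$ with $\Gal(\tilde L(\sqrt\delta)(E'^\delta_{n_1'})/\tilde L(\sqrt\delta))$ abelian. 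Since the image of $x$ has order at least $n/c$ and lies in a group of exponent $n_1$, respectively $n_1'/n_1$, we conclude $n/c\mid n_1$, respectively $n/c\mid n_1'$. Because $E'^\delta\isom E'$ over $\tilde L(\sqrt\delta)$ (and $\tilde L(\sqrt\delta)=\tilde L$ in the first case), both cases give that $\Gal(\tilde L(\sqrt\delta)(E'_{n/c})/\tilde L(\sqrt\delta))$ is abelian.

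I expect the main obstacle to be the very first step: the hypothesis only asserts an element of order $n$ in the \emph{full} quotient $\Br Y/\Br_1 Y$, whereas Theorem~\ref{thm:BrAbHom} controls the $M$-torsion quotient $(\Br Y)_M/(\Br_1 Y)_M$, and an order-$n$ class need not lift to an order-$n$ class at a fixed torsion level. The resolution is to exploit that $\Br Y$ is torsion, lift $q$ to a torsion class $b$ of order $M$ supported on the primes of $n$, and note that $M$ and $n$ have the same $\per(Y)$-part so the degree bound $N^4$ is unaffected; the only remaining care is the gcd identities above, which ensure that passing to the larger level $M$ does not weaken the conclusion beyond the factor $c$.
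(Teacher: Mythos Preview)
Your proof is correct and follows essentially the same route as the paper: pass from $\Br Y/\Br_1 Y$ to a finite torsion level, apply Theorem~\ref{thm:BrAbHom}, push through Theorem~\ref{thm:MainHomToEnd} (you cite the underlying Propositions~\ref{prop:HomToEndRatl} and~\ref{prop:HomToEndNonRational} directly, which is equivalent), and read off the abelian division field via Corollary~\ref{cor:End}. The paper simply asserts the existence of a power $n'$ of $n$ with $(\Br Y)_{n'}/(\Br_1 Y)_{n'}$ containing an order-$n$ element and leaves the gcd identity $\gcd(n,\gcd(2,n')\gcd(d,n')^2)=\gcd(2d^2,n)$ implicit; your version makes both of these steps explicit, which is a virtue rather than a difference in strategy.
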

\begin{remark}
	Theorem~\ref{thm:BrauerAbToRep} also holds with $E'$ replaced by $E$.  Note, however, that if $n/c$ and $d$ are relatively prime then the two statements are equivalent, because then $E'_{n/c}$ and $E_{n/c}$ are isomorphic as $\Gamma_{L(\sqrt{\delta})}$-modules.
\end{remark}

\begin{cor}\label{cor:BrauerKummerToRep}
	Let $X = \Kum Y$ be a Kummer surface over a field $k$ of characteristic $0$ such that $\NS \Xbar \isom \Lambda_d$, and let $n$ be a positive integer.  Since $Y\in \mathscr{A}^2_d$ we have $L, E, E',$ and $\delta$ as above.  
	
	Suppose that the quotient $\Br X/\Br_1 X$ contains an element of order $n$.  Then there is a field extension $\tilde k/k$ of degree at most $16$ such that $\Gal(\tilde L(\sqrt{\delta},E'_{n/c})/\tilde L(\sqrt{\delta}))$ is abelian, where $\tilde L$ is the compositum of $L$ and $\tilde k$ and $c$ is as in~\eqref{eq:DefnOfc}.
	If $n$ is odd or if $X$ is a Kummer surface (i.e., $Y$ is the trivial $2$-covering), then we may take $\tilde k = k$.
\end{cor}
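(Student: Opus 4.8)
The plan is to deduce the statement directly from Theorem~\ref{thm:BrauerAbToRep} applied to the principal homogeneous space $Y$; the only work is to transfer the hypothesis across the degree-$2$ quotient $\pi\colon \tilde Y \to X$ and to specialize the numerics. Because $X = \Kum Y$ with $Y \in \mathscr{A}^2_d$, the period $\per(Y)$ divides $2$, so $\gcd(\per(Y), n^\infty) \in \{1,2\}$. Hence the field $\tilde k/k$ furnished by Theorem~\ref{thm:BrauerAbToRep} satisfies $[\tilde k:k] \le (\gcd(\per(Y),n^\infty))^4 \le 2^4 = 16$, and is attached to the same $\tilde L = \tilde k\cdot L$, the same $\delta$, and the same $c$ from~\eqref{eq:DefnOfc}. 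If $n$ is odd then $\gcd(\per(Y),n^\infty)=1$ since $\per(Y)\mid 2$; and if $Y$ is the trivial $2$-covering then $Y\cong A$ has $\per(Y)=1$, so again the gcd is $1$. In both cases the theorem already permits $\tilde k = k$. Everything therefore reduces to producing an order-$n$ class in $\Br Y/\Br_1 Y$ from the given order-$n$ class in $\Br X/\Br_1 X$.

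To transfer the class I would use the injection~\eqref{eq:KummerToPHS}, namely the order-preserving map $\pi^*\colon (\Br X)_n/(\Br_1 X)_n \hookrightarrow (\Br Y)_n/(\Br_1 Y)_n$. Composing with the tautological injection $(\Br Y)_n/(\Br_1 Y)_n \hookrightarrow \Br Y/\Br_1 Y$, which is injective because $(\Br Y)_n \cap \Br_1 Y = (\Br_1 Y)_n$, carries an order-$n$ class of $(\Br X)_n/(\Br_1 X)_n$ to an order-$n$ class of $\Br Y/\Br_1 Y$. Feeding the latter into Theorem~\ref{thm:BrauerAbToRep} yields the abelianness of $\Gal(\tilde L(\sqrt\delta, E'_{n/c})/\tilde L(\sqrt\delta))$, which together with the degree bookkeeping of the previous paragraph is exactly the asserted conclusion.

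The main obstacle is the opening move: the hypothesis supplies an order-$n$ element of the \emph{full} quotient $\Br X/\Br_1 X$, whereas~\eqref{eq:KummerToPHS} only sees the $n$-torsion subquotient $(\Br X)_n/(\Br_1 X)_n$. The snake lemma for multiplication by $n$ identifies the latter with the kernel of a connecting map $(\Br X/\Br_1 X)[n] \to \Br_1 X/n\Br_1 X \cong (\Br_1 X/\Br_0 X)/n$ (using that $\Br_0 X$ is divisible over a number field), so a priori an order-$n$ class upstairs need not lift. For odd $n$ this difficulty evaporates: by the projection formula $\pi_*\pi^* = [2]$ the pullback $\pi^*$ along the degree-$2$ Galois cover has kernel and cokernel killed by $2$, hence induces an isomorphism on the odd parts of the full quotients $\Br X/\Br_1 X$ and $\Br Y/\Br_1 Y$, and an odd order-$n$ class transfers directly (consistent with $\tilde k = k$ there). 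For even $n$ I expect the discrepancy to be at worst a bounded power of $2$, to be shown to vanish on the class at hand or else absorbed into $c$; note the factor $2$ already built into $c = \gcd(2d^2,n)$. Controlling the order exactly at the prime $2$ is the delicate point of the argument.
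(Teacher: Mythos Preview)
Your approach is the paper's: transfer via~\eqref{eq:KummerToPHS} and then invoke Theorem~\ref{thm:BrauerAbToRep}. The degree bookkeeping ($\per(Y)\mid 2$, hence $[\tilde k:k]\le 16$, with $\tilde k=k$ when $n$ is odd or $Y$ is trivial) is exactly right.

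The ``main obstacle'' you flag, however, is not genuine, and your proposed workaround (snake lemma, projection formula, divisibility of $\Br_0 X$, absorbing factors of $2$ into $c$) is unnecessary and in places shaky---for instance, $k$ here is an arbitrary field of characteristic $0$, so you cannot appeal to divisibility of $\Br_0 X$. The fix is the same device already used in the first line of the proof of Theorem~\ref{thm:BrauerAbToRep}: given an element of order $n$ in $\Br X/\Br_1 X$, lift it to some $\alpha\in\Br X$; since $\Br X$ is torsion, $\alpha$ has finite order, and after multiplying by the part of that order coprime to $n$ you obtain an element of $(\Br X)_{n'}$ for some $n'$ that is a power of $n$, whose image in $(\Br X)_{n'}/(\Br_1 X)_{n'}$ still has order $n$. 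Now apply~\eqref{eq:KummerToPHS} with $n'$ in place of $n$ (it is stated for \emph{every} positive integer), and then inject $(\Br Y)_{n'}/(\Br_1 Y)_{n'}\hookrightarrow \Br Y/\Br_1 Y$ as you said. This produces an element of order $n$ in $\Br Y/\Br_1 Y$ with no loss at the prime $2$ and no extra hypotheses. With that one adjustment your argument is complete and coincides with the paper's one-line proof.
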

\begin{proof}
	This follows from~\eqref{eq:KummerToPHS}.
\end{proof}

\begin{cor}\label{cor:BrauerK3ToRep}
    Let $X$ be a K3 surface over a field $k$ of characteristic $0$ such that $\NS \Xbar \isom \Lambda_d$, and let $n$ be a positive integer. Suppose that the quotient $\Br X/\Br_1 X$ contains an element of order $n$.  Then there is a field extension $\tilde L/k$ whose degree is uniformly bounded (independent of $X, k, d$ and $n$) and an elliptic curve $E'$ over $\tilde L$ such that $\Gal(\tilde L(E'_{n/\gcd(2d^2,n)})/\tilde L)$ is abelian.
\end{cor}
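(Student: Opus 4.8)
The plan is to reduce to the Kummer case already settled in Corollary~\ref{cor:BrauerKummerToRep}. Since $\NS\Xbar \isom \Lambda_d$ contains a sublattice isomorphic to the Kummer lattice $\Lambda_K$, Proposition~\ref{prop:GeometricallyKummerToKummer} supplies a field extension $k_0/k$ whose degree is bounded by an absolute constant $M$ and for which $X_{k_0}$ is a Kummer surface $\Kum Y$, with $Y\to A$ a $2$-covering of an abelian surface $A$. The exact sequence~\eqref{eq:NScomparison}, combined with $\NS\overline{X_{k_0}} = \NS\Xbar \isom \Lambda_d$, shows that $\NS\Ybar$ is the rank-$3$ lattice of the family, so $Y\in\mathscr{A}_d^2$ and the hypotheses of Corollary~\ref{cor:BrauerKummerToRep} are in force over $k_0$.

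First I would carry the order-$n$ hypothesis down to $X_{k_0}$. Because $\overline{X_{k_0}} = \Xbar$, the restriction map sits in a commutative triangle together with the two canonical injections $\Br X/\Br_1 X \hookrightarrow \Br\Xbar$ and $\Br X_{k_0}/\Br_1 X_{k_0}\hookrightarrow \Br\Xbar$. Consequently the induced map $\Br X/\Br_1 X \to \Br X_{k_0}/\Br_1 X_{k_0}$ is injective: any class whose restriction lands in $\Br_1 X_{k_0}$ already vanishes in $\Br\Xbar$, hence lies in $\Br_1 X$. Therefore $\Br X_{k_0}/\Br_1 X_{k_0}$ also contains an element of order $n$.

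Next I would apply Corollary~\ref{cor:BrauerKummerToRep} to $X_{k_0}=\Kum Y$ over $k_0$. This produces the data $L,E,E',\delta$ associated to $Y$ (with $[L:k_0]\le 12$) and a further extension $\tilde k/k_0$ of degree at most $16$ such that $\Gal\bigl(\tilde L(\sqrt\delta,E'_{n/c})/\tilde L(\sqrt\delta)\bigr)$ is abelian, where $\tilde L$ is the compositum of $L$ and $\tilde k$ and $c$ is as in~\eqref{eq:DefnOfc}. Setting $\tilde L' := \tilde L(\sqrt\delta)$ absorbs the quadratic twist into the base field, and $[\tilde L':k]\le 2\cdot 12\cdot 16\cdot M$, a constant independent of $X,k,d,n$. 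To match the single denominator $\gcd(2d^2,n)$ in the statement I would note that in either branch of~\eqref{eq:DefnOfc} one has $c \mid \gcd(2d^2,n)$: indeed $\gcd(d,n)$ divides both $2d^2$ and $n$, hence divides $\gcd(2d^2,n)$, while the other branch is $c=\gcd(2d^2,n)$ outright. Thus $n/\gcd(2d^2,n)$ divides $n/c$, so $E'_{n/\gcd(2d^2,n)}\subseteq E'_{n/c}$, and $\tilde L'(E'_{n/\gcd(2d^2,n)})$ is a subextension of the abelian extension $\tilde L'(E'_{n/c})/\tilde L'$; its Galois group over $\tilde L'$ is therefore again abelian. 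Renaming $\tilde L'$ as $\tilde L$ yields the claim, and the looser denominator is exactly what allows one statement to cover both the $k$-rational and non-$k$-rational isogeny cases uniformly.

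Each step is a direct application of an earlier result, so the genuine content lies in the packaging rather than in any new computation. The point most deserving of care is the injectivity of $\Br X/\Br_1 X \to \Br X_{k_0}/\Br_1 X_{k_0}$, which is what transports the order-$n$ element to the Kummer model, together with the bookkeeping ensuring that every degree introduced—from Proposition~\ref{prop:GeometricallyKummerToKummer}, from $L$, from $\tilde k$, and from adjoining $\sqrt\delta$—is bounded independently of $X,k,d,n$. The one caveat I would flag is that Proposition~\ref{prop:GeometricallyKummerToKummer} is established for number fields (via class field theory), so strictly speaking this deduction applies over those $k$ for which that proposition, or its generalization in Remark~\ref{rmk:Stronger}(ii), holds.
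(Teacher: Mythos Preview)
Your proof is correct and follows essentially the same route as the paper: reduce to the Kummer case over a bounded extension and then invoke Corollary~\ref{cor:BrauerKummerToRep}. The only cosmetic difference is that the paper goes through Corollary~\ref{cor:Classification} (so that the curves $E,E'$ are already isogenous over the base, $L=L_0$, and $\delta$ is trivial), whereas you invoke Proposition~\ref{prop:GeometricallyKummerToKummer} directly and absorb $\sqrt{\delta}$ into the final field; your caveat about the number-field hypothesis in Proposition~\ref{prop:GeometricallyKummerToKummer} is apt and matches the paper's implicit reliance on it.
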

\begin{proof}
    By Corollary~\ref{cor:Classification}, there exist: a field $L_0/k$ whose degree is uniformly bounded, elliptic curves $E,E'$ over $L_0$, a $2$-covering $Y \to E\times E'$, such that $X_{L_0} = \Kum Y$ and there is a cyclic isogeny $\phi\colon E \to E'$ of degree $d$.  Hence, Corollary~\ref{cor:BrauerKummerToRep} implies that there is an extension  $\tilde k/L_0$ of degree at most $16$ such that $\Gal(\tilde k( E'_{n/c})/\tilde k)$ is abelian for $c$ as in~\eqref{eq:DefnOfc} (since $Y$ is a $2$-covering of $E\times E'$, the extension $L/L_0$ associated to the  Kummer $X_{L_0}$ is the trivial extension).  We have
    % \[
        $[\tilde k : k] = [\tilde k: L_0] \cdot [L_0:k]
        \leq  16 \cdot [L_0:k].$
    % \]
    As $[L_0:k]$ is uniformly bounded, so is $[\tilde L:k]$.  To complete the proof, we note that if $\Gal(\tilde L(E'_{n'})/\tilde L)$ is abelian, so is $\Gal(\tilde L(E'_{n''})/\tilde L)$ for any divisor $n''$ of $n'$.
\end{proof}

\begin{proof}[Proof of Theorem~\ref{thm:BrauerAbToRep}]
	Suppose that $\Br Y/\Br_1 Y$ contains an element of order $n$.  Then there exists an integer $n'$ that is a power of $n$ such that $(\Br Y)_{n'}/(\Br_1 Y)_{n'}$ has an element of order $n$.  Hence by Theorem~\ref{thm:BrAbHom}, there exists a field extension $k'/k$ of degree at most $(\gcd(\per(Y), n^{\infty}))^4$ such that
	\[
		\frac{\Hom_{L'}(E_{n'}, E_{n'}')}{\left(\Hom(\Ebar, \Ebar')/{n'}\right)^{\Gamma_{L'}}}
	\]
	contains an element of order $n$; here $L'$ denotes the compositum of $L$ and $k'$.  Furthermore, if $n$ is relatively prime to $\per(Y)$, we may take $k'= k$.  
	
	If $\delta\in {L'}^{\times2}$, then Theorem~\ref{thm:MainHomToEnd} implies that 	\[
		\frac{\End_{L'}(E_{n'}')}{\left(\End(\Ebar')/{n'}\right)^{\Gamma_{L'}}}
	\]
	has an element of order $n/\gcd(d, n)$.  Hence, the result follows from Corollary~\ref{cor:End}.
	
	If $\delta\not\in {`L'}^{\times2}$, then Theorem~\ref{thm:MainHomToEnd} yields an element of order $n/\gcd(n, \gcd(2,n)\gcd(d, n)^2) = n/\gcd(2d^2, n)$ in 
	\[
		\frac{\End_{L'(\sqrt{\delta})}(E'^{\delta}_{n'})}
						{\End_{L'}(E'^{\delta})}.
	\]
	Again, the result follows from Corollary~\ref{cor:End}.
\end{proof}

\begin{thm}\label{thm:RepToBrauerAb}
	Let $n$ be a positive integer, let $E'$ be a non-CM elliptic curve over a field $k$ of characteristic $0$, with a $k$-rational cyclic subgroup $C$ of degree $d$, and  let $E = E'/C$.  Let $W$ and $W'$ be principal homogeneous spaces of $E$ and $E'$, respectively, with periods coprime to $n$, and let $Y = W\times W'$.  (Recall that then $Y\in \mathscr{A}_d^N$ for some integer $N$ coprime to $n$.)  If $\Gal(k(E'_n)/k)$ is abelian, then $\Br Y/\Br_1 Y$ has an element of order $n/\gcd(d, n)$.  
\end{thm}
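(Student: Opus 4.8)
The plan is to run the chain of (iso)morphisms from Section~\ref{sec:BrauerGroup} \emph{in reverse}, manufacturing a Brauer class out of the abelian division field rather than extracting an abelian representation from a Brauer class. First I would fix the geometry. Put $A := E\times E'$; this product is defined over $k$ (not merely geometrically), so in the notation preceding the theorem we may take $L=k$ with $\delta$ trivial. Since $C$ is a $k$-rational cyclic subgroup of order $d$, the quotient map $q\colon E'\to E'/C = E$ is a $k$-rational cyclic isogeny of degree $d$; its dual $\phi := q^{\vee}\colon E\to E'$ is then a $k$-rational isogeny of degree $d$ whose kernel $q(E'_d)\isom E'_d/C$ is cyclic, so $\phi$ is itself a cyclic $k$-isogeny of degree $d$. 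Finally $Y=W\times W'$ is a principal homogeneous space of $A$ whose period divides $\lcm(\per W,\per W')$ and is therefore coprime to $n$; as in~\S\ref{sec:ProofBrAbHom} it can be realized as an $N$-covering $f\colon Y\to A$ with $N=\per(Y)$, so $\gcd(N,n)=1$.

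Next I would assemble the two reduction isomorphisms. Because $\gcd(N,n)=1$, Proposition~\ref{prop:Br2covering} gives
\[
f^*\colon \frac{(\Br A)_n}{(\Br_1 A)_n}\stackrel{\sim}{\To}\frac{(\Br Y)_n}{(\Br_1 Y)_n},
\]
and since $A=E\times E'$ already over $k$, the $L=k$ case of Proposition~\ref{prop:BrProduct} yields an isomorphism
\[
\frac{(\Br A)_n}{(\Br_1 A)_n}\isom \frac{\Hom_{\Gamma_k}(E_n,E'_n)}{\left(\Hom(\Ebar,\Ebar')/n\right)^{\Gamma_k}}.
\]
The kernel of the natural map $(\Br Y)_n\to \Br Y/\Br_1 Y$ is exactly $(\Br_1 Y)_n$, so $(\Br Y)_n/(\Br_1 Y)_n\into \Br Y/\Br_1 Y$ and this injection preserves the order of elements. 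Hence it suffices to exhibit an element of order $n/\gcd(d,n)$ in the right-hand Hom-quotient.

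To produce that element I would apply Proposition~\ref{prop:HomToEndRatl} to the cyclic $k$-isogeny $\phi\colon E\to E'$: composition with $\phi^{\vee}$ gives a homomorphism
\[
\frac{\Hom_k(E_n,E'_n)}{\left(\Hom(\Ebar,\Ebar')/n\right)^{\Gamma_k}}\stackrel{-\circ\phi^{\vee}}{\To}\frac{\End_k(E'_n)}{\left(\End(\Ebar')/n\right)^{\Gamma_k}}
\]
whose cokernel is killed by $m:=\gcd(d,n)$. By hypothesis $\Gal(k(E'_n)/k)$ is abelian, so Corollary~\ref{cor:End} applies with $n_1=n$ and identifies the target with $\Z/n\Z\times(\Z/n_2\Z)^2$, which contains an element $x$ of order $n$. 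Since the cokernel is $m$-torsion and $m\mid n$, the element $mx$ lies in the image and has order $n/m$; any preimage then has order divisible by $n/m$, and scaling it produces an element of order exactly $n/m=n/\gcd(d,n)$ in the Hom-quotient. Transporting this element back through the two isomorphisms above gives the desired class of order $n/\gcd(d,n)$ in $\Br Y/\Br_1 Y$.

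The only place the argument could slip is the order bookkeeping of the last step: one must verify that abelianness of $k(E'_n)/k$ really forces $n_1=n$ in Corollary~\ref{cor:End} (so the End-quotient genuinely carries an element of full order $n$), and that pushing this element through an $m$-torsion cokernel costs precisely the factor $\gcd(d,n)$ and no more. Everything else is the formal concatenation of isomorphisms already established in Sections~\ref{sec:BrauerGroup} and~\ref{sec:Hom}.
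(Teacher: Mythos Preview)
Your proof is correct and follows essentially the same route as the paper's: the paper invokes Corollary~\ref{cor:End} to produce an order-$n$ element in the End-quotient, then Proposition~\ref{prop:HomToEndRatl} to pull it back to an order-$n/\gcd(d,n)$ element in the Hom-quotient, and finally Theorem~\ref{thm:BrAbHom} (which in the present case, with $L=k$ and $\gcd(n,\per(Y))=1$, is an isomorphism) to land in $\Br Y/\Br_1 Y$. You have simply unpacked Theorem~\ref{thm:BrAbHom} into its constituent Propositions~\ref{prop:Br2covering} and~\ref{prop:BrProduct} and written out the order bookkeeping that the paper leaves implicit; the two concerns you flag at the end are both genuine but both fine, exactly as you argue.
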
\begin{proof}
	Since $\Gal(k(E'_n)/k)$ is abelian, Corollary~\ref{cor:End} implies that  $\End_k(E'_n)/\left(\End(\Ebar')/n\right)^{\Gamma_k}$ has an element of order $n$.  Hence,  $\Hom_k(E_n, E_n')/\left(\Hom(\Ebar, \Ebar')/n)^{\Gamma_k}\right)$ has an element of order $n/\gcd(d, n)$ by Proposition~\ref{prop:HomToEndRatl}.  Since $n$ is coprime to $\per(W)\per(W')$ and $L = k$, Theorem~\ref{thm:BrAbHom} completes the proof.
\end{proof}
\begin{cor}\label{cor:RepToBrauerK3}
	Let $n, E, E', W, W'$ be as in Theorem~\ref{thm:RepToBrauerAb}.  Suppose further that $W$ and $W'$ have period dividing $2$, so that we may define $X := \Kum (W\times W')$.  If $\Gal(k(E'_n)/k)$ is abelian, then $\Br X/\Br_1 X$ has an element of order $n_{\textup{odd}}/\gcd(d, n_{\textup{odd}})$.  
\end{cor}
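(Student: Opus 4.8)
The plan is to pass to the odd part of $n$ and then combine Theorem~\ref{thm:RepToBrauerAb} with the Kummer-to-torsor comparison~\eqref{eq:KummerToPHS}, which is an isomorphism exactly when the modulus is odd. Set $m := n_{\textup{odd}}$. Since $m\mid n$ we have $E'_m\subseteq E'_n$, so $k(E'_m)\subseteq k(E'_n)$ and $\Gal(k(E'_m)/k)$ is a quotient of the abelian group $\Gal(k(E'_n)/k)$, hence is itself abelian. Moreover $Y = W\times W'$ has period dividing $2$ (so that $X=\Kum Y$ is defined), and because $m$ is odd it is coprime to $\per(W)\per(W')$; thus all the hypotheses of Theorem~\ref{thm:RepToBrauerAb} hold with $n$ replaced by $m$.

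Applying Theorem~\ref{thm:RepToBrauerAb} with modulus $m$ then yields an element of order $m/\gcd(d,m) = n_{\textup{odd}}/\gcd(d,n_{\textup{odd}})$ in $\Br Y/\Br_1 Y$. The essential point I would emphasize is \emph{where} this class lives: since $E$ and $E'$ are defined over $k$ (so that $L=k$ in the setup of~\S\ref{sec:PreciseEquivalenceAb}) and $m$ is coprime to $\per(Y)$, the proof of Theorem~\ref{thm:RepToBrauerAb} produces the class through the isomorphism of Theorem~\ref{thm:BrAbHom},
\[
    \frac{(\Br Y)_m}{(\Br_1 Y)_m} \xrightarrow{\ \sim\ } \frac{\Hom_{k}(E_m, E'_m)}{\left(\Hom(\Ebar,\Ebar')/m\right)^{\Gamma_k}},
\]
together with Proposition~\ref{prop:HomToEndRatl} and Corollary~\ref{cor:End}. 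Consequently the class of order $n_{\textup{odd}}/\gcd(d,n_{\textup{odd}})$ genuinely lies in the torsion subquotient $(\Br Y)_m/(\Br_1 Y)_m$.

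Finally, since $m=n_{\textup{odd}}$ is odd, the map~\eqref{eq:KummerToPHS} is an isomorphism $(\Br X)_m/(\Br_1 X)_m \xrightarrow{\ \sim\ } (\Br Y)_m/(\Br_1 Y)_m$, so transporting the class back gives an element of order $n_{\textup{odd}}/\gcd(d,n_{\textup{odd}})$ in $(\Br X)_m/(\Br_1 X)_m$, which injects into $\Br X/\Br_1 X$. The only real obstacle is the bookkeeping flagged above: one must verify that the class furnished by Theorem~\ref{thm:RepToBrauerAb} lands in the $m$-torsion subquotient $(\Br Y)_m/(\Br_1 Y)_m$, to which~\eqref{eq:KummerToPHS} applies, rather than merely in the ambient quotient $\Br Y/\Br_1 Y$; this is what forces us to read off the class from the proof (via Theorem~\ref{thm:BrAbHom}) rather than from the bare statement of Theorem~\ref{thm:RepToBrauerAb}.
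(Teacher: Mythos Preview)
Your argument is correct and follows essentially the same route as the paper, which simply writes ``This follows from~\cite[Thm.~2.4]{SZ-Kummer} (see~\eqref{eq:KummerToPHS}).'' Your extra care in passing to $m=n_{\textup{odd}}$ first and in tracing the class into $(\Br Y)_m/(\Br_1 Y)_m$ via Theorem~\ref{thm:BrAbHom} (so that the odd-level isomorphism of~\eqref{eq:KummerToPHS} genuinely applies) makes explicit a bookkeeping step the paper leaves to the reader, but the underlying idea is identical.
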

\begin{proof}
	This follows from~\cite[Thm. 2.4]{SZ-Kummer} (see~\eqref{eq:KummerToPHS}).
\end{proof}

    %%%%%%%%%%%%%%%%%%%%%%%%%%%%%%%%%%%%%%%%%%%%%%%%%%%%%%%%%%%%%%%%%%%%%%%%%%%
    \subsection{Proof of Theorem~\ref{thm:EquivalentUniformBoundsArbField}}
    \label{sec:EquivalentUniformBoundsArbField}
    %%%%%%%%%%%%%%%%%%%%%%%%%%%%%%%%%%%%%%%%%%%%%%%%%%%%%%%%%%%%%%%%%%%%%%%%%%%%
	
    	\textbf{(Ab) $\Leftrightarrow$ (K3):} By Lemma~\ref{lem:AlgBrBound}, \textbf{(K3)} is equivalent to {the existence of} a uniform bound on $\#(\Br X/\Br_1 X)$. Furthermore, by  Proposition~\ref{prop:GeometricallyKummerToKummer}, for any $X/k\in \mathscr{K}_d$, there exists a field extension $L$ whose degree is absolutely bounded such that $X_L$ is a  Kummer surface.  Thus, the implications follow from the proof of~\cite[Thm. 2.4]{SZ-Kummer} (see~\eqref{eq:KummerToPHS}).
    
	\smallskip
	
		\textbf{(EC) $\Rightarrow$ (Ab):} Let $r'$ be a positive integer, let $k'/F$ be an extension of degree at most $r$, and let $Y/k' \in\mathscr{A}^2_d$.  Recall that for any abelian surface $A$ over an algebraically closed field with $\rank\NS A = 3$, $\Br A\isom(\Q/\Z)^3$.   As $\Br Y/\Br_1 Y$ injects into $\Br \Ybar$ and $\Ybar$ is an abelian surface, bounding $\#\left(\Br Y/\Br_1 Y\right)$ is equivalent to bounding the exponent of $\Br Y/\Br_1 Y$.  
	
		Assume that there exists an element of order $n$ in $\Br Y/\Br_1Y$, for some odd integer $n$.  Then by Theorem~\ref{thm:BrauerAbToRep} there exists a field extension $k'' := \tilde L(\sqrt{\delta})/k'$ whose degree is bounded by a constant $C$ that is independent of $Y, k', d$ and $n$ and an elliptic curve $E'/k''$ such that $E'$ contains a cyclic subgroup of order $d$ and $k''(E'_{n/\gcd(d^2, n)})/k''$ is an abelian extension.  Since $[k'':F] \leq C\cdot[k':F] \leq Cr'$,  by \textbf{(EC)}, 
	\[
		n \leq \gcd(d^2,n)\cdot B''(Cr', d)\leq d^2B''(Cr', d).
	\]

		\textbf{(Ab) $\Rightarrow$ (EC):} Let $r''$ be a positive integer, let $k''/F$ be an extension of degree at most $r''$, let $E/k''$ be a non-CM elliptic curve with a $k''$-rational cyclic subgroup $C$ of order $d$, and let $E' := E/C$.  Let $n$ be a positive integer such that $k''(E'_n)/k''$ is abelian.  Then by Theorem~\ref{thm:RepToBrauerAb}, there exists $Y/k'' \in \mathscr{A}^2_d$  with an element of order $n/\gcd(d, n)$ in $\Br Y/\Br_1 Y$.   Thus, by assumption
	\[
		n_{\textup{odd}} \leq \gcd(d, n)\cdot B'(r'',d) \leq dB'(r'',d),
	\]
	so we may take $B'' := dB'(r'',d).$ 
	\qed

    %%%%%%%%%%%%%%%%%%%%%%%%%%%%%%%%%%%%%%%%%%%%%%%%%%%%%%%%%%%%%%%%%%%%%%%%%%%
    \subsection{Uniform bounds on $\ell$-primary parts}
    \label{sec:UnconditionalBound}
    %%%%%%%%%%%%%%%%%%%%%%%%%%%%%%%%%%%%%%%%%%%%%%%%%%%%%%%%%%%%%%%%%%%%%%%%%%%%
	\begin{theorem}\label{thm:ellprimaryEC}
		Let $\ell$ be a prime integer and let $r$ be a positive integer.  There exists a positive constant $B = B(\ell, r)$ such that for all number fields $L$ of degree at most $r$ and all elliptic curves $E/L$, the extension $L(E_{\ell^s})/L$ is nonabelian whenever $s > B$.
	\end{theorem}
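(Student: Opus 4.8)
The plan is to argue by contradiction and to convert abelian $\ell^s$-division fields into rational points on the modular curves of Proposition~\ref{prop:images}, where the growth of the gonality (Abramovich) will be played against the scarcity of low-degree points on high-gonality curves (Frey). For $m\ge 1$ let $J_m$ denote the set of $j$-invariants of degree $\le r$ over $\Q$ that arise from \emph{some} elliptic curve $E/L$ with $[L:\Q]\le r$ and $L(E_{\ell^m})/L$ abelian. The starting observation is that a subextension of an abelian extension is abelian, so $J_{m+1}\subseteq J_m$, and that by Proposition~\ref{prop:images} every $j\in J_m$ is the image of a degree-$\le r$ point on one of the curves $X_s(\ell^{\lceil m/2\rceil})$, $\Xns(\ell^{\lceil m/2\rceil})$, or $\XB(\ell^{\lceil m/2\rceil})$. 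Thus it suffices to find $m_0=m_0(\ell,r)$ for which $J_{m_0}$ contains only CM invariants, and then to set $B=m_0$ (plus the CM bookkeeping below): any non-CM $E/L$ with abelian $\ell^s$-division field and $s>m_0$ would produce $j(E)\in J_{m_0}$, a contradiction.

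First I would check that the gonality of these curves tends to infinity with $m$. Writing each as $X(\ell^{s'})/H$ with $s'=\lceil m/2\rceil$ and $H$ one of the abelian groups $C_s(\ell^{s'})$, $C_{ns}^{t,\epsbar}(\ell^{s'})$, $B_{ab}^t(\ell^{s'})$ of Corollary~\ref{cor:Conjugacy}, I note that each such $H$ lies in a Cartan or Borel subgroup, so $H\cap\SL_2(\Z/\ell^{s'}\Z)$ has order $O(\ell^{s'})$ while $\#\SL_2(\Z/\ell^{s'}\Z)\sim\ell^{3s'}$; hence the associated congruence subgroup of $\SL_2(\Z)$ has index $\gg\ell^{2s'}$, which tends to infinity. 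Abramovich's lower bound for the gonality of a modular curve in terms of this index~\cite{Abramovich-Gonality} then guarantees that the $\overline\Q$-gonality of all three families exceeds $2r$ once $m\ge m^*=m^*(\ell,r)$. By Frey's theorem~\cite{Frey-InfinitelyManyDegreed}, a curve with infinitely many points of degree at most $r$ has gonality at most $2r$; so each of these finitely many curves carries only finitely many points of degree $\le r$, and therefore $J_m$ is finite for every $m\ge m^*$.

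It remains to purge the non-CM invariants from $J_m$ for $m$ large. This is the ``vertical'' input, supplied by Serre's Open Image Theorem~\cite{Serre-OpenImage}: fix a non-CM $j_0\in J_{m^*}$ and a model $E_0$ over $\Q(j_0)$, so $\im\rho_{E_0,\ell^\infty}$ is open in $\GL_2(\Z_\ell)$. Restricting to any $L\supseteq\Q(j_0)$ with $[L:\Q]\le r$ enlarges the index by a factor bounded in terms of $r$, and replacing $E_0$ by a twist over $L$ changes $\rho$ only by a character, which does not affect whether the mod-$\ell^s$ image is abelian. Since an open subgroup of $\GL_2(\Z_\ell)$ contains a principal congruence subgroup $\Gamma(\ell^a)$ with $\ell^a$ bounded, and $\Gamma(\ell^a)$ has non-abelian image modulo $\ell^s$ for all large $s$, there is a bound $B_{j_0}(\ell,r)$ with $j_0\notin J_m$ for $m>B_{j_0}$. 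As $J_{m^*}$ is finite, setting $m_0:=\max\bigl(m^*,\,1+\max B_{j_0}\bigr)$ over its finitely many non-CM elements forces $J_{m_0}$ to consist of CM invariants only, which completes the non-CM case.

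Finally I would dispose of the CM curves by splitting on the CM field $K$: if $K\not\subseteq L$ the image of $\rho_{E,\ell^s}$ lies in the normalizer of a Cartan but not in the Cartan itself, and abelianness forces the Cartan part to be $2$-torsion, bounding $s$ (using that there are only finitely many CM $j$-invariants of degree $\le r$ and that the mod-$\ell^s$ CM image grows); the case $K\subseteq L$ is the classical exception where the division fields are abelian for all $s$ by complex multiplication, so the statement is understood in the non-CM regime in which it is applied. I expect the genuine obstacle to be precisely the interface between the two finiteness inputs: Abramovich and Frey only bound, level by level, the set of admissible $j$-invariants, and one must still rule out a single $j$-invariant recurring at arbitrarily high level. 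Serre's theorem closes this gap but ineffectively, so the resulting $B(\ell,r)$ is ineffective; obtaining an explicit bound would require an effective form of the Open Image Theorem.
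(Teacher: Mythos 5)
Your proposal is correct and takes essentially the same route as the paper's proof: level-by-level finiteness of the admissible $j$-invariants via the classification of abelian level structures (Proposition~\ref{prop:IndexAbSubgroups}/Corollary~\ref{cor:Conjugacy}), Abramovich's gonality lower bound, and Frey's theorem, followed by Serre's Open Image Theorem (together with the twist-invariance of abelianness and the bounded index loss from passing to $L \supseteq \Q(j_0)$ of degree at most $r$) to exclude each of the finitely many surviving non-CM $j$-invariants beyond a level depending on $j_0$, and finally a maximum over this finite set. Your explicit discussion of the CM case also correctly flags something the paper leaves implicit: as literally stated the theorem fails for CM curves whose CM field lies in $L$ (their $\ell$-power division fields are abelian for \emph{every} $s$), and the paper's own argument, whose set $J(s)$ contains only non-CM points, likewise proves the statement only for non-CM curves -- which is the regime in which it is applied.
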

	\begin{proof}
		By Proposition~\ref{prop:IndexAbSubgroups} and~\cite{Abramovich-Gonality}, for sufficiently large $s$, the gonality of $X(\ell^s)/H$ is greater than $2r$ for all abelian subgroups $H<\GL_2(\Z/\ell^s\Z)$.  Fix such an ${s =: s_0}$. For any $s\in \mathbb{N}$, consider the set
        \[
            J(s) := \bigcup_{\substack{H<\GL_2(\Z/\ell^{s}\Z)\\ H\textup{ abelian}}} \bigcup_{\substack{L/k\\ [L:k]\leq r}}
            \left\{j(x) : x\in (Y(\ell^{s})/H)(L), \; x\textup{ non-CM}
            \right\}
        \]
        Since there are finitely many abelian subgroups $H<\GL_2(\Z/\ell^{s}\Z)$, a result of Frey~\cite{Frey-InfinitelyManyDegreed} implies that $J(s_0)$ is a finite set.

        Assume that there exists $j\in J(s_0)$. For $E$ a non-CM elliptic curve over $L$, the image of $\rho_{E, \ell^{s_0}}$ is abelian if and only if the image of $\rho_{E', {\ell^{s_0}}}$ is abelian for any twist $E'$ of $E$.  Furthermore, for any elliptic curve,  $\im \rho_{E, \ell^\infty}$ is an open subgroup of $\GL_2(\Z_{\ell})$~\cite[p. IV-11]{Serre-Abelianladic}.  Thus, by Lemma~\ref{lem:modcurvestorepns}, there exists a positive integer $s(j)$ such that for all $s'> s(j)$ and all abelian subgroups $H'<\GL_2(\Z/\ell^{s'}\Z)$, there is no  point on $X(\ell^{s'})/H'$ of degree at most $r$ that maps to $j$. 
               By definition of $s(j)$, we have $J(S) = \emptyset$ whenever   $S > \max_{j\in J(s_0)} s(j)$.  Therefore, after increasing $s_0$ we may assume that $J(s_0)=\emptyset$.

        Assume that $J(s_0)=\emptyset$. For $s' \geq {s_0}$ and $H'< \GL_2(\Z/\ell^{s'}\Z)$ we have $\Q$-morphisms
        \[
            X(\ell^{s'})/H' \To X(\ell^{{s_0}})/(H' \bmod \Mat_2(\ell^{s_0}\Z/\ell^{s'}\Z)),
        \]
        so $J(s') = \emptyset$.  Thus by Lemma~\ref{lem:modcurvestorepns}, for any number field $L$ of degree at most $r$ and any elliptic curve $E/L$, the extension $L(E_{\ell^{s}})$ is nonabelian for all $s \geq s_0$.  Hence we obtain a bound for $s$ that depends only on $\ell$ and $r$.
	\end{proof}

	\begin{proof}[{Proof of Theorem~\ref{thm:ellprimaryAb}}]
	Let $\ell$ be a prime, let $k$ be a number field of degree at most $r$, and let $Y/k\in \bigcup_N\mathscr{A}_d^N$. Assume that the period of $Y$ has $\ell$-valuation at most $v$. Since $\Br Y/\Br_1 Y$ injects into $\Br \Ybar \isom (\Q/\Z)^3$, in order to bound $\#(\Br Y/\Br_1 Y)[\ell^\infty]$ by a constant independent of $Y$, it suffices to bound the exponent of $(\Br Y/\Br_1 Y)[\ell^{\infty}]$ by a constant independent of $Y$.  Assume that there exists an element of order $\ell^s$ in $\Br Y/\Br_1 Y$.  Then by Theorem~\ref{thm:BrauerAbToRep}, there exists a field extension $\tilde L/k$ with $[\tilde L:k]$ bounded by $24\cdot\ell^{4v}$ and an elliptic curve $E'/\tilde L$ such that the extension $\tilde L(E'_{\ell^s/\gcd(2d^2, \ell^s)})/\tilde L$ is abelian.    Thus, by Theorem~\ref{thm:ellprimaryEC}, $s - v_{\ell}(2d^2)$ must be less than $B(\ell, 24\cdot r\cdot \ell^{4v})$, so $s$ is bounded depending only on $\ell,v, d$, and $r$.
	\end{proof}
    
    \begin{proof}[Proof of Corollary~\ref{cor:ellprimaryK3}]
        This follows from Theorem~\ref{thm:ellprimaryAb} and~\cite[Thm. 2.4]{SZ-Kummer} (see~\ref{eq:KummerToPHS}).
    \end{proof}

    %%%%%%%%%%%%%%%%%%%%%%%%%%%%%%%%%%%%%%%%%%%%%%%%%%%%%%%%%%%%%%%%%%%%%%%%%%%
    \subsection{Proof of Theorem~\ref{thm:EquivalentUniformBoundsNumberField}}
    \label{sec:EquivalentUniformBoundsNumberField}
    %%%%%%%%%%%%%%%%%%%%%%%%%%%%%%%%%%%%%%%%%%%%%%%%%%%%%%%%%%%%%%%%%%%%%%%%%%%%
	
		By Theorems~\ref{thm:ellprimaryEC} and~\ref{thm:ellprimaryAb} and Corollary~\ref{cor:ellprimaryK3} for $\ell = 2$,  \textbf{(EC)}, \textbf{(Ab)}, and \textbf{(K3)} from Theorem~\ref{thm:EquivalentUniformBoundsArbField} are equivalent, respectively, to \textbf{(EC)}, \textbf{(Ab)}, and \textbf{(K3)} from Theorem~\ref{thm:EquivalentUniformBoundsNumberField}.  Hence, it suffices to prove that one of these statements are equivalent to \textbf{(M)}.
	
	\textbf{(EC) $\Rightarrow$ (M):} Let $r'''$ be a positive integer, let $k'''/F$ be a field extension of degree at most $r'''$ and consider an odd prime power $\ell^s$.  Suppose that at least one of the curves $X_s(\ell^s)\times_{X(1)}X_0(d)$, $X_{ns}^{t, \overline{\varepsilon}}(\ell^s)\times_{X(1)}X_0(d)$, or $\XB(\ell^s)\times_{X(1)}X_0(d)$ has a non-CM, noncuspidal $k'''$-rational point. Then by Proposition~\ref{prop:images} there exists an elliptic curve $E/k'''$ together with a $k'''$-rational cyclic subgroup of order $d$ such that $\Gal(k'''(E_{\ell^s})/k''')$ is abelian. Applying \textbf{(EC)} with $r'' := r'''$, there exists a $B'' = B''(r', d)$ such that $\ell^s \leq B''$. Take $B''' = B''$. 

	\smallskip
	
	\textbf{(M) $\Rightarrow$ (EC):} {Let $r''$ be a positive integer, let $k''/F$ be an extension of degree at most $r''$, let $E/k''$ be a non-CM elliptic curve with a $k''$-rational cyclic subgroup $C$, and let $E' := E/C$.  Let $n$ be a positive integer such that $k''(E'_n)/k''$ is abelian. Fix an odd prime $\ell$ and set $s := v_{\ell}(n)$ and $s' := \lceil \frac{s}2\rceil$.  By Proposition~\ref{prop:images},  the point $j(E') \in X(1)(k)$ is the image of a $k''$-rational point of one of the curves $X_{s}(\ell^{s'})$, $X_{ns}^{t, \overline{\varepsilon}}(\ell^{s'})$, or $\XB(\ell^{s'})$.  Since $E'$ is smooth and non-CM, the $k''$-rational point must be noncuspidal and non-CM. Since $E'$ also gives rise to a $k''$-point on $X_0(d)$, using \textbf{(M)} with $r'''= r''$, there is a $B'''$ such that $\ell^{s'} \leq B'''$. Since $B'''$ is independent of $\ell$ and $s'$, we have 
    \[
        n_{\textup{odd}} \;= \;\prod_{\ell > 2}\ell^{s}
		\;\leq\; \prod_{\ell > 2}\ell^{2s'}\; \leq\;
		\prod_{2 < \ell \leq {B'''}^2} {B'''}^2\; =:\; B_{\circ}''.
    \]
    Since $B'''$ depends only on $d$ and $r''' = r''$, $B_{\circ}''$ also depends only on $d$ and $r'$.  By Theorem~\ref{thm:ellprimaryEC}, $v_2(n) \leq b=b(2, F, r'')$, so $n \leq 2^b\cdot B''_{\circ}=:B''$.
	\qed
    
    %%%%%%%%%%%%%%%%%%%%%%%%%%%%%%%%%%%%%%%%%%%%%%%%%%%%%%%%%%%%%%%%%%%%%%%%%%%
    \subsection{Proof of Theorem~\ref{thm:overQ}}\label{sec:overQ}
    %%%%%%%%%%%%%%%%%%%%%%%%%%%%%%%%%%%%%%%%%%%%%%%%%%%%%%%%%%%%%%%%%%%%%%%%%%%%
        Let $E$ be a non-CM elliptic curve over $\Q$ with a $\Q$-rational cyclic subgroup $C$, let $E' := E/C$, let $Y = E'\times E$, and let $X := \Kum Y$.  By~\cite[Thm. 2.4]{SZ-Finiteness}, there is an injective map $\Br X/\Br_1 X\hookrightarrow\Br Y/\Br_1Y$, thus it suffices to only bound $\#(\Br Y/\Br_1 Y)$.
		
		 Assume that $\Br Y/\Br_1 Y$ contains an element of order $n$. Then $\Q(E'_{n/\gcd(d, n)})$ is an abelian extension of $\Q$, by Theorem~\ref{thm:BrauerAbToRep}.  Hence, work of Gonz\'alez-Jim\'enez and Lozano-Robledo~\cite{GJLR-AbelianDivisionFields} implies that $n/\gcd(d, n) \leq 8$ so $n \leq 8d$.  Since $\Br Y/\Br_1 Y$ injects into $\Br \Ybar\isom (\Q/\Z)^3$, we conclude that 
        \[
            \#\left(\frac{\Br Y}{\Br_1 Y}\right) \leq (8d)^3. \tag*{\qed}
        \]	

%%%%%%%%%%%%%%%%%%%%%%%%%%%%%%%%%%%%%%%%%%%%%%%%%%%%%%%%%%%%%%%%%%%%%%%%%%%%%%%%
%%%%%%%%%%%%%%%%%%%%%%%%%%%%%%%%%%%%%%%%%%%%%%%%%%%%%%%%%%%%%%%%%%%%%%%%%%%%%%%%
%%%%%%%%%%%%%%%%%%%%%%%%%%%%%%%%%%%%%%%%%%%%%%%%%%%%%%%%%%%%%%%%%%%%%%%%%%%%%%%%
\appendix
\section{Corrigendum}
%%%%%%%%%%%%%%%%%%%%%%%%%%%%%%%%%%%%%%%%%%%%%%%%%%%%%%%%%%%%%%%%%%%%%%%%%%%%%%%%
%\revedit{This appendix will be published as a corrigendum, separate from the article.  We include it in the arXiv posting of the article to link the two together.}
{This is not an appendix, at least in a conventional sense.  Rather, it is a corrigendum that will be published separately from the article, as the main body of the article has already appeared in print.  We have added the corrigendum as an appendix in the arxiv posting as a means of alerting a reader to its existence.}\\
%%%%%%%%%%%%%%%%%%%%%%%%%%%%%%%%%%%%%%%%%%%%%%%%%%%%%%%%%%%%%%%%%%%%%%%%%%%%%%%%

There is an error in the statement and proof of Proposition~\ref{prop:ranks} that affects the statements of Corollaries~\ref{cor:End} and~\ref{cor:End2}. 
In this note, we correct the statement of Proposition~\ref{prop:ranks} and explain how to rectify subsequent statements. In brief, for a statement about abelian Galois representations of a \emph{fixed} level, ``abelian'' should be replaced with ``liftable abelian'' (Definition~\ref{defn:liftableabelian}).  Statements about abelian Galois representations of arbitrarily high level, however, remain unchanged because such representations give rise to liftable abelian Galois representations of smaller, but still arbitrarily high, level.  Hence the main theorems of the paper remain unchanged.

\begin{defn}
\label{defn:liftableabelian}
Let $n$ be a positive integer.  A subgroup $H$ of $\GL_2(\Z/n\Z)$ is \defi{liftable abelian} if there exists {an abelian subgroup} $\widehat H < \GL_2(\widehat\Z))$ such that $\widehat H$ surjects onto $H$ under the natural quotient map $\GL_2(\widehat\Z) \onto \GL_2(\Z/n\Z)$. (In particular, a liftable abelian subgroup is abelian.)
\end{defn}

For a positive integer $n$, and an elliptic curve $E$ over a field $k$ of characteristic $0$, let $\rho_{E,n}\colon \Gamma_k \to \Aut(E_{n}) \isom \GL_2(\Z/n\Z)$ denote the representation arising from the action of Galois on the $n$-torsion of $E$. If $m\mid n$, then we write $\iota_{m,n} \colon E_m \into E_n$ for the natural inclusion; the image of the multiplication map 
\begin{equation*}
\left[\frac{n}{m}\right]\colon \End(E_m) \to \End(E_n), \qquad
\varphi \mapsto \iota_{m,n}\circ\varphi\circ\left[\frac{n}{m}\right]
\end{equation*}
is $\End(E_n)\cap M_2(\frac{n}{m}\Z/n\Z)$. This map is also compatible with the homomorphisms $\rho_{E,n}$ and $\rho_{E,m}$. These two observations together yield the following lemma, where we have written $\End_k(E_m)\circ\left[\frac{n}{m}\right]$ in place of $\left[\frac{n}{m}\right]\left(\End_k(E_m)\right)$, to match the notation in the original paper.
\begin{lemma}
\label{lem:clarity}
If $m\mid n$ then $\End_k(E_m)\circ\left[\frac{n}{m}\right] = \End_k(E_n) \cap M_2(\frac{n}{m}\Z/n\Z)$.
\qed
\end{lemma}

Write $G_{E, n}$ for the quotient of $\im \rho_{E,n}$ by the subgroup of scalar matrices. The following proposition and corollaries replace Proposition~\ref{prop:ranks} and Corollaries~\ref{cor:End} and~\ref{cor:End2}.

\begin{prop}\label{prop:Correction}
    Let $\ell$ be a prime, let $s$ be a positive integer, and let $E$ be an elliptic curve over a field $k$ of characteristic $0$.  Then
       \[
            \dim_{\F_{\ell}}
            \frac{\End_{k}(E_{\ell^s})}
            {{\End_{k}(E_{\ell^{s-1}})\circ [\ell]}}
            =
            \begin{cases}
                4& \textup{if } G_{E, \ell^s} = \{1\},\\
                2& \textup{if } G_{E, \ell^s} \neq \{1\} \textup{ and $\im(\rho_{E,\ell^s})$ is liftable abelian, and}\\
                1& \textup{if }\im(\rho_{E,\ell^s})\textup{ is not liftable abelian.}
            \end{cases}
        \]
\end{prop}
\begin{cor}\label{cor:CorrectedEnd}
    Let $E$ be an elliptic curve over $k$ and let $n$ be a positive integer.  Then we have an isomorphism of abelian groups
    \[
        \End_{k}(E_n) \isom 
        \Z/n\Z \times \Z/n_1\Z \times \left(\Z/n_2\Z\right)^2 
    \]
    for positive integers $n_2\mid n_1\mid n$.  Furthermore, $n_1$ is the largest integer dividing $n$ such that $\Gal(k(E_{n_1})/k)$ is liftable abelian and $n_2$ is the largest integer dividing $n$ such that $\Gal(k(E_{n_2})/k) \subset (\Z/n_2\Z)^{\times}$ where $a\in (\Z/n_2\Z)^{\times}$ acts by $P\mapsto aP$.  If $E$ is non-CM, then $(\End(\Ebar)/n)^{\Gamma} \isom \Z/n\Z$ and hence
    \[
        \frac{\End_k(E_n)}{(\End(\Ebar)/n)^{\Gamma}} \isom \Z/n_1\Z \times \left(\Z/n_2\Z\right)^2.
    \]
\end{cor}
\begin{remark}\label{rem:clarity}
	The proofs of Proposition~\ref{prop:Correction} and Corollary~\ref{cor:CorrectedEnd} prove a stronger statement, namely that if $n_1 = n_2$, then $\End_{k}(E_n) = \Z/n\Z\cdot I + \Mat_2({\textstyle \frac{n}{n_2}}\Z/n\Z)$ and if $n_1\neq n_2$, then
	\[
		\End_{k}(E_n) = \left\{aI + b{\textstyle \frac{n}{n_1}}A:a,b\in \Z/n\Z\right\} + \Mat_2({\textstyle \frac{n}{n_2}}\Z/n\Z)\subset \Mat_2(\Z/n\Z),  
	\]
	where $A\in \im \rho_{E, n_1}$ is a matrix such that $A\bmod \ell \notin \langle I\rangle$, for any $\ell\mid n_1$, as given by Lemma~\ref{lem:rank2liftableabelian}.
\end{remark}
\begin{cor}
\label{cor:CorrectedEndQuotient}
    Let $E$ be an elliptic curve over $k$ and let $n$ be a positive integer.  Let $k'/k$ be a field extension. There is an isomorphism of abelian groups
    \[
        \frac{\End_{k'}(E_{n})}{\End_{k}(E_{n})} \isom \Z/{\textstyle \frac{n_1'}{n_1}}\Z \times \left(\Z/{\textstyle \frac{n_2'}{n_2}}\Z\right)^2,
    \]
    where $n_1'$ (respectively $n_1$) is the largest integer dividing $n$ such that $\Gal(k'(E_{n_1})/k')$ (respectively $\Gal(k(E_{n_1})/k)$) is liftable abelian and $n_2'$ (respectively $n_2$) is the largest integer dividing $n$ such that $\Gal(k'(E_{n_2})/k') \subset (\Z/n_2'\Z)^{\times}$ (respectively $\Gal(k(E_{n_2})/k) \subset (\Z/n_2\Z)^{\times}$).
\end{cor}

The proof of Corollary~\ref{cor:CorrectedEnd} proceeds as in the original proof of~\ref{cor:End}, almost verbatim, after replacing the word ``abelian'' with the phrase ``liftable abelian'', and references to~Proposition~\ref{prop:ranks} with references to Proposition~\ref{prop:Correction}. Corollary~\ref{cor:CorrectedEndQuotient} is more easily deduced from Remark~\ref{rem:clarity}.
    
Characterizing liftable abelian groups is essential in the proof of Proposition~\ref{prop:Correction}:
\begin{lemma}\label{lem:rank2liftableabelian}
    Let $n$ be a positive integer and let $H < \GL_2(\Z/n\Z)$ be a subgroup. Then $H$ is liftable abelian if and only if $H$ is contained in a subring of $\Mat_2(\Z/n\Z)$ generated by $I$ and $A$, for some matrix $A$ such that $A \bmod \ell \notin \langle I\rangle$ for any prime $\ell\mid n$. 
\end{lemma}
\begin{proof}
    By the Sun Tzu Remainder Theorem, it suffices to prove the lemma in the case where $n=\ell^s$, so we restrict to this case for the remainder of the proof.

    If $H$ is liftable abelian, then there is an abelian subgroup $\widehat H < \GL_2(\Z_\ell)$ that surjects onto $H$. Applying Proposition~\ref{prop:IndexAbSubgroups} to $\widehat H \bmod \ell^{2s}$, we conclude there is an $A' \in M_2(\Z/\ell^{2s}\Z)$ with $A'\bmod \ell \notin\langle I\rangle$ such that $\widehat H \bmod \ell^s = H \subseteq \langle I, A'  \bmod \ell^s \rangle$. 

    Now assume that $H\subset \langle I, A \rangle$ for some matrix $A$ such that $A \bmod \ell \notin \langle I\rangle$.  
    By the proof of Corollary~\ref{cor:Conjugacy} (starting from the second line, taking $s'=s$), any such $H$ is conjugate to a subgroup of $C_s(\ell^s)$, $C_{ns}^{t, \overline{\varepsilon}}(\ell^{s})$, or $B_{ab}^{t}(\ell^{s})$.  Any subgroup of a liftable abelian subgoup is itself liftable abelian, so it suffices to show that the groups $C_s(\ell^k)$, $C_{ns}^{t, \overline{\varepsilon}}(\ell^{k})$, and $B_{ab}^{t}(\ell^{k})$ appearing in Corollary~\ref{cor:Conjugacy} are liftable abelian.

    For the split Cartan group $C_s(\ell^k)$ and the Borel groups $B_{ab}^{t}(\ell^{k})$, the inverse limits
    \[
    \varprojlim_n C_s(\ell^n) \quad\textrm{and}\quad \varprojlim_n B_{ab}^{t}(\ell^{n}).
    \]
    of $\GL_2(\Z_{\ell})$ are abelian and surject onto $C_s(\ell^k)$ and $B_{ab}^{t}(\ell^k)$, respectively, proving the claim. 

    For the group $C_{ns}^{t, \overline{\varepsilon}}(\ell^{k})$, one can construct a surjective system
    \[
    C_{ns}^{t, \overline{\varepsilon_{k+1}}}(\ell^{k+1}) \onto C_{ns}^{t, \overline{\varepsilon_k}}(\ell^{k}),
    \]
    by taking $\revedit{\overline{\varepsilon_{k+1}}} \in (\Z/\ell^{k+1-t})^\times/(\Z/\ell^{k+1-t})^{\times 2}$ a lift of ${\overline{\varepsilon_{k}}} \in (\Z/\ell^{k-t})^\times/(\Z/\ell^{k-t})^{\times 2}$ compatibly up through the system. The inverse limit of this system demonstrates that $C_{ns}^{t, \overline{\varepsilon}}(\ell^{k})$ is liftable abelian.
\end{proof}

\begin{proof}[Proof of Proposition~\ref{prop:Correction}]
   	If $G_{E,\ell^s}=\{1\}$ then the claim is immediate; thus we may assume that $G_{E,\ell^s}\neq 1$.  Now assume further that $\im \rho_{E,\ell^s}$ is liftable abelian. By Lemma~\ref{lem:rank2liftableabelian}, there is an $A' \in M_2(\Z/\ell^s\Z)$ with $A' \bmod \ell \notin \langle I\rangle$ such that $\im \rho_{E,\ell^s} \subseteq \langle I, A' \rangle$.  
    Let $t$ be the maximal integer such that $\im \rho_{E,\ell^s}\subseteq\langle I, \ell^t A'\rangle$.  Note that since $G_{E,\ell^s}\neq \{1\}$, $t$ must be strictly less than $s$, or equivalently, $s - t\geq 1$. Then
    \begin{align*}
      \End_k(E_{\ell^s}) = & \{M : A'M \equiv MA' \bmod \ell^{s-t}\}  \\
      = & \{ aI + bA' + \ell^{s-t}M' : a,b\in \Z/\ell^s{\Z}, M'\in \Mat_2(\Z/\ell^s\Z)\},
    \end{align*}
    where the second equality comes from Lemma~\ref{lem:CommutingMatrices} applied to $A'$.
	Together with Lemma~\ref{lem:clarity}, we deduce that for $M'\in \Mat_2(\Z/\ell^{s}\Z)$, we have $\ell^{s-t-1}\cdot(\ell M')\in \End_{k}(E_{\ell^{s-1}})\circ [\ell]$.
    Hence, $\End_k(E_{\ell^s}) / \End_{k}(E_{\ell^{s-1}})\circ [\ell]$ is generated by $I$ and $A'$ and so is $2$-dimensional.

    To complete the proof, we claim that if $\dim_{\F_{\ell}}
    \frac{\End_{k}(E_{\ell^s})}
    {{\End_{k}(E_{\ell^{s-1}})\circ[\ell]}}\geq 2$ then $\im \rho_{E, \ell^s}$ is liftable abelian.  The identity endomorphism always generates a one-dimensional subspace of this quotient, so if the inequality holds, then by Lemma~\ref{lem:clarity} there exists an $A\in \End_{k}(E_{\ell^s})$ that is not a scalar modulo $\ell$.  Since every element of $\im \rho_{E,\ell^s}$ commutes with $A$, by Lemma~\ref{lem:CommutingMatrices} we have $\im \rho_{E, \ell^s}\subset \langle I, A \rangle$.  Lemma~\ref{lem:rank2liftableabelian} then shows that $\im \rho_{E, \ell^s}$ is liftable abelian.
\end{proof}

\subsection*{Corrections for subsequent statements in the main paper.}

Corollary~\ref{cor:End} is used in the proof of Theorems~\ref{thm:BrauerAbToRep}and~\ref{thm:RepToBrauerAb}.  In turn, Theorem~\ref{thm:BrauerAbToRep} is used in the proofs of Theorems~\ref{thm:EquivalentUniformBoundsArbField} \textbf{(EC) $\Rightarrow$ (Ab):},~\ref{thm:ellprimaryAb}, and~\ref{thm:overQ}, while Theorem~\ref{thm:RepToBrauerAb} is used in the proof of Theorem~\ref{thm:EquivalentUniformBoundsArbField} \textbf{(Ab) $\Rightarrow$ (EC):}, and Corollary~\ref{cor:RepToBrauerK3}.

Using Corollary~\ref{cor:CorrectedEnd} in place of Corollary~\ref{cor:End} in the proof of Theorem~\ref{thm:BrauerAbToRep} yields a \emph{stronger} version of the theorem.  Namely, Corollary~\ref{cor:CorrectedEnd} allows one to deduce that the Galois group $\Gal(\tilde{L}(\sqrt{\delta}, E'_{n/c})/\tilde{L}(\sqrt{\delta}))$ is \emph{liftable} abelian.  In particular, the proofs of Theorems~\ref{thm:EquivalentUniformBoundsArbField} \textbf{(EC) $\Rightarrow$ (Ab):},~\ref{thm:ellprimaryAb}, and~\ref{thm:overQ} go through unchanged.

On the other hand, using Corollary~\ref{cor:CorrectedEnd} in place of Corollary~\ref{cor:End} in the proof of Theorem~\ref{thm:RepToBrauerAb} yields the following weaker version of the theorem and its corollary; however, this version still suffices for the proof of Theorem~\ref{thm:EquivalentUniformBoundsArbField} \textbf{(Ab) $\Rightarrow$ (EC):}.
\begin{thm}\label{thm:CorrectedRepToBrauerAb}
	Let $n$ be a positive integer, let $E'$ be a non-CM elliptic curve over a field $k$ of characteristic $0$, with a $k$-rational cyclic subgroup $C$ of order $d$, and  let $E = E'/C$.  Let $W$ and $W'$ be principal homogeneous spaces of $E$ and $E'$, respectively, with periods coprime to $n$, and let $Y = W\times W'$. If $\Gal(k(E'_n)/k)$ is liftable abelian, then $\Br Y/\Br_1 Y$ has an element of order $n/\gcd(d, n)$.
\end{thm}
\begin{remark}
    With the weaker assumption that $\Gal(k(E'_n)/k)$ is abelian, then one can prove that $\Br Y/\Br_1 Y$ has an element of order $m/\gcd(d, m)$, where $m := \prod_{p|n} p^{\lceil v_p(n)/2\rceil}$.
\end{remark}
\begin{cor}
Let $n$, $E$, $E'$, $W$, $W'$ be as in Theorem~\ref{thm:CorrectedRepToBrauerAb}. Suppose further that $W$ and $W'$ have period dividing $2$, so that we may define $X:= \Kum(W\times W')$. If $\Gal(k(E'_n)/k)$ is liftable abelian, then $\Br X/\Br_1 X$ has an element of order $n_\textup{odd}/\gcd(d,n_{\textup{odd}})$.
\end{cor}

Finally, Theorem~\ref{thm:RepToBrauerAb} is used in the proof of the implication \textbf{(Ab) $\Rightarrow$ (EC)} in Theorem~\ref{thm:EquivalentUniformBoundsArbField}.

\begin{proof}[Corrected proof of Theorem~\ref{thm:EquivalentUniformBoundsArbField}\textbf{(Ab) $\Rightarrow$ (EC):}]
    Let $r''$ be a positive integer, let $k''/F$ be an extension of degree at most $r''$, let $E/k''$ be an elliptic curve with a $k''$-rational cyclic subgroup $C$ of order $d$, and let $E' := E/C$.  Let $n$ be a positive integer such that $k''(E'_n)/k''$ is abelian. Then Lemma~\ref{lem:rank2liftableabelian} and Proposition~\ref{prop:IndexAbSubgroups} together imply that $\Gal(k(E'_{m})/k)$ is liftable abelian, where $m := \prod_{p|n} p^{\lceil v_p(n)/2\rceil}$.  Hence, Theorem~\ref{thm:CorrectedRepToBrauerAb} implies that there exists $Y/k'' \in \mathscr{A}^2_d$  with an element of order $m/\gcd(d, m)$ in $\Br Y/\Br_1 Y$, where $m = \prod_{p|n}p^{\lceil v_p(n)/2\rceil}$.   Thus, by assumption
		$m_{\textup{odd}} \leq B'(r'',d)\gcd(d, m) \leq B'(r'',d)d.$
    Since $n\mid m^2$, we may take $B'' := (B'(r'',d)d)^2.$ 
\end{proof}

%%%%%%%%%%%%%%%%%%%%%%%%%%%%%%%%%%%%%%%%%%%%%%%%%%%%%%%%%%%%%%%%%%%%%%%%%%%%%%%%
\subsection*{Acknowledgements}
We are grateful to Francesca Balestrieri, Alexis Johnson, and Rachel Newton for alerting us to the error in the proof of~Proposition~\ref{prop:ranks}, and for an insightful counterexample that led us to a corrected statement.  We also thank the referee for a careful reading of the Corrigendum, and for their valuable suggestions.

%%%%%%%%%%%%%%%%%%%%%%%%%%%%%%%%%%%%%%%%%%%%%%%%%%%%%%%%%%%%%%%%%%%%%%%%%%%%%%%%
%%%%%%%%%%%%%%%%%%				Bibliography			%%%%%%%%%%%%%%%%%%%%%%%%
%%%%%%%%%%%%%%%%%%%%%%%%%%%%%%%%%%%%%%%%%%%%%%%%%%%%%%%%%%%%%%%%%%%%%%%%%%%%%%%%

	\begin{bibdiv}
		\begin{biblist}

            \bib{Abramovich-Gonality}{article}{
               author={Abramovich, D.},
               title={A linear lower bound on the gonality of modular curves},
               journal={Internat. Math. Res. Notices},
               date={1996},
               number={20},
               pages={1005--1011},
               issn={1073-7928},
               %review={\MR{1422373}},
               %doi={10.1155/S1073792896000621},
            }
            
	        \bib{AVA-Alevels}{article}{
		   author={Abramovich, D.},
		   author={V{\'a}rilly-Alvarado, A.},
		   title={Level structures on abelian varieties, Kodaira dimensions, and Lang's conjecture},
		   % date={2016},
		   note={Preprint, \texttt{arXiv:1601.02483}}
		}

	        \bib{AVA-Vojta}{article}{
		   author={Abramovich, D.},
		   author={V{\'a}rilly-Alvarado, A.},
		   title={Level structures on abelian varieties and Vojta's conjecture},
		   note={With an Appendix by K. Madapusi Pera},
		   journal={Compos. Math.},
               date={2017},
               number={153},
               pages={373--394},
		}
		
		\bib{AIMProblemList}{misc}{
			title={AimPL: Brauer groups and obstruction problems}, 
			note = {available at \url{http://aimpl.org/brauermoduli}},
			label = {AimPL}
		}

	        \bib{Beauville-Surfaces}{book}{
	           author={Beauville, Arnaud},
	           title={Complex algebraic surfaces},
	           series={London Mathematical Society Student Texts},
	           volume={34},
	           edition={2},
	           note={Translated from the 1978 French original by R. Barlow, with
	           assistance from N. I. Shepherd-Barron and M. Reid},
	           publisher={Cambridge University Press, Cambridge},
	           date={1996},
	           pages={x+132},
	           isbn={0-521-49510-5},
	           isbn={0-521-49842-2},
	           %review={\MR{1406314}},
	           %doi={10.1017/CBO9780511623936},
	        }

		\bib{BPR-p^r}{article}{
		   author={Bilu, Yu.},
		   author={Parent, P.},
		   author={Rebolledo, M.},
		   title={Rational points on $X^+_0(p^r)$},
		   language={English, with English and French summaries},
		   journal={Ann. Inst. Fourier (Grenoble)},
		   volume={63},
		   date={2013},
		   number={3},
		   pages={957--984},
		   issn={0373-0956},
		   %review={\MR{3137477}},
		   %doi={10.5802/aif.2781},
		}

			\bib{CFTTV}{article}{
				author = {Contoral-Farf{\'a}n, Victoria},
				author = {Tang, Yunqing},
				author = {Tanimoto, Sho},
				author = {Visse, Erik},
				title = {Effective bounds for Brauer groups of Kummer surfaces over number fields},
		   note={Preprint, \texttt{arXiv:1606.06074}}
			}

	        \bib{CTS-Transcendental}{article}{
	           author={Colliot-Th{\'e}l{\`e}ne, Jean-Louis},
	           author={Skorobogatov, Alexei N.},
	           title={Descente galoisienne sur le groupe de Brauer},
	           language={French, with English and French summaries},
	           journal={J. Reine Angew. Math.},
	           volume={682},
	           date={2013},
	           pages={141--165},
	           issn={0075-4102},
	           %review={\MR{3181502}},
	        }
	        
	        \bib{DeligneRapoport}{article}{
		   author={Deligne, P.},
		   author={Rapoport, M.},
		   title={Les sch\'emas de modules de courbes elliptiques},
		   language={French},
		   conference={
		      title={Modular functions of one variable, II},
		      address={Proc. Internat. Summer School, Univ. Antwerp, Antwerp},
		      date={1972},
		   },
		   book={
		      publisher={Springer, Berlin},
		   },
		   date={1973},
		   pages={143--316. Lecture Notes in Math., Vol. 349},
		   %review={\MR{0337993}},
		}
            
	        \bib{Frey-InfinitelyManyDegreed}{article}{
	           author={Frey, Gerhard},
	           title={Curves with infinitely many points of fixed degree},
	           journal={Israel J. Math.},
	           volume={85},
	           date={1994},
	           number={1-3},
	           pages={79--83},
	           issn={0021-2172},
	           %review={\MR{1264340}},
	           %doi={10.1007/BF02758637},
	        }

		\bib{GJLR-AbelianDivisionFields}{article}{
		   author={Gonz\'alez-Jim\'enez, Enrique},
		   author={Lozano-Robledo, \'Alvaro},
		   title={Elliptic curves with abelian division fields},
		   journal={Math. Z.},
		   volume={283},
		   date={2016},
		   number={3-4},
		   pages={835--859},
		   issn={0025-5874},
		   %review={\MR{3519984}},
		   %doi={10.1007/s00209-016-1623-z},
		}

		\bib{GHS-K3gentype}{article}{
		   author={Gritsenko, V.},
		   author={Hulek, K.},
		   author={Sankaran, G. K.},
		   title={The Kodaira dimension of the moduli of $K3$ surfaces},
		   journal={Invent. Math.},
		   volume={169},
		   date={2007},
		   number={3},
		   pages={519--567},
		   issn={0020-9910},
		   %review={\MR{2336040}},
		   %doi={10.1007/s00222-007-0054-1},
		}

		\bib{HKT-Degree2K3}{article}{
		   author={Hassett, Brendan},
		   author={Kresch, Andrew},
		   author={Tschinkel, Yuri},
		   title={Effective computation of Picard groups and Brauer-Manin
		   obstructions of degree two $K3$ surfaces over number fields},
		   journal={Rend. Circ. Mat. Palermo (2)},
		   volume={62},
		   date={2013},
		   number={1},
		   pages={137--151},
		   issn={0009-725X},
		   review={\MR{3031574}},
		   doi={10.1007/s12215-013-0116-8},
		}

		\bib{Huybrechts}{book}{
		   author={Huybrechts, Daniel},
		   title={Lectures on K3 surfaces},
		   series={Cambridge Studies in Advanced Mathematics},
		   volume={158},
		   publisher={Cambridge University Press, Cambridge},
		   date={2016},
		   pages={xi+485},
		   isbn={978-1-107-15304-2},
		   %review={\MR{3586372}},
		   %doi={10.1017/CBO9781316594193},
		}

            \bib{ISZ-DiagonalQuartics}{article}{
               author={Ieronymou, Evis},
               author={Skorobogatov, Alexei N.},
               author={Zarhin, Yuri G.},
               title={On the Brauer group of diagonal quartic surfaces},
               note={With an appendix by Peter Swinnerton-Dyer},
               journal={J. Lond. Math. Soc. (2)},
               volume={83},
               date={2011},
               number={3},
               pages={659--672},
               issn={0024-6107},
               %review={\MR{2802504 (2012e:14046)}},
               %doi={10.1112/jlms/jdq083},
            }

		\bib{Iwasawa}{article}{
		   author={Iwasawa, K.},
		   title={On some types of topological groups},
		   journal={Ann. of Math. (2)},
		   volume={50},
 		  date={1949},
		   pages={507--558},
		   issn={0003-486X},
		   %review={\MR{0029911}},
		}
    
			\bib{Kamienny}{article}{
			   author={Kamienny, S.},
			   title={Torsion points on elliptic curves and $q$-coefficients of modular
			   forms},
			   journal={Invent. Math.},
			   volume={109},
			   date={1992},
			   number={2},
			   pages={221--229},
			   issn={0020-9910},
			   %review={\MR{1172689}},
			   %doi={10.1007/BF01232025},
			}
						
            \bib{Kani}{article}{
			   author={Kani, E.},
			   title={Elliptic curves on abelian surfaces},
			   journal={Manuscripta Math.},
			   volume={84},
			   date={1994},
			   number={2},
			   pages={199--223},
			   issn={0025-2611},
			   %review={\MR{1285957}},
			   %doi={10.1007/BF02567454},
			}

			\bib{Manin}{article}{
			   author={Manin, Ju. I.},
			   title={The $p$-torsion of elliptic curves is uniformly bounded},
			   language={Russian},
			   journal={Izv. Akad. Nauk SSSR Ser. Mat.},
			   volume={33},
			   date={1969},
			   pages={459--465},
			   issn={0373-2436},
			   %review={\MR{0272786}},
			}

			\bib{MasserWustholz}{article}{
			   author={Masser, D. W.},
			   author={W\"ustholz, G.},
			   title={Estimating isogenies on elliptic curves},
			   journal={Invent. Math.},
			   volume={100},
			   date={1990},
			   number={1},
			   pages={1--24},
			   issn={0020-9910},
			   review={\MR{1037140}},
			   doi={10.1007/BF01231178},
			}

			\bib{Mazur-Isogenies}{article}{
			   author={Mazur, B.},
			   title={Rational isogenies of prime degree (with an appendix by D.
			   Goldfeld)},
			   journal={Invent. Math.},
			   volume={44},
			   date={1978},
			   number={2},
			   pages={129--162},
			   issn={0020-9910},
			   %review={\MR{482230}},
			   %doi={10.1007/BF01390348},
			}
			
			\bib{Mazur}{article}{
			   author={Mazur, B.},
			   title={Modular curves and the Eisenstein ideal},
			   journal={Inst. Hautes \'Etudes Sci. Publ. Math.},
			   number={47},
			   date={1977},
			   pages={33--186 (1978)},
			   issn={0073-8301},
			   %review={\MR{488287}},
			}
			
			\bib{McDonald}{article}{
			   author={McDonald, B. R.},
			   title={Similarity of matrices over Artinian principal ideal rings},
			   journal={Linear Algebra Appl.},
			   volume={21},
			   date={1978},
			   number={2},
			   pages={153--162},
			   issn={0024-3795},
			   %review={\MR{0485924 (58 \#5718)}},
			}
			
		\bib{MSTVA}{article}{
		   author={McKinnie, K.},
		   author={Sawon, J.},
		   author={Tanimoto, S.},
		   author={V{\'a}rilly-Alvarado, A.},
		   title={Brauer groups on $K3$ surfaces and arithmetic applications},
		   	conference={
		      title={Brauer groups and obstruction problems: moduli spaces and arithmetic},
		   },
		   book={
		      series={Progr. Math.},
		      volume={320},
		      publisher={Birkh\"auser},
		   },
		   date={2017},
		   pages={177--218},
				%note = {to appear; {\tt arXiv:1404.5460}}
		}

			\bib{Merel}{article}{
			   author={Merel, Lo{\"{\i}}c},
			   title={Bornes pour la torsion des courbes elliptiques sur les corps de
			   nombres},
			   language={French},
			   journal={Invent. Math.},
			   volume={124},
			   date={1996},
			   number={1-3},
			   pages={437--449},
			   issn={0020-9910},
			   %review={\MR{1369424}},
			   %doi={10.1007/s002220050059},
			}

	        \bib{Neukirch-BonnLectures}{book}{
	           author={Neukirch, J{\"u}rgen},
	           title={Class field theory},
	           note={The Bonn lectures, edited and with a foreword by Alexander Schmidt;
	           Translated from the 1967 German original by F. Lemmermeyer and W. Snyder;
	           Language editor: A. Rosenschon},
	           publisher={Springer, Heidelberg},
	           date={2013},
	           pages={xii+184},
	           isbn={978-3-642-35436-6},
	           isbn={978-3-642-35437-3},
	           %review={\MR{3058613}},
	           %doi={10.1007/978-3-642-35437-3},
	        }
        
		\bib{Newton-BrauerK3}{article}{
		   author={Newton, Rachel},
		   title={Transcendental Brauer groups of products of CM elliptic curves},
		   journal={J. Lond. Math. Soc. (2)},
		   volume={93},
		   date={2016},
		   number={2},
		   pages={397--419},
		   issn={0024-6107},
		   review={\MR{3483120}},
		   doi={10.1112/jlms/jdv058},
		}
		
			\bib{Nikulin}{article}{
			   author={Nikulin, V. V.},
			   title={On Kummer surfaces},
			   language={Russian},
			   journal={Izv. Akad. Nauk SSSR Ser. Mat.},
			   volume={39},
			   date={1975},
			   number={2},
			   pages={278--293, 471},
			   issn={0373-2436},
			   %review={\MR{0429917 (55 \#2926)}},
			}

			\bib{MO}{misc}{    
			    title={What are the strongest conjectured uniform versions of Serre's Open Image Theorem?},    
			    author={Jeremy Rouse},    
			    note={URL: \url{http://mathoverflow.net/q/203837} (version: 2015-05-02)},    
			    eprint={http://mathoverflow.net/q/203837},    
			    organization={MathOverflow}  
			}

			\bib{RouseZureickBrown}{article}{
			   author={Rouse, J.},
			   author={Zureick-Brown, D.},
			   title={Elliptic curves over $\mathbb{Q}$ and 2-adic images of Galois},
			   journal={Res. Number Theory},
	 		  volume={1},
	 		  date={2015},
	 		  pages={1:12},
	 		  issn={2363-9555},
	 		  %review={\MR{3500996}},
	 		  %doi={10.1007/s40993-015-0013-7},
			}

	        \bib{Serre-Abelianladic}{book}{
	           author={Serre, Jean-Pierre},
	           title={Abelian $l$-adic representations and elliptic curves},
	           series={Research Notes in Mathematics},
	           volume={7},
	           note={With the collaboration of Willem Kuyk and John Labute;
	           Revised reprint of the 1968 original},
	           publisher={A K Peters, Ltd., Wellesley, MA},
	           date={1998},
	           pages={199},
	           isbn={1-56881-077-6},
	           %review={\MR{1484415}},
	        }

			\bib{Serre-OpenImage}{article}{
			   author={Serre, Jean-Pierre},
			   title={Propri\'et\'es galoisiennes des points d'ordre fini des courbes
			   elliptiques},
			   language={French},
			   journal={Invent. Math.},
			   volume={15},
			   date={1972},
			   number={4},
			   pages={259--331},
			   issn={0020-9910},
			   %review={\MR{0387283}},
			}

	        \bib{SZ-Finiteness}{article}{
	           author={Skorobogatov, Alexei N.},
	           author={Zarhin, Yuri G.},
	           title={A finiteness theorem for the Brauer group of abelian varieties and
	           $K3$ surfaces},
	           journal={J. Algebraic Geom.},
	           volume={17},
	           date={2008},
	           number={3},
	           pages={481--502},
	           issn={1056-3911},
	           %review={\MR{2395136 (2009d:14016)}},
	           %doi={10.1090/S1056-3911-07-00471-7},
	        }
        
	        \bib{SZ-Kummer}{article}{
	           author={Skorobogatov, Alexei N.},
	           author={Zarhin, Yuri G.},
	           title={The Brauer group of Kummer surfaces and torsion of elliptic
	           curves},
	           journal={J. Reine Angew. Math.},
	           volume={666},
	           date={2012},
	           pages={115--140},
	           issn={0075-4102},
	           %review={\MR{2920883}},
	           %doi={10.1515/CRELLE.2011.121},
	        }
	        
	        \bib{TVA-cubic4folds}{article}{
		   author={Tanimoto, S.},
		   author={V{\'a}rilly-Alvarado, A.},
		   title={Kodaira dimension of moduli of special cubic fourfolds},
		   note={To appear in {\it J. Reine Angew. Math.} DOI: 10.1515/crelle-2016-0053}
		}

\bib{VA-AWSnotes}{article}{
   author={V\'arilly-Alvarado, A.},
   title={Arithmetic of K3 surfaces},
   conference={
      title={Geometry Over Nonclosed Fields},
   },
   book={
      publisher={Springer, New York},
   },
   date={2017},
   pages={197--248},
}

		\end{biblist}
	\end{bibdiv}

\end{document}